\numberwithin{equation}{section}
\providecommand{\abs}[1]{\lvert#1\rvert} %\abs{}
\theoremstyle{thm}
\newtheorem{thm}{Theorem}[section]
\newtheorem{lem}[thm]{Lemma}
\newtheorem{cor}[thm]{Corollary}
\theoremstyle{definition}
\newtheorem{defn}[thm]{Definition}
\theoremstyle{remark}
\newtheorem{rem}[thm]{Remark}
\DeclareMathOperator{\sech}{sech}
\DeclareMathOperator{\GL}{GL}
\DeclareMathOperator{\Gr}{Gr}
\DeclareMathOperator{\Wr}{Wr}
\title[Uniqueness of KP structures]{On uniqueness of KP soliton structures}
\author[Alegr\'ia]{Francisco Alegr\'ia}
\address{Instituto de Ciencias F\'isicas y Matem\'aticas, Facultad de Ciencias, Universidad Austral de Chile, Valdivia, Chile.}
\email{francisco.alegria@uach.cl}
\thanks{F.A.'s work is partially supported by ANID Exploration project 13220060.}
\author[Chen]{Gong Chen}
\address {School of Mathematics, Georgia Institute of Technology, Atlanta, GA 30332-0160, USA }
\email{gc@math.gatech.edu}
\thanks{G.C. was partially funded by Simons Foundation  MP-TSM-00002258.}
\author[Mu\~noz]{Claudio Mu\~noz}
	\address{Departamento de Ingenier\'{\i}a Matem\'atica and Centro
de Modelamiento Matem\'atico (UMI 2807 CNRS), Universidad de Chile, Casilla
170 Correo 3, Santiago, Chile.}
	\email{cmunoz@dim.uchile.cl}
	\thanks{C.M. was partially funded by Chilean research grants FONDECYT 1231250, ANID Exploraci\'on 13220060, MathAmSud WAFFLE 23-MATH-18, and Basal CMM FB210005.}
\author[Poblete]{Felipe Poblete}
\address{Instituto de Ciencias F\'isicas y Matem\'aticas, Facultad de Ciencias, Universidad Austral de Chile, Valdivia, Chile.}
\email{felipe.poblete@uach.cl}
\thanks{F.P.'s work is partially supported by ANID Exploration project 13220060, ANID project FONDECYT 1221076 and MathAmSud WAFFLE 23-MATH-18.}
\author[Tardy]{Benjam\'in Tardy}
	\address{Departamento de Ingenier\'{\i}a Matem\'atica, Universidad de Chile, Casilla
170 Correo 3, Santiago, Chile.}
	\email{btardy@dim.uchile.cl}
	\thanks{B.T. was partially funded by Chilean research grants FONDECYTs 1231250 and 1221076 and Basal CMM FB210005.}
\date{\today}
\begin{document}

\maketitle

\begin{abstract}
We consider the Kadomtsev-Petviashvili  II (KP) model placed in $\mathbb R_t \times \mathbb R_{x,y}^2$, in the case of smooth data that are not necessarily in a Sobolev space. In this paper, the subclass of smooth solutions we study  is of ``soliton type'', characterized by a phase $\Theta=\Theta(t,x,y)$ and a unidimensional profile $F$. In particular, every classical KP soliton and multi-soliton falls into this category with suitable  $\Theta$ and $F$. We establish concrete characterizations of KP solitons by means of a natural set of nonlinear differential equations and inclusions of functionals of Wronskian, Airy and Heat types, among others. These functional equations only depend on the new variables $\Theta$ and $F$. A distinct characteristic of this set of functionals is its special and rigid structure tailored to the considered soliton. By analyzing $\Theta$ and $F$,   we establish the uniqueness of line-solitons, multi-solitons, and other degenerate solutions among a large class of KP solutions. Our results are also valid for other 2D dispersive models such as the quadratic and cubic Zakharov-Kuznetsov equations.  

\end{abstract}

%\tableofcontents

\section{Introduction and Main Results}%\label{Sec:}

\subsection{Setting of the problem}% 
Let $t\in\mathbb R$, and $\left(x,y\right)\in\mathbb R^2$. In this work we consider the KP-II model
\begin{equation}\label{eq:KP}
\left(-4u_t +u_{xxx} +6 u u_{x}\right)_{x} + 3u_{yy} =0,
\end{equation}
where $u=u\left(t,x,y\right)\in\mathbb R$ is the unknown. The KP equations are canonical integrable models in 2D and were first introduced by Kadomtsev and Petviashvili in 1970 \cite{KadomtsevPetviashvili1970} for modeling ``long and weakly nonlinear waves'' propagating essentially along the $x$ direction, but with a small dependence on the variable $y$. A rigorous derivation of both models from the Boussinesq system was obtained by Lannes and Lannes-Saut \cite{Lannes2,LS}. 

\medskip

The KP-II model \eqref{eq:KP} (KP from now on, if there is no confusion) has an important set of symmetries. If $u=u\left(t,x,y\right)$ is a solution to \eqref{eq:KP}, then $u\left(t+t_0, x+x_0,y+y_0\right)$, with $t_0,x_0,y_0\in\mathbb R$,  $c u\left( c^{3/2} t , c^{1/2} x, c y\right)$, if $c>0$, and  if $\beta\in\mathbb R$ is a given speed, 
\[
u\left(t,x - \frac{4\beta}{3}y + \frac{4\beta ^{2}}{3}t , y-2\beta t\right)
\]
(Galilean invariance)  define new  solutions to KP. 

\medskip

Our purpose here is to establish uniqueness results for KP solitons in the class of smooth solutions that is of soliton type, complementing the results obtained in \cite{FFMP}. More precisely, assume that $u$ in \eqref{eq:KP} is sufficiently smooth and has the form
\begin{equation}\label{eqn:FT}
u\left(t,x,y\right)= 2 \partial_x^2 F\left(\Theta\left(t,x,y\right)\right),
\end{equation}
where, for some $s_0>0$, $F: \left[s_0,\infty\right) \longrightarrow \mathbb R$ and $\Theta=\Theta\left(t,x,y\right) \in \left[s_0,\infty\right)$ are smooth functions. Later, we will justify that without loss of generality, one can choose $s_0 = 1$. Consequently, we shall assume this particular choice throughout this paper. Since $F$ can be changed by any linear affine function, we can also assume that $F\left(1\right)=0$ and $F'\left(1\right)=1$. We shall call $\Theta$ the \emph{phase} of the solution $u$, and $F$ will be the \emph{profile}. Every classical KP soliton is in this class for suitable $F$ and $\Theta$. Indeed, one of the most important examples of solutions in the form of \eqref{eqn:FT} is provided by classical solitons and multi-solitons (see, e.g., Kodama \cite{Kodama2017}), namely solutions of the KP equation in the form of
\begin{equation}\label{ppal_form}
\begin{aligned}
u\left(t,x,y\right) =&~{} 2 \partial_x^2 \log \left(\Theta\left(t,x,y\right)\right), \\
\Theta\left(t,x,y\right) := &~{}  \hbox{Wr} \left(\Theta_1,...,\Theta_n\right)\left(t,x,y\right),\\
 \Theta_i \left(t,x,y\right):= &~{} \sum_{j=1}^{M_{i}} a_{ij} \exp \left( k_{ij}x + k^{2}_{ij}y + k^{3}_{ij}t \right),
\end{aligned}
\end{equation}
where $\hbox{Wr}$ represents the classical Wronskian of $n$ functions, and $k_{ij}, a_{ij}$ are in principle just real-valued, although specific values determine precise solutions. In this case, $F=\log$ and $\Theta$ has the additional scaling symmetry: if $\Theta$ is a valid phase, then $\lambda \Theta$ in \eqref{ppal_form} also does, for any $\lambda>0$. This fact motivates the reason why requiring $\Theta>1$ is in principle not extremely restrictive. Note that, in the particular case where $F=\log$, given $u=u\left(t,x,y\right)$ solution of \eqref{eq:KP}, the formula $\Theta= \exp\left(\int_0^x\int_0^t \frac12 u\left(s,r,y\right)\mathrm{d}r \mathrm{d}s \right)$ returns a valid phase $\Theta$. Consequently, despite some loss of regularity, solving an equation for the profile $F$ and the phase $\Theta$ may be considered as general as dealing with the original KP model \eqref{eq:KP} for $u$.

\medskip

Taking into account the great diversity of KP solutions, our study will focus on a simple but useful characterization of soliton solutions. It turns out that this question is interesting and quite challenging, since $\Theta$ can assume complicated values, from simple linear functions to periodic ones. Also, the question of whether or not profile $F=\log $ is the only possibility for $F$ is also extremely interesting.
 
\medskip

The first step is to rewrite KP \eqref{eq:KP} in terms of $F$ and $\Theta$ as in \eqref{eqn:FT}. After rearrangements, and assuming convergence to zero at infinity, we obtain the following fourth order equation for $\left(F,\Theta\right)$:
\begin{equation}\label{KP3partes}
\begin{aligned}
    0= &\left(F'''' + 6F''^{2}\right)\left(\Theta\right)\Theta^{4}_{x} + 6\left(F'' + F'^{2}\right)'\left(\Theta\right)\Theta^{2}_{x}\Theta_{xx} + 3\left(F'' + F'^{2}\right)\left(\Theta\right)\left(\Theta^{2}_{xx} + \Theta^{2}_{y}\right)  \\
    &-4F''\left(\Theta\right)\Theta_{x}\left(\Theta_{t} - \Theta_{xxx}\right) + F'\left(\Theta\right)\left(-4\left(\Theta_{t} - \Theta_{xxx}\right)_{x}\right)  \\
    &+ 3F'\left(\Theta\right)\left(\Theta_{yy} - \Theta_{xxxx}\right) + 3F'^{2}\left(\Theta\right)\left(\Theta^{2}_{xx} - \Theta^{2}_{y}\right).
\end{aligned}
\end{equation}
This is the equation that will be worked in this paper. It represents a highly nonlinear equation for the two unknowns $F$ and $\Theta$, but its nature is certainly better than \eqref{eq:KP}, since it possesses hidden structures. The most important is a splitting phenomenon between some parts concerning only $F$-motivated terms, and others only dealing with $\Theta$-related terms. Of course, this is far from being an exact ``separation of variables'' as in classical linear PDEs, but we shall mention important similarities. Indeed, it is possible to divide \eqref{KP3partes} into three somehow well-defined sub-equations:
\begin{enumerate}
	\item[(a)] The first line, that can be written in terms of 
	\begin{equation}\label{rho_def0}
	\rho (s): = \left(F'' + F'^{2}\right)(s),
	\end{equation}
	corresponding to those terms that are equal to 0 when $\rho \equiv 0$. And $\rho \equiv 0$ when $F=\log$ and some particular initial conditions are met. 
	\item[(b)] The second line involves a modified Airy function $Ai\left(\Theta\right):=\Theta_{t} - \Theta_{xxx}$ and its derivative with respect to the variable $x$. This is a reminiscent of the 1D variable $x$ that has a natural Airy structure for $\Theta$ (see Appendix \ref{rem:KdVmKdV} for further details).
	\item[(c)] The third line has a complex structure represented by a Heat type term defined by $H\left(\Theta\right):= \Theta_{y} - \Theta_{xx}$. Notice that $H$ is an operator in the variables $y$ and $x$ only. 
	\item[(d)] Additionally, there is a hidden structure in \eqref{KP3partes} represented by Wronskian type functions. Later, in Definition \ref{Thetas} we will explain better this structure.
\end{enumerate}
The Heat and Airy functions are classical in the KP literature, see e.g. Kodama \cite{Kodama2017}, but Wronskians and $\rho$ functions are, as far as we understand, not so well-understood. Translated to the equation \eqref{KP3partes}, the purpose of this work is to find suitable conditions on the Airy, Heat, Wronskian and $\rho$ functions that characterize different soliton solutions. We consider the following definitions:

\begin{defn}[Classification of phases $\Theta$]\label{Thetas}
We shall say that $\Theta$ as in \eqref{eqn:FT} 
\begin{enumerate}
\item[(i)] is of Airy type if for all $\left(t,x,y\right)\in\mathbb R^{3}$,
\[
Ai\left(\Theta\right):= \Theta_t - \Theta_{xxx} =0;
\]
\item[(ii)] is of Heat type if for all $\left(t,x,y\right)\in\mathbb R^{3}$,
\[
 H\left(\Theta\right):= \Theta_{y} - \Theta_{xx} = 0;
\]
\item[(iii)] is of $x$-Wronskian type and $y$-Wronskian type if $\Theta>0$, and
\begin{equation}\label{Wronskians}
W_x\left(\Theta\right):=\Theta_{xxxx} - \frac{\Theta^{2}_{xx}}{\Theta} =0,   \quad W_y\left(\Theta\right): = \Theta_{yy} - \frac{\Theta^{2}_{y}}{\Theta}=0,
\end{equation}
respectively, for all $\left(t,x,y\right)\in\mathbb R^{3}$;
\smallskip
\item[(iv)] is of $\mathcal T$-type if for $F$ fixed,
\begin{equation}\label{eqn:T_type}
\begin{aligned}
\mathcal T\left(\Theta\right):= &~{} -4F''\left(\Theta\right)Ai\left(\Theta\right)\Theta_x \\
&~{} + F'\left(\Theta\right)\left(-4Ai\left(\Theta\right)_x + 3\left(H\left(\Theta\right)_y + H\left(\Theta\right)_{xx}\right)\right) \\
&~{} - 3 F'^{2}\left(\Theta\right)H\left(\Theta\right)\left(\Theta_y + \Theta_{xx}\right)  =0,
\end{aligned}
\end{equation}
for all $\left(t,x,y\right)\in\mathbb R^{3}$.
\end{enumerate}
\end{defn}
Notice that we ask for equality for all $\left(t,x,y\right)\in\mathbb R^{3}$, while relaxing these conditions are interesting options not considered in this work. Before stating our main results, some important comments are necessary:

\begin{rem}[On the classical operators]
Heat and Airy operators are naturally involved in \eqref{KP3partes}. Indeed, it can be proved (see Section \ref{Sec:2} for further details) that \eqref{KP3partes} can be written as
\begin{equation}\label{rhoKP}
\begin{aligned}
    & \left(\rho'' -2F'\left(\Theta\right) \rho' +4F''\left(\Theta\right) \rho\right)\Theta_{x}^{4} + 6\rho' \Theta_{x}^{2} \Theta_{xx}  \\
    &\quad -4F''\left(\Theta\right)\Theta_xAi\left(\Theta\right) + F'\left(\Theta\right)\left(-4Ai\left(\Theta\right)_x + 3\left(H\left(\Theta\right)_y + H\left(\Theta\right)_{xx}\right)\right)  \\
    &\quad +3\left(\rho\left(\Theta^2_{xx} + \Theta^2_y\right) -  F'^2\left(\Theta\right)H\left(\Theta\right)\left(\Theta_y + \Theta_{xx}\right)\right)=0,
\end{aligned}
\end{equation}
where from \eqref{rho_def0} $\rho\left(s\right)= F''\left(s\right) + F'^2\left(s\right)$. On the other hand, the operator $\mathcal T$ is the natural counterpart of the ODE type satisfied by $\rho$, in the sense that \eqref{KP3partes}-\eqref{rhoKP} reads
\begin{equation}\label{casicasi}
\begin{aligned}
&  \Theta_{x}^{4}\rho'' + 2\left(6\Theta^2_x\Theta_{xx} - 2F'\left(\Theta\right)\Theta^4_x\right) \rho'   + \left(3\left(\Theta^2_{xx} + \Theta^2_y\right) + 4F''\left(\Theta\right)\Theta^4_x\right) \rho +\mathcal T\left(\Theta\right) = 0.
 \end{aligned}
\end{equation}
Finally, notice that $\Theta$ being of $\mathcal T$-type is a condition depending on the profile $F$, and consequently is a more complex condition than being of Airy or Heat type, which are independent of the profile $F$.
\end{rem}

\begin{rem}[On the Wronskian operators]
The emergence of the Wronskians \eqref{Wronskians} in \eqref{KP3partes} seems obscure and nonstandard. However, it is possible to rewrite \eqref{KP3partes} as
\begin{equation}\label{KP3partes_new}
\begin{aligned}
    &\left(F'''' + 6F''^{2}\right)\left(\Theta\right)\Theta^{4}_{x} + 6\left(F'' + F'^{2}\right)'\left(\Theta\right)\Theta^{2}_{x}\Theta_{xx} + 3\left(F'' + F'^{2}\right)\left(\Theta\right)\left(\Theta^{2}_{xx} + \Theta^{2}_{y}\right)  \\
    &\quad -4F''\left(\Theta\right)\Theta_{x}Ai\left(\Theta\right)  -4F'\left(\Theta\right)Ai\left(\Theta\right)_{x}  \\
    &\quad + 3F'\left(\Theta\right)\left(W^{F}_{y}\left(\Theta\right) - W^{F}_{x}\left(\Theta\right)\right)=0,
\end{aligned}
\end{equation}
with $W^F_x$ and $W_y^F$ generalized Wronskian functionals,
\begin{equation}\label{general_W1}
	\begin{aligned}
		W^{F}_{y}\left(\Theta\right) :=&~{} \Theta_{yy} - F'\left(\Theta\right)\Theta^{2}_{y}, \\
		W^{F}_{x}\left(\Theta\right) :=&~{} \Theta_{xxxx} - F'\left(\Theta\right)\Theta^{2}_{xx}.
	\end{aligned}
\end{equation} Later we will prove that under $H\left(\Theta\right)=0$, one has $W_x^F\left(\Theta\right)-W_y^F\left(\Theta\right)=W_x\left(\Theta\right)-W_y\left(\Theta\right)=0$, namely one can assume that $F=\log$ in \eqref{general_W1}, leading to the natural definitions in \eqref{Wronskians}. In that sense, null Wronskians are naturally related to the Heat condition $H\left(\Theta\right)=0$, however, the equivalence will not be as exact as one would prefer.
\end{rem}

\subsection{KP solitons} The soliton family stands out as one of the most distinctive features within the KP model. Distinguished by their complexity and rich character, numerous works have been dedicated to understanding them, employing integrability, algebraic, and analytic techniques. Among these we find the works by Kodama and Williams \cite{Kodama2013,Kodama2014}, which provide a precise description of KP-solitons within the positive Grassmannian. For a comprehensive and detailed overview of this line of research, see also Kodama's monograph  \cite{Kodama2017}.

\medskip

The line soliton family (see \cite{Kodama2017}) is given by
\begin{equation}\label{linesoliton}
   \Theta \left(t,x,y\right)= a_{1}\exp\left(\theta_{1}\right) + a_{2}\exp\left(\theta_{2}\right),
\end{equation}
where $a_1,a_2>0$, and $\theta_j: =k_j x + k_j^2 y + k_j^3 t$, $k_1,k_2\in\mathbb R$. Assuming $F=\log$, the corresponding KP solution via \eqref{eqn:FT} is given by
\begin{equation}\label{soliton family}
  u\left(t,x,y\right)=\frac{1}{2}\left(k_1-k_2\right)^2 \sech^2 \left( \frac{1}{2}\left(\theta_1-\theta_2\right) \right).
\end{equation}
See Fig. \ref{line_soliton} (left) for details. The classical KdV soliton is recovered by setting $k_1=-k_2=k$, and in this case $u$ becomes 
\begin{equation}\label{Qk}
Q_k\left(t,x\right):=2k^{2}\sech^{2}\left(kx+k^{3}t\right).
\end{equation}
The next case of KP solution is the \emph{resonant multi-soliton}. This corresponds to the case 
\begin{equation}\label{resonant}
\Theta\left(t,x,y\right) = \sum^{M}_{i=1}a_{i}\exp \left(\theta_i\right) =  \sum^{M}_{i=1}a_{i}\exp\left( k_{i}x + k^{2}_{i}y + k^{3}_{i}t \right),
\end{equation}
where to ensure the positivity and nondegeneracy of $\Theta$ we impose that each $a_i>0$ and $k_1<k_2<\cdots<k_M$. A special case of resonant soliton is given in Fig. \ref{line_soliton} (right). 
\begin{figure}[htb]
\centering
\includegraphics[scale=0.4]{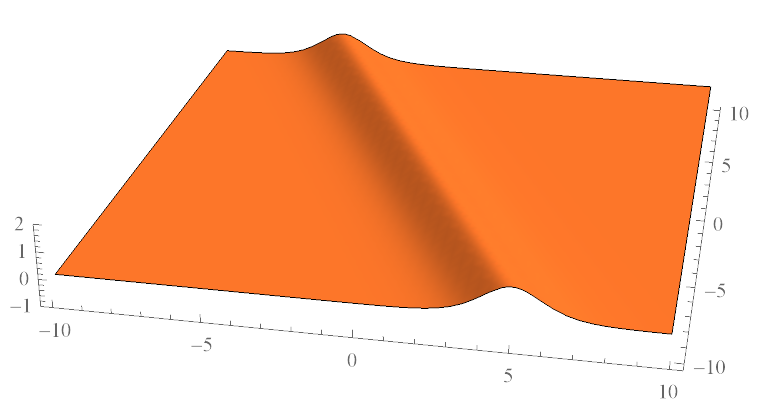}
\includegraphics[scale=0.27]{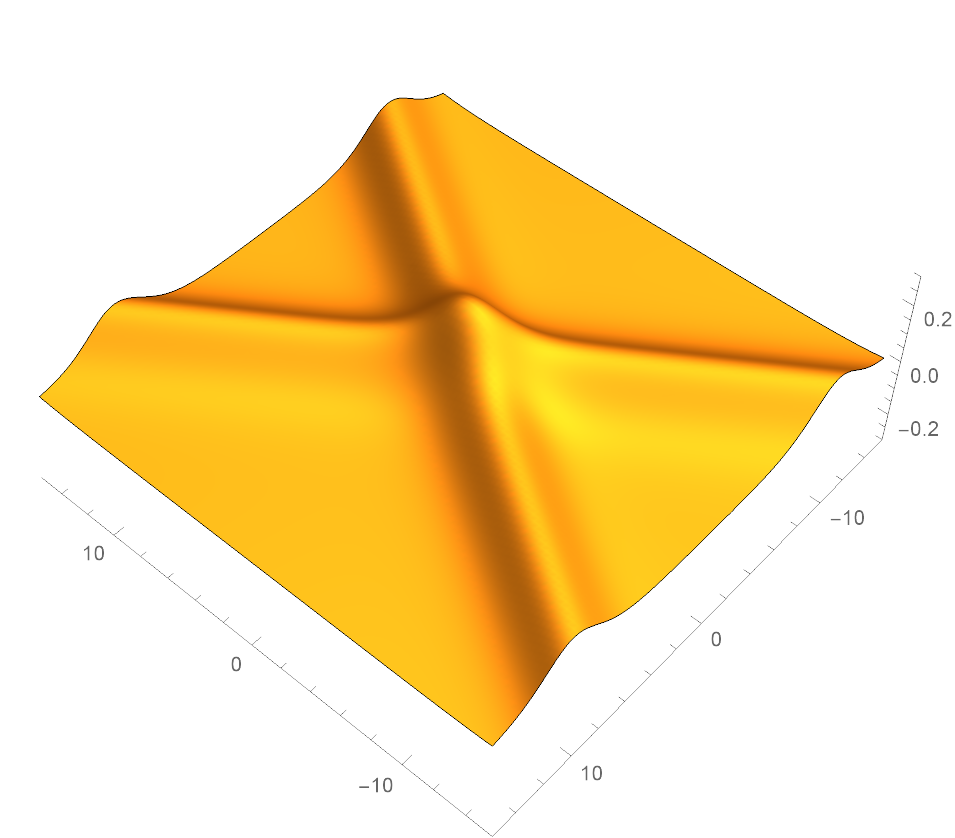}
\includegraphics[scale=0.4]{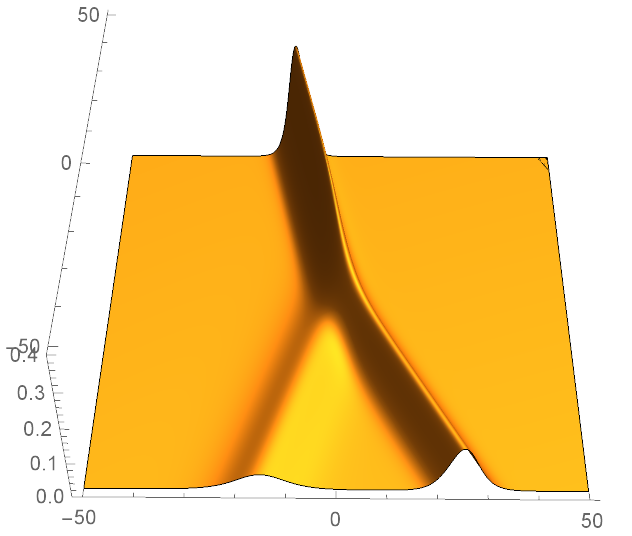}
\caption{Left: One line-soliton solution \eqref{soliton family} with $k_{1}=-0.5$, $k_{2}=1$ and $t=0$. This solution divides the plane into two regions according to the sum \eqref{linesoliton}, and on each region a different exponential dominates.
Center: A 2-soliton of KP with $k_{1}=-1$, $k_{2}=-0.5$, $k_{3}=0.5$ and $k_{4}=1$, at time $t=0$. Right: A $Y$-soliton characterized by $\Theta= \exp\left(\theta_1\right)+ \exp\left(\theta_2\right) +\exp\left(\theta_3\right)$ in \eqref{resonant}. Notice that the coefficients $a_j$ are set equal to 1, so that the three solitons meet at the origin at $t = 0$. Here, $\left(k_{1}, k_{2}, k_{3}\right) = \left(-0.3, 0, 0.5\right).$}\label{line_soliton}%\label{Figura1.3}
\end{figure}

\medskip

Now we recall the KP 2-soliton. In this case $\Theta = \hbox{Wr}\left(\Theta_{1}, \Theta_{2}\right)$, where $\hbox{Wr}$ is the Wronskian of two functions $\Theta_1=\exp\left(\theta_{1}\right) + \exp\left(\theta_{2}\right)$  and $\Theta_2=\exp\left(\theta_{3}\right) + \exp\left(\theta_{4}\right)$ being 1-soliton phases. Calculating the phase $\Theta$, one obtains the classical formula
\begin{equation}\label{2soliton}
\begin{aligned}
\Theta =&~{} \left(k_{3} - k_{1}\right)\exp\left(\theta_{1} + \theta_{3}\right) + \left(k_{4} - k_{1}\right)\exp\left(\theta_{1} + \theta_{4}\right) \\
&~{} + \left(k_{3} - k_{2}\right)\exp\left(\theta_{2} + \theta_{3}\right) + \left(k_{4} - k_{2}\right)\exp\left(\theta_{2} + \theta_{4}\right).
\end{aligned}
\end{equation}
In order to ensure the positivity and nondegeneracy of $\Theta$, we require $ k_1<k_2<k_3<k_4$. See Fig. \ref{line_soliton} (center) for further details on the family of KP 2-solitons. 

\subsection{Main Results} This paper represents a departure from previous approaches, in a sense to be explained below. We adopt the perspective that each KP soliton should adhere to a specific ``variational'' characterization, manifested in their critical points of a suitable nonlinear functional. Our primary aim is to offer clear and straightforward characterizations of the most distinct KP solitons using simple ``trapping'' functionals that exhibit rigidity properties. This represents an initial step in the direction previously outlined. We believe that this concept holds promising potential for applications not only in elucidating more complex KP solutions but also in other related dispersive models. 

\medskip

Our first result is a characterization of the KdV line soliton as a KP solution.

\begin{thm}\label{MT1}
 Let $u$ be a smooth solution to KP \eqref{eq:KP} of the form \eqref{eqn:FT}, with a smooth profile $F$ such that $F\left(1\right)=0$, $F'\left(1\right)=1$, $F''\left(1\right)=-1$, and $F'''\left(1\right)=2$. % with a profile $F(\Theta) = \log{\Theta}$, then it is true that:
Then   $u$ is a KdV soliton and $F=\log$ if and only if  $H\left(\Theta\right) = Ai\left(\Theta\right) = W_{y} \left(\Theta\right) = 0.$ 
\end{thm}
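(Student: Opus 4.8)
The plan is to prove both implications, the forward one being a direct verification and the reverse one carrying the real content. For the forward direction I would take the explicit KdV phase: via \eqref{eqn:FT} with $F=\log$ a KdV soliton corresponds to $\Theta=a_1\exp(\theta_1)+a_2\exp(\theta_2)$ with $a_1,a_2>0$ and $k_1=-k_2=k$. Since each exponential satisfies $\partial_y\exp(\theta_i)=k_i^2\exp(\theta_i)=\partial_{xx}\exp(\theta_i)$ and $\partial_t\exp(\theta_i)=k_i^3\exp(\theta_i)=\partial_{xxx}\exp(\theta_i)$, linearity gives $H(\Theta)=Ai(\Theta)=0$ immediately. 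For $W_y^F(\Theta)=0$ the decisive point is $k_1^2=k_2^2=k^2$, so that $\Theta_y=k^2\Theta$, $\Theta_{yy}=k^4\Theta$, and hence $W_y^F(\Theta)=\Theta_{yy}-\Theta_y^2/\Theta=k^4\Theta-k^4\Theta=0$. This simultaneously explains the role of $W_y^F$: among the line solitons \eqref{linesoliton} it is exactly the balance $k_1=-k_2$ that yields a null $y$-Wronskian.

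For the reverse direction, the first step is to feed the hypotheses into the master equation in its $\rho$-form \eqref{rhoKP}. From $H(\Theta)=0$, i.e. $\Theta_y=\Theta_{xx}$, one has $Ai(\Theta)_x=0$ and $H(\Theta)_y=H(\Theta)_{xx}=0$; substituting $Ai(\Theta)=0$ and $H(\Theta)=0$ into \eqref{rhoKP} then annihilates its entire second line and the last term of its third line, collapsing KP to the purely $\rho$-driven identity
\[
\left(\rho'' -2F'(\Theta)\rho' +4F''(\Theta)\rho\right)\Theta_x^4 + 6\rho'\Theta_x^2\Theta_{xx} + 3\rho\left(\Theta_{xx}^2+\Theta_y^2\right)=0,
\]
with $\rho=F''+F'^2$ as in \eqref{rho_def0}. (Equivalently, $H(\Theta)=0$ gives $\Theta_{yy}=\Theta_{xxxx}$, so $W_x^F(\Theta)=W_y^F(\Theta)$ by the remark on the Wronskian operators, and the hypothesis $W_y^F(\Theta)=0$ makes both generalized Wronskians vanish; this is what I will use below to pin down $\Theta$.) The normalization is tailored so that $\rho(1)=F''(1)+F'(1)^2=0$ and $\rho'(1)=F'''(1)+2F'(1)F''(1)=0$.

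The heart of the argument is to deduce $\rho\equiv0$, equivalently $F=\log$. My plan is to restrict the displayed identity to a curve $x\mapsto\Theta(t_0,x,y_0)$ on which $\Theta$ is strictly monotone, so that $\Theta_x\ne0$ and $x$ may be traded for $s=\Theta$. Along such a curve every coefficient — including the a priori transverse quantity $\Theta_y$ — becomes an honest function of $s$, and dividing by $\Theta_x^4$ turns the identity into a second order linear ODE $\rho''(s)+p(s)\rho'(s)+q(s)\rho(s)=0$. With $\rho(1)=\rho'(1)=0$, uniqueness for linear ODEs forces $\rho\equiv0$ on the range of $\Theta$, hence $F''+F'^2=0$; integrating with $F'(1)=1$ and $F(1)=0$ gives $F=\log$. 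I expect the main obstacle to lie precisely here: one must secure the existence of such a monotone curve, handle the possible degeneracy $\Theta_x=0$ at the endpoint $s=1$ (where the infimum of $\Theta$ may be attained), and check that $p,q$ are regular enough both to apply uniqueness and to connect the interior to the normalization at $s=1$.

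Once $F=\log$ is established, the last step recovers $\Theta$ from the linear data. Now $W_y^F(\Theta)=\Theta_{yy}-\Theta_y^2/\Theta=0$ is equivalent to $\partial_y^2\log\Theta=0$, so $\log\Theta=A(t,x)+yB(t,x)$. Imposing $H(\Theta)=\Theta_y-\Theta_{xx}=0$ and matching powers of $y$ forces $B_x=0$ together with $g_{xx}=Bg$ for $g=e^{A}$; then $Ai(\Theta)=\Theta_t-\Theta_{xxx}=0$ forces $B$ to be a constant $\mu$ and $g_t=\mu g_x$. Writing $g=G(x+\mu t)$ with $G''=\mu G$, positivity $\Theta>0$ discards the oscillatory branch $\mu<0$ and the degenerate affine branch $\mu=0$ (which yields $u=0$), leaving $\mu=k^2$ and $\Theta=a_1\exp(\theta_1)+a_2\exp(\theta_2)$ with $k_1=-k_2=k$, i.e. the KdV phase; by \eqref{eqn:FT} this is exactly $Q_k$ of \eqref{Qk}. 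Finally, since $\rho\equiv0$ makes the reduced identity hold automatically, no further compatibility with KP needs to be verified and the proof closes.
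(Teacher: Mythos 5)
Your proposal is correct and follows essentially the same route as the paper's proof: a direct verification of the forward implication, then collapsing \eqref{rhoKP} under the three hypotheses to the homogeneous second-order equation for $\rho$ and using $\rho(1)=\rho'(1)=0$ to force $\rho\equiv 0$ and $F=\log$ (the paper's Lemma \ref{dem:ODE2}\,(iii)), and finally integrating $W_y^F(\Theta)=0$, $H(\Theta)=0$, $Ai(\Theta)=0$ in succession to recover the two-exponential KdV phase (your $\partial_y^2\log\Theta=0$ step is exactly the paper's Lemma \ref{lem:Wy}). The subtlety you flag in the middle step — trading $x$ for $s=\Theta$ along a monotone curve, the possible vanishing of $\Theta_x$, and reaching the normalization point $s=1$ — is genuine, but the paper's own proof silently assumes it away by treating $\Theta_x^4$, $\Theta_x^2\Theta_{xx}$, $\Theta_{xx}^2+\Theta_y^2$ as admissible coefficients $h_i(s)$ with $h_1>0$, so your version is, if anything, the more careful of the two.
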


\begin{rem}
The four initial conditions on $F$ may seem extremely demanding, however they are naturally explained by the fourth order equation representing \eqref{eq:KP}. Consequently, in order to determine $F$, the four derivatives on $x$ at \eqref{eq:KP} induce corresponding initial conditions for $F$ and its three first derivatives. It is also easy to see that different initial conditions may lead to other solutions ($F$ periodic, for instance), as it happens in the simpler KdV case. See \cite{KKS2,AMP} for instance for examples of space-periodic profiles $F$.
\end{rem}

\begin{rem}
It will be proved below (see Lemma \ref{HWxWy}) that the Heat condition $H\left(\Theta\right)=0$ implies $ W_{x}^F\left(\Theta\right) = W_{y}^F\left(\Theta\right)$ and $ W_{x}\left(\Theta\right) = W_{y}\left(\Theta\right)$. So, additionally, one has $W_x\left(\Theta\right)=0$.
\end{rem}

As a natural consequence of Theorem \ref{MT1}, we obtain the following uniqueness result of KdV solitons as extended KP solutions. As usual, we require
\begin{equation}\label{F_conds}
F\left(1\right)=0, \quad F'\left(1\right)=1, \quad F''\left(1\right)=-1, \quad \hbox{and} \quad F'''\left(1\right)=2.
\end{equation}

\begin{cor}
Assume \eqref{F_conds}. Let $u$ be a nontrivial KP solution of the form \eqref{eqn:FT} such that $\Theta>0$ is a solution to $H\left(\Theta\right) = Ai\left(\Theta\right) = W_{y} \left(\Theta\right) = 0.$ Then $F=\log$ and $u=Q_k$ in \eqref{Qk} for some $k\in \mathbb R-\left\{0\right\}$.
\end{cor}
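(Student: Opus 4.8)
The plan is to derive the Corollary directly from Theorem \ref{MT1}, using its sufficiency (``if'') direction, and then to make the conclusion explicit. The first step is to check that the data of the Corollary satisfy every hypothesis of Theorem \ref{MT1}: the assumptions \eqref{F_conds} are precisely the four prescribed values $F(1)=0$, $F'(1)=1$, $F''(1)=-1$, $F'''(1)=2$; the function $u$ is a smooth KP solution written in the form \eqref{eqn:FT}; and the hypothesis that $\Theta>0$ solves $H(\Theta)=Ai(\Theta)=W_y^F(\Theta)=0$ is exactly the right-hand side of the equivalence in Theorem \ref{MT1}. I would therefore invoke the sufficiency direction to obtain, in one stroke, that $F=\log$ and that $u$ is a KdV soliton.

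The second step is to turn the abstract conclusion ``$u$ is a KdV soliton'' into the explicit formula $u=Q_k$ of \eqref{Qk}. With $F=\log$ fixed, the $y$-independent KP solitons produced by the analysis behind Theorem \ref{MT1} have a two-term line-soliton phase \eqref{linesoliton}, and the vanishing of the $y$-Wronskian forces the KdV reduction: a short computation gives $W_y(\Theta)=a_1a_2(k_1^2-k_2^2)^2 e^{\theta_1+\theta_2}/\Theta$, so $W_y(\Theta)=0$ with $k_1\neq k_2$ imposes $k_1=-k_2=:k$. Substituting this into \eqref{soliton family} collapses the $y$-dependence and yields $u=2k^2\sech^2(kx+k^3t)=Q_k$, any spatial and temporal shift being absorbed into the ratio $a_1/a_2$.

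The final step is the nontriviality normalization. Degenerate phases --- constant $\Theta$ or a single exponential --- make $\log\Theta$ affine in $x$, hence $u=2\partial_x^2\log\Theta\equiv 0$; likewise $k=0$ gives $Q_0\equiv 0$. Since $u$ is assumed nontrivial, all these cases are excluded and $k\in\mathbb R\setminus\{0\}$, as claimed. I do not expect a genuine obstacle here: all the substantive work is carried out in Theorem \ref{MT1}, and the only content specific to the Corollary is the bookkeeping identification of the KdV soliton with $Q_k$ together with the short $W_y$ computation selecting $k_1=-k_2$. The one point to handle carefully is to make sure Theorem \ref{MT1} indeed rules out every other nontrivial phase compatible with the three functional equations, so that the classification is exhaustive; but this exhaustiveness is precisely the content of Theorem \ref{MT1}, so it suffices to cite it.
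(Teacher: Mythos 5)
Your proposal is correct and follows exactly the paper's route: the paper offers no separate argument for this corollary, presenting it as an immediate consequence of the ``if'' direction of Theorem \ref{MT1}, which is precisely your first step. Your additional bookkeeping --- the identity $\Theta W_y(\Theta)=a_1a_2(k_1^2-k_2^2)^2\exp(\theta_1+\theta_2)$ forcing $k_1=-k_2$, and the exclusion of degenerate phases and $k=0$ via nontriviality --- simply makes explicit what the paper carries out inside the proof of Theorem \ref{MT1} (cf.\ \eqref{thetaW_y} and the final paragraphs of its converse direction), so there is no gap.
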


Theorem \ref{MT1} can be extended to general KP line-solitons \eqref{soliton family} as in Fig. \ref{line_soliton} left, and not necessarily of KdV type. In this case, we denote them oblique line-solitons. From now on, the structure $\Theta W_y(\Theta)$ from \eqref{Wronskians} will be essential. 

\begin{thm}\label{MT1b}
Let $u$ be a smooth solution to \eqref{eq:KP} of the form \eqref{eqn:FT}, with a smooth profile $F$ such that \eqref{F_conds} is satisfied.  Then $u$ is an oblique line-soliton of the form \eqref{linesoliton}-\eqref{soliton family} and $F=\log$ if and only if  $H\left(\Theta\right) = Ai\left(\Theta\right) = 0$, and 
\begin{equation}\label{origen}
\Theta W_{x}\left(\Theta\right) = \Theta W_{y}\left(\Theta\right) = A\left(t,x\right) \exp \left( k\left(t,x\right) y \right), %= \left(k_1^2-k_2^2\right)^{2} \exp\left(\theta_1+\theta_2\right),
\end{equation}
for some particular functions $A,k>0$ everywhere.
\end{thm}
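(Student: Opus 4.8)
The statement is an equivalence, and I would treat the two implications separately, starting with the straightforward ``only if'' direction by direct verification. For $F=\log$ and $\Theta = a_1 e^{\theta_1} + a_2 e^{\theta_2}$ as in \eqref{linesoliton}, each exponential $e^{\theta_j}$ with $\theta_j = k_j x + k_j^2 y + k_j^3 t$ is separately annihilated by $\partial_t - \partial_x^3$ and by $\partial_y - \partial_x^2$, so $Ai(\Theta) = H(\Theta) = 0$. Expanding the Wronskian functionals then gives $\Theta W_x(\Theta) = \Theta W_y(\Theta) = a_1 a_2 (k_1^2 - k_2^2)^2 e^{\theta_1 + \theta_2}$; since $\theta_1 + \theta_2 = (k_1 + k_2)x + (k_1^2 + k_2^2)y + (k_1^3 + k_2^3)t$, this has exactly the form $A(t,x)e^{k(t,x)y}$ with $k \equiv k_1^2 + k_2^2 > 0$ and $A = a_1 a_2(k_1^2 - k_2^2)^2 e^{(k_1+k_2)x + (k_1^3+k_2^3)t} > 0$ (the oblique hypothesis $k_1 \neq -k_2$ is precisely what keeps $A$ from vanishing identically).

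For the ``if'' direction I would first collapse the governing equation. The conditions $Ai(\Theta) = 0$ and $H(\Theta) = 0$ annihilate the entire middle line of \eqref{rhoKP}, while $H(\Theta) = 0$ also gives $\Theta_{yy} = \Theta_{xxxx}$ and $\Theta_y^2 = \Theta_{xx}^2$, so that the last line of \eqref{rhoKP} reduces to $6\rho(\Theta)\Theta_{xx}^2$. What survives is the purely $\rho$-driven equation
\begin{equation*}
\left(\rho'' - 2F'(\Theta)\rho' + 4F''(\Theta)\rho\right)\Theta_x^4 + 6\rho'(\Theta)\,\Theta_x^2\Theta_{xx} + 6\rho(\Theta)\,\Theta_{xx}^2 = 0,
\end{equation*}
which I would set aside and use only at the very end to pin down $F$.

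The core of the proof is to recover the two-exponential structure of $\Theta$ from the Wronskian hypothesis, and this part does not involve $F$. Writing $\Phi = \log\Theta$, one has $\Theta W_y(\Theta) = \Theta^2 \Phi_{yy}$, so $\Theta W_y(\Theta) = A(t,x)e^{k(t,x)y}$ with $A>0$ is equivalent, upon applying $\partial_y^2\log$, to $\Phi_{yy} > 0$ solving the autonomous Liouville-type ODE $(\log \Phi_{yy})_{yy} = -2\Phi_{yy}$ in $y$ for each frozen $(t,x)$. A first-integral computation shows that a solution positive and smooth on all of $\mathbb{R}$ must be the homoclinic bump $\Phi_{yy} = \tfrac14 \mu^2 \sech^2(\tfrac{\mu}{2}(y - y_0))$, so integrating twice yields $\Theta = e^{c_0}e^{c_1 y}\cosh(\tfrac{\mu}{2}(y-y_0))$, a sum of exactly two exponentials $\Theta = b_1(t,x) e^{\kappa_1 y} + b_2(t,x) e^{\kappa_2 y}$. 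Feeding this back into $H(\Theta) = 0$ and $Ai(\Theta) = 0$ and matching the linearly independent functions $y^j e^{\kappa_i y}$ forces $\kappa_1,\kappa_2$ to be constants and each $b_i$ to satisfy $b_{i,xx} = \kappa_i b_i$, $b_{i,t} = b_{i,xxx}$. Finally, imposing that $A(t,x) = b_1 b_2(\kappa_1 - \kappa_2)^2$ be the single exponential prescribed by the hypothesis eliminates the spurious second solution of $b_{i,xx} = \kappa_i b_i$ and collapses each $b_i$ to one exponential $a_i e^{k_i x + k_i^3 t}$ with $\kappa_i = k_i^2$, giving precisely \eqref{linesoliton}.

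With $\Theta$ now a genuine oblique line-soliton I would return to the reduced equation. On a level set $\{\Theta = s\}$ write $P = a_1 e^{\theta_1} \in (0,s)$ and $Q = s - P$; then $\Theta_x = k_1 P + k_2 Q$ and $\Theta_{xx} = k_1^2 P + k_2^2 Q$ are affine in $P$, so the reduced equation becomes a degree-four polynomial identity in $P \in (0,s)$. Matching the coefficients of $P^4$, $P^3$ and $P^2$, and invoking obliqueness so that $k_1 - k_2$ and $k_1^2 - k_2^2$ are both nonzero, forces $\rho(s) = \rho'(s) = 0$ for every $s$ in the range of $\Theta$; with the normalization \eqref{F_conds} this integrates to $F'' + F'^2 = 0$, $F(1) = 0$, $F'(1) = 1$, whose unique solution is $F = \log$. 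I expect the genuine obstacle to be the middle step: rigorously upgrading the functional identity $\Theta W_y(\Theta) = A e^{ky}$ to the exact two-exponential form, both in controlling the Liouville ODE globally in $y$ and, more delicately, in using the rigid single-exponential shape of $A$ to exclude the extra $x$-exponential that $b_{i,xx} = \kappa_i b_i$ would otherwise permit --- which is exactly what separates a true line-soliton from a resonant multi-soliton.
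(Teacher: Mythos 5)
Your proposal is correct and its skeleton coincides with the paper's own proof (Lemmas \ref{Lema2a}--\ref{Lema2b}): necessity by the same direct computation, and sufficiency by freezing $(t,x)$ and reducing the condition $\Theta W_y\left(\Theta\right)=A e^{ky}$ to a Liouville-type ODE in $y$. The paper substitutes $\tilde f = 2\log\Theta - ky$ and solves $\tilde f''=2Ae^{-\tilde f}$ (its ``Toda equation''), while you apply $\partial_y^2 \log$ and solve $w''=-2e^{w}$ for $w=\log \Phi_{yy}$; these are the same reduction, and both yield the two-exponential form $\Theta=b_1e^{\kappa_1 y}+b_2e^{\kappa_2 y}$, after which both proofs match coefficients of $y^j e^{\kappa_i y}$ in $H\left(\Theta\right)=0$ and $Ai\left(\Theta\right)=0$. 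One genuine difference is where $F=\log$ is obtained: the paper fixes it at the outset of Lemma \ref{Lema2b} via Lemmas \ref{HWxWy}, \ref{WsAidaF} and the ODE-uniqueness Lemma \ref{dem:ODE2}(iii), which consumes all four conditions in \eqref{F_conds}, whereas you postpone it and read $\rho=\rho'=0$ off a degree-four polynomial identity in $P=a_1e^{\theta_1}$ on level sets of $\Theta$. Your route is sound (the Liouville step never uses $F$, so there is no circularity) and needs only $F(1)=0$, $F'(1)=1$, with obliqueness $k_1^2\neq k_2^2$ doing the work of the two extra normalization conditions; this is a legitimate, arguably cleaner, alternative to the paper's Lemma \ref{dem:ODE2}(iii).

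The more substantive difference is the step you yourself single out as the obstacle. The equation $b_{i,xx}=\kappa_i b_i$ with $\kappa_i>0$ constant has the two-dimensional solution space $c_ie^{\sqrt{\kappa_i}x}+d_ie^{-\sqrt{\kappa_i}x}$; the paper's Step 2 simply writes $A_{i}\left(t,x\right)=c_{i}\left(t\right)\exp\left(\sqrt{B_{i}\left(t\right)}x\right)$ and silently drops the second branch. Keeping both branches produces the four-exponential resonant phase ($M=2$ in \eqref{resonant_0}), which satisfies $H\left(\Theta\right)=Ai\left(\Theta\right)=0$ and $\Theta W_x\left(\Theta\right)=\Theta W_y\left(\Theta\right)=PQ\left(\kappa_1-\kappa_2\right)^2e^{\left(\kappa_1+\kappa_2\right)y}$ with everywhere-positive coefficient and constant exponent, yet is not of the form \eqref{linesoliton}. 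So under the literal reading of ``for some functions $A,k>0$'' in \eqref{origen}, the sufficiency implication fails, and some rigidity of $A$ must be invoked. Your resolution --- requiring $A=b_1b_2\left(\kappa_1-\kappa_2\right)^2$ to be a single exponential in $(t,x)$, which forces exactly one of the four products $c_1c_2$, $c_1d_2$, $d_1c_2$, $d_1d_2$ to survive and collapses each $b_i$ to one exponential --- is precisely what closes the argument, at the price of reading ``particular'' in \eqref{origen} as prescribing the single-exponential shape of $A$ produced by the necessity direction (Lemma \ref{Lema2a}). In this respect your write-up is more careful than the paper's at the critical step; just be explicit that you are proving the theorem under that strengthened (and surely intended) interpretation of the hypothesis, since the statement as literally written does not constrain the $x$-dependence of $A$.
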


It is worth to mention that the conditions $H\left(\Theta\right) = Ai\left(\Theta\right) = 0$ do not ensure the validity of Theorem \ref{MT1b}. Indeed, it will be proved that the class of phases satisfying these two conditions is large enough to contain many multi-soliton solutions, such as the $Y$ structure defined below, which is not a line soliton. Consequently, \eqref{origen} is {\color{black} just} a necessary condition.

\begin{rem}
Notice that $\Theta W_{x}\left(\Theta\right)$ remains unchanged after a Galilean transformation (see \eqref{Galilean} in Appendix \ref{AppA}). This is not the case for $\Theta W_{y}\left(\Theta\right)$. However, one can prove that if $\Theta$ is of the form \eqref{linesoliton}, then coincidentally for $\beta=k_1+k_2$ one has that the Galilean transformation of $\Theta$, denoted $\Theta_\beta$ (see \eqref{galilean_theta}), satisfies $\Theta_\beta W_{y}\left(\Theta_\beta \right) =0$. It is an interesting problem to fully elucidate the role of Galilean transforms in the classification of solitons as proposed in this paper.
\end{rem}

\medskip

Theorem \ref{MT1b} puts in evidence an intriguing new mathematical structure, a natural finite-dimensional cone in the variable $y$. 

\begin{defn}[Invariant $\mathcal W_n$ cones]
Given $n\in \left\{1,2,3,\ldots\right\}$,  consider the linear, positively generated cone
\begin{equation}\label{Wn}
\mathcal W_n :=\left\{ \sum_{j=1}^n  a_j\left(t,x\right)\exp\left(k_j \left(t,x\right) y\right)  ~ : ~  \begin{matrix}  \exists \, 0\leq k_1\left(t,x\right)<k_2\left(t,x\right)\cdots <k_n\left(t,x\right), \\
 \exists \, a_1\left(t,x\right), a_2\left(t,x\right),\ldots, a_n\left(t,x\right) \geq 0 \end{matrix}  \right\}. 
\end{equation}
\end{defn}
Using the terminology above,  \eqref{origen} can be recast as $\Theta W_{x}\left(\Theta\right) = \Theta W_{y}\left(\Theta\right) \in \mathcal W_1$. 

\medskip

The space $\mathcal W_n$ has interesting properties, in particular its behavior under the nonlinear mapping $\Theta W_y\left(\Theta\right)$ is key to understand complex multi-soliton structures. First of all, $\Theta\in \mathcal W_M$ implies that $\Theta W_y\left(\Theta\right) \in \mathcal W_{\frac12M\left(M-1\right)}$ (Lemma \ref{WM}).  Second, there is a natural ``kernel'' given by the function $\exp\left(ky\right)$, $k$ arbitrary: one has $\exp\left(ky\right) W_{y}\left(\exp\left(ky\right)\right)=0 $ for any $k=k\left(t,x\right)$. Additionally, under the gauge $\mathcal W_M \ni \Theta \longmapsto \Theta_k:= \exp\left(k y\right) \Theta$, one has $\Theta_k W_y\left(\Theta_k\right)=\exp\left(2ky\right) \Theta W_y\left(\Theta\right) \in \mathcal W_{\frac12M(M-1)}$, revealing that \emph{there is no unique nontrivial solution} to the set inclusion $\Theta W_{y}\left(\Theta\right) \in \mathcal W_{\frac12M\left(M-1\right)}$ for $\Theta\in \mathcal W_M$.  One way to repair this gauge freedom is to ask for $\Theta\left(y=0\right)$ and $\Theta_y\left(y=0\right)$ uniquely defined, as it is done below.
%\footnote{On the other hand, it is easy to check that 
%\[
%\mathcal W:= \bigcup_{n\geq 0} \mathcal W_n
%\] 
%is a multiplicative algebra under nonnegative coefficients. With respect to the variable $y$, standard topological arguments ensure that the closure of this space restricted to any compact set $K$ of $\mathbb R$, under the uniform norm, is nothing but $C_+\left(K,\mathbb R\right)$, the space of real-valued, nonnegative continuous functions defined on $K$.
%}
For natural reasons, we will work in a slightly larger class of resonant phases than \eqref{resonant}, given by 
\begin{equation}\label{resonant_0}
\Theta\left(t,x,y\right) =  \sum^{M}_{i=1}\left(a_{i,1}\exp\left( -k_{i}x + k^{2}_{i}y - k^{3}_{i}t \right) +a_{i,2}\exp\left( k_{i}x + k^{2}_{i}y + k^{3}_{i}t \right) \right),
\end{equation}
with coefficients $  k_1<k_2<\cdots <k_M$ and $a_{i,j}\geq 0$.

\begin{thm}[Resonant multisolitons]\label{MT2}
Let $u$ be a solution of \eqref{eq:KP} of the form \eqref{eqn:FT} with a smooth real-valued phase $\Theta>0$ satisfying for $k=0,1,2,3,$
\begin{equation}\label{hypohypo}
\partial_x^k \Theta\left(t,0,0\right), \quad \partial_x^k \partial_y \Theta\left(t,0,0\right) \quad \hbox{uniquely prescribed.}
\end{equation}
Assume that the smooth profile $F$ satisfies \eqref{F_conds}. Then $\Theta$ corresponds to an $M$ resonant multi-soliton \eqref{resonant_0} and $F=\log$ if and only if $\Theta$ satisfies $H\left(\Theta\right) = Ai\left(\Theta\right) = 0$  and  $\Theta W_y\left(\Theta\right)= \Theta W_x\left(\Theta\right)$ has a unique value in $\mathcal W_{\frac12M\left(M-1\right)}$. 
\end{thm}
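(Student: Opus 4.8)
I would prove the equivalence by treating the two implications separately, with the reverse (that the functional conditions force the resonant structure) as the substantial direction.

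\emph{Necessity.} Suppose first that $\Theta$ is a resonant $M$-multisoliton \eqref{resonant_0} and $F=\log$. Each exponential summand $\exp(\pm k_i x + k_i^2 y \pm k_i^3 t)$ is an elementary solution of both $\Theta_y=\Theta_{xx}$ and $\Theta_t=\Theta_{xxx}$, so by linearity $H(\Theta)=Ai(\Theta)=0$. Although \eqref{resonant_0} carries $2M$ exponentials, they realize only the $M$ distinct $y$-frequencies $k_1^2<\cdots<k_M^2$, whence $\Theta\in\mathcal W_M$ as a function of $y$ with $(t,x)$-dependent nonnegative coefficients; Lemma \ref{WM} then gives $\Theta W_y(\Theta)\in\mathcal W_{\frac12 M(M-1)}$. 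A direct computation, using that $\Theta W_y(\Theta)=\Wr_y(\Theta,\Theta_y)=\Theta\Theta_{yy}-\Theta_y^2$, shows that the diagonal and the same-frequency cross terms cancel, leaving $\Theta W_y(\Theta)=\sum_{i<j}c_{ij}(t,x)\,(k_i^2-k_j^2)^2\,\exp((k_i^2+k_j^2)y)$, with the equality $\Theta W_y(\Theta)=\Theta W_x(\Theta)$ coming either from the same computation or from Lemma \ref{HWxWy}; the prescribed data \eqref{hypohypo} fix the residual $\exp(ky)$ gauge, so this value is unique.

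\emph{Sufficiency, Step 1 ($F=\log$).} Assume $H(\Theta)=Ai(\Theta)=0$ and \eqref{F_conds}. Since $Ai(\Theta)$ and $H(\Theta)$ vanish identically, so do all their derivatives, and \eqref{eqn:T_type} gives $\mathcal T(\Theta)=0$; equation \eqref{casicasi} then collapses to a homogeneous linear relation for $\rho=F''+F'^2$ evaluated along $\Theta$. The hypotheses \eqref{F_conds} yield $\rho(1)=F''(1)+F'(1)^2=0$ and $\rho'(1)=F'''(1)+2F'(1)F''(1)=0$. I would then argue $\rho\equiv 0$ exactly as in Theorem \ref{MT1b}: reading the collapsed \eqref{casicasi} as a second-order linear ODE in the variable $s=\Theta$ along curves where $\Theta_x\neq 0$, with strictly positive leading coefficient $\Theta_x^4$, the vanishing Cauchy data at $s=1$ propagate and force $\rho\equiv 0$ on the range of $\Theta$. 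Solving the Riccati equation $(F')'+(F')^2=\rho=0$ with $F'(1)=1$ gives $F'=1/s$, i.e.\ $F=\log$.

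\emph{Sufficiency, Step 2 (structure of $\Theta$).} With $F=\log$, Lemma \ref{HWxWy} reduces the equation to $W_x(\Theta)=W_y(\Theta)$, which is automatic under $H(\Theta)=0$; thus all remaining information is carried by $H(\Theta)=Ai(\Theta)=0$ together with the cone condition. The constant-coefficient system $\Theta_y=\Theta_{xx}$, $\Theta_t=\Theta_{xxx}$ forces $\Theta$ to be a superposition of the elementary solutions $\exp(\kappa x+\kappa^2 y+\kappa^3 t)$, with the $y$-frequencies present being the squares $\kappa^2$. Computing $\Theta W_y(\Theta)=\Wr_y(\Theta,\Theta_y)$ again leaves only the pairwise-sum frequencies $\kappa^2+\kappa'^2$, with nonnegative coefficients proportional to $(\kappa^2-\kappa'^2)^2$ so that equal-frequency contributions drop out. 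The membership $\Theta W_y(\Theta)\in\mathcal W_{\frac12 M(M-1)}$ bounds the number of distinct such pairwise sums by $\binom{M}{2}$; inverting this pairwise-sum map (the converse of Lemma \ref{WM}) yields that $\Theta$ carries exactly $M$ distinct $y$-frequencies $k_1^2<\cdots<k_M^2$. Positivity of $\Theta$ forces the surviving coefficients to be positive, and grouping $\kappa=\pm k_i$ for each frequency $k_i^2$ gives precisely the form \eqref{resonant_0}; finally the data \eqref{hypohypo} remove the $\Theta\mapsto\exp(ky)\Theta$ ambiguity and fix the $k_i$ and $a_{i,j}$ uniquely.

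\emph{Main obstacle.} The delicate point is the inversion in Step 2: that $\Theta W_y(\Theta)\in\mathcal W_{\frac12 M(M-1)}$ forces $\Theta\in\mathcal W_M$. Lemma \ref{WM} supplies only the easy inclusion $\Theta\in\mathcal W_M\Rightarrow\Theta W_y(\Theta)\in\mathcal W_{\binom{M}{2}}$, and the converse must exclude phases with $N>M$ frequencies whose $\binom{N}{2}$ pairwise sums collapse, through arithmetic coincidences, to at most $\binom{M}{2}$ distinct values. Here nonnegativity of the coefficients $(\kappa^2-\kappa'^2)^2 c$ prevents any cancellation, the classical lower bound of $2N-3$ on the number of distinct pairwise sums constrains $N$, and, crucially, the gauge-fixing \eqref{hypohypo} selects the unique representative making the cone value well defined; assembling these to conclude $N=M$ is the technical heart of the argument. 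A secondary obstacle is to justify the exponential-superposition representation of solutions of $H(\Theta)=Ai(\Theta)=0$ in the present merely smooth and positive class, without a priori decay, which I would handle by extracting the required finiteness from the cone condition before invoking the representation.
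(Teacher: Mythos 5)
Your necessity direction and your Step 1 agree with the paper: the paper gets $F=\log$ under $H(\Theta)=Ai(\Theta)=0$ exactly through the collapse of \eqref{casicasi} to a homogeneous linear ODE for $\rho=F''+F'^2$ with $\rho(1)=\rho'(1)=0$ (Lemmas \ref{HWxWy} and \ref{WsAidaF}), and the forward implication is Corollary \ref{resonantes} plus Lemma \ref{WM}. The genuine gap is your Step 2, and you have named it yourself without closing it: the whole content of the sufficiency direction is (a) that a smooth positive solution of $H(\Theta)=Ai(\Theta)=0$ obeying the cone condition is a \emph{finite} superposition of exponentials, and (b) the converse of Lemma \ref{WM}, i.e.\ that $\Theta W_y(\Theta)\in\mathcal W_{\frac12M(M-1)}$ forces $\Theta\in\mathcal W_M$. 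Claim (a) is false as stated: the system $\Theta_y=\Theta_{xx}$, $\Theta_t=\Theta_{xxx}$ admits continuous superpositions such as
\begin{equation*}
\Theta(t,x,y)=\int_0^1 \exp\left(\kappa x+\kappa^2 y+\kappa^3 t\right)\mathrm{d}\kappa,
\end{equation*}
which are smooth, positive, and not finite sums; your plan to ``extract the required finiteness from the cone condition'' would need a sumset/support argument on the spectral measure (finiteness of the exponentials in $\Theta\Theta_{yy}-\Theta_y^2$ forcing finite support of the measure representing $\Theta$) that you never carry out. Claim (b) you explicitly defer as ``the technical heart of the argument.'' A proof whose two load-bearing steps are announced as open obstacles does not establish the theorem.

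The paper closes the argument by a mechanism that avoids both (a) and (b), and this is worth internalizing. Given the unique value $G=\Theta W_y(\Theta)=\Theta W_x(\Theta)\in\mathcal W_{\frac12M(M-1)}$, it first \emph{constructs} a preimage $\Theta^*\in\mathcal W_M$: writing $G=\sum b_{n,i}\exp(m_{n,i}y)$, the frequency equations $k_i+k_{n-i+1}=m_{n,i}$ and the coefficient equations $\log a_i+\log a_{n-i+1}=\log b_{n,i}-2\log\left|k_{n-i+1}-k_i\right|$ are linear systems with nonsingular matrices, hence uniquely solvable. It then proves \emph{uniqueness} using the prescribed data \eqref{hypohypo}: if $\Theta_1,\Theta_2$ share the values of $\Theta W_x(\Theta)$ and $\Theta W_y(\Theta)$, then $\Pi_0=(\Theta_1-\Theta_2)|_{y=0}$ and $\Pi_1=(\Theta_{1,y}-\Theta_{2,y})|_{y=0}$ satisfy fourth-order linear ODEs in $x$ with vanishing Cauchy data at $x=0$, and then $\Pi_2=\Theta_1-\Theta_2$ satisfies a second-order linear ODE in $y$ with vanishing Cauchy data at $y=0$, so $\Theta_1\equiv\Theta_2$. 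This identifies the actual phase with $\Theta^*\in\mathcal W_M$ with no representation theorem and no converse of Lemma \ref{WM}; only afterwards are $H(\Theta)=0$ and $Ai(\Theta)=0$ imposed on the $\mathcal W_M$ representation (via linear independence of the $y$-exponentials, as in Step 2 of Lemma \ref{Lema2b}) to force the $k_j$ constant and $a_j(t,x)=a_{0,0,j}\exp(-k_jx)+a_{1,0,j}\exp(k_jx)$ times $\exp(k_j^2y+k_j^3t)$ after renaming $k_j\mapsto k_j^2$, which is exactly \eqref{resonant_0}. If you wish to keep your route, you must actually prove (a) and (b); otherwise the existence-plus-uniqueness scheme is the efficient way around them.
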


\begin{rem}\label{cagazo}
Notice that the condition \eqref{hypohypo} only requires information of $\Theta$ at $x=y=0$. This is necessary to ensure the uniqueness of the solution $\Theta$ to $\Theta W_y\left(\Theta\right)= \Theta W_x\left(\Theta\right) \in \mathcal W_{\frac12M\left(M-1\right)}$ within the class $\mathcal W_{M}$. Due to the exponential growth in $x$ of the functions solving the equations for $\Theta$, a less demanding sufficient condition will require to establish a Cauchy theory for the linear Airy equation $\Theta_t -\Theta_{xxx} =0$ with initial conditions in the distributional class $D'(\mathbb R_x)$, a problem that is far from trivial due to the oscillatory character of the Airy kernel.
\end{rem}

The key in the proof of Theorem \ref{MT2} is the property that  $\Theta\in \mathcal W_M$ implies $\Theta W_y\left(\Theta\right) \in \mathcal W_{\frac12M\left(M-1\right)}$ (Lemma \ref{WM}). This property allows us to estimate the size of the cone representing the image of $\mathcal W_M$ under the nonlinear mapping $\Theta W_y\left(\Theta\right)$. Then one has to establish a sort of uniqueness in the representation of $\Theta$, which in the case $M=1$ is easy to obtain (see Theorem \ref{MT1b}),  but it is not known if it is maintained in general. Under the additional prescribed data at $x=y=0$, uniqueness is recovered and Theorem \ref{MT2} establishes the equivalence between resonant multi-solitons and Airy-Heat type phases with finite dimensional Wronskians.

\medskip

Resonant solitons of $Y$-type, or Miles type (see Fig. \ref{line_soliton} right), are essential KP solutions included in the previous result. These are usually given by \cite{Kodama2017} ($k_1<k_2<k_3$)
\[
\Theta= a_1\exp\left(\theta_1\right)+a_2 \exp\left(\theta_2\right) +a_3\exp\left(\theta_3\right), \quad a_i>0, \quad  \theta_i:= k_{i}x + k^{2}_{i}y + k^{3}_{i}t .
\]
Sometimes referred as resonant interacting 3-solitons, Theorem \ref{MT2} states that they are characterized as having zero Heat and Airy operators, but having $\Theta W_y\left(\Theta\right)$ with one more dimension than the one obtained in Theorem \ref{MT1b}, measured in terms of the subspace $\mathcal W_2$. Additionally, in this case $\Theta \in\mathcal W_3$ and $\Theta W_y\left(\Theta\right) \in\mathcal W_3$, being the only phases (as far as we understand) that have this invariance under the nonlinear mapping $\Theta \longmapsto \Theta W_y\left(\Theta\right)$.

\medskip

Our last result concerns the characterization of crossed 2-solitons, with $\Theta$ given in \eqref{2soliton} (see also Fig. \ref{line_soliton} right panel). Recall the subspace $\mathcal W_n$ defined in \eqref{Wn}.

\begin{thm}[2-solitons]\label{MT3}
Let $u$ be a solution of \eqref{eq:KP} of the form \eqref{eqn:FT} with $F=\log$ and with a smooth real-valued phase $\Theta>0$ satisfying \eqref{hypohypo} and being at most exponentially growing in $x$: there are $C_1,c_2>0$ such that 
\[
|\Theta(t,x,y)|\leq C_1 e^{c_2|x|}.
\] 
Then $\Theta$ corresponds to a 2-soliton \eqref{2soliton} with $ k_1<k_2<k_3<k_4$ if and only if  
\begin{enumerate}
\item $H\left(\Theta\right), Ai\left(\Theta\right)$ are contained in $\mathcal W_4$, 
\item $ \Theta W_y\left(\Theta\right), \Theta W_x\left(\Theta\right)$ describe unique elements in $\mathcal W_5$, and
\item   $Ai\left(\Theta\right) = \frac32 \partial_x H\left(\Theta\right)$.  
\end{enumerate}
\end{thm}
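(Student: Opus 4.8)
The plan is to prove the two implications of the equivalence separately: the forward direction (a $2$-soliton satisfies (1)--(3)) is a direct computation, while the converse carries the real content. Throughout I use that, since $F=\log$, one has $\rho\equiv 0$ and $F''''+6F''^2=0$, so the master equation \eqref{KP3partes_new} collapses to the reduced identity
\[
4\Theta_x\,Ai(\Theta)-4\Theta\,Ai(\Theta)_x+3\bigl(\Theta W_y(\Theta)-\Theta W_x(\Theta)\bigr)=0,
\]
which is the only constraint that KP imposes once the profile is fixed. This identity is what I will use to couple the $x$- and $y$-structures of $\Theta$.

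For the forward direction I substitute \eqref{2soliton} and compute the action of each operator on a single mode $e^{\theta_i+\theta_j}$, where $\theta_i+\theta_j=(k_i+k_j)x+(k_i^2+k_j^2)y+(k_i^3+k_j^3)t$. One finds the multipliers $H(e^{\theta_i+\theta_j})=-2k_ik_j\,e^{\theta_i+\theta_j}$ and $Ai(e^{\theta_i+\theta_j})=-3k_ik_j(k_i+k_j)\,e^{\theta_i+\theta_j}$; since $\partial_x$ contributes the factor $k_i+k_j$, these already give $Ai(e^{\theta_i+\theta_j})=\tfrac32\partial_xH(e^{\theta_i+\theta_j})$ on each mode, hence condition (3) after summation. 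Because $H$ and $Ai$ merely rescale each mode, both $H(\Theta)$ and $Ai(\Theta)$ keep the four $y$-frequencies $k_i^2+k_j^2$ of $\Theta$, placing them in the four-frequency class $\mathcal W_4$ (with genuinely nonnegative coefficients after the Galilean normalization $\beta\in[k_2,k_3]$, which makes every $-2(k_i-\beta)(k_j-\beta)\ge0$). For (2) I use that for $\Theta=\sum_a c_a e^{\mu_a y}$ one has $\Theta W_y(\Theta)=\sum_{a<b}c_ac_b(\mu_a-\mu_b)^2e^{(\mu_a+\mu_b)y}$ and, analogously, $\Theta W_x(\Theta)=\sum_{a<b}c_ac_b(p_a^2-p_b^2)^2e^{(\mu_a+\mu_b)y}$ with $p_a=k_i+k_j$; the six pairwise sums of $\{k_1^2+k_3^2,k_1^2+k_4^2,k_2^2+k_3^2,k_2^2+k_4^2\}$ collapse to five because $(k_1^2+k_3^2)+(k_2^2+k_4^2)=(k_1^2+k_4^2)+(k_2^2+k_3^2)$, so both products lie in $\mathcal W_5$, and the prescribed data \eqref{hypohypo} fix them as unique elements.

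For the converse the heart of the matter is to upgrade the cone memberships into the rigid statement that $\Theta$ is a four-term exponential sum in $y$ with constant frequencies. Writing the forced relations $\Theta_y-\Theta_{xx}=H(\Theta)\in\mathcal W_4$ and $\Theta_t-\Theta_{xxx}=Ai(\Theta)\in\mathcal W_4$ together with the compatibility (3), I would argue that $H$ and $Ai$ act diagonally on each $y$-mode, so any mode present in $\Theta$ either survives in $H(\Theta)$ or $Ai(\Theta)$ --- bounding the count by four --- or is annihilated by both, i.e. is a genuine Heat--Airy (line-soliton) mode; the positivity $\Theta>0$, the exponential growth bound in $x$, and the data \eqref{hypohypo} exclude the latter and pin the count at exactly $\Theta=\sum_{a=1}^4 c_a(t,x)e^{\mu_a y}$ with constant $\mu_a\ge0$. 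Condition (2) together with Lemma \ref{WM} then forces the six sums $\mu_a+\mu_b$ to realize exactly five values, i.e. a single resonance $\mu_a+\mu_b=\mu_c+\mu_d$, which is solved by $\mu_a\in\{k_i^2+k_j^2\}$ and identifies $k_1<k_2<k_3<k_4$. Feeding the expansion into (3) yields mode-by-mode the Airy-type evolution $c_{a,t}=\tfrac32\mu_a c_{a,x}-\tfrac12 c_{a,xxx}$, and matching the reduced identity and the two Wronskian products \eqref{Wronskians} exponent-by-exponent in all variables forces $c_a=A_a e^{p_ax+r_at}$ with $p_a=k_i+k_j$, $r_a=k_i^3+k_j^3$; the growth bound $|\Theta|\le C_1e^{c_2|x|}$ excludes any second $x$-exponential or polynomial correction, and the data \eqref{hypohypo} fix $A_a=k_j-k_i$ and remove the gauge $\Theta\mapsto e^{k(t,x)y}\Theta$, producing exactly \eqref{2soliton}.

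I expect the main obstacle to be precisely the rigidity step in the converse: extracting from the bare membership $H(\Theta),Ai(\Theta)\in\mathcal W_4$ that $\Theta$ is itself a four-term exponential sum with constant $y$-frequencies, and in particular ruling out the hidden Heat--Airy modes invisible to both operators. This is where $\Theta>0$, the exponential control in $x$, and the prescribed Cauchy data must be used in a coordinated and essential way; the subtlety is exactly the one flagged in Remark \ref{cagazo}, namely that exponential growth in $x$ blocks a soft distributional Cauchy theory for the Airy flow, so the argument must stay at the level of explicit exponential modes. A secondary difficulty is the gauge bookkeeping: since $\Theta W_y(\Theta)$ is invariant under $\Theta\mapsto e^{k(t,x)y}\Theta$, one must confirm that demanding unique elements of $\mathcal W_5$ together with \eqref{hypohypo} pins down $\Theta$ itself rather than merely its gauge orbit.
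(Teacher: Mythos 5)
Your forward direction is correct and is essentially the paper's own computation (Lemma \ref{2solitonphase}): the mode multipliers $-2k_ik_j$ for $H$ and $-3k_ik_j(k_i+k_j)$ for $Ai$ give conditions (1) and (3) simultaneously, and the single resonance $(k_1^2+k_3^2)+(k_2^2+k_4^2)=(k_1^2+k_4^2)+(k_2^2+k_3^2)$ collapses the six Wronskian frequencies to five. Your side remark on signs is a fair catch --- the paper never verifies the nonnegativity of coefficients demanded by \eqref{Wn} --- but the proposed Galilean repair is not automatic, since by \eqref{Galilean} the operators $H$ and $Ai$ do not commute with the Galilean action, so normalizing $\beta\in[k_2,k_3]$ changes the very quantities whose membership in $\mathcal W_4$ is being tested.

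The genuine gap is in the converse, at precisely the step you yourself flag as the main obstacle: passing from the memberships in (1)--(2) to the statement that $\Theta$ is a four-term exponential sum in $y$. Your proposed mechanism --- ``$H$ and $Ai$ act diagonally on each $y$-mode, so any mode present in $\Theta$ either survives in $H(\Theta)$ or $Ai(\Theta)$ or is annihilated by both'' --- is circular: for a general smooth positive $\Theta$ there is no a priori decomposition into $y$-modes, so ``mode present in $\Theta$'' is undefined, and that finite modal structure (including the constancy of the frequencies $\mu_a$) is exactly what must be proved. Positivity, the growth bound in $x$, and \eqref{hypohypo} cannot rescue an argument that has no object to act on. The paper fills this hole by a different mechanism, which your proposal never invokes for this purpose: it works with the nonlinear Wronskian equation itself. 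Since $\Theta W_y(\Theta)\in\mathcal W_5\subset\mathcal W_6=\mathcal W_{\frac12\cdot 4\cdot 3}$, the proof of Theorem \ref{MT2} first \emph{constructs} some $\widetilde\Theta\in\mathcal W_4$ with $\widetilde\Theta W_y(\widetilde\Theta)=\Theta W_y(\Theta)$ by explicitly solving the triangular algebraic systems for frequencies and amplitudes, and then proves \emph{uniqueness}: any two phases with the same image under $\Theta\mapsto\Theta W_x(\Theta)$ (respectively $W_y$) and the same Cauchy data \eqref{hypohypo} coincide, because their difference satisfies a linear fourth-order ODE in $x$ (respectively second-order in $y$) with vanishing data at the origin. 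This existence-plus-uniqueness argument is what legitimately yields $\Theta\in\mathcal W_4$; only afterwards do the conditions $H(\Theta),Ai(\Theta)\in\mathcal W_4$ force the frequencies to be $x$- and $t$-independent. Your remaining steps (resonance counting via Lemma \ref{WM}, the mode-by-mode evolution $a_{j,t}+\frac12 a_{j,xxx}-\frac32\mu_j a_{j,x}=0$ extracted from (3), and uniqueness for exponentially growing Airy data, which the paper proves by Laplace transform using the bound $|\Theta|\leq C_1e^{c_2|x|}$) do track the paper's proof, but without the rigidity step the converse is not established.
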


\begin{rem}
Notice that Theorem \ref{MT3} does not assume that $\Theta = \hbox{Wr} \left(\Theta_1,\Theta_2\right)$ (standard Wronskian of $\Theta_1$ and $\Theta_2$), with $H\left(\Theta_i\right)=Ai\left(\Theta_i\right)=0$ and $\Theta_1,\Theta_2 \in \mathcal W_2$. This is the standard and well-known definition of the 2-soliton that assumes the key Wronskian substructure. Here we lift that condition and only ask conditions on $\Theta$ itself. The Wronskian structure is recovered from the proof.
\end{rem}

\begin{rem}
Theorem \ref{MT3} can be recast as follows. It is easy to check that 2-solitons \eqref{2soliton} solve $\mathcal T\left(\Theta\right)=0$ when $F=\log$ (Corollary \ref{coro2soliton}). However,  this equation  has plenty of additional, more complicated solutions \cite{Kodama2017}, and a suitable characterization of the 2-soliton subclass is desirable. In that sense, Theorem \ref{MT3} and the $\mathcal W_n$ structure give a precise equivalence that separates 2-solitons of other more complex KP solutions.
\end{rem}

\subsection{Further results}  Our last comments are related to possible extensions of the results presented in this paper. We believe that with some work it is possible to give a suitable characterization of KP-I line-soliton and lumps in terms of particular phases. The challenge is to get a good understanding of the fact that KP-I lumps are degenerate soliton solutions, in a sense already described in \cite{AS}. 
However, in the Zakharov-Kuznetsov (ZK) case,
\begin{equation}\label{ZK}
-4\partial_t u  +\partial_{x_1}\left(\Delta u +  3 u^2\right) =0,
\end{equation}
where $u=u\left(t,x\right)$, $x=\left(x_1,x'\right)\in\mathbb R^d$, $x_1\in \mathbb R$, $x'\in \mathbb R^{d-1}$, one can say the following:
\begin{thm}[ZK case]\label{MT_ZK}
Let $u= 2\partial_{x_1}^2 F(\Theta) $ be a smooth solution of \eqref{ZK} with $\Theta>0$ and \eqref{F_conds} satisfied.  Then $u=Q_k$, $k>0$ (the KdV soliton) as in \eqref{Qk} and $F=\log$ if and only if 
\begin{equation}\label{nuevas}
Ai\left(\Theta\right)=W_1\left(\Theta\right) = W_{x_j}^F\left(\Theta\right)=0, \quad j=2,\ldots, d,
\end{equation}
and where
\[
W_1\left(\Theta\right):=\left(\partial_{x_1}^2\Theta\right)^2 - \partial_{x_1} \Theta \partial_{x_1}^4 \Theta, \quad W_{x_j}^F\left(\Theta\right):= \partial_{x_j}^2\Theta - F'\left( \Theta\right)\left(\partial_{x_j} \Theta\right)^2.
\]
\end{thm}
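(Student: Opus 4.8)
The plan parallels the proof of Theorem \ref{MT1}, adapted to the anisotropy of \eqref{ZK}, where $x_1$ plays the role of the dispersive KdV direction and $x'=(x_2,\dots,x_d)$ is transverse. First I would insert $u=2\partial_{x_1}^2F(\Theta)$ into \eqref{ZK} and, as \eqref{eq:KP} was recast as \eqref{KP3partes_new}, integrate once in $x_1$ (using decay) to obtain a fourth-order master equation for $(F,\Theta)$; concretely, with $G=2F(\Theta)$ this is $-4G_{x_1t}+\partial_{x_1}^4G+\partial_{x_1}^2\Delta'G+3(\partial_{x_1}^2G)^2=0$, where $\Delta'=\sum_{j\ge2}\partial_{x_j}^2$. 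Writing $\rho=F''+F'^2$ and using the transverse identity $\Delta'F(\Theta)=\rho(\Theta)|\nabla'\Theta|^2+F'(\Theta)\sum_{j\ge2}W_{x_j}^F(\Theta)$, I expect the master equation to split into four blocks mirroring \eqref{KP3partes_new}: a $\rho$-block (the analog of the first line of \eqref{KP3partes} together with $2\partial_{x_1}^2(\rho(\Theta)|\nabla'\Theta|^2)$); an Airy block $-8F''(\Theta)\Theta_{x_1}Ai(\Theta)-8F'(\Theta)\partial_{x_1}Ai(\Theta)$ with $Ai(\Theta)=\Theta_t-\partial_{x_1}^3\Theta$; a pure-$x_1$ Wronskian block proportional to $W_1(\Theta)$, collecting the residual $\partial_{x_1}^4\Theta$ and $(\partial_{x_1}^2\Theta)^2$ terms; and a transverse Wronskian block $2\partial_{x_1}^2(F'(\Theta)\sum_{j\ge2}W_{x_j}^F(\Theta))$. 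The bookkeeping that makes this clean is that $\Theta_t$ enters only through $Ai(\Theta)$ and its $x_1$-derivative.

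The forward implication is immediate: for $u=Q_k$ one takes $F=\log$ and the $x'$-independent phase $\Theta=\cosh(kx_1+k^3t)$, which obeys $\partial_{x_1}^2\Theta=k^2\Theta$, so that $Ai(\Theta)=0$, $W_1(\Theta)=0$, and $W_{x_j}^F(\Theta)=0$ (as $\Theta_{x_j}=0$). For the converse I would impose \eqref{nuevas}. The three conditions annihilate, respectively, the Airy block (and its $x_1$-derivative, since $Ai(\Theta)\equiv0$), the pure-$x_1$ Wronskian block, and the transverse Wronskian block, leaving only the homogeneous $\rho$-block. Collecting coefficients as in \eqref{casicasi} turns this into a second-order linear ODE for $\rho$ read along $\Theta$, whose leading coefficient is $2\Theta_{x_1}^2(\Theta_{x_1}^2+|\nabla'\Theta|^2)$, strictly positive wherever $\Theta$ genuinely depends on $x_1$ (the nontrivial case). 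Since \eqref{F_conds} gives $\rho(1)=F''(1)+F'(1)^2=0$ and $\rho'(1)=F'''(1)+2F'(1)F''(1)=0$, ODE uniqueness forces $\rho\equiv0$; integrating $F''+F'^2=0$ with $F(1)=0,\ F'(1)=1$ then yields $F=\log$.

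With $F=\log$ established I would collapse the transverse variables. From \eqref{nuevas} and $W_{x_j}^F(\Theta)=\Theta\,\partial_{x_j}^2\log\Theta$ one gets $\Delta'\log\Theta=0$, hence $\Delta'u=2\partial_{x_1}^2\Delta'\log\Theta=0$: the solution is harmonic in $x'$ at each fixed $(t,x_1)$, so, using the mild $x'$-growth control inherent to soliton-type data and Liouville's theorem (or, alternatively, differentiating the pure-$x_1$ Wronskian relation in $x_j$ and invoking the transverse affineness of $\log\Theta$), $u$ is independent of $x'$. Then \eqref{ZK} reduces to KdV, and passing to the one-dimensional tau function $\tilde\Theta(t,x_1)$ with $u=2\partial_{x_1}^2\log\tilde\Theta$ (the transverse-affine part of $\log\Theta$ drops out of $u$), the surviving conditions are $Ai(\tilde\Theta)=0$ and $W_1(\tilde\Theta)=0$. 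It remains to prove these force $\tilde\Theta=a_1e^{kx_1+k^3t}+a_2e^{-kx_1-k^3t}$, i.e. $u=Q_k$. In terms of $v=\partial_{x_1}\log\tilde\Theta$ (so $u=2v_{x_1}$) the Wronskian condition is the third-order ODE $v_{x_1x_1x_1}+4vv_{x_1x_1}+2v_{x_1}^2+4v^2v_{x_1}=0$, satisfied by every Riccati flow $v_{x_1}=\mu-v^2$, while $Ai(\tilde\Theta)=0$ combined with it yields $v_t=-vv_{x_1x_1}+v_{x_1}^2-v^2v_{x_1}$; the profile $v=k\tanh(kx_1+k^3t)$ solves both.

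The heart of the matter, and the step I expect to resist, is this final rigidity: the Wronskian ODE has a genuinely three-parameter solution set for $v$, strictly larger than the Riccati (soliton) subfamily, and the spurious solutions must be excluded. I would handle it by a phase-plane argument: linearizing $v_{x_1x_1x_1}=-4vv_{x_1x_1}-2v_{x_1}^2-4v^2v_{x_1}$ about a constant state $v_0$ gives the characteristic polynomial $\lambda(\lambda+2v_0)^2$, so the manifold of profiles with $v$ bounded and $u=2v_{x_1}$ decaying as $x_1\to+\infty$ is low-dimensional; matching the decay at both spatial infinities, together with positivity $\tilde\Theta>0$ and the Airy evolution fixing the $t$-dependence, should single out the one-parameter $\tanh$ family. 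A cleaner-looking alternative is to expand $\tilde\Theta$ as a positive Airy superposition $\sum_i a_ie^{k_ix_1+k_i^3t}$ and use that $W_1$ of such a sum equals, up to the positive factor $1/\tilde\Theta$, $\sum_{i<l}a_ia_l(k_l^2-k_i^2)^2e^{(k_i+k_l)x_1+\cdots}$, which vanishes iff all $k_i^2$ coincide, instantly producing the opposite exponents $\pm k$; but certifying that every positive solution of the linear Airy equation with $W_1=0$ is such a finite superposition is exactly the delicate distributional Cauchy problem flagged in Remark \ref{cagazo}, and reconciling these two routes is where the real difficulty concentrates.
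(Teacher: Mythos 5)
Your overall skeleton (substitute the ansatz, integrate once in $x_1$, split into a $\rho$-block, an Airy block, a $W_1$-block and a transverse block, then use \eqref{nuevas} to kill everything except a homogeneous linear ODE for $\rho$, which \eqref{F_conds} forces to vanish, giving $F=\log$) is the same as the paper's, and that part is sound. But the proposal goes wrong at both ends of the equivalence, for one underlying reason: you have misidentified which Wronskian $W_1$ is. In this theorem $W_1$ is (up to the evident misprint $\partial_{x_1}^4\mapsto\partial_{x_1}^3$ in the statement, which the paper's own proof confirms) the Wronskian of $\Theta_{x_1}$ and $\Theta_{x_1x_1}$, namely $W_1(\Theta)=\Theta_{x_1x_1}^2-\Theta_{x_1}\Theta_{x_1x_1x_1}$, i.e.\ the same functional $W$ as in the KdV appendix --- \emph{not} the KP functional $W_x(\Theta)=\Theta_{xxxx}-\Theta_{xx}^2/\Theta$. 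Consequently your forward-direction check is false: for $\Theta=\cosh(kx_1+k^3t)$ one has $\Theta_{x_1x_1}=k^2\Theta$, which indeed gives $W_x(\Theta)=0$, but $W_1(\Theta)=k^4(\cosh^2-\sinh^2)=k^4\neq 0$ (and it is also nonzero under the fourth-derivative reading). The phase that actually verifies \eqref{nuevas} is of exponential-plus-constant type, e.g.\ $\Theta=1+\exp\left(2kx_1+2k^3t\right)$, for which $\Theta_{x_1},\Theta_{x_1x_1},\Theta_{x_1x_1x_1}$ are proportional and hence $W_1=0$. Relatedly, the ZK normalization $-4u_t+\partial_{x_1}(\cdots)$ forces the Airy operator here to be $4\Theta_t-\partial_{x_1}^3\Theta$ up to sign (kernel spanned by $e^{bx_1+b^3t/4}$), not your KP-convention $\Theta_t-\partial_{x_1}^3\Theta$, so $\cosh(kx_1+k^3t)$ is not even Airy-admissible in this setting.

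The same misidentification derails your converse. After $F=\log$, you aim at the target $\tilde\Theta=a_1e^{kx_1+k^3t}+a_2e^{-kx_1-k^3t}$ and set up a third-order ODE for $v=\partial_{x_1}\log\tilde\Theta$ whose rigidity you admit you cannot close (phase-plane matching, or the Airy-superposition route that needs exactly the distributional Cauchy theory flagged in Remark \ref{cagazo}). That target is wrong --- $\tanh$ profiles do not solve $W_1=0$, as the computation above shows --- and none of that machinery is needed, because $W_1=0$ is not a genuinely third-order constraint: it says precisely that $\Theta_{x_1}$ and $\Theta_{x_1x_1}$ are linearly dependent in $x_1$, hence $\Theta=a\,e^{bx_1}+c$ with $a,b,c$ functions of $(x_2,\ldots,x_d,t)$ (this is Lemma \ref{lem:W} of the paper). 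The conditions $W_{x_j}^{\log}(\Theta)=0$, expanded in the linearly independent functions $x_1^m e^{nbx_1}$, then force $a,b,c$ to depend on $t$ only, and finally $Ai(\Theta)=0$, by matching the coefficients of $e^{bx_1}$, $x_1e^{bx_1}$ and $1$, gives $b,c$ constant and $a=a_0e^{b^3t/4}$, i.e.\ $u=Q_k$ with $k=b/2$. This closed argument also dispenses with your Liouville-in-$x'$ step, which would require transverse growth control that the theorem does not assume.
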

Similarly, in the modified Zakharov-Kuznetsov (mZK) case,
\begin{equation}\label{KdVmZKin}
\left( -4u_t+ \partial_{x_1}^3 u +6u^2 \partial_{x_1} u \right)+\partial_{x_1} \left(\Delta_{c} u\right) =0,
\end{equation}
we have the following result:
\begin{thm}[mZK case]\label{MT_mZK}
Let $u= 2\partial_{x_1}F\left(\Theta\right) $ be a smooth solution of \eqref{KdVmZKin} with smooth profile $F:\mathbb R\to \mathbb R$ satisfying $F\left(0\right)=0$ and $F$ strictly increasing in $\mathbb R$. Then a nontrivial $\Theta$ is a mKdV soliton  \eqref{ppal_form:mKdV}  and $F=2 \arctan$, if and only if 
\begin{equation}\label{nuevas_2}
Ai\left(\Theta\right)=W\left(\Theta\right) = \Lambda_{x_j}^F\left(\Theta\right)=0, \quad j=2,\ldots, d,
\end{equation}
and where
\[
W\left(\Theta\right) := \Theta_{xx}^2 -\Theta_x\Theta_{xx}, \quad \Lambda_{x_j}^F\left(\Theta\right):=\Theta _{x_jx_j} F'\left(\Theta\right)+\Theta_{x_j}^2 F''\left(\Theta\right).
\]
\end{thm}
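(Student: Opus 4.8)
The plan is to substitute the ansatz $u=2\partial_{x_1}F(\Theta)$ into \eqref{KdVmZKin} and reduce it, as in the passage from \eqref{eq:KP} to \eqref{KP3partes}, to a master equation for the pair $(F,\Theta)$ whose natural splitting exhibits the three functionals in \eqref{nuevas_2}. Writing $v:=F(\Theta)$, so that $u=2v_{x_1}$, I would first recognize the whole left-hand side of \eqref{KdVmZKin} as $\partial_{x_1}$ of a single expression and integrate once in $x_1$ (using decay as $|x_1|\to\infty$), obtaining
\[
-4v_t+v_{x_1x_1x_1}+\beta\,v_{x_1}^{3}+\partial_{x_1}\Delta_c v=0,
\]
where $\beta>0$ is the constant produced by the cubic nonlinearity. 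Expanding $v=F(\Theta)$ and sorting by the derivatives of $F$, the longitudinal terms organize as
\[
F'(\Theta)\,(\Theta_{x_1x_1x_1}-4\Theta_t)+(F'''+\beta F'^{3})(\Theta)\,\Theta_{x_1}^{3}+3F''(\Theta)\,\Theta_{x_1}\Theta_{x_1x_1},
\]
so that the coefficient of $F'(\Theta)$ is, up to a nonzero constant, the (suitably normalized) Airy operator $Ai(\Theta)$, hence vanishes exactly when $Ai(\Theta)=0$, while the transverse contribution is $\partial_{x_1}\Delta_c v=\partial_{x_1}\sum_{j=2}^{d}\Lambda_{x_j}^F(\Theta)$, because $\Lambda_{x_j}^F(\Theta)=\partial_{x_j}^2F(\Theta)$. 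This already isolates the roles of $Ai$, $W$ and the $\Lambda_{x_j}^F$.

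For the forward implication I would take $\Theta$ the mKdV soliton \eqref{ppal_form:mKdV} --- a single travelling exponential in $x_1$ with no transverse dependence --- together with $F=2\arctan$, and check the three identities in \eqref{nuevas_2} directly: $Ai(\Theta)=0$ is the dispersion relation of the soliton phase, $W(\Theta)=0$ holds because $\Theta_{x_1}$ and $\Theta_{x_1x_1}$ are proportional for a single exponential, and $\Lambda_{x_j}^F(\Theta)=\partial_{x_j}^2F(\Theta)=0$ holds since $\Theta$, hence $F(\Theta)$, is independent of $x'$. For the converse I would begin with the transverse block: the hypothesis $\partial_{x_j}^2F(\Theta)=0$ for each $j$ makes $F(\Theta)$ affine in every transverse variable, and boundedness/decay in $x'$ forces $\partial_{x_j}F(\Theta)=0$; since $F$ is strictly increasing, hence invertible, $\Theta$ itself is $x'$-independent. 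Thus $\Delta_c v=0$ and the master equation collapses to the one-dimensional mKdV reduction $-4v_t+v_{x_1x_1x_1}+\beta v_{x_1}^{3}=0$.

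It remains to analyse this reduced equation under $Ai(\Theta)=0$ and $W(\Theta)=0$. Imposing $Ai(\Theta)=0$ removes the dispersion term, leaving $(F'''+\beta F'^{3})(\Theta)\,\Theta_{x_1}^{3}+3F''(\Theta)\,\Theta_{x_1}\Theta_{x_1x_1}=0$. The Wronskian condition $W(\Theta)=0$ expresses the linear dependence of $\Theta_{x_1}$ and $\Theta_{x_1x_1}$, i.e. $\Theta_{x_1x_1}=k(t)\Theta_{x_1}$, whence $\Theta=a(t)\exp(k(t)x_1)+b(t)$; feeding this into $Ai(\Theta)=0$ forces $k,b$ constant and the soliton time-law for $a$. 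Substituting $\Theta_{x_1}=k(\Theta-b)$, $\Theta_{x_1x_1}=k\Theta_{x_1}$ and cancelling the common factor $\Theta_{x_1}^{2}$ yields the profile identity
\[
(s-b)(F'''+\beta F'^{3})(s)+3F''(s)=0,\qquad s\in\mathrm{range}(\Theta),
\]
which can only hold for a monotone profile if $b=0$; hence $\Theta$ is a genuine single travelling exponential (the mKdV one-soliton phase) and $F$ solves $s(F'''+\beta F'^{3})+3F''=0$. Writing $G:=F'$, this becomes the Emden--Fowler type equation $(s^{3}G')'+\beta s^{3}G^{3}=0$, whose solution compatible with $G>0$ and $F(0)=0$ on $(0,\infty)$ is a constant multiple of $(1+s^{2})^{-1}$; integrating gives that $F$ is a multiple of $\arctan$, fixed to $F=2\arctan$.

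The main obstacle is the profile step: proving that the Emden--Fowler equation $(s^{3}G')'+\beta s^{3}G^{3}=0$, under only $G=F'>0$ and $F(0)=0$ on the range $(0,\infty)$ of the soliton phase, has the $\arctan$ profile as its unique admissible solution, thereby ruling out the remaining one-parameter family of positive solutions; and, in tandem, showing that the additive constant $b$ must vanish. A secondary delicate point is the transverse reduction, where passing from $\partial_{x_j}^2F(\Theta)=0$ to genuine $x'$-independence requires a Liouville/boundedness argument valid in the admissible class. Once these rigidity facts are secured, the forward verification and the algebraic bookkeeping of the master equation are routine.
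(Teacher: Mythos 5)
Your reduction to the master equation, the key identity $\Lambda_{x_j}^F(\Theta)=\partial_{x_j}^2F(\Theta)$, the forward verification, and the reduced profile identity all agree with the paper's proof (equations \eqref{comparacionmZK}--\eqref{nuevas2}). The converse is where the two gaps you flag sit, and neither is a routine technicality. The first, the transverse step, would fail as written: you pass from affineness of $F(\Theta)$ in $x_j$ to $\partial_{x_j}F(\Theta)=0$ via ``boundedness/decay in $x'$'', but the theorem assumes no boundedness or decay in any variable --- the admissible phases grow exponentially in $x_1$, and nothing whatsoever is imposed in $x'$ --- so the Liouville argument has nothing to stand on. The paper avoids this by reversing the order of the steps. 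It first fixes the profile: granting \eqref{conmZK}, the surviving block of \eqref{nuevas2} is $\Theta_{x_1}^2\bigl(h''+\tfrac{3}{s}h'+8h^3\bigr)(\Theta)=0$ with $h=\tfrac12F'$, i.e.\ the radial equation $\Delta h+8h^3=0$ in dimension $4$, and Talenti--Aubin gives $h(s)=\tfrac{1}{1+s^2}$, hence $F=2\arctan$. Only then does it treat $x'$: $W(\Theta)=0$ gives $\Theta=a(x',t)e^{b(x',t)x_1}+c(x',t)$, and substituting into $\Lambda_{x_j}^{2\arctan}(\Theta)=0$ produces a polynomial identity in $x_1$, $e^{bx_1}$, $e^{2bx_1}$, $e^{3bx_1}$ whose coefficients must vanish separately; these are ODEs in $x_j$ for $a,b,c$ (in the paper's notation $C_3=a^3b_{x_j}^2=0$ forces $b_{x_j}=0$, then $C_2=0$ forces $a=(Ax_j^2+Bx_j+C)^{-1}$, then $B_1=0$ forces $A=B=0$, and the zero-order coefficient forces $c_{x_j}=0$). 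This is linear-independence bookkeeping in $x_1$ and needs no growth control in $x'$; if you keep your ordering, you must replace the Liouville step by an argument of this type, since the hypothesis you lean on is simply absent.

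The second gap is worse than you suggest: it cannot be ``secured'' at all. Your shifted identity $(s-b)(F'''+\beta F'^3)+3F''=0$ is the correct reduction, but ``$b=0$ because $F$ is monotone'' is not provable, because the pair $F(s)=2\arctan\bigl(\lambda(s-b)\bigr)+2\arctan(\lambda b)$, $\Theta=a\,e^{kx_1+k^3t/4}+b$ is smooth, strictly increasing, satisfies $F(0)=0$ and all three conditions $Ai(\Theta)=W(\Theta)=\Lambda_{x_j}^F(\Theta)=0$, and yields the very same soliton solution $u$, yet has $b\neq0$ and $F\neq2\arctan$. In other words, the three functionals and the solution $u$ are invariant under the affine gauge $\Theta\mapsto\lambda(\Theta-b)$, $F\mapsto F(\lambda^{-1}\cdot+b)-F(b)$, and the Aubin--Talenti scaling family you mention is part of the same gauge; uniqueness of the pair $(F,\Theta)$ can only hold after this gauge is fixed. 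This is not a defect of your plan alone: the paper's own proof sidesteps it by asserting that the Wronskian block of \eqref{nuevas2} vanishes under $W(\Theta)=0$, which is true only when $c=0$, i.e.\ it silently normalizes the gauge. A complete write-up, along your route or the paper's, must either impose an explicit normalization on $(F,\Theta)$ or weaken the conclusion to ``$u$ is the mKdV soliton \eqref{ppal_form:mKdV}'' rather than claiming $F=2\arctan$ and the pure-exponential phase separately.
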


\begin{rem}[The KdV and ZK cases]\label{rem:KdV}
An important outcome of the proofs will be its robust character. Indeed, Theorem \ref{MT1} has natural counterparts in the case of the 1D KdV model and ZK model (Theorem \ref{MT_ZK}), where similar notions of Airy and Heat operators are introduced. See Appendix \ref{rem:KdVmKdV} and Section \ref{sec:ZK} respectively, for the complete details.
\end{rem}

\begin{rem}[The mKdV and mZK cases]\label{rem:mKdV}
In concordance with the previous remark, it should be natural that the same ideas can be applied as well for the so-called mKdV and mZK models. It turns out that this models has a particular rich structure involving even more demanding special solutions that complicates matters. We provide for completeness a suitable short treatment of the problem in  Appendix \ref{rem:KdVmKdV} and Subsection \ref{sec:mZK}. See also \cite{FFMP} for a detailed account of the difficulties found when dealing with mKdV models.
\end{rem}

\subsection{Previous results}
We mention some key results obtained for KP-II models during the past years. Bourgain \cite{Bourgain1993} showed that KP is globally well-posed (GWP) in $L^2\left(\mathbb{R}^2\right)$ (see also Ukai \cite{Ukai} and I\'orio-Nunes \cite{IN} for early results). Bourgain's result was later improved by Takaoka-Tzvetkov \cite{Takaoka-Tzvetkov-2001}, Isaza-Mejia \cite{Isaza-Mejia-2001}, Hadac \cite{Hadac-2008} and Hadac-Herr-Koch \cite{Hadac-Herr-Koch-2009}. Molinet, Saut and Tzvetkov \cite{Molinet-Saut-Tzvetkov-2011} proved global well-posedness of KP along the KdV line-soliton in $L^2\left(\mathbb{R}\times \mathbb{T}\right)$  and $L^2\left(\mathbb R^2\right)$. 

\medskip

The long time behavior of small KP solutions has been studied by Hayashi-Naumkin-Saut and Hayashi-Naumkin \cite{HNS,HN}, see also recent improvements by Niizato \cite{Ni}. de Bouard-Martel \cite{dBM} showed that KP has no ``lump'' structures, namely compact-in-space solutions. Any KdV soliton becomes an (infinite energy) line-soliton solution of KP. This structure is stable, as proved by Mizumachi and Tzvetkov \cite{MT}, and asymptotically stable, see Mizumachi \cite{Mizu1,Mizu2}. The linear stability of the 2-soliton was recently proved by Mizumachi \cite{Mizu3}. Finally, Izasa-Linares-Ponce \cite{ILP_2} showed propagation of regularity for this model.  

\medskip

Numerical studies of KP solutions have been performed in \cite{KS2}, see also \cite{KS3,KS} for a detailed account of the KP literature via PDE methods. Multi-line-soliton structures are known to exist via Inverse Scattering Transforms (IST) methods \cite{AS,Kono}. Their stability in rigorous terms has been recently considered by Wu \cite{Wu,Wu2}.  See also \cite{RT1,RT2} for a detailed theory of transversal stability and instability of PDE models of water waves, that applies to one line-solitons as the ones studied in Theorem \ref{MT1}-\ref{MT1b}. The description of small data can be found in \cite{W,Sung}. Recently, and following \cite{MMPP}, in \cite{MMPP1} it was shown that every solution $u$ of KP obtained from arbitrary initial data $u_0$ in $L^2\left(\mathbb R^2\right)$ satisfies $\liminf_{t\to \infty} \int_{K} u^2\left(t,x,y\right)\,\mathrm{d}x\mathrm{d}y=  0$, with $K \subseteq \mathbb R^2$ compact. Finally, Kenig and Martel \cite{KM} showed that for any $\beta>0$ and  initial data small in $L^1\cap L^2$, $\lim_{t\to \infty} \int_{x>\beta t} u^2\left(t,x,y\right)\,\mathrm{d}x\mathrm{d}y=  0$.

\subsection*{Organization of this work} This work is organized as follows. In Section \ref{Sec:2} we introduce the basic elements needed for the proof of the main results. Section \ref{Sec:3} is devoted to recall standard results on KP solitons. In Section \ref{Sec:5} we present general results and properties about phases $\Theta$ satisfying \eqref{KP3partes}. On the other hand, Section \ref{Sec:6} presents properties satisfied by classical soliton structures. Finally, in Section \ref{Sec:7} we prove the main results, Theorems \ref{MT1}, \ref{MT1b}, \ref{MT2}, and \ref{MT3}. Section \ref{sec:ZK} considers the ZK and mZK cases. Appendix \ref{AppA} contains some useful computations needed in the paper, and finally Appendix \ref{rem:KdVmKdV} is devoted to the proof of similar results in the case of KdV and mKdV models.

\subsection*{Acknowledgments} Part of this work was done while the third and fourth authors were visiting UACh (Valdivia, Chile), Georgia Tech and Texas A\&M Mathematics departments. We thank these institutions for their warm hospitality and support. 

\section{Preliminaries}\label{Sec:2}

In this section we first mention some simple but important facts related to solutions of the KP model.

\subsection{Invariances} 
First of all, it is clear that \eqref{KP3partes} is invariant under space and time shifts in the phase $\Theta$. Additionally,  \eqref{KP3partes} has the natural scaling invariance associated to KP:
\[
\Theta \left(t,x,y\right) \longrightarrow \Theta\left(\lambda^3 t,\lambda x, \pm \lambda^2 y\right), \quad \lambda>0.
\]
KP-II obeys the Galilean Transform \cite{MMPP1}
\[
    u\left(t, x,y\right) \longrightarrow  u\left(t, x - \frac{4\beta}{3}y + \frac{4\beta ^{2}}{3}t, y - 2\beta t \right) =: u_{\beta}\left(t,\tilde{x},\tilde{y}\right), \quad \beta\in\mathbb R.
\]
This invariance naturally translates into the phase $\Theta$
\begin{equation}\label{galilean_theta}
    \Theta_{\beta}\left(t, x,y\right) = \Theta \left(t, x-\frac{4\beta}{3}y +\frac{4\beta^{2}}{3}t,y-2\beta t \right) = \Theta\left(t, \tilde{x},\tilde{y}\right), 
\end{equation}
that also satisfies \eqref{KP3partes} provided $\Theta$ does. 

\subsection{Kernel in the solitonic representation}
Notice that the formulation \eqref{eqn:FT} involves a nontrivial kernel. 

\begin{lem}
One has  $\partial^{2}_{x} \log\left(\Theta\left(t,x,y\right)\right) = 0$ for all $(t,x,y)\in \mathbb R^3$ if and only if the phase $\Theta$ satisfies $\Theta\left(t,x,y\right) = \exp\left( a\left(t,y\right)x + b\left(t,y\right)\right)$, for any well-defined functions $a$ and $b$.
\end{lem}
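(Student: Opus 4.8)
The plan is to prove the two implications separately, the backward direction being immediate and the forward direction following by integrating the vanishing second derivative twice in the $x$ variable. Throughout I would use that, by the standing convention $\Theta \in [s_0,\infty)$ with $s_0=1$, the phase satisfies $\Theta>0$, so that $g:=\log\Theta$ is a well-defined smooth function on $\mathbb R^3$.

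For the backward implication, I would simply assume $\Theta(t,x,y)=\exp\left(a(t,y)x+b(t,y)\right)$ for some smooth $a,b$. Then $\log\Theta=a(t,y)x+b(t,y)$ is affine in $x$ for each fixed $(t,y)$, and differentiating twice in $x$ gives $\partial_x^2\log\Theta\equiv 0$. This direction requires no further work.

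For the forward implication, I would start from the hypothesis $\partial_x^2 g\equiv 0$ on $\mathbb R^3$. Fixing $(t,y)$ and regarding $g$ as a function of $x$ alone, the vanishing of its second derivative forces $g$ to be affine in $x$. Concretely, integrating once in $x$ gives $\partial_x g(t,x,y)=\partial_x g(t,0,y)=:a(t,y)$, a quantity independent of $x$; integrating a second time yields $g(t,x,y)=a(t,y)\,x+g(t,0,y)$. Setting $b(t,y):=g(t,0,y)$ and exponentiating produces $\Theta(t,x,y)=\exp\left(a(t,y)x+b(t,y)\right)$, as claimed. I would then note that $a$ and $b$ are smooth in $(t,y)$, since $a(t,y)=\partial_x g(t,0,y)$ and $b(t,y)=g(t,0,y)$ are traces of smooth functions, and that they are uniquely determined by $\Theta$ through these formulas, which clarifies the phrase ``well-defined functions $a$ and $b$.''

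There is no substantial obstacle here: the statement is a direct consequence of the fact that a function with identically vanishing second $x$-derivative is affine in $x$. The only point meriting attention is the well-definedness and smoothness of $g=\log\Theta$, which rests solely on the positivity $\Theta>0$ built into the setup; once this is recorded, the argument is purely a double integration in $x$ with coefficients depending on the frozen parameters $(t,y)$.
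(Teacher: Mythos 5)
Your proof is correct and matches the paper's approach: the paper's own proof is just the one-line remark that the lemma follows from ``direct integrations,'' which is exactly the double integration in $x$ (plus the trivial converse) that you carry out in detail. Your additional remarks on the positivity of $\Theta$ and the smoothness of $a,b$ are fine and consistent with the paper's standing conventions.
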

\begin{proof} It is a consequence of direct integrations.
\end{proof}
As a consequence of the previous result, if $F=\log$, any phase that can be expressed in the form $\Theta = \exp\left(f\left(t,x,y\right)\right)$ with $f\left(t,x,y\right)$ any smooth linear affine function in the variable $x$ gives a trivial solution. This is the kernel of the operator $\partial^{2}_{x} \log$ which permits to construct the KP multi-soliton solutions, based on this first seed.

\subsection{Quick review of the simplest KP solitons}\label{Sec:3} Recall the line soliton introduced in \eqref{linesoliton} and \eqref{soliton family}. The line that separates these regions correspond to $\theta_{1} = \theta_{2}$. In this case, this line is called $\left[1,2\right]$-soliton \cite{Kodama2017}.  In the general case of resonant structures such as \eqref{resonant}, they are called $\left[i,j\right]$-solitons and are formed by the intersection of the corresponding exponentials $i$ and $j$. Each $\left[i,j\right]$-soliton has the same local structure as a line-soliton, which is described by the form \cite[p. 5]{Kodama2017}
\begin{equation*}
    u = A_{\left[i,j\right]}\sech^{2} \left(\frac{1}{2}\left(K_{\left[i,j\right]}\cdot \left(x,y\right) - \Omega_{\left[i,j\right]}t + \Theta^{0}_{\left[i,j\right]}\right)\right),
\end{equation*}
with $\Theta^{0}_{\left[i,j\right]}$ a constant. The parameters $A_{\left[i,j\right]}$, $K_{\left[i,j\right]}$ and $\Omega_{\left[i,j\right]}$ are known as the amplitude, wave-vector and frequency, respectively, and are defined by
\[
\begin{aligned}
    A_{\left[i,j\right]} &= \frac{1}{2}\left(k_{j} - k_{i}\right)^{2},\\
    K_{\left[i,j\right]} &= \left(k_{j} - k_{i}, k^{2}_{j} - k^{2}_{i}\right) = \left(k_{j} - k_{i}\right)\left(1, k_{j} + k_{i}\right),\\
    \Omega_{\left[i,j\right]} &= -\left(k^{3}_{j} - k^{3}_{i}\right) = -\left(k_{j} - k_{i}\right)\left(k^{2}_{i} + k_{i}k_{j}+ k^{2}_{j}\right).
\end{aligned}
\]
If one denotes $\psi_{\left[i,j\right]}$  the angle measured counterclockwise between the $\left[i,j\right]$-soliton and the $y$-axis, then $\tan\left(\psi_{\left[i,j\right]}\right)=k_{i}+k_{j}$. 

\medskip

Now consider the case of resonant solitons \eqref{resonant} with $M=3$.  As in the previous case, it is possible to determine the dominant exponentials and analyze the structure of the solution in the $xy$-plane. Indeed, the line-soliton at positive $y$, corresponding to the $\left[1,3\right]$-soliton, is located on the phase transition $x + cy = constant$ with direction parameter $c = k_{ 1} + k_{3}$. In the same way, the line-soliton located at $y$ negative, corresponding to the $\left[1,2\right]$-soliton and $\left[2,3\right]$-soliton are located over their respective phase transitions with direction parameter $c = k_{1} + k_{2}$ and $c = k_{2} + k_{3}$, respectively. The \emph{resonance condition} of these three line-solitons is given by
\begin{equation*}
    K_{\left[1,3\right]} = K_{\left[1,2\right]} + K_{\left[2,3\right]}, \quad \Omega_{\left[1,3\right]} = \Omega_{\left[1,2\right]} + \Omega_{\left[2,3\right]}, 
\end{equation*}
and both are satisfied when $K_{\left[i,j\right]} = \left(k_{j} - k_{i},k^{2}_{j} - k^{2}_{i}\right)$ and $\Omega_{\left[i,j\right]} = -\left(k^{3}_{j} - k^{3}_{i}\right)$. 

\medskip

In a simple way it is possible to extend the previous result for a general solution constructed from a $\Theta$ composed with an arbitrary number $M$ of exponentials, as in \eqref{resonant}.

\begin{thm}[\cite{Kodama2017}, Proposition 1.2]
Let $\Theta$ be an $M$ resonant phase as in \eqref{resonant}. Then the solution $u$ has the following asymptotic characteristics:
\begin{enumerate}
    \item[$(i)$] For values of $y\gg 1$, there is only one soliton of the form $\left[1,M\right]$-soliton.
    \item[$(ii)$] For values of $y\ll -1$, there are $M-1$ line-solitons of the form $\left[k,k+1\right]$-soliton, with $k = 1,2,\dots,M-1$, located counter-clockwise from the negative part to the positive part of the $x$-axis.
\end{enumerate}
\end{thm}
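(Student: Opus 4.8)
The plan is to reduce the asymptotic analysis of $u = 2\partial_x^2\log\Theta$ to the tropical (dominant--balance) geometry of the sum of exponentials in \eqref{resonant}. Fix $t$ and regard each phase $\theta_i = k_i x + k_i^2 y + k_i^3 t$ as an affine function of $(x,y)$. Away from the loci where two of the $\theta_i$ coincide and are simultaneously maximal, a single term $a_{i^*}e^{\theta_{i^*}}$ strictly dominates, so that $\Theta = a_{i^*}e^{\theta_{i^*}}\bigl(1+\textstyle\sum_{\ell\neq i^*}(a_\ell/a_{i^*})e^{\theta_\ell-\theta_{i^*}}\bigr)$ with an exponentially small sum; hence $\log\Theta = \theta_{i^*} + \log a_{i^*} + \mathcal O(e^{-\delta})$ with the leading part affine in $x$, and $u = 2\partial_x^2\log\Theta$ is exponentially small there. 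The solution therefore concentrates on the creases of the upper envelope $\mathcal E(x,y) := \max_i \theta_i(x,y)$. First I would confirm that near a crease $\{\theta_i = \theta_j > \theta_\ell,\ \ell\neq i,j\}$ one has $\Theta \approx a_i e^{\theta_i} + a_j e^{\theta_j}$, so that with $\phi := \theta_i - \theta_j$ (whence $\partial_x\phi = k_i - k_j$) we get $u = 2\partial_x^2\log(1 + c\,e^{\phi}) = \tfrac12(k_i-k_j)^2\sech^2(\tfrac12(\theta_i - \theta_j) + \text{const})$, exactly the $[i,j]$-line-soliton of \eqref{soliton family} and Section~\ref{Sec:3}. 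Thus counting asymptotic line-solitons amounts to counting the edges of $\mathcal E$ that persist as $|y|\to\infty$.

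Next I would translate ``which phases appear in $\mathcal E$'' into a convex-hull statement. Restricting to a horizontal line $y = y_0$, each $\theta_i$ becomes affine in $x$ with slope $k_i$ and intercept $b_i := k_i^2 y_0 + k_i^3 t$, and $\mathcal E(\cdot,y_0) = \max_i(k_i x + b_i)$ is the support function of the planar point set $\{(k_i,b_i)\}$ evaluated in directions $(x,1)$. Consequently phase $i$ contributes an edge to $\mathcal E$ if and only if $(k_i,b_i)$ is a vertex of the upper convex hull of these points, and consecutive hull vertices yield consecutive $[i,j]$-solitons. The heart of the proof is then the elementary observation that, since $b_i$ depends on $k_i$ through $k_i^2 y_0$, the points $(k_i,b_i)$ lie, up to the bounded perturbation $k_i^3 t$, on the parabola $z = y_0 x^2$.

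For $y_0 \gg 1$ this parabola opens upward (convex), so every interior point $(k_i,b_i)$, $1<i<M$, falls strictly below the chord joining $(k_1,b_1)$ and $(k_M,b_M)$ by an amount proportional to $y_0$; for $y_0$ large this gap dominates the bounded perturbation, leaving only the two extreme points as upper-hull vertices, a single edge, and hence the unique $[1,M]$-line-soliton, which is claim $(i)$. For $y_0 \ll -1$ the parabola opens downward (concave), so each $(k_i,b_i)$ lies above the chord of its neighbors by an amount proportional to $|y_0|$ and every point is an upper-hull vertex; the ordered slopes $k_1<\cdots<k_M$ then produce exactly $M-1$ consecutive edges, i.e. the $[k,k+1]$-solitons for $k=1,\dots,M-1$, which is claim $(ii)$. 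Their counter-clockwise ordering from the negative to the positive $x$-axis follows because the $[k,k+1]$-crease lies on $x + (k_k + k_{k+1})y = \text{const}$ with direction parameter $c = k_k + k_{k+1}$ strictly increasing in $k$, matching $\tan\psi_{[i,j]} = k_i + k_j$ recorded in Section~\ref{Sec:3}.

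The main obstacle is making the dominant-balance reduction uniform and ruling out spurious contributions from the resonant triple points where three phases meet. I would handle this by observing that such vertices of $\mathcal E$ occur at isolated points of the $(x,y)$-plane at fixed $t$, so for $|y_0|$ large the slice $y=y_0$ avoids them and meets each surviving edge transversally in a genuine two-phase balance; a quantitative lower bound on the spectral gap $\min_{\ell\neq i,j}(\mathcal E - \theta_\ell)$ along each edge then upgrades the formal approximations above to uniform asymptotics for $u$, and guarantees that the perturbation $k_i^3 t$ cannot alter the hull structure once $|y_0|$ is taken large relative to $t$.
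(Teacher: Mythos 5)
Your proposal is correct, but note that there is nothing in the paper to compare it against: this statement is quoted as background from Kodama's monograph \cite{Kodama2017} (Proposition 1.2) and the paper supplies no proof of its own, only the surrounding discussion of dominant exponentials and the angles $\tan\psi_{[i,j]}=k_i+k_j$ in Section \ref{Sec:3}. Your dominant-balance/tropical argument is essentially the standard route to this result, and all the key steps are in place: away from creases of $\max_i\theta_i$ the solution $u=2\partial_x^2\log\Theta$ is exponentially small because the leading part of $\log\Theta$ is affine in $x$; along a two-phase crease one recovers exactly the $[i,j]$-soliton of \eqref{soliton family}; and the survival question for creases as $y\to\pm\infty$ reduces, on slices $y=y_0$, to identifying the vertices of the upper convex hull of the points $\left(k_i,\,k_i^2y_0+k_i^3t+\log a_i\right)$, which for $y_0\gg1$ (convex parabola, gaps of order $y_0$) keeps only the two extreme points and hence the single $[1,M]$-soliton, while for $y_0\ll-1$ (concave parabola) keeps all $M$ points with consecutive edges, hence the $M-1$ solitons $[k,k+1]$ ordered counterclockwise since $k_k+k_{k+1}$ is increasing in $k$. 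Two small points to tighten: your intercept $b_i$ should carry the bounded term $\log a_i$ (your ``bounded perturbation'' clause absorbs it, and your local $\sech^2$ formula already accounts for the constant, but as written $b_i$ omits it); and the uniformity discussion in your last paragraph — the spectral gap along surviving edges growing linearly in $|y_0|$, and the isolated triple points being avoided by slices with $|y_0|$ large — is not a side remark but the step that converts the formal two-term balance into genuine asymptotics for $u$, so in a full write-up it should be carried out quantitatively rather than deferred.
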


\begin{defn}
Let $N < M$. The Grassmannian $\Gr\left(N,M\right)$ are all the matrices which represents an $N$-dimensional sub-vectorial space contained in a $M$-dimensional vectorial space.  
\end{defn}
It can be verified the following isomorphism: $\Gr\left(N,M\right)\cong\GL_{N}\left(\mathbb R\right)\setminus M_{N\times M}\left(\mathbb R\right)$, where $\GL_{N}\left(\mathbb R\right)$ consists of those matrices with dimension $n\times n$, real coefficients and whose determinant is non-zero, and $M_{N\times M}(\mathbb{R})$ is the set of all $N\times M$ full-rank matrices.

\medskip

Let $\left\{ \Theta_i : i = 1, \ldots , N\right\}$ be linearly independent solutions of $Ai\left(\Theta\right)=H\left(\Theta\right)=0$.  Let  $\Theta:= \Wr\left(\Theta_{1}, \ldots , \Theta_{N}\right)$ be the Wronskian of the functions $\Theta_{i}$ with respect to the variable $x$ (usually called a $\tau$-function). It is well-known and not difficult to see (see \cite{Kodama2017}) that $u\left(t,x,y\right)=2\partial^{2}_{x} \log \left(\Theta\left(t,x,y\right)\right)$ satisfies the KP equation. A particular choice for $\Theta_i$ is given by 
\[%begin{equation}\label{1.14}
 \Theta_{i}\left(t,x,y\right)=\sum^{M}_{j=1}a_{ij}\exp\left(\theta_{j}\left(t,x,y\right)\right), \quad \hbox{with} \quad \theta_{j}=k_{j}x+k^{2}_{j}y+k^{3}_{j}t,
\]%end{equation}
where $A := \left(a_{ij}\right)$ is an $N\times M$ matrix. Thus each KP soliton expressed in the previous form is parametrized by $M$ parameters $\left(k_{1}, \ldots , k_{M}\right)$ and an $N \times M$ matrix $A$. The matrix $A$ will be identified as a point of the real Grassmannain $\Gr\left(N, M\right)$.

\subsection{Linear ODEs related to $F$}

In this section we describe some ODE theory related to the equations that $F$ must satisfy. Indeed, from \eqref{rho_def0} consider the auxiliary variable
\[
\rho\left(s\right):= F''\left(s\right) + F'^2\left(s\right).
\]
\begin{lem}\label{dem:ODE1}
If $F=\log$, then $\rho=0$. If $\rho=0$ and $F\left(1\right)=0$, $F'\left(1\right)=1$, then $F=\log.$
\end{lem}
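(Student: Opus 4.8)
The plan is to prove the two implications separately; both reduce to elementary first-order ODE analysis for $g:=F'$. For the forward direction I would simply insert $F=\log$ into the definition \eqref{rho_def0}: since $F'(s)=1/s$ and $F''(s)=-1/s^2$, one gets $\rho(s)=-1/s^2+(1/s)^2=0$ identically, which settles the first claim by direct computation.

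For the converse, set $g:=F'$. The hypothesis $\rho\equiv 0$ reads $g'+g^2=0$, a Riccati equation, together with the initial value $g(1)=F'(1)=1$. The natural move is to linearize via the change of variable $u:=1/g$: wherever $g\neq 0$ one has $u'=-g'/g^2=1$, so $u$ is affine. Imposing $u(1)=1/g(1)=1$ forces $u(s)=s$, hence $g(s)=1/s$ on $[1,\infty)$. Integrating $F'(s)=1/s$ and using $F(1)=0$ then yields $F(s)=\log s$, as desired.

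The one point that requires care, and the main (if modest) obstacle, is justifying that the substitution $u=1/g$ is globally valid, i.e. that $g=F'$ never vanishes on $[1,\infty)$. I would argue by uniqueness for the initial value problem $g'=-g^2$: the right-hand side is locally Lipschitz in $g$, and $g\equiv 0$ is a solution passing through any point at which $g$ vanishes; since $g(1)=1\neq 0$, uniqueness forbids $g$ from ever reaching $0$ (otherwise it would coincide with the zero solution, contradicting $g(1)=1$). Hence $1/g$ is smooth throughout $[1,\infty)$ and the computation above is legitimate on the whole domain, with no singularity arising since $s\geq 1>0$.
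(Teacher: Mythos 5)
Your proof is correct and follows essentially the same route as the paper, whose entire proof is the one-line remark that the claim ``follows from directly solving the ODE $\rho\left(s\right)=0$''; you simply supply the details (the Riccati substitution $u=1/g$ and the uniqueness argument ruling out zeros of $F'$) that the paper leaves implicit. The care you take with the non-vanishing of $g$ is a legitimate point the paper glosses over, but it does not constitute a different approach.
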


\begin{proof}

It follows from directly  solving the ODE $\rho\left(s\right)=0$.
\end{proof}

\begin{lem}\label{dem:ODE2}
Let $F$ be a smooth profile such that $F\left(1\right)=0$ and $F'\left(1\right)=1$. Then the following are satisfied, for any $s\geq 1$:
\begin{enumerate}
\item[$\left(i\right)$] If $\left(F'' + F'^2\right)\left(s\right)=0$, then $\left(F'''' + 6F''^{2}\right)\left(s\right)=0$. 
\smallskip
\item[$\left(ii\right)$] If now $F''\left(1\right)=-1$ and $F'''\left(1\right)=2$, and $\left(F'''' + 6F''^{2}\right)\left(s\right)=0$, then one has $\left(F'' + F'^2\right)\left(s\right)=0$.
\smallskip
\item[$\left(iii\right)$] Assume now that $F\left(1\right)=0$, $F'\left(1\right)=1$, $F''\left(1\right)=-1$, $F'''\left(1\right)=2$, $s \in\left[1,\infty\right)$ and
\begin{equation}\label{EDO_rho_new}
\begin{aligned}
& h_1(s) \left(F'''' + 6F''^{2}\right)(s)+ h_2(s) \left(F'' + F'^{2}\right)'(s)+ h_3(s)\left(F'' + F'^{2}\right)(s)=0,
\end{aligned}
\end{equation}
for some continuous $h_1,h_2,h_3:[1,\infty)\to \mathbb R$, $h_1>0$. Then $F= \log.$  
\end{enumerate}
\end{lem}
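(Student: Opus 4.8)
The whole statement reduces to one structural observation: the combination $F''''+6F''^2$ can be rewritten as a \emph{linear} second-order expression in $\rho:=F''+F'^2$. Concretely, the plan is to first establish the identity
\begin{equation}\label{eq:keyid}
F''''+6F''^2 = \rho'' - 2F'\rho' + 4F''\rho ,
\end{equation}
which is precisely the substitution that turns the first line of \eqref{KP3partes} into the first line of \eqref{rhoKP}. I would obtain it by plain differentiation: writing $p=F'$ one has $\rho=p'+p^2$, $\rho'=p''+2pp'$ and $\rho''=p'''+2(p')^2+2pp''$, so that $F''''+6F''^2=p'''+6(p')^2=\rho''+4(p')^2-2pp''$; then eliminating $p''=\rho'-2pp'$ and $p'=\rho-p^2$ and simplifying collapses the nonlinear remainder to $4\rho(\rho-p^2)=4\rho p'=4F''\rho$, giving \eqref{eq:keyid}. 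This is pure bookkeeping and is already recorded implicitly in the passage \eqref{KP3partes}$\to$\eqref{rhoKP}.

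Next I would translate the hypotheses \eqref{F_conds} into Cauchy data for $\rho$ at $s=1$. From $\rho=F''+F'^2$ and $\rho'=F'''+2F'F''$ together with $F'(1)=1$, $F''(1)=-1$, $F'''(1)=2$, one computes
\[
\rho(1)=F''(1)+F'(1)^2=-1+1=0, \qquad \rho'(1)=F'''(1)+2F'(1)F''(1)=2-2=0 .
\]
Thus $\rho$ starts with vanishing position and velocity at $s=1$; this is the single place where the fourth-order initial data on $F$ is consumed.

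With \eqref{eq:keyid} and these initial values in hand, the three items follow almost mechanically. For $(i)$, if $\rho\equiv0$ then $\rho'=\rho''=0$ and the right-hand side of \eqref{eq:keyid} vanishes identically, so $F''''+6F''^2=0$. For $(ii)$, the hypothesis $F''''+6F''^2=0$ turns \eqref{eq:keyid} into the homogeneous linear second-order ODE $\rho''-2F'\rho'+4F''\rho=0$ with smooth coefficients (as $F$ is smooth); since $\rho(1)=\rho'(1)=0$, uniqueness for linear Cauchy problems forces $\rho\equiv0$, i.e. $F''+F'^2=0$. For $(iii)$, substituting \eqref{eq:keyid} into \eqref{EDO_rho_new} yields
\[
h_1\rho'' + (h_2-2h_1F')\rho' + (h_3+4h_1F'')\rho = 0,
\]
and dividing by $h_1>0$ gives once more a homogeneous linear second-order ODE for $\rho$ with continuous coefficients and vanishing Cauchy data, so uniqueness again gives $\rho\equiv0$. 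Finally, from $\rho\equiv0$ together with $F(1)=0$, $F'(1)=1$, Lemma \ref{dem:ODE1} concludes $F=\log$.

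There is no genuine analytic obstacle here: once \eqref{eq:keyid} is recognized, each part is a direct application of the uniqueness theorem for linear ODEs. The only step demanding care is the algebra producing \eqref{eq:keyid} — in particular verifying that the apparent quadratic contribution $4\rho^2$ cancels against $-4\rho\,p^2$ so that only linear-in-$\rho$ terms survive — together with the check that the prescribed values $F''(1)=-1$, $F'''(1)=2$ are exactly what make $\rho(1)=\rho'(1)=0$. It is precisely this matching of the four conditions \eqref{F_conds} with a second-order Cauchy problem that explains a posteriori why four initial conditions on $F$ are the natural and minimal requirement.
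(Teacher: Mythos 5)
Your proposal is correct and takes essentially the same route as the paper's proof: you derive the same key identity $F''''+6F''^{2}=\rho''-2F'\rho'+4F''\rho$ (the paper's \eqref{ODE_rho}), observe that the prescribed data give $\rho(1)=\rho'(1)=0$, and settle each part by uniqueness for the resulting homogeneous linear second-order Cauchy problem, invoking Lemma \ref{dem:ODE1} to conclude $F=\log$ in part $(iii)$. No gaps.
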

\begin{proof}
The proof of $\left(i\right)$ is a consequence of the following identity. One has 
\begin{equation}\label{ODE_rho}
\left(F''+F'^2\right)'' -2F'\left(F''+F'^2\right)'  +4F''\left(F''+F'^2\right)= F''''+6F''^2.
\end{equation}
Therefore, $F'' + F'^2=0$ implies $F'''' + 6F''^{2}=0$, proving $\left(i\right)$. 

\medskip

Proof of $\left(ii\right)$. In the case where $F''''+6F''^2 = 0$ one has that \eqref{ODE_rho} can be written in terms of $\rho$ as
\begin{equation}\label{edo_rho}
\rho'' -2F' \rho' +4F'' \rho= 0.
\end{equation}
This is a second order linear ODE with continuous coefficients. This solution has a  basis of solutions of dimension 2, say $\left\{\rho_1,\rho_2\right\}$. Consequently,
\[
\rho\left(s\right)= C_1 \rho_1\left(s\right) +C_2 \rho_2\left(s\right), \quad C_1,C_2 \in\mathbb R.
\]
Recall that $F\left(1\right)=0$ and $F'\left(1\right)=1$, and $\rho= F'' + F'^2,$ $\rho' = F''' +2F'F''$. Since $F''\left(1\right)=-1$ and $F'''\left(1\right)=2$, one has $\rho\left(1\right)=\rho'\left(1\right)=0$, leading to $\rho\left(s\right)=0$ for all $s\geq 1$, proving $\left(ii\right)$. 

\medskip

Proof of $\left(iii\right)$. Thanks to \eqref{ODE_rho}, equation \eqref{EDO_rho_new} is equivalent to 
\[
h_1(s) \left(\rho'' \left(s\right)- 2F'\left(s\right)\rho' \left(s\right) + 4F''\left(s\right)\rho\left(s\right)\right) +h_2(s) \rho' \left(s\right)+ h_3(s)\rho \left(s\right)=0,
\]
with $\rho= F''+F'^2$. This system is analogous to \eqref{edo_rho}, and proceeding as in step $\left(ii\right)$ we get from Lemma \ref{dem:ODE1} that $\rho\left(s\right)=0$ for all $s\geq 1$, and $F=\log$. 
\end{proof}

Recall from Definition \ref{Thetas} that $\left(\Theta,F\right)$ are of $\mathcal T$-type if \eqref{eqn:T_type} is satisfied.  The following corollary is a direct result of $\left(iii\right)$ in {\color{black} Lemma \ref{dem:ODE2}.}

\begin{cor}%\label{T=0implicaLog}
Let $u = 2\partial^{2}_{x}F\left(\Theta\right)$ solution of \eqref{eq:KP}, with $F\left(1\right)=0$, $F'\left(1\right)=1$, $F''\left(1\right)=-1$, and $F'''\left(1\right)=2$. Then $\Theta$ is $\mathcal T$-type if and only if $F=\log$.
\end{cor}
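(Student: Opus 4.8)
My plan is to prove the two implications separately, since they are of very unequal difficulty. For the easy direction, suppose $F=\log$. By Lemma \ref{dem:ODE1} this forces $\rho\equiv 0$, where $\rho=F''+F'^2$ as in \eqref{rho_def0}, and hence $\rho'\equiv\rho''\equiv 0$ as well. Because $u$ solves \eqref{eq:KP}, the reformulation \eqref{casicasi} holds at every $(t,x,y)$; substituting $\rho=\rho'=\rho''=0$ annihilates its first three terms and leaves exactly $\mathcal T(\Theta)=0$, i.e. $\Theta$ is of $\mathcal T$-type. This direction is immediate and requires no further work.

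The content lies in the converse: assuming $\mathcal T(\Theta)=0$, I must show $F=\log$. Again I start from \eqref{casicasi}, which holds since $u$ solves KP. Setting $\mathcal T(\Theta)=0$ and using the algebraic identity \eqref{ODE_rho} to trade the $\rho''$ term for $(F''''+6F''^2)(\Theta)$, the relation collapses to
\begin{equation*}
\Theta_x^4\,(F''''+6F''^2)(\Theta)+\bigl(12\Theta_x^2\Theta_{xx}-2F'(\Theta)\Theta_x^4\bigr)(F''+F'^2)'(\Theta)+3\bigl(\Theta_{xx}^2+\Theta_y^2\bigr)(F''+F'^2)(\Theta)=0,
\end{equation*}
valid for all $(t,x,y)$. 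This is precisely the hypothesis \eqref{EDO_rho_new} of Lemma \ref{dem:ODE2}(iii), with $h_1=\Theta_x^4$, $h_2=12\Theta_x^2\Theta_{xx}-2F'(\Theta)\Theta_x^4$ and $h_3=3(\Theta_{xx}^2+\Theta_y^2)$, the decisive feature being $h_1=\Theta_x^4>0$ wherever $\Theta_x\neq 0$. The normalization \eqref{F_conds} supplies the initial data $\rho(1)=F''(1)+F'(1)^2=0$ and $\rho'(1)=F'''(1)+2F'(1)F''(1)=0$ required by the lemma, so it then suffices to invoke Lemma \ref{dem:ODE2}(iii) to conclude $F=\log$.

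The main obstacle is that Lemma \ref{dem:ODE2}(iii) is an ODE statement in the single variable $s\in[1,\infty)$, whereas the identity above has coefficients depending on the full point $(t,x,y)$ through $\Theta$ and its derivatives. To bridge this I would, for nontrivial $u$ (so that $\Theta_x\not\equiv 0$; note $\Theta_x\equiv 0$ forces $u\equiv 0$), fix $(t_0,y_0)$ and restrict to an $x$-interval on which $\Theta_x>0$, so that $x\mapsto s=\Theta(t_0,x,y_0)$ is a diffeomorphism onto an $s$-interval. Pulling the coefficients back through this change of variables turns $h_1,h_2,h_3$ into genuine continuous functions of $s$ with $h_1(s)>0$, at which point the uniqueness argument in the proof of Lemma \ref{dem:ODE2}(iii) — a linear homogeneous second-order ODE for $\rho$ with vanishing Cauchy data at $s=1$ — yields $\rho\equiv 0$ and hence $F=\log$ on the range swept out. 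The delicate point to pin down is that $\Theta$ actually covers all of $[1,\infty)$ along a suitable curve with $\Theta_x\neq 0$, or, failing surjectivity, that vanishing of $\rho$ on the range of $\Theta$ is all that is needed, since $F$ enters \eqref{eq:KP} only through $F(\Theta)$; this is where the standing assumptions $\Theta\geq 1$ and smoothness, together with connectedness of the level sets of $\Theta$, must be brought to bear.
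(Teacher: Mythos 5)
Your proposal follows exactly the paper's route: the paper's entire proof of this corollary reads ``Direct from \eqref{casicasi} and $(iii)$ in Lemma \ref{dem:ODE2}'', i.e.\ both implications are read off from \eqref{casicasi} combined with the linear ODE uniqueness of Lemma \ref{dem:ODE2}$(iii)$, which is precisely what you do. The additional care you take in turning the $(t,x,y)$-dependent coefficients into genuine functions of $s=\Theta$ (restricting to a curve where $\Theta_x\neq 0$ and pulling back), and your closing caveat about the range of $\Theta$ having to reach $s=1$ so the Cauchy data $\rho(1)=\rho'(1)=0$ can be used, are not defects relative to the paper but genuine gaps that the paper's one-line proof passes over in silence --- note that the paper itself adds the hypothesis $\Theta_x\neq 0$ when it needs the same argument in Lemma \ref{WsAidaF}, and the corollary as stated is in fact vacuously violated by constant phases $\Theta$, for which $\mathcal T(\Theta)=0$ holds with any admissible $F$.
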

\begin{proof}
    Direct from \eqref{casicasi} and $\left(iii\right)$ in Lemma \ref{dem:ODE2}.
\end{proof}

\section{Wronskian structures}\label{Sec:5}

\subsection{Simple Phases}%\label{Sec:4}

The purpose of this section is to establish simple properties for the smooth phases satisfying \eqref{KP3partes}. We start with the following
\begin{lem}\label{FandPhase}
Assume that $\Theta>0$. Then the following are satisfied:
\begin{enumerate}
\item[(i)] If $F''\left(\Theta\right)\Theta^{2}_{x} + F'\left(\Theta\right)\Theta_{xx} = 0$ for all $\left(t,x,y\right) \in \mathbb R^{3}$, then $u$ is the trivial solution. % u \equiv 0$.
\medskip
\item[(ii)] If $F = \log$, then a positive phase $\Theta$ that satisfies $\Theta \Theta_{xx} = \Theta^{2}_{x}$, for all $\left(t,x,y\right) \in \mathbb R^{3}$, gives the trivial solution.
\medskip
\item[(iii)] If $F = \log$ then every $\Theta$ of the form $\Theta\left(t,x,y\right) = A\left(t,y\right)\exp\left(kx\right)$ with $A>0$, gives the trivial solution.
\end{enumerate}
\end{lem}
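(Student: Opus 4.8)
The plan is to recognize that in all three cases the hypothesis forces the second $x$-derivative of $F(\Theta)$ to vanish identically, so that $u = 2\partial_x^2 F(\Theta) \equiv 0$ by the very definition \eqref{eqn:FT} of the solution.

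For (i), I would simply apply the chain rule to $F(\Theta)$ in the $x$-variable. Since $\partial_x F(\Theta) = F'(\Theta)\Theta_x$, a further differentiation gives
\[
\partial_x^2 F(\Theta) = F''(\Theta)\Theta_x^2 + F'(\Theta)\Theta_{xx}.
\]
Thus the hypothesis $F''(\Theta)\Theta_x^2 + F'(\Theta)\Theta_{xx} = 0$ is nothing but $\partial_x^2 F(\Theta) = 0$ for all $(t,x,y)\in\mathbb R^3$, whence $u = 2\partial_x^2 F(\Theta) = 0$ identically. This is the whole content of part (i); no genuine obstacle is expected here.

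For (ii), I would specialize (i) to $F = \log$, for which $F'(s) = 1/s$ and $F''(s) = -1/s^2$. The expression from (i) then becomes
\[
\partial_x^2 \log\Theta = \frac{\Theta\Theta_{xx} - \Theta_x^2}{\Theta^2},
\]
which is well defined precisely because $\Theta > 0$. The hypothesis $\Theta\Theta_{xx} = \Theta_x^2$ makes the numerator vanish, so again $u = 2\partial_x^2\log\Theta = 0$. Equivalently, one observes that the hypothesis of (ii) is exactly the hypothesis of (i) read with $F = \log$ (after multiplying through by $\Theta^2>0$), so (ii) is a direct corollary of (i).

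For (iii), I would verify that every $\Theta(t,x,y) = A(t,y)\exp(kx)$ with $A > 0$ falls under (ii). Differentiating in $x$ gives $\Theta_x = k\Theta$ and $\Theta_{xx} = k^2\Theta$, so that $\Theta\Theta_{xx} = k^2\Theta^2 = \Theta_x^2$; hence (ii) applies and $u = 0$. Alternatively one computes directly that $\log\Theta = \log A(t,y) + kx$, whose second $x$-derivative vanishes, giving the trivial solution at once. The only point to track throughout is the standing assumption $\Theta > 0$, which is what guarantees that $\log\Theta$ and the quotients above are well defined; this is the sole (minor) subtlety, and the argument is otherwise a routine chain-rule computation.
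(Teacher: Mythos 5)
Your proposal is correct and follows essentially the same route as the paper: all three parts are handled by the chain-rule identity $u = 2\partial_x^2 F(\Theta) = 2\left(F''(\Theta)\Theta_x^2 + F'(\Theta)\Theta_{xx}\right)$, with (ii) as the $F=\log$ specialization of (i) and (iii) reduced to the previous cases by a direct computation on $\Theta = A(t,y)\exp(kx)$. No gap; your write-up is just a more explicit version of the paper's three-line argument.
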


\begin{proof}
Proof of (i). Direct from the fact that $u = 2\partial^{2}_{x}F\left(\Theta\right) = 2\left(F''\left(\Theta\right)\Theta^{2}_{x} + F'\left(\Theta\right)\Theta_{xx}\right)$. Proof of (ii): also direct from the fact that $ F''\left(\Theta\right)\Theta^{2}_{x} + F'\left(\Theta\right)\Theta_{xx} =\frac{\Theta \Theta_{xx} - \Theta^{2}_{x}}{\Theta^{2}}$, and the previous result. Finally, (iii) is consequence of the fact that in this case $\Theta F''\left(\Theta\right) = -F'\left(\Theta\right)$, for all $\left(t,x,y\right) \in \mathbb R^{3}$.
\end{proof}

\begin{rem}
Note that in Lemma \ref{FandPhase} we are assuming that the conditions are satisfied for all $\left(t,x,y\right) \in \mathbb R^{3}$. In the case where the conditions are satisfied just for certain points $\left(t,x,y\right)\in\mathbb R^{3}$, this points will correspond to zeros of the associated solution $u$.
\end{rem}

\begin{cor}
  If $\Theta>0$ is a smooth phase such that $\Theta \Theta_{xx} - \Theta^{2}_{x} = 0$, then $W_{x}\left(\Theta\right)= 0$.
\end{cor}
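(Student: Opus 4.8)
The plan is to obtain $W_x(\Theta)=0$ directly from the constraint by differentiating it twice in the $x$-variable. Recall from \eqref{Wronskians} that $W_x(\Theta)=\Theta_{xxxx}-\Theta_{xx}^2/\Theta$, and that the hypothesis $\Theta>0$ permits dividing by $\Theta$. Hence it suffices to establish the pointwise identity $\Theta\,\Theta_{xxxx}=\Theta_{xx}^2$ for all $(t,x,y)\in\mathbb{R}^3$, after which division by $\Theta$ gives the claim.

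First I would rewrite the hypothesis as $\Theta\,\Theta_{xx}=\Theta_x^2$. Differentiating both sides with respect to $x$ produces $\Theta_x\Theta_{xx}+\Theta\,\Theta_{xxx}=2\,\Theta_x\Theta_{xx}$, hence $\Theta\,\Theta_{xxx}=\Theta_x\Theta_{xx}$. Differentiating this relation once more yields $\Theta_x\Theta_{xxx}+\Theta\,\Theta_{xxxx}=\Theta_{xx}^2+\Theta_x\Theta_{xxx}$, and cancelling the common term $\Theta_x\Theta_{xxx}$ leaves precisely $\Theta\,\Theta_{xxxx}=\Theta_{xx}^2$, which is the desired identity. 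Throughout, the variables $t$ and $y$ are passive parameters, since all equalities are assumed valid for every $(t,x,y)$.

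As an alternative route, note that $\Theta\,\Theta_{xx}-\Theta_x^2=\Theta^2\,\partial_x^2\log\Theta$, so the hypothesis is equivalent to $\partial_x^2\log\Theta=0$. By the Kernel Lemma this forces $\Theta(t,x,y)=\exp\bigl(a(t,y)x+b(t,y)\bigr)$, for which each $x$-derivative is a scalar multiple of $\Theta$ (indeed $\Theta_x=a\Theta$, $\Theta_{xx}=a^2\Theta$, $\Theta_{xxxx}=a^4\Theta$), and the identity $\Theta_{xxxx}\Theta=\Theta_{xx}^2$ is then immediate. I expect no genuine obstacle here: the only point requiring care is the use of positivity $\Theta>0$ to legitimize the final division, and the observation that $t,y$ remain fixed parameters under $\partial_x$.
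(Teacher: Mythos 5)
Your proof is correct and is essentially the paper's own argument: differentiate the hypothesis $\Theta\Theta_{xx}=\Theta_x^2$ twice in $x$ to obtain $\Theta\Theta_{xxxx}=\Theta_{xx}^2$, then divide by $\Theta>0$. Your alternative route via $\partial_x^2\log\Theta=0$ and the Kernel Lemma is also valid, but the main computation matches the paper's proof line for line.
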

\begin{proof}
Taking derivative in $x$, $0= \Theta_{x} \Theta_{xx} + \Theta \Theta_{xxx} - 2\Theta_{x} \Theta_{xx} =  \Theta \Theta_{xxx} - \Theta_{x} \Theta_{xx}$. Once again, taking derivative in $x$, $0=\Theta_{x} \Theta_{xxx} + \Theta \Theta_{xxxx} - \Theta^{2}_{xx} - \Theta_{x} \Theta_{xxx} = \Theta W_{x}\left(\Theta\right)$. 
\end{proof}

\subsection{General vs. simple Wronskians}
It is noted that in Definition \ref{Thetas}, Wronskians \eqref{Wronskians} do not coincide with the expected value if taken from \eqref{KP3partes}. Indeed, the correct definition should be
\begin{equation}\label{general_W}
\begin{aligned}
    W^{F}_{y}\left(\Theta\right) :=&~{} \Theta_{yy} - F'\left(\Theta\right)\Theta^{2}_{y}, \\
    W^{F}_{x}\left(\Theta\right) :=&~{} \Theta_{xxxx} - F'\left(\Theta\right)\Theta^{2}_{xx}.
\end{aligned}
\end{equation}
Here $F$ is taken general. One recovers the values stated in Definition \ref{Thetas} if $F=\log$.  Notice that from \eqref{KP3partes_new} in terms of $Ai\left(\Theta\right)$, one gets
\begin{equation}\label{KP3partes_new_new}
\begin{aligned}
    &\left(F'''' + 6F''^{2}\right)\left(\Theta\right)\Theta^{4}_{x} + 6\left(F'' + F'^{2}\right)'\left(\Theta\right)\Theta^{2}_{x}\Theta_{xx} + 3\left(F'' + F'^{2}\right)\left(\Theta\right)\left(\Theta^{2}_{xx} + \Theta^{2}_{y}\right)  \\
    %&-4F''(\Theta)\Theta_{x}Ai(\Theta)  -4F'(\Theta)Ai(\Theta)_{x}  \\
    &-4\left(F'\left(\Theta\right)Ai\left(\Theta\right)\right)_{x}+ 3F'\left(\Theta\right)\left(W^{F}_{y}\left(\Theta\right) - W^{F}_{x}\left(\Theta\right)\right)=0.
\end{aligned}
\end{equation}
Having this structure in mind, let us study the phases related to the Wronskian conditions \eqref{Wronskians}. 

\begin{lem}
   Assume that $u = 2\partial^{2}_{x} \log\left(\Theta\right)$ is solution to KP with $\Theta>0$ and $Ai\left(\Theta\right) = 0$. Then $W_{y}\left(\Theta\right) = W_{x}\left(\Theta\right)$.
\end{lem}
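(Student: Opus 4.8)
The plan is to start from the reformulation \eqref{KP3partes_new_new} of the governing equation, since it already isolates the two Wronskian operators $W_x^F$ and $W_y^F$. The first observation is that with $F=\log$ one has $F'\left(s\right)=1/s$, so that the generalized Wronskians \eqref{general_W} collapse onto the simple ones in \eqref{Wronskians}: $W_x^F\left(\Theta\right)=W_x\left(\Theta\right)$ and $W_y^F\left(\Theta\right)=W_y\left(\Theta\right)$. Thus it suffices to establish $W_y^F\left(\Theta\right)=W_x^F\left(\Theta\right)$, and then read off the claim.

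Next I would exploit the fact that $F=\log$ forces $\rho\equiv 0$. By Lemma \ref{dem:ODE1}, $F=\log$ gives $\rho=F''+F'^2=0$, and then Lemma \ref{dem:ODE2}$(i)$ (equivalently, the identity \eqref{ODE_rho}) yields $F''''+6F''^2=0$ as well. Consequently every term in the entire first line of \eqref{KP3partes_new_new} vanishes identically, since each is a multiple of $\rho$, of $\rho'=\left(F''+F'^2\right)'$, or of $F''''+6F''^2$.

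The key step is then to use the hypothesis $Ai\left(\Theta\right)=0$. Because this holds for all $\left(t,x,y\right)\in\mathbb R^3$, its $x$-derivative also vanishes, so the term $-4\left(F'\left(\Theta\right)Ai\left(\Theta\right)\right)_x$ in \eqref{KP3partes_new_new} drops out completely: expanding it gives $F''\left(\Theta\right)\Theta_x Ai\left(\Theta\right)+F'\left(\Theta\right)Ai\left(\Theta\right)_x$, and both summands carry either $Ai\left(\Theta\right)$ or $Ai\left(\Theta\right)_x$ as a factor. What remains of \eqref{KP3partes_new_new} is therefore
\[
3F'\left(\Theta\right)\left(W_y^F\left(\Theta\right)-W_x^F\left(\Theta\right)\right)=0.
\]
Since $\Theta>0$ and $F=\log$, the prefactor $F'\left(\Theta\right)=1/\Theta$ is strictly positive everywhere, so it can be divided out, yielding $W_y^F\left(\Theta\right)=W_x^F\left(\Theta\right)$ and hence $W_y\left(\Theta\right)=W_x\left(\Theta\right)$.

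There is no serious obstacle here; the only point requiring care is that the vanishing of $\left(F'\left(\Theta\right)Ai\left(\Theta\right)\right)_x$ relies on $Ai\left(\Theta\right)=0$ being a pointwise identity throughout $\mathbb R^3$, rather than at isolated points, so that differentiating in $x$ is legitimate and returns zero. Everything else is the bookkeeping of substituting $F=\log$ into the already-split form \eqref{KP3partes_new_new}.
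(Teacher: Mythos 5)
Your proposal is correct and follows essentially the same route as the paper's proof: both use the reformulation \eqref{KP3partes_new_new}, invoke Lemmas \ref{dem:ODE1} and \ref{dem:ODE2} to kill the $\rho$-terms via $F=\log$, drop the Airy term using $Ai\left(\Theta\right)=0$, and divide out $F'\left(\Theta\right)\neq 0$ to conclude $W_{y}^F\left(\Theta\right)=W_{x}^F\left(\Theta\right)$, hence $W_{y}\left(\Theta\right)=W_{x}\left(\Theta\right)$. Your write-up simply makes explicit the bookkeeping that the paper compresses into one sentence.
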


\begin{proof}
Since $F=\log$ one has from Lemmas \ref{dem:ODE1} and \ref{dem:ODE2} that $\rho = F'' + F'^2 = 0$ and $F'''' + 6F''^{2}=0$. Using $Ai\left(\Theta\right) = 0$, from \eqref{KP3partes_new_new} and the fact that $F'\neq 0$, we get
\[
\begin{aligned}
    W^{F}_{y}\left(\Theta\right) - W^{F}_{x}\left(\Theta\right)= 0.
\end{aligned}
\]
The conclusion is obtained by recalling that $F=\log$. 
\end{proof}

Now we put our attention to the following rigidity property.

\begin{lem}\label{HWxWy}
Given any everywhere smooth functions $F$ and  $\Theta>0$, if $H\left(\Theta\right) = 0$, then $W^{F}_{y}\left(\Theta\right) - W^{F}_{x}\left(\Theta\right)=W_{y}\left(\Theta\right) - W_{x}\left(\Theta\right)=0$.  %Moreover, in any case above $F=\log.$
\end{lem}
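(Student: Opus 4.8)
The plan is to reduce everything to the single differential consequence of $H(\Theta)=0$ and then verify that the two pairs of Wronskian functionals differ only by terms that this consequence kills. The starting point is the hypothesis $H(\Theta)=\Theta_y-\Theta_{xx}=0$, i.e.
\[
\Theta_y=\Theta_{xx}\qquad\text{everywhere.}
\]
From this single identity I would extract the second-order relation $\Theta_{yy}=\Theta_{xxxx}$, which is the only nontrivial ingredient needed.

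First I would differentiate $\Theta_y=\Theta_{xx}$ with respect to $y$, obtaining $\Theta_{yy}=\Theta_{xxy}$. Separately, I would differentiate the same identity twice with respect to $x$, obtaining $\Theta_{yxx}=\Theta_{xxxx}$. Since $\Theta$ is smooth, the mixed partials commute, so $\Theta_{xxy}=\Theta_{yxx}$, and chaining these gives
\[
\Theta_{yy}=\Theta_{xxy}=\Theta_{yxx}=\Theta_{xxxx}.
\]
This is the heart of the argument; it is elementary, but it is where the Heat condition actually enters.

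With the two relations $\Theta_y=\Theta_{xx}$ and $\Theta_{yy}=\Theta_{xxxx}$ in hand, the rest is a matching of terms. For the generalized Wronskians I would write, directly from \eqref{general_W},
\[
W^{F}_{y}(\Theta)-W^{F}_{x}(\Theta)=\bigl(\Theta_{yy}-\Theta_{xxxx}\bigr)-F'(\Theta)\bigl(\Theta_y^2-\Theta_{xx}^2\bigr),
\]
and both parentheses vanish by the two relations, giving $W^{F}_{y}(\Theta)-W^{F}_{x}(\Theta)=0$. The identical computation with $F'(\Theta)$ replaced by $1/\Theta$ (legitimate since $\Theta>0$), using the definitions in \eqref{Wronskians}, yields
\[
W_{y}(\Theta)-W_{x}(\Theta)=\bigl(\Theta_{yy}-\Theta_{xxxx}\bigr)-\frac{1}{\Theta}\bigl(\Theta_y^2-\Theta_{xx}^2\bigr)=0.
\]

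The only point requiring any care is the passage $\Theta_y=\Theta_{xx}\Rightarrow\Theta_{yy}=\Theta_{xxxx}$; everything else is a term-by-term cancellation that is insensitive to the choice of coefficient function ($F'(\Theta)$ versus $1/\Theta$), which is exactly why the same conclusion holds simultaneously for both the generalized and the simple Wronskian differences. I do not expect a genuine obstacle here, since smoothness of $\Theta$ guarantees the interchange of partial derivatives; the value of the statement is structural rather than technical, namely that the Heat condition collapses the $x$- and $y$-Wronskian discrepancies independently of the profile $F$.
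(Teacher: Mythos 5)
Your proposal is correct and follows essentially the same route as the paper: both derive $\Theta_{yy}=\Theta_{xxxx}$ from $H(\Theta)=0$ via commutation of mixed partials (the paper does this by expanding $H(\Theta)_y+H(\Theta)_{xx}=0$, which is the same computation), and then both conclude by term-by-term cancellation in the Wronskian differences, with the simple Wronskian case being the generalized one with $F'(\Theta)$ replaced by $1/\Theta$. Your direct cancellation for $W_y(\Theta)-W_x(\Theta)$ is marginally cleaner than the paper's detour through the identity relating it to $W_y^F(\Theta)-W_x^F(\Theta)$, but it is the same argument in substance.
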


\begin{proof}%[Proof of Lemma \ref{HWxWy}]
Fix smooth functions $F$ and $\Theta>0$.  By hypothesis  $H\left(\Theta\right) = 0$. Then $H\left(\Theta\right)_{y}=H\left(\Theta\right)_{xx}=0$. Consequently, $0=H\left(\Theta\right)_{y} + H\left(\Theta\right)_{xx} = \Theta_{yxx} - \Theta_{xxxx} + \Theta_{yy} - \Theta_{xxy}$. Since $\Theta_{xxy} = \Theta_{yxx}$, one obtains $\Theta_{yy} = \Theta_{xxxx}$. Finally, from \eqref{general_W},
\[
    W^{F}_{x}\left(\Theta\right) = \Theta_{xxxx} - F'\left(\Theta\right) \Theta^{2}_{xx} = \Theta_{yy} - F'\left(\Theta\right)\Theta^{2}_{y} = W^{F}_{y}\left(\Theta\right).
\]
This proves that $ W^{F}_{x}\left(\Theta\right)=W^{F}_{y}\left(\Theta\right)$. Now we prove that the previous result is independent of $F$. First of all, one has again from \eqref{general_W},
\[
W^{F}_{y}\left(\Theta\right) - W^{F}_{x}\left(\Theta\right) = \left(\Theta_{yy} - \Theta_{xxxx}\right) + F'\left(\Theta\right)\left(\Theta^{2}_{xx} - \Theta^{2}_{y}\right) .
\]
We compute,
\begin{equation*}
\begin{aligned}
 &~{} W^{F}_{y}\left(\Theta\right) - W^{F}_{x}\left(\Theta\right) \\
&~{} =   W_y\left(\Theta\right) -W_x\left(\Theta\right)+ \frac{1}{\Theta}\Theta^{2}_{y}  -\frac{1}{\Theta}\Theta^{2}_{xx}  + F'\left(\Theta\right)\left(\Theta^{2}_{xx} - \Theta^{2}_{y}\right)\\
&~{} =  W_y\left(\Theta\right) -W_x\left(\Theta\right) + \left(\frac{1}{\Theta} - F'\left(\Theta\right)\right)H\left(\Theta\right)\left(\Theta_{y} + \Theta_{xx}\right).
\end{aligned}
\end{equation*}
Since $H\left(\Theta\right)=0$, by the previous result $ W^{F}_{y}\left(\Theta\right) - W^{F}_{x}\left(\Theta\right) =0$ and $W_y\left(\Theta\right) -W_x\left(\Theta\right) =0$.
\end{proof}

\begin{rem}
The converse of Lemma \ref{HWxWy} does not hold in general. Indeed, the condition $H\left(\Theta\right) = 0$ is sufficient to cancel the third part of the equation \eqref{KP3partes_new}. However, the conditions $W^{F}_{x}\left(\Theta\right) = W^{F}_{y}\left(\Theta\right)$ are not sufficient to cancel $H\left(\Theta\right)$. See Corollary \ref{coro22} for additional details.
\end{rem}

\begin{lem}\label{lem:Wy}
 Let $\Theta>0$ be a everywhere smooth real-valued phase. Then $\Theta$ is of $y$-Wronskian type if and only if $\Theta = A\left(t,x\right)\exp\left(c\left(t,x\right)y\right)$, for arbitrary $A>0$, $c$ real-valued.
\end{lem}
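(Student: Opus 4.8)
The plan is to reduce the nonlinear condition $W_y(\Theta)=\Theta_{yy}-\Theta_y^2/\Theta=0$ to a statement about the second $y$-derivative of $\log\Theta$, exactly as was done for the $x$-variable in the kernel lemma of Section~\ref{Sec:2}. Since $\Theta>0$ everywhere, the function $\log\Theta$ is well-defined and smooth, and the central observation is the identity
\begin{equation*}
\partial_y^2\log\Theta=\partial_y\!\left(\frac{\Theta_y}{\Theta}\right)=\frac{\Theta\,\Theta_{yy}-\Theta_y^2}{\Theta^2}=\frac{W_y(\Theta)}{\Theta}.
\end{equation*}
Because $\Theta>0$, this shows that $W_y(\Theta)=0$ for all $(t,x,y)\in\mathbb R^3$ if and only if $\partial_y^2\log\Theta=0$ for all $(t,x,y)\in\mathbb R^3$.

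For the forward direction I would then freeze $t$ and $x$ and integrate $\partial_y^2\log\Theta=0$ twice in the $y$-variable. This yields $\log\Theta(t,x,y)=c(t,x)\,y+d(t,x)$ for some real-valued functions $c,d$ depending only on $t,x$, and hence $\Theta(t,x,y)=A(t,x)\exp(c(t,x)y)$ with $A:=\exp(d)>0$. To confirm the regularity claimed in the statement I would identify the integration ``constants'' explicitly via $A(t,x)=\Theta(t,x,0)>0$ and $c(t,x)=\Theta_y(t,x,0)/\Theta(t,x,0)$; both are smooth functions of $(t,x)$ as quotients and evaluations of the smooth, strictly positive $\Theta$, so $A$ and $c$ inherit smoothness from $\Theta$.

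The converse is a direct verification: if $\Theta=A(t,x)\exp(c(t,x)y)$ with $A>0$, then $\log\Theta=\log A(t,x)+c(t,x)y$ is affine in $y$, so $\partial_y^2\log\Theta=0$, and by the identity above $W_y(\Theta)=\Theta\,\partial_y^2\log\Theta=0$.

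There is no genuine obstacle here: the computation is a one-variable ODE integration after the logarithmic change of variable, and the only points requiring a word of care are the use of $\Theta>0$ (both to define $\log\Theta$ and to pass freely between $W_y(\Theta)=0$ and $\partial_y^2\log\Theta=0$) and the smoothness of the resulting $A,c$, which I would settle through the explicit formulas at $y=0$ above. This mirrors, in the $y$-direction, the structure already recorded for $\partial_x^2\log\Theta$ earlier in the paper.
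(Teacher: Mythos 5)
Your proof is correct and is essentially the paper's own argument: the paper also divides by $\Theta^{2}$ to recognize $W_{y}(\Theta)/\Theta=\partial_{y}\left(\Theta_{y}/\Theta\right)=\partial_{y}^{2}\log\Theta$ and integrates in $y$ to obtain $\Theta=A\left(t,x\right)\exp\left(c\left(t,x\right)y\right)$. Your additional identification $A\left(t,x\right)=\Theta\left(t,x,0\right)$, $c\left(t,x\right)=\Theta_{y}\left(t,x,0\right)/\Theta\left(t,x,0\right)$ is a nice touch that makes the smoothness of $A$ and $c$ explicit, a point the paper leaves implicit.
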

\begin{proof}
The sufficient condition is clear. We prove the necessary condition. By hypothesis, $\Theta W_{y}\left(\Theta\right) = 0$, i.e. $\Theta \Theta_{yy} - \Theta^{2}_{y}= 0$. Since $\Theta>0$, this equation can be written as  $\Theta^{2} \partial_{y} \left(\ \frac{\Theta_y}{\Theta} \right) = 0$. Hence,  $\Theta_y = c\left(t,x\right) \Theta$, with $c\left(t,x\right)$ a well-defined function in $\mathbb R^{2}$. Then $\Theta \left(t,x,y\right) = A\left(t,x\right)\exp\left(c\left(t,x\right)y\right)$. Note that $A\left(t,x\right) > 0$, since $\Theta \left(t,x,y\right) > 0$.
\end{proof}

\begin{rem}
It is noticed that phases satisfying the $y$-Wronskian condition are extremely rigid. On the contrary, the $x$-Wronskian condition seems less demanding.
\end{rem}

\begin{cor}\label{wywx0}
 Let $\Theta>0$ be a smooth phase. Then $\Theta$ is of $x$-Wronskian and $y$-Wronskian type if and only if  
 \begin{equation*}%\label{WyWx}
 \Theta\left(t,x,y\right) = A\left(t,x\right)\exp\left( \left(c_{1}\left(t\right) + c_{2}\left(t\right)x\right)y\right),
 \end{equation*}
with $c_1,c_2$ are time-dependent, smooth arbitrary functions, and $A>0$ is an $x$-Wronskian type function. In the case where $c_2\left(t\right)\neq 0$, one has $A\left(t,x\right) =  c_{3}\left(t\right)\exp\left(c_{4}\left(t\right)x\right)$, with $c_3\left(t\right)>0$ and $c_4\left(t\right)\in\mathbb R$ is smooth and arbitrary. 
\end{cor}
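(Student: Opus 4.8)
The plan is to reduce everything to Lemma \ref{lem:Wy}, which already characterizes the $y$-Wronskian condition, and then impose the $x$-Wronskian condition on the resulting Ansatz. By Lemma \ref{lem:Wy}, a positive smooth phase satisfies $W_y(\Theta)=0$ if and only if $\Theta=A(t,x)\exp(c(t,x)y)$ with $A>0$ and $c$ real-valued. So the content of the corollary is entirely about what the additional requirement $W_x(\Theta)=0$ does to the pair $(A,c)$. The sufficiency (``if'') direction I would dispatch by direct substitution: the $y$-Wronskian part holds for free by Lemma \ref{lem:Wy} since $c=c_1+c_2x$ is a legitimate function of $(t,x)$; for the $x$-Wronskian part, when $c_2=0$ the factor $e^{c_1(t)y}$ is constant in $x$ and one checks $W_x(\Theta)=W_x(A)\,e^{c_1 y}$, which vanishes precisely because $A$ is $x$-Wronskian type, while when $c_2\neq0$ the exponential form $A=c_3e^{c_4x}$ makes $\log\Theta$ affine in $x$, so all $x$-derivatives of $\Theta$ are proportional to $\Theta$ and $\Theta\Theta_{xxxx}=\Theta_{xx}^2$ identically.

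The heart of the argument is the necessity direction, and the key device I would use is the logarithmic derivative $L:=\Theta_x/\Theta=(\log A)_x+c_x\,y$, which is \emph{affine in $y$}. A short computation gives $\Theta_{xx}/\Theta=L_x+L^2$ and $\Theta_{xxxx}/\Theta=L_{xxx}+4LL_{xx}+3L_x^2+6L^2L_x+L^4$, so that
\[
\frac{W_x(\Theta)}{\Theta}=L_{xxx}+4LL_{xx}+2L_x^2+4L^2L_x,
\]
with subscripts denoting $\partial_x$. Thus $W_x(\Theta)=0$ is equivalent to this differential polynomial in $L$ vanishing. Writing $L=p+qy$ with $p:=(\log A)_x$ and $q:=c_x$, the left-hand side becomes a cubic polynomial in $y$, and requiring it to vanish for all $y$ forces each of its four coefficients to vanish.

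I would then extract the structure coefficient by coefficient. The degree-three coefficient is $4q^2q_x=4c_x^2c_{xx}$; its vanishing says $(c_x^3)_x=0$, hence $c_x$ is independent of $x$ and $c(t,x)=c_1(t)+c_2(t)x$. Feeding $c_{xx}=0$ back in, the $y^2$ coefficient collapses to $4c_2^2(\log A)_{xx}$ and the $y^0$ coefficient is exactly $W_x(A)/A$; the $y^1$ coefficient then vanishes automatically. Hence $A$ is always $x$-Wronskian type, and if $c_2\neq0$ we are forced to have $(\log A)_{xx}=0$, i.e.\ $\log A$ affine in $x$, giving $A=c_3(t)\exp(c_4(t)x)$ with $c_3>0$ because $A>0$. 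This is precisely the claimed form. The only genuine obstacle is bookkeeping: the expansion of the cubic in $y$ is mechanical, and the single conceptual point I would emphasize is that the top-order coefficient $c_x^2c_{xx}$ \emph{by itself} forces $c$ to be affine in $x$, after which the lower-order coefficients degenerate and the case split $c_2=0$ versus $c_2\neq0$ closes the argument.
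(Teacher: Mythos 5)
Your proof is correct and takes essentially the same route as the paper's: both directions reduce to Lemma \ref{lem:Wy}, substitute $\Theta = A(t,x)\exp(c(t,x)y)$ into $W_x(\Theta)=0$, expand the result as a cubic polynomial in $y$, use the vanishing of the top coefficient $4c_x^2c_{xx}$ (equivalently $(c_x^3)_x=0$) to force $c=c_1(t)+c_2(t)x$, and then split on $c_2=0$ versus $c_2\neq 0$, where the $y^2$ coefficient forces $AA_{xx}=A_x^2$ and renders the $y^1$ and $y^0$ equations automatic. Your logarithmic-derivative variable $L=\Theta_x/\Theta$ is only a bookkeeping simplification — your coefficients coincide with the paper's after dividing by $A^2$ — not a genuinely different argument.
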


\begin{proof}
As in the proof of Lemma \ref{lem:Wy}, we only prove the necessary condition. Thanks to this last result, $\Theta \left(t,x,y\right) = A\left(t,x\right)\exp\left(c\left(t,x\right)y\right)$. Assuming now $W_{x}\left(\Theta\right) = 0$, i.e. $\Theta \Theta_{xxxx} - \Theta^{2}_{xx} = 0$, and replacing $\Theta$ in this equation, one gets the next degree 3 polynomial in the variable $y$:
\[
\begin{aligned}
    p\left(y\right) &= y^{3}\left(4A^{2}c^{2}_{x}c_{xx}\right)\\
    &+ y^{2}\left(-4A^{2}_{x}c^{2}_{x} + 4AA_{xx}c^{2}_{x} + 8AA_{x}c_{x}c_{xx} + 2A^{2}c^{2}_{xx} + 4A^{2}c_{x}c_{xxx}\right)\\
    &+ y\left(-4A_{x}A_{xx}c_{x} + 4AA_{xx}c_{xx} + 4AA_{xxx}c_{x} + 4AA_{x}c_{xxx} + A^{2}c_{xxxx}\right)\\
    &+ AA_{xxxx} - A^{2}_{xx} = 0.
\end{aligned}
\]
By the linear independence, each of the coefficients that multiplies $y^{i}$ with $i \in \left\{0,1,2,3\right\}$, must be equal to zero. Making the coefficient that multiplies $y^{3}$ equal to 0 one gets $4A^{2}c^{2}_{x}c_{xx} = 0$. Since $A>0$, $\left(c^{3}_{x}\right)_{x} = 0.$ Consequently, $c \left(t,x\right)= c_{1}\left(t\right) + c_{2}\left(t\right)x$. Now the remaining equations in powers of $y$ return the equations
\[
\begin{aligned}
    AA_{xxxx} - A^{2}_{xx} &= 0,\\
    4c_{2}\left(t\right)\left(AA_{xxx} - A_{x}A_{xx}\right) &= 0,\\
    4c^{2}_{2}\left(t\right)\left(AA_{xx} - A^{2}_{x}\right) &= 0.
\end{aligned}
\]
Assume $c_{2}\left(t\right) \neq 0$. The third equation implies the first and second ones. We are left to solve $AA_{xx} = A^{2}_{x}$, or $A^2\left( \dfrac{A_x}{A}\right)_x=0$. The solution is
\begin{equation*}
    A\left(t,x\right) = c_{3}\left(t\right)\exp\left(c_{4}\left(t\right)x\right).
\end{equation*}
Notice that $A$ is of $x$-Wronskian type, that is, $AA_{xxxx} = A^{2}_{xx}$. Replacing $A$ in the phase $\Theta$, 
\begin{equation*}
    \Theta\left(t,x,y\right) = c_{3}\left(t\right)\exp\big(c_{4}\left(t\right)x + \left(c_{1}\left(t\right) + c_{2}\left(t\right)x\right)y\big),
\end{equation*}
as desired.

\medskip

If now $c_{2}\left(t\right) = 0$, then $A$ is of $x$-Wronskian type. This ends the proof.
\end{proof}

\begin{cor}\label{coro22}
Assume $\Theta>0$ everywhere. The conditions $W_{y}\left(\Theta\right)= W_{x}\left(\Theta\right) = 0$ do not necessarily imply $H\left(\Theta\right) = 0$. 
\end{cor}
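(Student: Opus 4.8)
The plan is to prove Corollary~\ref{coro22} by exhibiting an explicit counterexample, since the statement is a negative (non-implication) result. The most economical strategy is to use the characterization already established in Corollary~\ref{wywx0}: any phase $\Theta>0$ satisfying $W_x(\Theta)=W_y(\Theta)=0$ with $c_2(t)\neq 0$ must have the form
\begin{equation}\label{cex_form}
\Theta(t,x,y) = c_3(t)\exp\bigl(c_4(t)x + (c_1(t)+c_2(t)x)y\bigr).
\end{equation}
So I would simply choose constants, say $c_3\equiv 1$, $c_4\equiv 0$, $c_1\equiv 0$, $c_2\equiv 1$, giving the candidate $\Theta(t,x,y)=\exp(xy)$. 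This automatically lies in the class described by Corollary~\ref{wywx0}, hence satisfies both Wronskian conditions; but one then checks directly that $H(\Theta)=\Theta_y-\Theta_{xx}\neq 0$, which establishes the non-implication.

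First I would verify the Wronskian conditions hold for $\Theta=\exp(xy)$, either by direct computation or, more cleanly, by invoking Corollary~\ref{wywx0} with the stated choice of constants, so that no independent verification is needed. Second I would compute the Heat operator: with $\Theta=\exp(xy)$ one has $\Theta_y = x\exp(xy)$ and $\Theta_{xx}=y^2\exp(xy)$, so
\begin{equation}\label{cex_heat}
H(\Theta) = \Theta_y - \Theta_{xx} = (x-y^2)\exp(xy),
\end{equation}
which is manifestly not identically zero. This contradicts $H(\Theta)=0$ and completes the argument. I would note in passing that positivity $\Theta>0$ is automatic since $\exp(xy)>0$ everywhere, so the example genuinely lives in the admissible class.

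The only subtlety I would want to check is that the chosen example is truly consistent with the hypotheses of the corollary it relies on: Corollary~\ref{wywx0} treats $c_1,c_2,c_4$ as time-dependent and $A>0$, and the constant choice above is a legitimate special case (with $A(t,x)=c_3(t)\exp(c_4(t)x)=1$ of $x$-Wronskian type, trivially). Since there is no obligation for $\Theta$ to solve the full KP equation \eqref{KP3partes}—the corollary only concerns the two algebraic Wronskian identities—no further compatibility constraints arise. The main (and essentially only) obstacle is purely cosmetic: making sure the example selected is simple enough that $H(\Theta)\neq 0$ is evident, while still satisfying $W_x=W_y=0$; the form \eqref{cex_form} guarantees this by construction. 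An even simpler alternative, should one prefer to avoid appealing to Corollary~\ref{wywx0}, is to verify all three identities by hand for $\Theta=\exp(xy)$, but the structural route through \eqref{cex_form} is shorter and more transparent.
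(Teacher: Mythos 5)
Your proposal is correct and takes essentially the same route as the paper: both invoke Corollary~\ref{wywx0} to produce phases satisfying $W_x(\Theta)=W_y(\Theta)=0$ and then compute the Heat operator on such a phase to see it is not identically zero. The only difference is that you specialize to the concrete instance $\Theta=\exp(xy)$, whereas the paper keeps the general coefficients and observes that $H(\Theta)=\Theta\left(\left(c_1+c_2x\right)-\left(c_2y+c_4\right)^2\right)$ vanishes identically only when $c_2=0$ and $c_1=c_4^2$.
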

 
\begin{proof}
From Corollary \ref{wywx0}, and assuming $c_2\left(t\right)$ different from zero, we obtain that necessarily $\Theta\left(t,x,y\right) = c_{3}\left(t\right)\exp\left(c_{4}\left(t\right)x + \left(c_{1}\left(t\right) + c_{2}\left(t\right)x\right)y\right)$. Then
\begin{equation*}
    H\left(\Theta\right) = \Theta_{y} - \Theta_{xx} = \Theta \left(\left(c_1+ c_2x \right) - \left(c_2 y + c_4\right)^{2}\right).% = 0 .
\end{equation*}
Then $H$ is identically zero only if $c_2 = 0$ and $c_1 = c_4^{2}$. Therefore, in general $W_{x}\left(\Theta\right) = W_{y}\left(\Theta\right) = 0$ do not imply $H\left(\Theta\right) = 0$, except if $c_2 = 0$ and $c_1 = c_4^{2}$.
\end{proof}

\begin{lem}\label{4p11}
    If $W_{y}\left(\Theta\right) = W_{x}\left(\Theta\right) = Ai\left(\Theta\right) = 0$, then $\Theta = A\left(t,x\right)\exp\left(cy\right)$, with $A\left(t,x\right)$ being of Airy type and with $c \in \mathbb R$.
\end{lem}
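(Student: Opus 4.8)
The plan is to build directly on Corollary \ref{wywx0}, which already dispatches the two Wronskian conditions, and then to use the Airy condition $Ai(\Theta)=0$ to rigidify the remaining $y$-dependence. First I would invoke Corollary \ref{wywx0}: since $W_x(\Theta)=W_y(\Theta)=0$ and $\Theta>0$, the phase must take the factorized form
\[
\Theta(t,x,y) = A(t,x)\exp\big((c_1(t)+c_2(t)x)y\big),
\]
with $A>0$ of $x$-Wronskian type and $c_1,c_2$ smooth in $t$. It then remains only to impose $Ai(\Theta)=\Theta_t-\Theta_{xxx}=0$ and to read off what it forces on $c_1$, $c_2$, and $A$.

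The key computational step is to evaluate $\Theta_{xxx}$ for this factorized form. Writing $k(t,x)=c_1(t)+c_2(t)x$, so that $k_x=c_2(t)$ is constant in $x$ (and $k_{xx}=0$), differentiation acts as $\partial_x(A e^{ky}) = (A_x + c_2 y\,A)e^{ky}$; iterating three times gives $\Theta_{xxx} = (A_{xxx} + 3c_2 y A_{xx} + 3c_2^2 y^2 A_x + c_2^3 y^3 A)e^{ky}$, while $\Theta_t = (A_t + (c_1'(t)+c_2'(t)x)y\,A)e^{ky}$. Substituting into $Ai(\Theta)=0$ and cancelling the nonvanishing factor $e^{ky}$ yields a cubic polynomial identity in $y$ that must hold for every $y$; by linear independence of $1,y,y^2,y^3$ each coefficient vanishes separately.

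The decisive coefficient is that of $y^3$, namely $c_2(t)^3 A(t,x)=0$; since $A>0$ everywhere this forces $c_2(t)\equiv 0$, collapsing the exponent to $c_1(t)y$ and killing the $y^2$ contribution as well. With $c_2\equiv 0$ (hence $c_2'\equiv 0$) the coefficient of $y^1$ reduces to $c_1'(t)A=0$, so again by $A>0$ we obtain $c_1'(t)\equiv 0$, i.e. $c_1(t)=c$ is constant. Finally the coefficient of $y^0$ is exactly $A_t-A_{xxx}=0$, so $A$ is of Airy type. This yields $\Theta(t,x,y)=A(t,x)e^{cy}$ with $A$ of Airy type and $c\in\mathbb R$, as claimed.

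I do not expect a serious obstacle: once Corollary \ref{wywx0} supplies the factorized form, the argument is finite-dimensional linear-independence bookkeeping in the powers of $y$. The one point requiring a little care is the computation of $\Theta_{xxx}$, where the linearity of $k$ in $x$ is precisely what caps the $y$-polynomial at degree $3$ and makes the top coefficient the clean monomial $c_2^3 A$; the positivity of $A$ then does all of the rigidifying work, successively eliminating $c_2$, then $c_1'$, and finally pinning $A$ to the Airy equation.
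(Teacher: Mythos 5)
Your proof is correct and follows essentially the same route as the paper: invoke Corollary \ref{wywx0} to obtain the factorized form $\Theta = A(t,x)\exp\left(\left(c_1(t)+c_2(t)x\right)y\right)$, impose $Ai(\Theta)=0$, and annihilate the resulting cubic polynomial in $y$ coefficient by coefficient, using $A>0$ to force $c_2\equiv 0$, then $c_1'\equiv 0$, then $A_t-A_{xxx}=0$. If anything, your derivation of $c_2\equiv 0$ from the $y^3$ coefficient is stated more cleanly than in the paper, which phrases that step as an assumption ("Assuming $c_2(t)=0$") rather than as the forced consequence it is.
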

\begin{proof}
From Corollary \ref{wywx0}, if $W_{y}\left(\Theta\right) = W_{x}\left(\Theta\right)=0$ one has 
\[
 \Theta\left(t,x,y\right) = A\left(t,x\right)\exp\left(\left(c_{1}\left(t\right) + c_{2}\left(t\right)x\right)y\right).
 \]  
If now $Ai\left(\Theta\right) = 0$,
\[
\begin{aligned}
0= &~{} -y \exp\left(y\left(c_1\left(t\right)+x c_2\left(t\right)\right)\right) \left(3 c_2\left(t\right) A_{xx}\left(t,x\right)+c_1'\left(t\right)\left(-A\left(t,x\right)\right)-x c_2'\left(t\right)A\left(t,x\right)\right) \\
&~{} -3 y^2 c_2^2\left(t\right) A_x\left(t,x\right)\exp\left(y\left(c_1\left(t\right)+xc_2\left(t\right)\right)\right) \\
&~{} -\left(A_{xxx}\left(t,x\right)-A_t\left(t,x\right)\right)\exp\left(y\left(c_1\left(t\right)+xc_2\left(t\right)\right)\right) \\
&~{} -y^3 c_2^3\left(t\right)A\left(t,x\right)\left(\exp\left(y\left(c_1\left(t\right)+xc_2\left(t\right)\right)\right)\right).
\end{aligned}
\]
If $A=0$, we are done. Assuming $c_2\left(t\right)=0$, one gets $c_1'\left(t\right)=0$ and then $c_1$ is constant, and $A$ satisfies Airy. Then  $\Theta = A\left(t,x\right)\exp\left(cy\right)$, with $A\left(t,x\right)$ being of Airy type.
\end{proof}

\begin{lem}
  Let $\Theta>0$ be a smooth phase, and $F$ smooth such that $u=2\partial_x^2 F\left(\Theta\right)$ solves KP \eqref{eq:KP}. Then  $Ai\left(\Theta\right) = 0$ and $H\left(\Theta\right) = 0 $ imply  $\mathcal T\left(\Theta\right) = 0$. 
\end{lem}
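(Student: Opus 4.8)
The plan is to prove this directly from the definition of the operator $\mathcal T$ in \eqref{eqn:T_type}, by making the structural observation that every single term comprising $\mathcal T(\Theta)$ is a multiple of either the Airy functional $Ai(\Theta)$, its $x$-derivative $Ai(\Theta)_x$, the Heat functional $H(\Theta)$, or one of its derivatives $H(\Theta)_y$ or $H(\Theta)_{xx}$. Once this is noticed, the conclusion is immediate and requires no information beyond the two hypotheses.

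First I would record that the hypotheses are imposed for \emph{all} $(t,x,y)\in\mathbb R^3$. Thus $Ai(\Theta)\equiv 0$ is an identity in the three variables, and differentiating it in $x$ gives $Ai(\Theta)_x\equiv 0$. Likewise, $H(\Theta)\equiv 0$ identically yields $H(\Theta)_y\equiv 0$ and $H(\Theta)_{xx}\equiv 0$ upon differentiating once in $y$ and twice in $x$ respectively. This step is the only place where the word ``for all $(t,x,y)$'' in Definition \ref{Thetas} is genuinely used.

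Next I would substitute these vanishings into the three lines of \eqref{eqn:T_type} one at a time. The first line $-4F''(\Theta)Ai(\Theta)\Theta_x$ vanishes because of the factor $Ai(\Theta)$; the second line $F'(\Theta)\left(-4Ai(\Theta)_x + 3(H(\Theta)_y + H(\Theta)_{xx})\right)$ vanishes because each of $Ai(\Theta)_x$, $H(\Theta)_y$, $H(\Theta)_{xx}$ is now zero; and the third line $-3F'^2(\Theta)H(\Theta)(\Theta_y+\Theta_{xx})$ vanishes because of the factor $H(\Theta)$. Summing the three lines gives $\mathcal T(\Theta)=0$ for all $(t,x,y)$, as claimed.

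There is no genuine analytic obstacle here; the only thing to verify is the structural claim that $\mathcal T$, as defined in \eqref{eqn:T_type}, contains no term that survives when $Ai(\Theta)$ and $H(\Theta)$ are set to zero. I would also remark that the hypothesis that $u=2\partial_x^2 F(\Theta)$ solves KP is not actually needed for this particular implication, which holds for any smooth $F$ and any positive smooth $\Theta$; it merely situates the lemma within the decomposition \eqref{casicasi}, where $\mathcal T(\Theta)$ plays the role of the inhomogeneous term of the linear second-order relation for $\rho$ along $\Theta$. A useful sanity check, consistent with the use of this lemma in the main theorems, is that once $\mathcal T(\Theta)=0$ equation \eqref{casicasi} becomes homogeneous in $\rho$, which is exactly the mechanism that will force $\rho\equiv 0$ and hence $F=\log$.
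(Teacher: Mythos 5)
Your proof is correct and follows exactly the paper's argument, which simply notes that the conclusion is direct from the definition of $\mathcal T\left(\Theta\right)$ in \eqref{eqn:T_type}: every term carries a factor of $Ai\left(\Theta\right)$, $Ai\left(\Theta\right)_x$, $H\left(\Theta\right)$, $H\left(\Theta\right)_y$, or $H\left(\Theta\right)_{xx}$, all of which vanish identically under the hypotheses. Your additional remark that the KP hypothesis on $u$ is not actually needed for this implication is also accurate.
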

\begin{proof}
Direct from the definition of $\mathcal T\left(\Theta\right)$ in \eqref{eqn:T_type}.
\end{proof}

Classical resonant solitons are in the class $Ai\left(\Theta\right) = 0$ and $H\left(\Theta\right) = 0 $. From the previous result, their phases do satisfy $\mathcal T\left(\Theta\right) = 0$ as well. The following is a sort of reciprocal saving Corollary \ref{coro22}.

\begin{lem}\label{WsAidaF}
  Let $u=2\partial^{2}_{x}F\left(\Theta\right)$ solution of \eqref{eq:KP}. Let $F$ be a smooth profile satisfying $F\left(1\right)=0$, $F'\left(1\right)=1$, $F''\left(1\right)=-1$ and $F'''\left(1\right)=2$. Let $\Theta>0$ be a smooth phase such that $\Theta_x\neq 0$, $W_{y}^F\left(\Theta\right) = W_{x}^F\left(\Theta\right)$ and $Ai\left(\Theta\right) = 0$. Then $F=\log$ and consequently $W_{y}^F=W_y$, $W_{x}^F=W_x$.
\end{lem}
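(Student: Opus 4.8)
The plan is to substitute the two vanishing hypotheses into the reformulation \eqref{KP3partes_new_new} of the governing equation and reduce the resulting identity to the scalar ODE framework of Lemma \ref{dem:ODE2}(iii), from which $F=\log$ follows.

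First I would strip \eqref{KP3partes_new_new} down to its first line. Since $Ai\left(\Theta\right)=0$ holds for all $\left(t,x,y\right)$, the product $F'\left(\Theta\right)Ai\left(\Theta\right)$ vanishes identically and hence so does its $x$-derivative, killing the term $-4\left(F'\left(\Theta\right)Ai\left(\Theta\right)\right)_x$. The hypothesis $W^F_y\left(\Theta\right)=W^F_x\left(\Theta\right)$ eliminates the term $3F'\left(\Theta\right)\left(W^F_y\left(\Theta\right)-W^F_x\left(\Theta\right)\right)$. What survives is
\begin{equation*}
\left(F''''+6F''^2\right)\left(\Theta\right)\Theta^4_x + 6\left(F''+F'^2\right)'\left(\Theta\right)\Theta^2_x\Theta_{xx} + 3\left(F''+F'^2\right)\left(\Theta\right)\left(\Theta^2_{xx}+\Theta^2_y\right)=0
\end{equation*}
for all $\left(t,x,y\right)$.

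Next I would recognize this as an instance of \eqref{EDO_rho_new}. Writing $s=\Theta$ and reading off $h_1=\Theta^4_x$, $h_2=6\Theta^2_x\Theta_{xx}$, $h_3=3\left(\Theta^2_{xx}+\Theta^2_y\right)$, the displayed relation is exactly $h_1\left(F''''+6F''^2\right)+h_2\left(F''+F'^2\right)'+h_3\left(F''+F'^2\right)=0$. The hypothesis $\Theta_x\neq 0$ gives $h_1=\Theta^4_x>0$, precisely the sign condition required by Lemma \ref{dem:ODE2}(iii). Invoking that lemma yields $\rho:=F''+F'^2\equiv 0$ and, together with $F\left(1\right)=0$, $F'\left(1\right)=1$ (via Lemma \ref{dem:ODE1}), the conclusion $F=\log$; the remaining data $F''\left(1\right)=-1$, $F'''\left(1\right)=2$ supply $\rho\left(1\right)=\rho'\left(1\right)=0$, the initial values driving the uniqueness step. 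Finally, with $F=\log$ one has $F'\left(\Theta\right)=1/\Theta$, so comparing \eqref{general_W} with \eqref{Wronskians} gives $W^F_x=W_x$ and $W^F_y=W_y$.

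The main obstacle I anticipate is that $h_1,h_2,h_3$ are a priori functions of $\left(t,x,y\right)$, whereas Lemma \ref{dem:ODE2}(iii) requires them as continuous functions of the single variable $s$. To bridge this I would fix $\left(t_0,y_0\right)$ and use that $x\mapsto \Theta\left(t_0,x,y_0\right)$ is strictly monotone (because $\Theta_x\neq 0$), hence a diffeomorphism onto its image; composing the $h_i$ with its inverse exhibits them as continuous functions of $s$, with $h_1>0$ preserved. The one delicate point is ensuring that along such a slice the range of $\Theta$ reaches the normalization value $s=1$, where the data for $\rho$ are prescribed, so that the associated second-order linear ODE for $\rho$ arising from \eqref{EDO_rho_new} can be integrated forward from $s=1$; here the standing convention $\Theta\geq 1$ and the initial-value-problem nature of that ODE are what must be used to propagate $\rho\equiv 0$ across the whole domain of $F$.
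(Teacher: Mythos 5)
Your proposal is correct and follows essentially the same route as the paper: both reduce \eqref{KP3partes_new}/\eqref{KP3partes_new_new} to its first line using $Ai\left(\Theta\right)=0$ and $W^{F}_{y}\left(\Theta\right)=W^{F}_{x}\left(\Theta\right)$, identify the surviving identity with \eqref{EDO_rho_new} (noting $h_1=\Theta_x^4>0$), and invoke Lemma \ref{dem:ODE2}(iii) together with Lemma \ref{dem:ODE1} to conclude $F=\log$, whence $W^{F}_{x}=W_x$ and $W^{F}_{y}=W_y$. The reparametrization issue you flag --- that the coefficients are a priori functions of $\left(t,x,y\right)$ rather than of $s$, and that the range of $\Theta$ must reach the normalization point $s=1$ for the initial-value argument to propagate $\rho\equiv 0$ --- is a genuine subtlety that the paper's own proof passes over silently (it simply asserts the reduced equation is ``exactly with the form \eqref{EDO_rho_new}''), so your slice-diffeomorphism fix makes the step more honest than the original.
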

\begin{proof}
From \eqref{KP3partes_new} we have
\[
\begin{aligned}
&    \left(\rho'' \left(s\right)- 2F'\left(\Theta\right)\rho' \left(s\right) + 4F''\left(\Theta\right)\rho \left(s\right)\right)\Theta^{4}_{x} + 6\rho' \left(s\right)\Theta^{2}_{x}\Theta_{xx} + 3\rho \left(s\right)\left(\Theta^{2}_{xx} + \Theta^{2}_{y}\right)\\
  & \quad -4F''\left(\Theta\right)\Theta_{x}Ai\left(\Theta\right) - 4F'\left(\Theta\right)Ai\left(\Theta\right)_{x}\\
   &\quad  + 3F'\left(\Theta\right)\left(W^{F}_{y}\left(\Theta\right) - W^{F}_{x}\left(\Theta\right)\right) = 0.
\end{aligned}
\]
Using the hypotheses, only the first part remains:
\begin{equation*}
    \left(\rho'' \left(s\right) - 2F'\left(\Theta\right)\rho' \left(s\right) + 4F''\left(\Theta\right)\rho\left(s\right)\right)\Theta^{4}_{x} + 6\rho' \left(s\right)\Theta^{2}_{x}\Theta_{xx} + 3\rho \left(s\right)\left(\Theta^{2}_{xx} + \Theta^{2}_{y}\right) = 0.
\end{equation*}
This is an ODE for $\rho$ with variable coefficients, exactly with the form \eqref{EDO_rho_new}. Lemma \ref{dem:ODE2} ensures that $\rho = 0$. Lemma \ref{dem:ODE1} implies that $F=\log$ and by definition $W_{y}^F=W_y$, $W_{x}^F=W_x$.
\end{proof}

\section{Soliton structures}\label{Sec:6}

\subsection{Airy and Heat structures}

Recall that the Airy type condition defined in Definition \ref{Thetas} is described by the identity $Ai\left(\Theta\right)= \Theta_t -\Theta_{xxx}=0$. Unfortunately, this condition does not coincide with linear part of the KP equation. Later (see Appendix \ref{rem:KdVmKdV}) we will check that in the KdV this is not the case: being of Airy type implies that $\Theta$ satisfies the linear part of KdV.  

\medskip

Some simple solutions of $Ai\left(\Theta\right)=0$ are
\begin{align*}%\label{example phase}
\Theta_1\left(t,x,y\right) &= A_0\exp\left(k_{1}x + k_{1}^{2}y + k_{1}^{3}t\right) + B_0\exp\left(k_{2}x + k_{2}^{2}y + k_{2}^{3}t\right) + C_0, \quad 
\end{align*}
\begin{align*}
\Theta_2\left(t,x,y\right) = A\left(y\right)\exp\left(kx + k^{3}t\right),
\end{align*}
where $A_0,B_0,C_0,k_{1},k_{2},k \in \mathbb R$ and $A>0$ is any well-defined function. Associated to the profile $F=\log$, the phase  $\Theta_1$ corresponds to a line-soliton phase \eqref{linesoliton} while  $\Theta_{2}$ gives the trivial solution (Lemma \ref{FandPhase} $(iii)$). Finally, the phase $\Theta_1$ serves as an example exhibiting both Airy and Heat type.

\begin{lem}
Assume that $\Theta$ is of $\mathcal T$-type and $F'\left(\Theta\right)$ is different from zero for $\left(t,x,y\right) \in \mathbb R^{3}$. If $\Theta$ is of Heat type, then there exists $c_0\left(t,y\right)\in\mathbb R$ such that 
\begin{equation}\label{sol_Ai}
Ai\left(\Theta\right)= \frac{c_0\left(t,y\right)}{F'\left(\Theta\right)}.
\end{equation}
\end{lem}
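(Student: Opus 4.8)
The plan is to exploit the hypothesis $H(\Theta) = 0$ to collapse the three-line expression defining $\mathcal T(\Theta)$ in \eqref{eqn:T_type} down to a single divergence-type identity in $x$. First I would observe that when $\Theta$ is of Heat type, all the Heat-dependent pieces of $\mathcal T$ disappear: differentiating the identically-zero function $H(\Theta)$ gives $H(\Theta)_y = H(\Theta)_{xx} = 0$, so the entire second-line contribution $3F'(\Theta)\bigl(H(\Theta)_y + H(\Theta)_{xx}\bigr)$ vanishes, and the third-line term $-3F'^2(\Theta)H(\Theta)(\Theta_y + \Theta_{xx})$ vanishes because it carries an explicit factor of $H(\Theta)$. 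Thus the condition $\mathcal T(\Theta) = 0$ reduces to
\[
-4F''(\Theta)Ai(\Theta)\Theta_x - 4F'(\Theta)Ai(\Theta)_x = 0.
\]

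The key step is then to recognize the surviving expression as a perfect $x$-derivative. Dividing by $-4$ and applying the chain rule, one has
\[
F''(\Theta)\Theta_x\, Ai(\Theta) + F'(\Theta)Ai(\Theta)_x = \partial_x\bigl(F'(\Theta)Ai(\Theta)\bigr),
\]
so the reduced equation says precisely that $\partial_x\bigl(F'(\Theta)Ai(\Theta)\bigr) = 0$ for all $(t,x,y)\in\mathbb R^3$. Integrating in $x$ (equivalently, noting that a function with vanishing $x$-derivative is constant in $x$) shows that the product $F'(\Theta)Ai(\Theta)$ is independent of $x$, hence equal to some function $c_0 = c_0(t,y)$. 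Since by hypothesis $F'(\Theta)\neq 0$ everywhere, I would divide through to obtain $Ai(\Theta) = c_0(t,y)/F'(\Theta)$, which is exactly the claimed formula \eqref{sol_Ai}.

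There is essentially no hard obstacle here: the entire argument rests on the single observation that the two terms left after imposing $H(\Theta)=0$ assemble into the total $x$-derivative of $F'(\Theta)Ai(\Theta)$, after which the conclusion follows by a one-line integration in $x$. The only points requiring care are bookkeeping — confirming that every $H$-dependent term genuinely vanishes under $H(\Theta)=0$ — and the role of the hypothesis $F'(\Theta)\neq 0$, which is what makes the final division legitimate and guarantees that $c_0(t,y)$ is well defined.
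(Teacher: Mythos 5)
Your proof is correct and follows essentially the same route as the paper: both arguments hinge on recognizing that $-4F''(\Theta)\Theta_x\,Ai(\Theta) - 4F'(\Theta)Ai(\Theta)_x = -4\,\partial_x\bigl(F'(\Theta)Ai(\Theta)\bigr)$ is a perfect $x$-derivative, using $H(\Theta)=0$ (hence $H(\Theta)_y = H(\Theta)_{xx}=0$) to kill the remaining terms of $\mathcal T(\Theta)$, and then integrating in $x$ and dividing by the nonvanishing $F'(\Theta)$. The only difference is cosmetic ordering — the paper first rewrites $\mathcal T(\Theta)=0$ as an equation for $\bigl(F'(\Theta)Ai(\Theta)\bigr)_x$ and then imposes $H(\Theta)=0$, while you impose it first — which changes nothing of substance.
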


\begin{proof}
Notice that $\Theta$ being of $\mathcal T$-type as in \eqref{eqn:T_type} is equivalent to have
\[
\begin{aligned}
0 = &~{} -4F''\left(\Theta\right)\Theta_xAi\left(\Theta\right) - 4F'\left(\Theta\right)Ai\left(\Theta\right)_{x} + 3F'\left(\Theta\right)\left(H\left(\Theta\right)_{y} + H\left(\Theta\right)_{xx}\right) \\
&~{} - 3F'\left(\Theta\right)^2H\left(\Theta\right)\left(\Theta_{y} + \Theta_{xx}\right).
\end{aligned}
\]
Equivalently,
\[
\begin{aligned}
 \left(F'\left(\Theta\right)Ai\left(\Theta\right)\right)_x = &~{} \frac{3F'\left(\Theta\right)}{4}\left(\left(H\left(\Theta\right)_y + H\left(\Theta\right)_{xx}\right) - F'\left(\Theta\right)H\left(\Theta\right)\left(\Theta_y + \Theta_{xx}\right)\right).
\end{aligned}
\]
Then \eqref{sol_Ai} follows directly from $H\left(\Theta\right) = 0$ for all $\left(t,x,y\right)$ and solving the corresponding ODE for $Ai\left(\Theta\right)$.
\end{proof}

\begin{lem}\label{lemvert}  Let $\Theta>0$ be a smooth phase satisfying $W_x\left(\Theta\right)=W_y\left(\Theta\right)=0$. Then the following conditions are satisfied:
\begin{enumerate}
\item[(i)] If $H\left(\Theta\right)=0$, then 
\[
\begin{aligned}
\Theta\left(t,x,y\right) = &~{} a_0(t)\exp\left(k_1(t) x+k_1^2(t) y\right)+a_1(t)\exp\left(-k_1(t) x+k_1^2(t) y\right),
\end{aligned}
\]
with $a_0,a_1,k_1\geq 0$.
\smallskip
\item[(ii)] If $Ai\left(\Theta\right)=0$, then $\Theta = A\left(t,x\right)\exp\left(c_{1}y\right)$ where $c_1\in \mathbb R$ and $A$ is of Airy and $W_x$ type.
\smallskip
\item[(iii)] If both $H\left(\Theta\right)=Ai\left(\Theta\right)=0$, then   \[\Theta=a_1 \exp\left(k_1 x+k_1^2y+k_1^{3}t\right)+a_2\exp\left(-k_1 x+k_1^2y-k_1^{3}t\right),\]
for some  $a_1,a_2>0$ and $k_1$ constants.
\end{enumerate}
\end{lem}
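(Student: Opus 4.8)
The plan is to treat all three items as successive refinements of the normal form supplied by Corollary~\ref{wywx0}: under $W_x(\Theta)=W_y(\Theta)=0$ one already has $\Theta(t,x,y)=A(t,x)\exp\big((c_1(t)+c_2(t)x)y\big)$ with $A>0$ of $x$-Wronskian type, and each additional hypothesis ($H(\Theta)=0$ or $Ai(\Theta)=0$) is then imposed to eliminate the remaining freedom one power of $y$, or one exponential mode, at a time. For (ii) I would not even need the full normal form: since $W_x(\Theta)=W_y(\Theta)=Ai(\Theta)=0$, Lemma~\ref{4p11} directly yields $\Theta=A(t,x)\exp(c_1 y)$ with $c_1\in\mathbb R$ constant and $A$ of Airy type. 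It then only remains to observe that $A$ is of $W_x$ type: substituting $\Theta=A\exp(c_1 y)$ into the $x$-Wronskian gives the pointwise identity $W_x(\Theta)=W_x(A)\exp(c_1 y)$, so $W_x(\Theta)=0$ together with $\exp(c_1 y)>0$ forces $W_x(A)=0$.

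For (i) I would start from the normal form and impose $H(\Theta)=\Theta_y-\Theta_{xx}=0$. A direct computation shows $H(\Theta)=\big[A(c_1+c_2 x)-A_{xx}-2A_x c_2 y-A c_2^2 y^2\big]\exp\big((c_1+c_2x)y\big)$, so the bracket must vanish identically as a polynomial in $y$. Its $y^2$-coefficient is $-A c_2(t)^2$, and since $A>0$ this forces $c_2\equiv0$; with $c_2=0$ the surviving $y^0$-coefficient gives the linear ODE $A_{xx}=c_1(t)A$ in $x$. Solving this under positivity is the crux: oscillatory solutions (from $c_1<0$) change sign, so $A>0$ on all of $\mathbb R$ rules them out and forces $c_1=k_1^2\ge0$; for $k_1>0$ one gets $A=a_0\exp(k_1 x)+a_1\exp(-k_1 x)$, and inspecting $x\to\pm\infty$ shows $a_0,a_1\ge0$. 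Recombining with $\exp(c_1 y)=\exp(k_1^2 y)$ then produces the stated form, with $a_0,a_1,k_1$ functions of $t$.

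For (iii) I would feed the output of (i), namely $\Theta=a_0(t)E_++a_1(t)E_-$ with $E_\pm=\exp(\pm k_1(t)x+k_1^2(t)y)$, into $Ai(\Theta)=\Theta_t-\Theta_{xxx}=0$. Computing $\Theta_{xxx}=k_1^3(a_0E_+-a_1E_-)$ and $\Theta_t$ (which brings in $a_0',a_1',k_1'$) turns the equation into $P_+(x,y)E_++P_-(x,y)E_-=0$ with $P_\pm$ affine in $(x,y)$. Matching the $x$-slopes gives $a_0k_1'=a_1k_1'=0$, so in the genuine two-mode case $a_0,a_1>0$ one has $k_1'\equiv0$, i.e.\ $k_1$ constant; the constant terms then read $a_0'=k_1^3a_0$ and $a_1'=-k_1^3a_1$, whose solutions $a_0(t)=a_0(0)e^{k_1^3t}$, $a_1(t)=a_1(0)e^{-k_1^3t}$ absorb into the exponents to yield exactly $a_1\exp(k_1x+k_1^2y+k_1^3t)+a_2\exp(-k_1x+k_1^2y-k_1^3t)$.

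The main obstacle, present in both (i) and (iii), is the coefficient-matching together with the underlying linear-independence argument: one must justify that an identity of the shape $P_+(x,y)E_++P_-(x,y)E_-\equiv0$ with polynomial $P_\pm$ forces $P_\pm\equiv0$. This holds precisely because $E_+/E_-=\exp(2k_1 x)$ is nonconstant when $k_1\ne0$. The degenerate possibilities $k_1=0$, or a vanishing coefficient $a_0$ or $a_1$, collapse $\Theta$ to at most a single exponential in $x$, which by Lemma~\ref{FandPhase}(iii) gives only the trivial solution; these cases are therefore excluded from the nondegenerate conclusions $a_1,a_2>0$.
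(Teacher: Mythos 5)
Your proposal is correct and follows essentially the same route as the paper: reduce to the normal form $\Theta=A(t,x)\exp\left(\left(c_1(t)+c_2(t)x\right)y\right)$ from Corollary~\ref{wywx0}, kill $c_2$ by matching powers of $y$ in $H(\Theta)=0$, solve $A_{xx}=c_1A$ under positivity, and then match coefficients in $Ai(\Theta)=0$ to force $k_1$ and the amplitudes into solitonic form (the paper instead gets constancy of $c_1$ by invoking part (ii), and your use of Lemma~\ref{4p11} for (ii) merely cites a computation the paper repeats inline). Your explicit handling of $k_1'=0$ and of the degenerate single-exponential cases via Lemma~\ref{FandPhase}(iii) is slightly more careful than the paper's, but it is the same argument in substance.
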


\begin{proof}
Proof of (i). Assume $H\left(\Theta\right) = 0$. From $W_x\left(\Theta\right)=W_y\left(\Theta\right)=0$, Corollary \ref{wywx0} yields 
\[
\Theta\left(t,x,y\right) = A\left(t,x\right)\exp\left(\left(c_{1}\left(t\right) + c_{2}\left(t\right)x\right)y\right)
\]
with $A$ of $W_x$ type. Now if $c_2\left(t\right)$ is different from zero, from the proof of  Corollary \ref{coro22}  we first have
\[
\Theta\left(t,x,y\right) = c_{3}\left(t\right)\exp\left(c_{4}\left(t\right)x + \left(c_{1}\left(t\right) + c_{2}\left(t\right)x\right)y\right),
\]
and from $H(\Theta)=0$ we must have $c_2(t) = 0$ and $c_1 (t)= c_4^{2}(t)$. We conclude
\[
\Theta\left(t,x,y\right) = c_{3}\left(t\right)\exp\left(c_{4}\left(t\right)x + c_4^2\left(t\right) y\right).
\]
Denoting $a_0:= c_3$, $k_1:=c_4$, we conclude this case.

\medskip

 In the case $c_2\left(t\right)=0$ we first have $\Theta\left(t,x,y\right) = A\left(t,x\right)\exp\left(c_{1}\left(t\right)y\right)$. Since $H\left(\Theta\right)=0$ necessarily $A\left(t,x\right)c_1\left(t\right) -A_{xx}\left(t,x\right)=0$. Depending on the sign of $c_1\left(t\right)$, we have
\[
A\left(t,x\right)= a_0\left(t\right) e^{\sqrt{c_1\left(t\right)} x} +a_1\left(t\right) e^{-\sqrt{c_1\left(t\right)} x}, \quad c_1\left(t\right)>0,
\] 
\[
A\left(t,x\right)= a_0\left(t\right) +a_1\left(t\right) x, \quad c_1\left(t\right)=0,
\]
or
\[
A\left(t,x\right)= a_0\left(t\right) \cos\left(\sqrt{-c_1\left(t\right)} x\right)  +a_1\left(t\right) \sin\left(\sqrt{-c_1\left(t\right)} x\right), \quad -c_1\left(t\right)>0.
\]
Since $\Theta>0$ we discard the third case, and in the second case we get $c_2\left(t\right)=0$, leading to a solution included in the first case. Note that naturally the condition $A$ is of $W_x$ type is satisfied. Denoting $k_1:= \sqrt{c_1}$ we conclude.  This proves  (i). 

\medskip

Proof of (ii). As in the previous item, from Corollary \ref{wywx0} we have
\begin{equation*}
    \Theta = A\left(t,x\right)\exp\left(\left(c_{1}\left(t\right)+c_{2}\left(t\right)x\right)y\right).
\end{equation*}
This phase will assume different values depending on the value of $c_{2}\left(t\right)$. 

\medskip

\emph{ Case $c_{2}\left(t\right)=0$}. First, we will examine the case in which $c_{2}\left(t\right)=0$. In this case from Corollary \ref{wywx0} $\Theta = A\left(t,x\right)\exp\left(c_{1}\left(t\right)y\right)$, with $W_x\left(A\right)=0$. First we compute
\begin{equation*}
\begin{aligned}
    \Theta_{t}&=A_{t}\exp\left(c_{1}y\right)+Ac'_{1}y\exp\left(c_{1}y\right),\quad \Theta_{xxx}=A_{xxx}\exp\left(c_{1}y\right).
\end{aligned}
\end{equation*}
Consequently, imposing the Airy condition,
\begin{equation*}
\begin{aligned}
    Ai\left(\Theta\right) &= A_{t}\exp\left(c_{1}y\right) + Ac'_{1}\exp\left(c_{1}y\right) - A_{xxx}\exp\left(c_{1}y\right)\\
&= \exp\left(c_{1}y\right)\left(A_{t}-A_{xxx}\right)+y\exp\left(c_{1}y\right) Ac'_{1} =0. 
\end{aligned}
\end{equation*}
The equality leads to a first-degree polynomial in the variable $y$ equals zero. Setting each coefficient equal to 0 yields the following system 
\begin{equation*}
\begin{aligned}
    A_{t}-A_{xxx}=0, \quad Ac'_{1}=0.
\end{aligned}
\end{equation*}
From the first equation, $A\left(t,x\right)$ is Airy type and from the second one, $c_{1}\left(t\right) = c_{1}$ with $c_{1}\in\mathbb R$ an arbitrary constant. This means that the phase is
\begin{equation}\label{c2=0}
    \Theta = A\left(t,x\right)\exp\left(c_{1}y\right).
\end{equation}

\medskip

\noindent
\emph{ Case $c_{2}\left(t\right)\neq 0$}.  Now, from Corollary \ref{wywx0} it is known that when $c_{2}\left(t\right)$ is different from zero the phase is
\begin{equation*}
    \Theta = c_{3}\left(t\right)\exp\left(c_{1}\left(t\right)y+c_{2}\left(t\right)xy+c_{4}\left(t\right)x\right).
\end{equation*}
One has
\begin{equation*}
\begin{aligned}
    \Theta_{t}&=c'_{3}\exp\left(c_{1}\left(t\right)y+c_{2}\left(t\right)xy+c_{4}\left(t\right)x\right) \\
    & \quad + c_{3}\left(c'_{1}y+c'_{2}xy+c'_{4}x\right)\exp\left(c_{1}\left(t\right)y+c_{2}\left(t\right)xy+c_{4}\left(t\right)x\right),\\
    \Theta_{xxx}&=c_{3}\left(c_{2}y+c_{4}\right)^{3}\exp\left(c_{1}\left(t\right)y+c_{2}\left(t\right)xy+c_{4}\left(t\right)x\right).
\end{aligned}
\end{equation*}
Imposing the Airy condition,
\begin{equation*}
\begin{aligned}
     0= &~{} Ai\left(\Theta\right)\\
     = &~{} \exp\left(c_{1}\left(t\right)y+c_{2}\left(t\right)xy+c_{4}\left(t\right)x\right)\\
    &~{} \times \left(\left(c'_{3}-c^{3}_{4}\right)+y\left(c_{3}c'_{1}-3c_{2}c^{2}_{4}\right)+x\left(c_{3}c'_{4}\right)+xy\left(c_{3}c'_{2}\right)+y^{2}\left(-3c^{2}_{2}c_{4}\right)+y^{3}\left(-c_{3}c^{2}_{2}\right)\right).
\end{aligned}
\end{equation*}
This equality leads to a third-degree polynomial with variables $x$ and $y$. Setting each coefficient equal to 0 yields the following system of six equations,
\begin{equation*}
\begin{aligned}
    c'_{3}-c^{3}_{4}&=0,\quad  c_{3}c'_{1}-3c_{2}c^{2}_{4}=0,\quad    c_{3}c'_{4}=0,\\
    c_{3}c'_{2}&=0,\quad c^{2}_{2}c_{4}=0,\quad  c_{3}c^{2}_{2}=0.
\end{aligned}
\end{equation*}
From the last equation $c_{3}=0$ implies $\Theta = 0$, then we discard this trivial case. We assume $c_3\neq 0$ and  $c_{2}\left(t\right)=0$. Then the system is reduced to
\begin{equation*}
\begin{aligned}
    c'_{3}-c^{3}_{4}&=0,\quad c_{3}c'_{1}=0,\quad c_{3}c'_{4}=0.
\end{aligned}
\end{equation*}
From the third and the second equation $c_{4}(t)=c_{4}$ and $c_{1}(t)=c_{1}$, respectively, with $c_{1}$, $c_{4}\in\mathbb R$ arbitrary constants. Then, from the first equation $c_{3}(t)=c^{3}_{4}t+c_{3}$, with $c_{3}\in\mathbb R$ an arbitrary constant. The condition $\Theta>0$ implies $c_4=0$. Thus, the phase is
\begin{equation*}
\begin{aligned}
    \Theta=c_3 \exp\left(c_{1}y+c_{4}x\right).
\end{aligned}
\end{equation*}
Using again the Airy condition, one has $c_{4}=0$ and
\begin{equation*}
    \Theta = c_{5}\exp\left(c_{1}y\right),
\end{equation*} 
which is a particular case of \eqref{c2=0}, and a trivial solution to \eqref{eq:KP}. This proves (ii).

\medskip

Proof of (iii).  From (i) and (ii), we get
\[
\Theta\left(t,x,y\right) = a_1\left(t\right)\exp\left(k_1 x+k_1^2 y\right)+a_2\left(t\right)\exp\left(-k_1 x+k_1^2 y\right).
\]
Additionally, to satisfy the Airy condition, 
\[
\left(a_1' -a_1k_1^3\right)\exp\left(k_1 x+k_1^2 y\right) +\left(a_2'  + a_2 k_1^3\right)\exp\left(-k_1 x+k_1^2 y\right)=0.
\]
We obtain $a_1\left(t\right)= a_1 e^{k_1^3 t}$, $a_2\left(t\right)= a_2 e^{-k_1^3 t}$, where $a_1$ and $a_2$ are constants. Therefore,
 \[
 A\left(t,x\right)=a_1\exp\left(k_1 x +k_1^2 y + k_1^3 t\right)+a_2\exp\left(-k_1 x +k_1^2 y -k_1^3 t\right).
 \]
 This proves (iii).
\end{proof}

Finally, we provide a quick method to construct an Airy-Heat phase $\Theta$. The relevance here is that the Airy-Heat type is a property still satisfied by infinitely many functions, specially in the KP case. 

\begin{lem}%\label{FJ}
Let $\Theta_0=\Theta_0\left(t,x\right)$ be any solution of Airy $Ai\left(\Theta_0\right)=0$ such that there are $C_1,C_2>0$ under which  $|\Theta_0\left(t\right)|\le C_1 \exp\left(C_2|x|\right)$. Then $\Theta\left(t,x,y\right):=\exp\left(y\partial_x^2\right)\Theta_0$ solves $H\left(\Theta\right)=0$ for $y\geq 0$.
\end{lem}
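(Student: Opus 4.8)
The plan is to interpret the operator $\exp\left(y\partial_x^2\right)$ as the heat semigroup and to verify that the resulting $\Theta$ solves the heat equation in the classical sense, the exponential growth bound on $\Theta_0$ being exactly what licenses the convolution representation. Concretely, for $y>0$ I would write
\[
\Theta\left(t,x,y\right) = \left(G(\cdot,y)*\Theta_0(t,\cdot)\right)(x) = \frac{1}{\sqrt{4\pi y}}\int_{\mathbb R}\exp\left(-\frac{(x-z)^2}{4y}\right)\Theta_0\left(t,z\right)\,\mathrm{d}z,
\]
where $G(x,y):=\frac{1}{\sqrt{4\pi y}}\exp\left(-\frac{x^2}{4y}\right)$ is the one-dimensional heat kernel. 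The variable $t$ plays the role of a passive parameter throughout, since $\exp\left(y\partial_x^2\right)$ acts only on the $x$ variable.

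First I would check that this integral is well defined. For fixed $t$ and $y>0$, the Gaussian factor $\exp(-(x-z)^2/(4y))$ decays faster in $z$ than the bound $C_1\exp(C_2|z|)$ grows, so the integral converges absolutely and, by a routine dominating-function argument, uniformly for $(x,y)$ in compact subsets of $\mathbb R\times(0,\infty)$. The same Gaussian decay applies to every $x$-derivative and $y$-derivative of $G(x-z,y)$, which differ from $G$ only by polynomial prefactors in $(x-z)$ and powers of $1/y$, so differentiation under the integral sign in both $x$ and $y$ is justified for each $y>0$.

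Next I would invoke the defining property of the kernel, $\partial_y G=\partial_x^2 G$, to conclude. Differentiating under the integral,
\[
\Theta_y\left(t,x,y\right) = \int_{\mathbb R}\partial_y G(x-z,y)\,\Theta_0(t,z)\,\mathrm{d}z = \int_{\mathbb R}\partial_x^2 G(x-z,y)\,\Theta_0(t,z)\,\mathrm{d}z = \Theta_{xx}\left(t,x,y\right),
\]
which is precisely $H\left(\Theta\right)=\Theta_y-\Theta_{xx}=0$ for every $y>0$. The endpoint $y=0$ is recovered by continuity: since $G(\cdot,y)$ is an approximate identity as $y\to 0^+$ and $\Theta_0$ is smooth, one has $\Theta(t,x,y)\to\Theta_0(t,x)$, so $\Theta$ extends continuously to $y=0$ and, since $\Theta_{xx}$ extends continuously as well, the Heat identity persists up to the boundary in the one-sided sense.

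The single technical point requiring care — and the only place where the hypothesis is used — is the justification of differentiation under the integral against a function of exponential growth; this is standard but rests entirely on the Gaussian kernel beating $\exp(C_2|z|)$, and would fail for substantially faster (e.g. $\exp(C|z|^2)$) growth. Finally, I would remark that the same representation shows $\Theta$ is automatically of Airy type as well: because the constant-coefficient operator $\partial_t-\partial_x^3$ commutes with $\exp\left(y\partial_x^2\right)$, one gets $Ai\left(\Theta\right)=\exp\left(y\partial_x^2\right)Ai\left(\Theta_0\right)=0$, so the construction indeed produces an Airy--Heat phase as intended.
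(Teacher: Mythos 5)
Your proof is correct and follows essentially the same route as the paper: the paper's proof simply writes the heat-kernel convolution representation of $\exp\left(y\partial_x^2\right)\Theta_0$ and appeals to the exponential growth bound, exactly the argument you spell out in detail (your kernel $\exp\left(-\frac{(x-z)^2}{4y}\right)$ is in fact the correct normalization; the paper's displayed formula has a stray $\pi$ in the exponent). Your added remarks on differentiation under the integral and on the preservation of the Airy condition are sound supplements to the same approach.
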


\begin{proof}
This result is clear from the formula
\[
{\color{black}
\Theta\left(t,x,y\right)=\exp\left(y\partial_x^2\right)\Theta_0\left(t,x\right) = \frac1{\left(4\pi y\right)^{1/2}} \int_{\mathbb R} \exp \left( -\frac{\left(x-s\right)^2}{4 y}\right)\Theta_0\left(t,s\right)\mathrm{d}s
}\]
and the growth of $\Theta_0$.
\end{proof}

\begin{rem}
As already mentioned in the introduction (see Remark \ref{cagazo}), a similar result for the case of the Airy equation $Ai\left(\Theta\right)=0$ with initial condition exponentially growing in $x$ is far from being obvious. 
\end{rem}

\subsection{Soliton structures}

Recall from \eqref{linesoliton} that a simple KP line soliton is obtained by the structure
\begin{equation}\label{theta_linesoliton}
   \Theta\left(t,x,y\right)= a_{1}\exp\left(\theta_1\right) + a_{2}\exp\left(\theta_{2}\right),
\end{equation}
where $a_1,a_2>0$, and $\theta_j: =k_j x + k_j^2 y + k_j^3 t$, $k_1,k_2\in\mathbb R$. We shall say that $\Theta$ represents a line-soliton if $\Theta$ has the previous form. Notice that unless $F=\log$, one does not have that $u=2\partial_x^2 F\left( \Theta\right)$ is the standard KP line-soliton \eqref{soliton family}. 
 
\begin{lem}\label{lem:linesoliton}
If $\Theta>0$ smooth represents a line-soliton, then $Ai\left(\Theta\right) =H\left(\Theta\right) = 0$. Moreover, the case $k_{1} = \pm k_{2}$ is the unique instance in which $W_{x}\left(\Theta\right) = W_{y}\left(\Theta\right) = 0$.
\end{lem}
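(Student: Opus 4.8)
The plan is to exploit the fact that the line-soliton is a superposition of two pure exponential plane waves, each of which is annihilated by the Airy and Heat operators, and then to compute the two Wronskian functionals directly. Write $E_j := a_j \exp(\theta_j)$, $j=1,2$, so that $\Theta = E_1 + E_2$. Since $\theta_j = k_j x + k_j^2 y + k_j^3 t$, each factor satisfies $\partial_x E_j = k_j E_j$, $\partial_y E_j = k_j^2 E_j$ and $\partial_t E_j = k_j^3 E_j$. Hence $\partial_t E_j - \partial_x^3 E_j = (k_j^3 - k_j^3)E_j = 0$ and $\partial_y E_j - \partial_x^2 E_j = (k_j^2 - k_j^2)E_j = 0$, i.e. each $E_j$ is simultaneously of Airy and Heat type. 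Because $Ai$ and $H$ are linear operators, the first claim $Ai(\Theta) = H(\Theta) = 0$ follows at once by summing over $j$.

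For the second claim, I would avoid dividing by $\Theta$ and instead work with the cleared forms $\Theta W_{x}(\Theta) = \Theta\Theta_{xxxx} - \Theta_{xx}^2$ and $\Theta W_{y}(\Theta) = \Theta\Theta_{yy} - \Theta_y^2$. Using $\partial_x^m E_j = k_j^m E_j$ and $\partial_y^m E_j = k_j^{2m} E_j$, a direct expansion gives $\Theta_{xxxx} = k_1^4 E_1 + k_2^4 E_2$, $\Theta_{xx} = k_1^2 E_1 + k_2^2 E_2$, $\Theta_{yy} = k_1^4 E_1 + k_2^4 E_2$ and $\Theta_y = k_1^2 E_1 + k_2^2 E_2$. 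Multiplying out and cancelling the diagonal terms $k_j^4 E_j^2$, the surviving cross terms collapse to
\[
\Theta W_{x}(\Theta) = \Theta W_{y}(\Theta) = (k_1^2 - k_2^2)^2\, E_1 E_2 .
\]
The equality of the two expressions is no accident: it reflects the Heat-type rigidity $W_{x}(\Theta) = W_{y}(\Theta)$ from Lemma \ref{HWxWy}, which already applies here because $H(\Theta)=0$.

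To finish, I observe that $\Theta > 0$ together with $a_1, a_2 > 0$ forces $E_1 E_2 > 0$ everywhere, so the product $(k_1^2 - k_2^2)^2 E_1 E_2$ vanishes identically if and only if $(k_1^2 - k_2^2)^2 = 0$, that is $k_1^2 = k_2^2$, equivalently $k_1 = \pm k_2$. Dividing by $\Theta>0$ then yields $W_{x}(\Theta) = W_{y}(\Theta) = 0$ precisely in this case. I do not anticipate a genuine obstacle: the whole argument is an elementary plane-wave computation, and the only point requiring slight care is to confirm that the quartic cross-coefficient is exactly $k_1^4 + k_2^4 - 2k_1^2 k_2^2$, a perfect square, so that it cannot vanish for any configuration of $(k_1,k_2)$ other than $k_1 = \pm k_2$.
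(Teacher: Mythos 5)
Your proposal is correct and follows essentially the same route as the paper: a direct plane-wave computation showing $Ai(\Theta)=H(\Theta)=0$, then expanding $\Theta W_y(\Theta)$ to obtain the cross term $(k_1^2-k_2^2)^2\,a_1a_2\exp(\theta_1+\theta_2)$, with the transfer to $W_x$ handled via the Heat condition (the paper cites Lemma \ref{HWxWy}; your observation that $\Theta_{xxxx}=\Theta_{yy}$ and $\Theta_{xx}=\Theta_y$ hold identically here is just that lemma made explicit for this phase). The positivity argument closing the equivalence with $k_1=\pm k_2$ matches the paper's conclusion.
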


\begin{rem}
Notice that the condition $k_{1} = -k_{2}$ corresponds to the case of the KdV soliton (vertical line-soliton) and $k_1=k_2$ the trivial solution.
\end{rem}

\begin{proof}[Proof of Lemma \ref{lem:linesoliton}]
The proof comes from \eqref{theta_linesoliton}. Indeed
\begin{align*}
    Ai\left(\Theta\right) = &~{}  \Theta_{t} - \Theta_{xxx}\\
    =&~{} k^{3}_{1}a_{1}\exp\left(\theta_1\right) + k^{3}_{2}a_{2}\exp\left(\theta_2\right) - \left(k^{3}_{1}a_{1}\exp\left(\theta_1\right) + k^{3}_{2}a_{2}\exp\left(\theta_2\right)\right) = 0.
\end{align*}
Additionally, 
\begin{align*}
    H\left(\Theta\right) =&~{} \Theta_{y} - \Theta_{xx}\\
    =&~{}  k^{2}_{1}a_{1}\exp\left(\theta_1\right) + k^{2}_{2}a_{2}\exp\left(\theta_2\right) - \left(k^{2}_{1}a_{1}\exp\left(\theta_1\right) + k^{2}_{2}a_{2}\exp\left(\theta_2\right)\right) = 0.
\end{align*}
Finally,
\begin{align*}
    \Theta W_{y}\left(\Theta\right) &= \Theta \Theta_{yy} - \Theta^{2}_{y}\\
    &= \left(a_{1}\exp\left(\theta_1\right) + a_{2}\exp\left(\theta_2\right)\right)\left(k^{4}_{1}a_{1}\exp\left(\theta_1\right) + k^{4}_{2}a_{2}\exp\left(\theta_2\right)\right)\\
    & \quad  - \left(k^{2}_{1}a_{1}\exp\left(\theta_1\right) + k^{2}_{2}a_{2}\exp\left(\theta_2\right)\right)^{2}\\
    &=k^{4}_{1}a^{2}_{1}\exp\left(2\theta_1\right) + k^{4}_{2}a_{1}a_{2}\exp\left(\theta_{1} + \theta_{2}\right) \\
    & \quad + k^{4}_{1}a_{1}a_{2}\exp\left(\theta_1+\theta_2\right) + k^{4}_{2}a^{2}_{2}\exp\left(2\theta_2\right) \\
    &\quad- k^{4}_{1}a^{2}_{1}\exp\left(2\theta_1\right)- 2k^{2}_{1}k^{2}_{2}a_{1}a_{2}\exp\left(\theta_1+\theta_2\right) - k^{4}_{2}a^{2}_{2}\exp\left(2\theta_{2}\right)\\
    &=\left(k^{4}_{1} + k^{4}_{2} - 2k^{2}_{1}k^{2}_{2}\right)a_{1}a_{2}\exp\left(\theta_1+\theta_2\right).
\end{align*}
We conclude that
\begin{equation}\label{thetaW_y}    
  \Theta  W_{y}\left(\Theta\right)   =\left(k^{2}_{1} - k^{2}_{2}\right)^{2}a_{1}a_{2}\exp\left(\theta_1+\theta_2\right).
\end{equation}
Notice that $W_{y}\left(\Theta\right) = 0$ if and only if $k_{1} = \pm k_{2}$. Since $\Theta$ is a Heat type phase, by Lemma \ref{HWxWy}, 
\begin{equation*}
    W_{x}\left(\Theta\right) = W_{y}\left(\Theta\right) = 0.
\end{equation*}
The proof is complete.
\end{proof}

The previous result can be extended to smooth phases $\Theta$ of the form, 
\begin{equation}\label{theta_multi_linesoliton}
    \Theta = \sum^{M}_{j=1}a_{j}\exp\left(\theta_{j}\right), \quad a_j>0, \quad \theta_j:= k_j x+ k_j^2 y +k_j^3 t, \quad k_j\in\mathbb R.
\end{equation}
Here, $k_1<k_2<\cdots<k_M$. In Kodama \cite{Kodama2017} this phase represents a multi-soliton structure graphically represented by $M-1$ legs on the region $y$ negative and $1$ leg in the positive part of the $y$-axis. Recall $\mathcal W_n$ introduced in \eqref{Wn}.
%In this case, we shall say that $\Theta$ generates an $M$ multi-line-soliton.

\begin{lem}\label{WM}
If $\Theta\in \mathcal W_M$, then
\[
\Theta W_y\left(\Theta\right) \in \mathcal W_{\frac12M\left(M-1\right)}.
\]
In particular, $\Theta\in\mathcal W_3$ implies $\Theta W_y\left(\Theta\right) \in \mathcal W_3$.
\end{lem}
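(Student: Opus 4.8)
The plan is to compute $\Theta W_y(\Theta) = \Theta\Theta_{yy} - \Theta_y^2$ directly from the defining representation of $\mathcal W_M$ and read off that the result is again a nonnegative combination of exponentials in $y$, with the right number of generators and nonnegative growth rates. Throughout I suppress the $(t,x)$-dependence and write $\Theta = \sum_{j=1}^M a_j \exp(k_j y)$ with $a_j\ge 0$ and $0\le k_1<k_2<\cdots<k_M$.

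First I would differentiate in $y$, obtaining $\Theta_y = \sum_j a_j k_j e^{k_j y}$ and $\Theta_{yy} = \sum_j a_j k_j^2 e^{k_j y}$, so that
\begin{equation*}
\Theta W_y(\Theta) = \Theta\Theta_{yy}-\Theta_y^2 = \sum_{i,j=1}^M a_i a_j\left(k_j^2 - k_i k_j\right) e^{(k_i+k_j) y} = \sum_{i,j=1}^M a_i a_j\, k_j(k_j - k_i)\, e^{(k_i+k_j) y}.
\end{equation*}
The key algebraic step is then to symmetrize this double sum by grouping the ordered pairs $(i,j)$ and $(j,i)$, whose exponential factors coincide. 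Since $k_j(k_j-k_i)+k_i(k_i-k_j)=(k_j-k_i)^2$, the diagonal terms $i=j$ cancel and every unordered pair contributes a single manifestly nonnegative term, leaving
\begin{equation*}
\Theta W_y(\Theta) = \sum_{1\le i<j\le M} a_i a_j\,(k_j - k_i)^2\, e^{(k_i+k_j) y}.
\end{equation*}

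From here the conclusion is essentially bookkeeping. The number of index pairs $i<j$ is exactly $\binom{M}{2}=\tfrac12 M(M-1)$, which gives the claimed number of generators; each coefficient $a_i a_j (k_j-k_i)^2$ is nonnegative because $a_i,a_j\ge 0$; and each growth rate $k_i+k_j$ is nonnegative because all $k_\ell\ge 0$. Hence $\Theta W_y(\Theta)$ is a nonnegative combination of at most $\tfrac12 M(M-1)$ exponentials in $y$ with nonnegative rates, which is precisely the membership $\Theta W_y(\Theta)\in\mathcal W_{\frac12 M(M-1)}$. The particular case $\Theta\in\mathcal W_3\Rightarrow \Theta W_y(\Theta)\in\mathcal W_3$ is then immediate from $\tfrac12\cdot 3\cdot 2 = 3$.

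The one point requiring care, and the main (though minor) obstacle, is that the sums $k_i+k_j$ need not be pairwise distinct, and as functions of $(t,x)$ their relative ordering may vary, so the displayed expression is not literally in the strictly-increasing normal form demanded by the definition of $\mathcal W_n$ with exactly $\tfrac12 M(M-1)$ distinct rates. I would resolve this by merging the nonnegative coefficients of any coinciding exponents and then reordering the surviving distinct rates increasingly, invoking the elementary inclusion $\mathcal W_{n'}\subseteq\mathcal W_n$ for $n'\le n$ (obtained by padding with zero-coefficient terms). This places the result in the required cone and also recovers, consistently with the stated ``kernel'' property, the vanishing $\Theta W_y(\Theta)=0$ when $M=1$.
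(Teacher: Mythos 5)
Your proof is correct and follows essentially the same route as the paper's: expand $\Theta\Theta_{yy}-\Theta_y^2$ into a double sum of exponentials, symmetrize the pairs $(i,j)$ and $(j,i)$ so that the diagonal cancels and each unordered pair carries the manifestly nonnegative coefficient $a_ia_j\left(k_j-k_i\right)^2$, then count $\binom{M}{2}=\tfrac12 M\left(M-1\right)$ terms. Two minor points in your favor: your exponents are consistent with the normalization $\theta_j=k_j y$ in \eqref{Wn} (the paper's displayed computation carries factors $k_i^2,k_i^4$ as if $\theta_j=k_j^2 y$, a harmless slip inherited from the line-soliton computation), and your final paragraph on merging coinciding rates $k_i+k_j$ and padding via $\mathcal W_{n'}\subseteq\mathcal W_n$ handles a degeneracy the paper leaves implicit.
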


\begin{proof}
Assume that $\Theta\in\mathcal W_M$, that is for $0\leq k_1\left(t,x\right)<k_2\left(t,x\right)<\ldots < {\color{black} k_M}\left(t,x\right)$,
\[
\Theta = \sum_{j=1}^M a_j\exp\left(\theta_j\right), \quad a_j\left(t,x\right)>0, \quad  \theta_j\left(t,x,y\right)= k_j\left(t,x\right)y. 
\]
{\color{black} Note now that for each real numbers $A_i, B_j$ with $i,j\in \{1,\cdots,M\}$ we have \[\sum^{M}_{n = 1}A_{n}\sum^{M}_{n = 1}B_{n} = \sum^{2M}_{n = 2}C_{n}\hbox{, where } C_{n} =  \sum_{\substack{1 \le i, j \le M \\ i + j = n}} A_i B_j. \]
Thus
\begin{align*}
     \Theta W_{y}\left(\Theta\right) &= \Theta \Theta_{yy} - \Theta^{2}_{y}\\
    &= \left(\sum^{M}_{i=1}a_{i}\exp\left(\theta_{i}\right)\right)\left(\sum^{M}_{i=1} k^{4}_{i}a_{i}\exp\left(\theta_{i}\right)\right) - \left(\sum^{M}_{i=1} k^{2}_{i}a_{i}\exp\left(\theta_{i}\right)\right)^{2}\\
    &= \sum^{2M}_{n = 2} \sum_{\substack{1 \le i, j \le M \\ i + j = n}}k^{4}_{j}a_{i}a_{j}\exp\left(\theta_{i} + \theta_{j}\right)\\
    & \quad  - \sum^{2M}_{n = 2} \sum_{\substack{1 \le i, j \le M \\ i + j = n}} k^{2}_{i}k^{2}_{j}a_{i}a_{j}\exp\left(\theta_{i} + \theta_{j}\right)\\
    &= \sum^{2M}_{n = 2} \sum_{\substack{1 \le i, j \le M \\ i + j = n}} a_{i}a_{j}\left(k^{4}_{j} - k^{2}_{i}k^{2}_{j}\right)\exp\left(\theta_{i} + \theta_{j}\right).
\end{align*}

We observe that every diagonal contribution, where the two indices coincide, the factor $k_i^4-k_i^2 k_i^2$ is identically zero. For the remaining off-diagonal terms, using $\theta_{i} + \theta_{j}=\theta_{j} + \theta_{i}$, we can pair the summands that come from $(i, j)$ and $(j, i)$. Adding those two contributions, associated to $\exp\left(\theta_{i} + \theta_{j}\right)$, replaces the original coefficient $k_j^4-k_i^2 k_j^2$ by the symmetric combination $\left(k_i^2-k_j^2\right)^2$. After this last expresión  reduces to a single sum obtaining

$$
\Theta W_{y}\left(\Theta\right)=\sum_{1 \leq i<j \leq M} a_i a_j\left(k_i^2-k_j^2\right)^2 e^{\theta_i+\theta_j}.
$$

Thus under the hypotheses of the lemma,  are $\frac12 M(M-1)$ linearly independent terms in the last expression, proving the required inclusion in $\mathcal W_{\frac12M\left(M-1\right)}$. %The $M$ terms correspond to the diagonal, and $M^2$ represents all the possible elements in an $M\times M$ matrix.

}
\end{proof}

\begin{cor}\label{resonantes}
If $\Theta>0$ as in \eqref{theta_multi_linesoliton} generates an $M$ resonant soliton, then $Ai\left(\Theta\right) = H\left(\Theta\right) = 0$ but  $W_{x}\left(\Theta\right) = W_{y}\left(\Theta\right)$ are different from zero unless $M=1$ or $M=2$.
\end{cor}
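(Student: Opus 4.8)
The plan is to handle the three assertions in sequence: reduce $Ai(\Theta)=H(\Theta)=0$ to the eigenfunction property of the exponentials, obtain $W_x(\Theta)=W_y(\Theta)$ from the Heat condition via Lemma~\ref{HWxWy}, and then specialize the computation of Lemma~\ref{WM} to decide when the common value vanishes.

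First I would verify $Ai(\Theta)=H(\Theta)=0$ exactly as in Lemma~\ref{lem:linesoliton}, now with $M$ summands. Since $\theta_j=k_jx+k_j^2y+k_j^3t$, each exponential satisfies $\partial_t\exp(\theta_j)=k_j^3\exp(\theta_j)=\partial_x^3\exp(\theta_j)$ and $\partial_y\exp(\theta_j)=k_j^2\exp(\theta_j)=\partial_x^2\exp(\theta_j)$, so that $Ai(\exp(\theta_j))=H(\exp(\theta_j))=0$. As $Ai$ and $H$ are linear, summing over $j$ gives $Ai(\Theta)=H(\Theta)=0$. Having $H(\Theta)=0$, Lemma~\ref{HWxWy} immediately yields $W_x(\Theta)=W_y(\Theta)$, so the two Wronskians coincide and it only remains to decide when this common value vanishes.

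Next I would specialize the computation in Lemma~\ref{WM}. Using $\partial_y\exp(\theta_j)=k_j^2\exp(\theta_j)$ and $\partial_{yy}\exp(\theta_j)=k_j^4\exp(\theta_j)$, the same symmetrization used there gives
\[
\Theta W_y(\Theta)=\Theta\Theta_{yy}-\Theta_y^2=\sum_{1\le i<j\le M}a_ia_j\left(k_i^2-k_j^2\right)^2\exp(\theta_i+\theta_j),
\]
the diagonal terms cancelling because $(k_i^2-k_i^2)^2=0$. Since $\Theta>0$, $W_y(\Theta)=0$ precisely when this sum vanishes identically. The key structural point is that the functions $\exp(\theta_i+\theta_j)$, indexed by unordered pairs $\{i,j\}$, are linearly independent: the exponent $(k_i+k_j)x+(k_i^2+k_j^2)y+(k_i^3+k_j^3)t$ determines $s=k_i+k_j$ and $q=k_i^2+k_j^2$, whence $k_ik_j=(s^2-q)/2$, so that $\{k_i,k_j\}$ is recovered as the root set of $\lambda^2-s\lambda+(s^2-q)/2$; as the $k_j$ are distinct, distinct pairs yield distinct exponent vectors. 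Consequently $\Theta W_y(\Theta)=0$ forces $a_ia_j(k_i^2-k_j^2)^2=0$, i.e.\ $k_i^2=k_j^2$, for every pair $i<j$.

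Finally I would rule out $M\ge 3$. Under the ordering $k_1<k_2<\cdots<k_M$ the $k_j$ are distinct, so $k_i^2=k_j^2$ with $i\ne j$ forces $k_i=-k_j$. If $M\ge 3$, then among $k_1<k_2<k_3$ the conditions $k_1=-k_2$, $k_2=-k_3$ and $k_1=\pm k_3$ are mutually incompatible (they would give $k_1=k_3$), so at least one coefficient $(k_i^2-k_j^2)^2$ is strictly positive and $\Theta W_y(\Theta)$ is a nonzero function; hence $W_x(\Theta)=W_y(\Theta)\neq 0$. For $M=1$ the sum is empty, and for $M=2$ the single admissible choice $k_1=-k_2$ makes it vanish, consistent with Lemma~\ref{lem:linesoliton}. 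The main obstacle is the linear-independence step: one must confirm that no accidental resonance such as $k_i+k_j=k_p+k_q$ can collapse two distinct pair-exponentials, which is exactly what the recovery of $\{k_i,k_j\}$ from the data $(s,q)$ guarantees.
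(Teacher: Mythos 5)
Your proposal is correct and follows essentially the same route as the paper: direct verification of the Airy and Heat conditions term by term, Lemma \ref{HWxWy} to get $W_{x}\left(\Theta\right)=W_{y}\left(\Theta\right)$, and the specialization of the Lemma \ref{WM} computation to reduce the vanishing of $\Theta W_{y}\left(\Theta\right)$ to $k_i^2=k_j^2$ for every pair, which is incompatible with $k_1<k_2<\cdots<k_M$ once $M\geq 3$. Your explicit check that distinct pairs $\{i,j\}$ yield linearly independent exponentials (recovering the pair from $k_i+k_j$ and $k_i^2+k_j^2$, so no resonance $k_i+k_j=k_p+k_q$ can collapse two terms) is a welcome refinement of a step the paper leaves implicit by merely citing the proof of Lemma \ref{WM}.
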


\begin{proof}
The proof of $Ai\left(\Theta\right) = H\left(\Theta\right) = 0$  is direct. Indeed, computing the corresponding derivatives and replacing its values in $H\left(\Theta\right)$,
\begin{equation*}
\begin{aligned}
    H\left(\Theta\right) = &~{} \Theta_{y} - \Theta_{xx} \\
    = &~{}   \sum^{M}_{i=1}k^{2}_{i}a_{1}\exp\left( k_{i}x + k^{2}_{i}y + k^{3}_{i}t \right) - \sum^{M}_{i=1}k^{2}_{i}a_{1}\exp\left( k_{i}x + k^{2}_{i}y + k^{3}_{i}t \right) = 0.
\end{aligned}
\end{equation*}
Similarly, 
\begin{equation*}
\begin{aligned}
    Ai\left(\Theta\right) = &~{} \Theta_{t} - \Theta_{xxx} \\
    =&~{}  \sum^{M}_{i=1}k^{3}_{i}a_{i}\exp\left( k_{i}x + k^{2}_{i}y + k^{3}_{i}t \right) - \sum^{M}_{i=1}k^{3}_{i}a_{i}\exp\left( k_{i}x + k^{2}_{i}y + k^{3}_{i}t \right) = 0.
\end{aligned}
\end{equation*}
Therefore, if $\Theta\left(t,x,y\right) = \sum^{M}_{i=1}a_{i}\exp\left( k_{i}x + k^{2}_{i}y + k^{3}_{i}t \right)$ then $H\left(\Theta\right) = Ai\left(\Theta\right) = 0$.

\medskip

Now, from the proof of Lemma \ref{WM}, one has $\Theta W_{y}\left(\Theta\right) = 0$ if and only if,
\begin{equation*}
    k^{2}_{i} = k^{2}_{j},
\end{equation*}
for all $i,j \in \left\{1,\ldots,M\right\}$. If $M\ge 3$  at least one term of the sum is different to zero, since $k_1<k_2<\ldots <k_M$ imply $W_{y}$ different from zero.  In the cases $M=2$ or $M=1$, from Lemma \ref{lemvert} (iii) one has that $W_{y} = 0$ is equivalent to a phase associated  KdV soliton or a trivial solution, respectively.
\end{proof}

\subsection{2-solitons}

For the following result, recall the 2-soliton phase introduced in \eqref{2soliton}.
\begin{lem}\label{2solitonphase}
 Let $\Theta>0$ be a 2-soliton of scaling parameters $k_1<k_2<k_3<k_4$, and where each soliton correspond to a line-soliton. Then the following are satisfied:
 
\begin{enumerate}
 \item[$(i)$] In general $Ai\left(\Theta\right)$, $H\left(\Theta\right)$, $W_{y}\left(\Theta\right)$ and $W_{x}\left(\Theta\right)$ are different from zero.
 \medskip
 \item[$(ii)$] In general, $Ai\left(\Theta\right)$, $H\left(\Theta\right) \in \mathcal W_4$ and $\Theta W_{y}\left(\Theta\right), \Theta W_{x}\left(\Theta\right) \in \mathcal W_5$.
 \medskip
 \item[$(iii)$] If now $k_{1} = -k_{2}$ and $k_{3} = -k_{4}$ (that is, the case of 2 vertical line-solitons), then $W_{y}\left(\Theta\right) = 0$.
\end{enumerate}
\end{lem}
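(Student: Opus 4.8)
\emph{The plan is} to exploit that the $2$-soliton $\Theta$ of \eqref{2soliton} is a sum of four pure exponentials, so that every functional in the statement reduces to differentiation of exponentials followed by bookkeeping of exponents. Index the four pairs occurring in \eqref{2soliton} by $\alpha=(i,j)\in\{(1,3),(1,4),(2,3),(2,4)\}$ and write $\Theta=\sum_\alpha c_\alpha\exp(\Phi_\alpha)$, where $c_\alpha=k_j-k_i>0$ and $\Phi_\alpha:=\theta_i+\theta_j=\kappa_\alpha x+\mu_\alpha y+\nu_\alpha t$ with $\kappa_\alpha=k_i+k_j$, $\mu_\alpha=k_i^2+k_j^2\ge0$ and $\nu_\alpha=k_i^3+k_j^3$. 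Since $\exp(\Phi_\alpha)=\exp(\mu_\alpha y)\exp(\kappa_\alpha x+\nu_\alpha t)$ has nonnegative $y$-rate $\mu_\alpha$, each summand lies in $\mathcal W_1$ and is a joint eigenfunction of $\partial_t,\partial_x,\partial_y$; in particular $\Theta\in\mathcal W_4$. The uniform strategy is therefore to apply each operator term by term and read off the scalar multipliers.

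For the linear operators $Ai=\partial_t-\partial_x^3$ and $H=\partial_y-\partial_x^2$ I use the elementary identities $\nu_\alpha-\kappa_\alpha^3=-3k_ik_j(k_i+k_j)$ and $\mu_\alpha-\kappa_\alpha^2=-2k_ik_j$ (for $\alpha=(i,j)$) to obtain
\[
Ai\left(\Theta\right)=-3\sum_{\alpha=(i,j)}c_\alpha k_ik_j(k_i+k_j)\exp(\Phi_\alpha),\qquad H\left(\Theta\right)=-2\sum_{\alpha=(i,j)}c_\alpha k_ik_j\exp(\Phi_\alpha).
\]
Both are combinations of the same four modes $\exp(\mu_\alpha y)$, whose $y$-rates are nonnegative and, for generic $k$'s, pairwise distinct. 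This gives the inclusions $Ai\left(\Theta\right),H\left(\Theta\right)\in\mathcal W_4$ of $(ii)$, and since the scalar multipliers are generically nonzero, the corresponding non-vanishing of $(i)$. I would flag that when all $k_i>0$ these multipliers are negative, so the $\mathcal W_4$ membership is to be read at the level of the $y$-mode count rather than as literal positivity of the coefficients.

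For the quadratic Wronskians I expand $\Theta W_y\left(\Theta\right)=\Theta\Theta_{yy}-\Theta_y^2$ and $\Theta W_x\left(\Theta\right)=\Theta\Theta_{xxxx}-\Theta_{xx}^2$ as double sums over $\alpha,\beta$ and symmetrize in the two indices, exactly as in Lemma \ref{WM}, which a priori only places these in $\mathcal W_6$. I get
\[
\Theta W_y\left(\Theta\right)=\tfrac12\sum_{\alpha,\beta}c_\alpha c_\beta(\mu_\alpha-\mu_\beta)^2\exp(\Phi_\alpha+\Phi_\beta),\qquad \Theta W_x\left(\Theta\right)=\tfrac12\sum_{\alpha,\beta}c_\alpha c_\beta(\kappa_\alpha^2-\kappa_\beta^2)^2\exp(\Phi_\alpha+\Phi_\beta).
\]
The diagonal $\alpha=\beta$ drops out, leaving six off-diagonal unordered pairs with manifestly nonnegative coefficients and $y$-rates $\mu_\alpha+\mu_\beta\ge0$. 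The decisive point is a phase coincidence, $\Phi_{(1,3)}+\Phi_{(2,4)}=\Phi_{(1,4)}+\Phi_{(2,3)}=\theta_1+\theta_2+\theta_3+\theta_4$: the two mixed pairs carry the same total exponent, so the six contributions collapse to five distinct exponentials. This improves the count to $\Theta W_y\left(\Theta\right),\Theta W_x\left(\Theta\right)\in\mathcal W_5$, with generically nonzero coefficients, completing $(i)$ and $(ii)$.

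Finally, for $(iii)$ I substitute $k_1=-k_2$, $k_3=-k_4$, so that $k_1^2=k_2^2$, $k_3^2=k_4^2$, and every $\mu_\alpha=k_i^2+k_j^2$ equals the single constant $m:=k_1^2+k_3^2$. Then $\Theta=\exp(my)\,g(t,x)$, whence $\Theta_y=m\Theta$, $\Theta_{yy}=m^2\Theta$, and $\Theta W_y\left(\Theta\right)=m^2\Theta^2-m^2\Theta^2=0$; since $\Theta>0$, $W_y\left(\Theta\right)=0$. Equivalently, every factor $(\mu_\alpha-\mu_\beta)^2$ in the display above vanishes, which is also consistent with the rigidity of Lemma \ref{lem:Wy}. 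The only real difficulty in the whole argument is combinatorial: one must verify carefully the $6\to5$ phase coincidence and confirm that, away from non-generic resonances among the $\mu_\alpha$ (and the $\kappa_\alpha^2$), the listed exponents are genuinely distinct, so that the dimension counts $\mathcal W_4$ and $\mathcal W_5$ are sharp; everything else is routine term-by-term differentiation of exponentials.
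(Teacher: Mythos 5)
Your proof is correct, and its computational core coincides with the paper's: both apply the operators term by term to the four exponentials of \eqref{2soliton}, factor the multipliers via $\nu_\alpha-\kappa_\alpha^3=-3k_ik_j(k_i+k_j)$ and $\mu_\alpha-\kappa_\alpha^2=-2k_ik_j$ (exactly the paper's factored forms of $Ai\left(\Theta\right)$ and $H\left(\Theta\right)$), and count distinct $y$-modes, with the same decisive observation that the two mixed products share the exponent $\theta_1+\theta_2+\theta_3+\theta_4$, collapsing six terms to five. Where you differ is in organization. The paper obtains \eqref{Wy_2soliton} and \eqref{Wx_2soliton} by brute-force expansion of $\Theta\Theta_{yy}-\Theta_y^2$ and $\Theta\Theta_{xxxx}-\Theta_{xx}^2$, whereas you reuse the symmetrization identity behind Lemma \ref{WM}, namely $\Theta W_y\left(\Theta\right)=\tfrac12\sum_{\alpha,\beta}c_\alpha c_\beta\left(\mu_\alpha-\mu_\beta\right)^2\exp\left(\Phi_\alpha+\Phi_\beta\right)$ and its $W_x$ analogue with $\kappa_\alpha^2$ in place of $\mu_\alpha$. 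This is tidier, it makes the nonnegativity of the coefficients of $\Theta W_y\left(\Theta\right)$ and $\Theta W_x\left(\Theta\right)$ manifest (the paper's \eqref{Wy_2soliton} even drops the prefactors $c_\alpha c_\beta$ on the unmixed terms), and it yields part $(iii)$ structurally: with $k_1=-k_2$, $k_3=-k_4$ all four $\mu_\alpha$ coincide, so $\Theta=\exp\left(my\right)g\left(t,x\right)$ and $W_y\left(\Theta\right)=0$ at once, rather than by reading coefficients off \eqref{Wy_2soliton} as the paper does. Two further remarks. First, your flag about signs is warranted and matches a looseness in the paper itself: the multipliers of $Ai\left(\Theta\right)$ and $H\left(\Theta\right)$ need not be nonnegative, so membership in $\mathcal W_4$ holds only as a count of exponential $y$-modes, whereas the quadratic Wronskian coefficients are genuinely nonnegative and the $\mathcal W_5$ claims are unambiguous. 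Second, for part $(i)$ the paper proves slightly more for $W_x$ (it can never vanish under the strict ordering, via the equal-absolute-values argument), while you only claim generic nonvanishing; since the statement says ``in general,'' your weaker conclusion suffices, and note in passing that the hypotheses of $(iii)$ are actually incompatible with $k_1<k_2<k_3<k_4$, so both your proof and the paper's should be read as operating with the ordering relaxed.
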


\begin{proof}
After computing (see Appendix \ref{A}), 
\begin{equation}\label{Aicomputation}
\begin{aligned}
    Ai\left(\Theta\right) =&~{} \Theta_{t} - \Theta_{xxx}\\
    =& ~{}\left(k_{3} - k_{1}\right)\left(k^{3}_{3} + k^{3}_{1}\right)\exp\left(\theta_{1} + \theta_{3}\right) + \left(k_{4} - k_{1}\right)\left(k^{3}_{4} + k^{3}_{1}\right)\exp\left(\theta_{1} + \theta_{4}\right) \\
    &~{}+ \left(k_{3} - k_{2}\right)\left(k^{3}_{3} + k^{3}_{2}\right)\exp\left(\theta_{2} + \theta_{3}\right) + \left(k_{4} - k_{2}\right)\left(k^{3}_{4} + k^{3}_{2}\right)\exp\left(\theta_{2} + \theta_{4}\right) \\
    &~{}-\left(k_{3} - k_{1}\right)\left(k_{3} + k_{1}\right)^{3}\exp\left(\theta_{1} + \theta_{3}\right) - \left(k_{4} - k_{1}\right)\left(k_{4} + k_{1}\right)^{3}\exp\left(\theta_{1} + \theta_{4}\right)\\
    &~{}- \left(k_{3} - k_{2}\right)\left(k_{3} + k_{2}\right)^{3}\exp\left(\theta_{2} + \theta_{3}\right) - \left(k_{4} - k_{2}\right)\left(k_{4} + k_{2}\right)^{3}\exp\left(\theta_{2} + \theta_{4}\right).
\end{aligned}
\end{equation}
notice that $Ai\left(\Theta\right)=0$  if and only if the associated exponentials coefficients are zero. Thus, for each $\left(j,i\right)\in\left\{\left(1,3\right),\left(1,4\right),\left(2,3\right),\left(2,4\right)\right\}$
\begin{equation*}
    \begin{aligned}
     &   \left(k_{i}-k_{j}\right)\left(k^{3}_{i}+k^{3}_{j}\right) - \left(k_{i}-k_{j}\right)\left(k_{i}+k_{j}\right)^{3}\\
       &\quad = k^{4}_{i} + k_{i}k^{3}_{j} - k^{3}_{i}k_{j} - k^{4}_{j} - \left(k_{i}-k_{j}\right)\left(k^{3}_{i}+3k^{2}_{i}k_{j}+3k_{i}k^{2}_{j}+k^{3}_{j}\right)\\
        &\quad =k^{4}_{i} + k_{i}k^{3}_{j} - k^{3}_{i}k_{j} - k^{4}_{i} - 3k^{3}_{i}k_{j} - 3 k^{2}_{i}k^{2}_{j} - k_{i}k^{3}_{j} + k^{3}_{i}k_{j} + 3k^{2}_{i}k^{2}_{j} + 3k_{i}k^{3}_{j} + k^{4}_{j}\\
       & \quad = -3k^{3}_{i}k_{j} + 3k_{i}k^{3}_{j} \\ %= 3\left(k_{i}k^{3}_{j} - k^{3}_{i}k_{j}\right)
       & \quad =3k_{i}k_{j}\left(k^{2}_{j}-k^{2}_{i}\right).
    \end{aligned}
\end{equation*}
Denoting $E_{ij}=\left(k_{j}-k_{i}\right)\exp\left(\theta_{i}+\theta_{j}\right)$, then $Ai\left(\Theta\right)$ can be rewritten as 
\begin{equation*}%\label{Ai_2soliton}
\begin{aligned}
    Ai\left(\Theta\right) =  &~{} -3\Big( k_{1}k_{3}\left(k_{1}+k_{3}\right)E_{13} + k_{1}k_{4}\left(k_{1}+k_{4}\right)E_{14} \\
    &~{} \qquad + k_{2}k_{3}\left(k_{2}+k_{3}\right)E_{23} + k_{2}k_{4}\left(k_{2}+k_{4}\right)E_{24}\Big).
\end{aligned}
\end{equation*}
This proves that $Ai\left(\Theta\right) \in \mathcal W_4$.\footnote{
Assume $ k_1<k_2<k_3<k_4$. Let us study the condition $Ai\left(\Theta\right)=0$. If $k_1=0$, then $k_2=-k_3=-k_4$, which is impossible. A similar argument holds if now $k_2 =0$, or $k_3=0$, or $k_4=0$. Therefore, each $k_i$ must be nonzero. However, in this case $k_1=-k_3 = -k_4$, also impossible. Thus, $Ai\left(\Theta\right)\neq 0$ for a nondegenerate 2-soliton.}
Replacing the values of the derivatives in $H\left(\Theta\right)$,
\begin{align*}
    H\left(\Theta\right) &= \Theta_{y} - \Theta_{xx}\\
    &=\left(k_{3} - k_{1}\right)\left(k^{2}_{3} + k^{2}_{1}\right)\exp\left(\theta_{1} + \theta_{3}\right) + \left(k_{4} - k_{1}\right)\left(k^{2}_{4} + k^{2}_{1}\right)\exp\left(\theta_{1} + \theta_{4}\right) \\
    &\quad + \left(k_{3} - k_{2}\right)\left(k^{2}_{3} + k^{2}_{2}\right)\exp\left(\theta_{2} + \theta_{3}\right) + \left(k_{4} - k_{2}\right)\left(k^{2}_{4} + k^{2}_{2}\right)\exp\left(\theta_{2} + \theta_{4}\right) \\
    & \quad - \left(k_{3} - k_{1}\right)\left(k_{3} + k_{1}\right)^{2}\exp\left(\theta_{1} + \theta_{3}\right) - \left(k_{4} - k_{1}\right)\left(k_{4} + k_{1}\right)^{2}\exp\left(\theta_{1} + \theta_{4}\right)\\
    & \quad - \left(k_{3} - k_{2}\right)\left(k_{3} + k_{2}\right)^{2}\exp\left(\theta_{2} + \theta_{3}\right) - \left(k_{4} - k_{2}\right)\left(k_{4} + k_{2}\right)^{2}\exp\left(\theta_{2} + \theta_{4}\right).
\end{align*}
Repeating the procedure of the Airy condition, $H\left(\Theta\right)$ can be rewritten as
\begin{equation*}
    H\left(\Theta\right) = -2\left(k_{1}k_{3}E_{13} + k_{1}k_{4}E_{14} + k_{2}k_{3}E_{23} + k_{2}k_{4}E_{24}\right).
\end{equation*}
The condition $k_1<k_2<k_3<k_4$ naturally forbids $ H\left(\Theta\right) =0.$

\medskip

Now, replacing the values of the derivatives in $W_{y}\left(\Theta\right)$, it follows
\begin{equation*}
\begin{aligned}
  &  \Theta W_{y}\left(\Theta\right) \\
    &~{}=  \Theta \Theta_{yy} - \Theta^{2}_{y}\\
    &~{} = \Big(\left(k_{3} - k_{1}\right)\exp\left(\theta_{1} + \theta_{3}\right) + \left(k_{4} - k_{1}\right)\exp\left(\theta_{1} + \theta_{4}\right) \\
    &\qquad + \left(k_{3} - k_{2}\right)\exp\left(\theta_{2} + \theta_{3}\right) + \left(k_{4} - k_{2}\right)\exp\left(\theta_{2} + \theta_{4}\right)\Big)\\
    &\qquad \cdot \Big(\left(k_{3} - k_{1}\right)\left(k^{2}_{3} + k^{2}_{1}\right)^{2}\exp\left(\theta_{1} + \theta_{3}\right) \\
    &\qquad \quad + \left(k_{4} - k_{1}\right)\left(k^{2}_{4} + k^{2}_{1}\right)^{2}\exp\left(\theta_{1} + \theta_{4}\right) + \left(k_{3} - k_{2}\right)\left(k^{2}_{3} + k^{2}_{2}\right)^{2}\exp\left(\theta_{2} + \theta_{3}\right)\\
    &\qquad \quad + \left(k_{4} - k_{2}\right)\left(k^{2}_{4} + k^{2}_{2}\right)^{2}\exp\left(\theta_{2} + \theta_{4}\right)\Big) \\
    &\qquad - \Big(\left(k_{3} - k_{1}\right)\left(k^{2}_{3} + k^{2}_{1}\right)\exp\left(\theta_{1} + \theta_{3}\right) + \left(k_{4} - k_{1}\right)\left(k^{2}_{4} + k^{2}_{1}\right)\exp\left(\theta_{1} + \theta_{4}\right)\\
    &\qquad \quad + \left(k_{3} - k_{2}\right)\left(k^{2}_{3} + k^{2}_{2}\right)\exp\left(\theta_{2} + \theta_{3}\right) + \left(k_{4} - k_{2}\right)\left(k^{2}_{4} + k^{2}_{2}\right)\exp\left(\theta_{2} + \theta_{4}\right)\Big)^{2}.
\end{aligned}
\end{equation*}
Developing and rearranging,  one has
\begin{equation}\label{Wy_2soliton}
\begin{aligned} %[Check This]
    & \Theta W_{y}\left(\Theta\right)\\
    & = \left(k^{2}_{1}-k^{2}_{2}\right)^{2}\Big(\exp\left(\theta_{1}+\theta_{2}+2\theta_{4}\right)+\exp\left(\theta_{1}+\theta_{2}+2\theta_{3}\right)\Big) \\
& \quad + \left(k^{2}_{3}-k^{2}_{4}\right)^{2}\Big( \exp\left(2\theta_{1}+\theta_{3}+\theta_{4}\right)+\exp\left(2\theta_{2}+\theta_{3}+\theta_{4}\right)\Big)\\
&\quad +  k_{1234} \exp\left(\theta_{1}+\theta_{2}+\theta_{3}+\theta_{4}\right),
\end{aligned}
\end{equation}
where
\[
\begin{aligned}
k_{1234}:=  &~{}\left(k_{3}-k_{1}\right)\left(k_{4}-k_{2}\right)\left(\left(k^{2}_{1}+k^{2}_{3}\right)-\left(k^{2}_{2}+k^{2}_{4}\right)\right)^{2} \\
&~{} + \left(k_{3}-k_{2}\right)\left(k_{4}-k_{1}\right)\left(\left(k^{2}_{1}+k^{2}_{4}\right)-\left(k^{2}_{2}+k^{2}_{3}\right)\right)^{2}.
\end{aligned}
\]
A simple observation reveals that $\Theta W_{y}\left(\Theta\right) =0$ if $k_{1234}=0$ and $k_1=\pm k_2$, $k_3=\pm k_4$. Since $k_1<k_2<k_3<k_4$, one necessarily has $k_1=-k_2$ and $k_3=-k_4$. In this case, we also have $k_{1234}=0$, making this assumption unnecessary. Except by this particular case, one naturally concludes from \eqref{Wy_2soliton} that $\Theta W_y\left(\Theta\right) \in \mathcal W_5.$
Therefore, $W_{y}$ can be zero in the case of a 2-soliton when $k_{1}=-k_{2}$ and $k_{3}=-k_{4}$. This can be true when all constants have different values. 

\medskip

As mentioned earlier, $H\left(\Theta\right)$ is not equal to zero and therefore it is necessary to verify both conditions, $\Theta W_{y}\left(\Theta\right) = 0$ and $\Theta W_{x}\left(\Theta\right) = 0$, separately.  Then, %replacing the derivatives in $W_{x}$,
\begin{equation*}
\begin{aligned}
   & \Theta W_{x}\left(\Theta\right) \\
   &= \Theta \Theta_{xxxx} - \Theta^{2}_{xx}\\
    &= \Big(\left(k_{3} - k_{1}\right)\exp\left(\theta_{1} + \theta_{3}\right) + \left(k_{4} - k_{1}\right)\exp\left(\theta_{1} + \theta_{4}\right) + \left(k_{3} - k_{2}\right)\exp\left(\theta_{2} + \theta_{3}\right) \\
    &\qquad + \left(k_{4} - k_{2}\right)\exp\left(\theta_{2} + \theta_{4}\right)\Big) \cdot \Big(\left(k_{3} - k_{1}\right)\left(k_{3} + k_{1}\right)^{4}\exp\left(\theta_{1} + \theta_{3}\right) \\
    &\qquad + \left(k_{4} - k_{1}\right)\left(k_{4} + k_{1}\right)^{4}\exp\left(\theta_{1} + \theta_{4}\right) + \left(k_{3} - k_{2}\right)\left(k_{3} + k_{2}\right)^{4}\exp\left(\theta_{2} + \theta_{3}\right) \\
    &\qquad + \left(k_{4} - k_{2}\right)\left(k_{4} + k_{2}\right)^{4}\exp\left(\theta_{2} + \theta_{4}\right)\Big) -\Big(\left(k_{3}-k_{1}\right)\left(k_{3}+k_{1}\right)^{2}\exp\left(\theta_{1}+\theta_{3}\right)\\
    &\qquad +\left(k_{4}-k_{1}\right)\left(k_{4}+k_{1}\right)^{2}\exp\left(\theta_{1}+\theta_{4}\right) +\left(k_{3}-k_{2}\right)\left(k_{3}+k_{2}\right)^{2}\exp\left(\theta_{2}+\theta_{3}\right)\\
    &\qquad +\left(k_{4}-k_{2}\right)\left(k_{4}+k_{2}\right)^{2}\exp\left(\theta_{2}+\theta_{4}\right)\Big)^{2}.
\end{aligned}
\end{equation*}
Developing and grouping terms, we arrive at
\begin{equation}\label{Wx_2soliton}%[Check This]
    \begin{aligned}
& \Theta   W_{x}\left(\Theta\right) \\
&~{} = \Big(\left(k_{1}+k_{4}\right)^{2}-\left(k_{1}+k_{3}\right)^{2}\Big)^{2}E_{13}E_{14} + \Big(\left(k_{2}+k_{3}\right)^{2}-\left(k_{1}+k_{3}\right)^{2}\Big)^{2}E_{13}E_{23} \\
&\quad + \Bigg(  \Big(\left(k_{2}+k_{4}\right)^{2}-\left(k_{1}+k_{3}\right)^{2}\Big)^{2}+ \Big(\left(k_{2}+k_{3}\right)^{2}-\left(k_{1}+k_{4}\right)^{2}\Big)^{2} \Bigg) E_{14}E_{23} \\
&\quad + \Big(\left(k_{1}+k_{4}\right)^{2}-\left(k_{2}+k_{4}\right)^{2}\Big)^{2}E_{14}E_{24} + \Big(\left(k_{2}+k_{3}\right)^{2}-\left(k_{2}+k_{4}\right)^{2}\Big)^{2}E_{23}E_{24}.
    \end{aligned}
\end{equation} 
Clearly from \eqref{Wx_2soliton} one concludes that in general $\Theta W_x\left(\Theta\right) \in\mathcal W_5$. In order to get zero value, one should have
\[
\abs{k_{1}+k_{4}} =\abs{k_{1}+k_{3}} =\abs{k_{2}+k_{4}} =\abs{k_{2}+k_{3}} . %, \quad  
\]
Under the hypothesis $k_{1}<k_{2}<k_{3}<k_{4}$, this is never satisfied. Indeed, 
\[
k_1+k_3 <k_2+k_3<k_2+k_4, \quad k_1+k_3<k_1+k_4;
\]
contradicting the equality of absolute values.  Then $\Theta W_{x}\left(\Theta\right)$ is always different from zero to a phase of a 2-soliton.
\end{proof}

\begin{cor}\label{coro2soliton}
In general, given the profile $F=\log$, any 2-soliton as in \eqref{2soliton} only satisfies $\mathcal T=0$.
\end{cor}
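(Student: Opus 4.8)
The plan is to read the claim directly off the alternative form \eqref{casicasi} of the master equation, combined with the computations already carried out in Lemma \ref{2solitonphase}. The word ``only'' in the statement asserts two things at once: that a generic 2-soliton \emph{fails} each of the Airy, Heat, $x$-Wronskian and $y$-Wronskian conditions, while it \emph{does} satisfy $\mathcal T\left(\Theta\right)=0$. I would therefore split the argument into a negative half and a positive half.

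First I would dispose of the negative half by quoting Lemma \ref{2solitonphase}(i): for a nondegenerate 2-soliton with $k_1<k_2<k_3<k_4$, the quantities $Ai\left(\Theta\right)$, $H\left(\Theta\right)$, $W_x\left(\Theta\right)$ and $W_y\left(\Theta\right)$ are all nonzero in general. Concretely, the explicit expressions \eqref{Aicomputation}, \eqref{Wy_2soliton} and \eqref{Wx_2soliton} exhibit coefficients that cannot all vanish under the strict ordering of the $k_i$ (the footnote after \eqref{Aicomputation} already rules out $Ai\left(\Theta\right)=0$, and the absolute-value inequalities at the end of the proof of Lemma \ref{2solitonphase} rule out $\Theta W_x\left(\Theta\right)=0$). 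Hence none of the Airy, Heat, or Wronskian conditions hold for generic parameters.

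For the positive half, I would invoke \eqref{casicasi}, which is an exact rewriting of the master equation \eqref{KP3partes} that the phase of every genuine KP solution satisfies; the 2-soliton \eqref{2soliton}, being $u=2\partial_x^2\log\Theta$ for the Wronskian $\tau$-function $\Theta$, is such a solution. Since here $F=\log$, Lemma \ref{dem:ODE1} gives $\rho=F''+F'^2\equiv 0$, whence also $\rho'\equiv\rho''\equiv 0$. Substituting these into \eqref{casicasi} annihilates every term carrying a factor of $\rho$, $\rho'$ or $\rho''$, and what survives is precisely $\mathcal T\left(\Theta\right)=0$.

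I do not expect a genuine obstacle here, since all the content is already packaged in Lemma \ref{2solitonphase} and in the equivalence \eqref{KP3partes}$\Leftrightarrow$\eqref{casicasi}. The single point worth stating with care is that the 2-soliton is indeed a bona-fide KP solution, so that \eqref{casicasi} applies to it; this is standard and is recorded in the review of soliton structures above.
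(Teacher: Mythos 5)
Your proposal is correct and follows exactly the paper's own argument: the paper's proof reads ``Direct from \eqref{casicasi}, Lemma \ref{dem:ODE1} and Lemma \ref{2solitonphase} (i)'', which is precisely your combination of the negative half (Lemma \ref{2solitonphase}(i) ruling out the Airy, Heat and Wronskian conditions) and the positive half ($\rho\equiv 0$ from Lemma \ref{dem:ODE1} reducing \eqref{casicasi} to $\mathcal T\left(\Theta\right)=0$). You have simply spelled out the details the paper leaves implicit.
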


\begin{proof}
Direct from \eqref{casicasi}, Lemma \ref{dem:ODE1} and Lemma \ref{2solitonphase} (i).
\end{proof}

\section{Proof of Main Results}\label{Sec:7}

\subsection{Proof of Theorem \ref{MT1}}

Assume $\Theta>0$ is a smooth KdV line-soliton phase with a profile $F= \log$. Its phase is given by
\begin{equation}\label{phase_basica}
\begin{aligned}
    \Theta \left(t,x,y\right) &= \exp\left(kx + k^{2}y + k^{3}t\right) + \exp\left(-kx + k^{2}y - k^{3}t\right)\\
    &= \exp\left(\theta_{1}\right) + \exp\left(\theta_{2}\right),
\end{aligned}
\end{equation}
with $\theta_{1} = kx + k^{2}y + k^{3}t$ and $\theta_{2} = -kx + k^{2}y - k^{3}t$. It will be shown that the KdV phase satisfies the following  conditions
\begin{equation*}
    \begin{aligned}
    H\left(\Theta\right)   =  Ai\left(\Theta\right) =  W_{x}\left(\Theta\right) =  W_{y}\left(\Theta\right) = 0.
    \end{aligned}
\end{equation*}
Computing derivatives, we obtain
\[
\begin{aligned}
    \Theta_{x} &= k\exp\left(\theta_{1}\right) - k\exp\left(\theta_{2}\right); \quad \Theta_{xx} = k^{2}\exp\left(\theta_{1}\right) + k^{2}\exp\left(\theta_{2}\right); \\
     \Theta_{xxx} & = k^{3}\exp\left(\theta_{1}\right) - k^{3}\exp\left(\theta_{2}\right); \quad \Theta_{xxxx} = k^{4}\exp\left(\theta_{1}\right) + k^{4}\exp\left(\theta_{2}\right);\\
    \Theta_{y} &= k^{2}\exp\left(\theta_{1}\right) + k^{2}\exp\left(\theta_{2}\right); \quad \Theta_{yy} = k^{4}\exp\left(\theta_{1}\right) + k^{4}\exp\left(\theta_{2}\right); \\
    \Theta_{t} &= k^{3}_{1}\exp\left(\theta_{1}\right) - k^{3}_{1}\exp\left(\theta_{2}\right).
\end{aligned}
\]
Replacing the values of $\Theta_{y}$ and $\Theta_{xx}$ in $H\left(\Theta\right)$ by corresponding expressions above,  one has
\[
\begin{aligned}
    H\left(\Theta\right) &= \Theta_{y} - \Theta_{xx}\\
    &=\left(k^{2}\exp\left(\theta_{1}\right) + k^{2}\exp\left(\theta_{2}\right)\right) - \left(k^{2}\exp\left(\theta_{1}\right) + k^{2}\exp\left(\theta_{2}\right)\right)\\
    &= \left(k^{2}\exp\left(\theta_{1}\right) -  k^{2}\exp\left(\theta_{1}\right)\right) + \left(k^{2}\exp\left(\theta_{2}\right)  -  k^{2}\exp\left(\theta_{2}\right)\right)=0.
\end{aligned}
\]
Hence $H\left(\Theta\right) = 0$, then  $\Theta$ is of Heat type. Since $H\left(\Theta\right) = 0$  from Lemma \ref{HWxWy}, $W_{x}\left(\Theta\right) = W_{y}\left(\Theta\right)$.
Now, replacing the values of the derivatives of $\Theta$ in $W_{x}\left(\Theta\right)$,
\begin{align*}
    W_{x}\left(\Theta\right) &= \Theta_{xxxx} - \frac{\Theta^{2}_{xx}}{\Theta}\\
    &= \left(k^{4}\exp\left(\theta_{1}\right) + k^{4}\exp\left(\theta_{2}\right)\right) - \frac{\left( k^{2}\exp\left(\theta_{1}\right) + k^{2}\exp\left(\theta_{2}\right)\right)^{2}}{\left(\exp\left(\theta_{1}\right) + \exp\left(\theta_{2}\right)\right)}\\
    &= k^{4}\frac{\left(\exp\left(\theta_{1}\right) + \exp\left(\theta_{2}\right)\right)^{2}}{\left(\exp\left(\theta_{1}\right) + \exp\left(\theta_{2}\right)\right)} - k^{4}\frac{\left(\exp\left(\theta_{1}\right) + \exp\left(\theta_{2}\right)\right)^{2}}{\left(\exp\left(\theta_{1}\right) + \exp\left(\theta_{2}\right)\right)}= 0.
\end{align*}
Since $H\left(\Theta\right) = 0$, this also means that $W_{y}\left(\Theta\right) = 0$.
\medskip

Finally, replacing the derivatives of $\Theta$ in $Ai\left(\Theta\right)$
\begin{align*}
    \Theta_{t} - \Theta_{xxx} &= \left(k^{3}\exp\left(\theta_{1}\right) - k^{3}\exp\left(\theta_{2}\right)\right) - \left(k^{3}\exp\left(\theta_{1}\right) - k^{3}\exp\left(\theta_{2}\right)\right) = 0.
\end{align*}
Then, if $u = 2\partial^{2}_{x}\log{\Theta}$ is a KdV line-soliton solution of KP (i.e. $\Theta = \exp\left(kx + k^{2}y + k^{3}t\right) + \exp\left(-kx + k^{2}y - k^{3}t\right)$) then $H\left(\Theta\right) = Ai\left(\Theta\right) = W_{x}\left(\Theta\right) = W_{y}\left(\Theta\right) = 0$.
\medskip

Conversely, it will now be demonstrated that if a phase $\Theta$ satisfies $H\left(\Theta\right) = W_{x}^F\left(\Theta\right) = W_{y}^F\left(\Theta\right) = Ai\left(\Theta\right) = 0$, then the corresponding solution $u=2\partial_x^2 F\left(\Theta\right)$ is  a KdV vertical line-soliton, that is,  $F= \log$ and $\Theta$ as in \eqref{phase_basica}.
 \medskip

%AGREGAR QUE SE CONCLUYE $F\equiv\log$.
Since $H\left(\Theta\right)=Ai\left(\Theta\right)=W_x^F \left(\Theta\right)=W_y^F\left(\Theta\right)=0$, it is sufficient to look at equation \eqref{rhoKP}, to conclude that
\begin{equation*}
     \left(\rho \left(s\right)'' - 2F'\left(\Theta\right)\rho \left(s\right)' + 4F''\left(\Theta\right)\rho \left(s\right)\right)\Theta^{4}_{x} + 6\rho \left(s\right) '\Theta^{2}_{x}\Theta_{xx} + 3\rho \left(s\right)\left(\Theta^{2}_{xx} + \Theta^{2}_{y}\right) = 0.
\end{equation*}
By the hypothesis on $F$ and the values of its derivatives at $s=1$, if there exists a solution to the KP equation of the form $u = 2\partial^{2}_{x}F\left(\Theta\right)$, \eqref{rhoKP} is satisfied, and by Lemma \ref{dem:ODE2} $\left(iii\right)$, $F=\log$.

\medskip

Now, Lemma \ref{HWxWy} implies that $W^{F}_{y}\left(\Theta\right) - W^{F}_{x}\left(\Theta\right)=W_{y}\left(\Theta\right) - W_{x}\left(\Theta\right)=0$. Since $F=\log$, $W_{y}\left(\Theta\right)=W_{x}\left(\Theta\right)=0$. Since $W_{y}\left(\Theta\right) = W_{x}\left(\Theta\right) = Ai\left(\Theta\right) = 0$, Lemma \ref{4p11} ensures that $\Theta = A\left(t,x\right)\exp\left(cy\right)$, with $A\left(t,x\right)$ being of Airy type and with $c \in \mathbb R$.
 
\medskip

Finally, the condition $ H\left(\Theta\right) = 0$ implies 
%Writting $H\left(\Theta\right) = 0$ in terms of the derivatives of $\Theta$,
\begin{equation*}
    \Theta _{y} - \Theta _{xx} = \left(c A - A_{xx}\right)\exp\left(c y\right)= 0.
\end{equation*}
We first treat the case $c=0$. In this case
\[
A\left(t,x\right) = c_{A,1}\left(t\right)+ c_{A,2}\left(t\right)x.
\]
Since $A$ must satisfy the Airy equation for all $\left(t,x,y\right)\in\mathbb R^{3}$, one gets $c_{A,1}'\left(t\right)+ c_{A,2}'\left(t\right)x =0$, implying that $c_{A,1}$ and $c_{A,2}$ are constants. $\Theta$ is given in this case by
\[
\Theta= c_{A,1} + c_{A,2} x, 
\] 
corresponding to a singular soliton solution, which is discarded by smoothness assumptions. 

\medskip

Now we assume $c$ different from zero. Here, %After divide the exponential and rearranging terms, appears an ODE for the function $A\left(x,t\right)$, $c_{1}\left(t\right)A\left(x,t\right) = A_{xx}\left(x,t\right)$, which solution is given by
\begin{equation}\label{Ademvertical}
    A\left(t,x\right) = c_{A,1}\left(t\right)\exp\left(kx\right) + c_{A,2}\left(t\right)\exp\left(-kx\right), \quad k=\sqrt{c}\in\mathbb C-\{0\}.
\end{equation}
Solving again the Airy equation for $A$, it follows 
\begin{equation*}
\begin{aligned}
A_{t} - A_{xxx} &= c'_{A,1}\left(t\right)\exp\left( kx\right) + c'_{A,2}\exp\left(-kx\right) \\
    &- \left(k^3 c_{A,1} \exp\left(kx\right) - k^3 c_{A,2}\exp\left(-kx\right)\right)= 0.
\end{aligned}
\end{equation*}
By the  linear independence, 
\begin{equation*}
    \begin{aligned}
        c'_{A,1}\left(t\right) - k^3 c_{A,1}\left(t\right)= 0;\\
        c'_{A,2}\left(t\right) + k^3 c_{A,2}\left(t\right)= 0.
    \end{aligned}
\end{equation*}
This are independents ODE's for $c_{A,1}\left(t\right)$ and $c_{A,2}\left(t\right)$. Solving them, it is obtained 
\begin{equation}\label{final_final}
        c_{A,1}\left(t\right) = c_{A,1,0} \exp\left(k^3 t\right), \quad  c_{A,2}\left(t\right) = c_{A,2,0} \exp\left(-k^3 t\right),
\end{equation}
with $ c_{A,1,0}, c_{A,2,0} \in \mathbb R$ arbitrary constants. We obtain from \eqref{Ademvertical} and \eqref{final_final} %Replacing all this values on \eqref{FaseVerticalSoliton}, and renaming the constant, $k=\sqrt{c}$, this is equal to
%\begin{equation}
    %\Theta \left(t,x,y\right) = a\exp\left(\sqrt{c}x + cy + c^{\frac{3}{2}}t\right) + b\exp\left(-\sqrt{c}x + cy - c^{\frac{3}{2}}t\right).
%\end{equation}
\begin{equation*}
    \Theta \left(t,x,y\right) =  c_{A,1,0} \exp\left(kx + k^{2}y + k^{3}t\right) +  c_{A,2,0}\exp\left(-kx + k^{2}y - k^{3}t\right).
\end{equation*}
The condition $\Theta\in\mathbb R$ implies that  $k$ is real-valued. Also, $\Theta>0$ implies $  c_{A,1,0},  c_{A,2,0}>0$. This finally shows that $\Theta$ corresponds to the phase of a KdV vertical line-soliton. The proof is complete.

\subsection{Proof of Theorem \ref{MT1b}}
The proof of this result is based in two lemmas. Since Theorem \ref{MT1} considers the case of KdV line solitons, we focus on the most demanding case of oblique solitons $\left(A>0\right)$.

\begin{lem}\label{Lema2a}
 Let $u$ be a smooth solution to \eqref{eq:KP} of the form \eqref{eqn:FT}, with a smooth profile $F\left(\Theta\right) $ such that $F\left(1\right)=0$, $F'\left(1\right)=1$, $F''\left(1\right)=-1$, and $F'''\left(1\right)=2$. %and $\Theta > 0$. % with a profile $F(\Theta) = \log{\Theta}$, then it is true that:
Then if $u$ is a line-soliton of the form \eqref{soliton family}-\eqref{linesoliton} and $F=\log$, one has that $H\left(\Theta\right) = Ai\left(\Theta\right) = 0$, and 
\begin{equation}\label{thetaW_y_new}
\begin{aligned}
& \Theta W_{x}\left(\Theta\right) = \Theta W_{y}\left(\Theta\right) \\
& = a_{1}a_{2}\left(k^{2}_{1}-k^{2}_{2}\right)^{2}\exp\left(\left(k_{1}+k_{2}\right)x+\left(k^{2}_{1}+k^{2}_{2}\right)y+\left(k^{3}_{1}+k^{3}_{2}\right)t\right), %= \left(k_1^2-k_2^2\right)^{2} \exp\left(\theta_1+\theta_2\right),
\end{aligned}
\end{equation}
for some particular $a_1, a_2>0$, $k_1,k_2\in\mathbb R$.
%Assuming $Ai\left(\Theta\right)=H\left(\Theta\right)=0$. 
%
%It is true that $\Theta W_{y}\left(\Theta\right)=\left(k^{2}_{1}-k^{2}_{2}\right)^{2}\exp\left((k_{1}+k_{2})x+(k^{2}_{1}+k^{2}_{2})y+(k^{3}_{1}+k^{3}_{2})t\right)$ if and only if $\Theta=A_{1}\left(t,x\right)\exp\left(B_{1}\left(t,x\right)y\right) + A_{2}\left(t,x\right)\exp\left(B_{2}\left(t,x\right)y\right)$.
\end{lem}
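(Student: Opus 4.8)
The plan is to verify all the claimed identities by direct substitution of the explicit line-soliton phase \eqref{linesoliton}, using the hypothesis $F=\log$ to collapse every generalized Wronskian to its simple counterpart. First I would write $\Theta = a_{1}\exp(\theta_{1}) + a_{2}\exp(\theta_{2})$ with $\theta_{j} = k_{j}x + k_{j}^{2}y + k_{j}^{3}t$ and $a_{1},a_{2}>0$, and observe that for each single exponential one has $\partial_{t}\exp(\theta_{j}) = k_{j}^{3}\exp(\theta_{j}) = \partial_{x}^{3}\exp(\theta_{j})$ and $\partial_{y}\exp(\theta_{j}) = k_{j}^{2}\exp(\theta_{j}) = \partial_{x}^{2}\exp(\theta_{j})$. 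Summing over $j$ produces termwise cancellation, so that $Ai(\Theta)=\Theta_{t}-\Theta_{xxx}=0$ and $H(\Theta)=\Theta_{y}-\Theta_{xx}=0$ follow immediately; this is exactly the computation already recorded in Lemma \ref{lem:linesoliton}.

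Next, with $H(\Theta)=0$ in hand, Lemma \ref{HWxWy} yields $W_{x}^{F}(\Theta)-W_{y}^{F}(\Theta)=W_{x}(\Theta)-W_{y}(\Theta)=0$, and since $F=\log$ the superscript drops, giving the asserted equality $\Theta W_{x}(\Theta)=\Theta W_{y}(\Theta)$. It then remains only to evaluate one of the two, say $\Theta W_{y}(\Theta)=\Theta\Theta_{yy}-\Theta_{y}^{2}$. I would carry out this product exactly as in the derivation of \eqref{thetaW_y}: the diagonal contributions $k_{j}^{4}a_{j}^{2}\exp(2\theta_{j})$ cancel against the corresponding squares in $\Theta_{y}^{2}$, leaving only the cross term, whose coefficient simplifies via $k_{1}^{4}+k_{2}^{4}-2k_{1}^{2}k_{2}^{2}=(k_{1}^{2}-k_{2}^{2})^{2}$. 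Hence
\[
\Theta W_{y}(\Theta) = (k_{1}^{2}-k_{2}^{2})^{2}\,a_{1}a_{2}\exp(\theta_{1}+\theta_{2}),
\]
and substituting $\theta_{1}+\theta_{2}=(k_{1}+k_{2})x+(k_{1}^{2}+k_{2}^{2})y+(k_{1}^{3}+k_{2}^{3})t$ recovers precisely \eqref{thetaW_y_new}.

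There is no serious obstacle here: the statement is the \emph{direct} direction, asserting that genuine line-solitons satisfy the Airy, Heat, and single-exponential Wronskian conditions, and every identity follows by substitution of the explicit form. The only point requiring mild care is the reduction of $W_{x}^{F},W_{y}^{F}$ to $W_{x},W_{y}$, which is where Lemma \ref{HWxWy} together with $H(\Theta)=0$ is essential; note that in this lemma $F=\log$ is assumed as a hypothesis rather than derived, so the prescribed values of $F$ and its derivatives at $s=1$ play no further role beyond ensuring consistency of the representation \eqref{soliton family}.
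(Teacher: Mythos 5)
Your proposal is correct and follows essentially the same route as the paper's own proof: the paper likewise invokes Lemma \ref{lem:linesoliton} for $Ai\left(\Theta\right)=H\left(\Theta\right)=0$ and the formula \eqref{thetaW_y}, and then Lemma \ref{HWxWy} to identify $\Theta W_x\left(\Theta\right)$ with $\Theta W_y\left(\Theta\right)$, yielding \eqref{thetaW_y_new}. The only difference is presentational, in that you spell out the termwise cancellations that the paper delegates to the cited lemma.
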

\begin{proof}
By Lemma \ref{lem:linesoliton}, we know that $H\left(\Theta\right) = Ai\left(\Theta\right) = 0$. Thanks to \eqref{thetaW_y} and Lemma \ref{HWxWy}, we conclude  \eqref{thetaW_y_new}. This proves Lemma \ref{Lema2a}.
\end{proof}

\begin{lem}\label{Lema2b}
   Let $u$ be a smooth solution to \eqref{eq:KP} of the form \eqref{eqn:FT}, with $\Theta>0$ smooth and real-valued and $F$ a smooth profile such that $F\left(1\right)=0$, $F'\left(1\right)=1$, $F''\left(1\right)=-1$, and $F'''\left(1\right)=2$. %and $\Theta > 0$. % with a profile $F(\Theta) = \log{\Theta}$, then it is true that:
If $H\left(\Theta\right) = Ai\left(\Theta\right) = 0$, and 
\begin{equation}\label{condition_1dim}
\Theta W_{y}\left(\Theta\right) = A\left(t,x\right) \exp \left( k\left(t,x\right) y \right), %= \left(k_1^2-k_2^2\right)^{2} \exp\left(\theta_1+\theta_2\right),
\end{equation}
for some particular $A>0$, $k>0$, then $u$ is a line-soliton of the form \eqref{soliton family}-\eqref{linesoliton} and $F=\log$.
\end{lem}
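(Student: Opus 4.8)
The plan is to determine the profile first and the phase afterward. \emph{Step 1 (showing $F=\log$).} Since $H(\Theta)=0$, Lemma \ref{HWxWy} gives $W_x^F(\Theta)=W_y^F(\Theta)$ (and $W_x(\Theta)=W_y(\Theta)$). Together with $Ai(\Theta)=0$ and $\Theta_x\not\equiv 0$ — the alternative $\Theta_x\equiv 0$ forces $u\equiv 0$, which is not a line soliton and is discarded — Lemma \ref{WsAidaF} yields $F=\log$ and $\rho\equiv 0$. In particular $W_y^F=W_y$, so the hypothesis \eqref{condition_1dim} becomes $\Theta W_y(\Theta)=\Theta\Theta_{yy}-\Theta_y^2=A(t,x)\exp(k(t,x)y)$ with the genuine Wronskian.

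\emph{Step 2 (two exponentials in $y$).} Treating $t,x$ as parameters and setting $u:=2\log\Theta-k(t,x)\,y$, the identity $\Theta W_y(\Theta)=\Theta^2\partial_y^2(\log\Theta)$ turns \eqref{condition_1dim} into the Liouville equation $u_{yy}=2A\,e^{-u}$. A first integral gives $u_y^2=2C_1(t,x)-4A\,e^{-u}$; since $\Theta>0$ one has $e^{-u}>0$, so $C_1>0$ is forced, and the only solution positive and defined for all $y\in\mathbb{R}$ is $\Theta=\alpha(t,x)\exp(\kappa_1 y)+\beta(t,x)\exp(\kappa_2 y)$ with $\alpha,\beta>0$, real distinct rates $\kappa_1\neq\kappa_2$, and $\kappa_1+\kappa_2=k$, $\alpha\beta(\kappa_1-\kappa_2)^2=A$ (consistent with \eqref{thetaW_y} and the computation in Lemma \ref{lem:linesoliton}).

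\emph{Step 3 (heat and Airy fix the $x,t$-dependence, then collapse to a line soliton).} Inserting this form into $H(\Theta)=\Theta_y-\Theta_{xx}=0$ produces terms proportional to $y^2\exp(\kappa_j y)$ with coefficients $\alpha\kappa_{1,x}^2$, $\beta\kappa_{2,x}^2$; linear independence forces $\kappa_{j,x}=0$, and matching the surviving exponentials gives $\alpha_{xx}=\kappa_1\alpha$, $\beta_{xx}=\kappa_2\beta$. Imposing $Ai(\Theta)=\Theta_t-\Theta_{xxx}=0$ kills the $y\exp(\kappa_j y)$ terms, so $\kappa_{j,t}=0$ (the $\kappa_j$ are constants) and one gets the transport relations $\alpha_t=\kappa_1\alpha_x$, $\beta_t=\kappa_2\beta_x$. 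Solving $\alpha_{xx}=\kappa_1\alpha$ with $\alpha_t=\kappa_1\alpha_x$ (write $k_1:=\sqrt{\kappa_1}$) yields $\alpha=a_+\exp(k_1x+k_1^3t)+a_-\exp(-k_1x-k_1^3t)$, and likewise $\beta$ with $k_2:=\sqrt{\kappa_2}$, so $\Theta$ lies in the four-term family \eqref{resonant_0} with $M=2$. Expanding the prescribed profile one finds $A=(k_1^2-k_2^2)^2\big[a_+b_+e^{(k_1+k_2)x+\cdots}+a_+b_-e^{(k_1-k_2)x+\cdots}+a_-b_+e^{(k_2-k_1)x+\cdots}+a_-b_-e^{-(k_1+k_2)x-\cdots}\big]$; requiring $A(t,x)$ to be the single exponential of \eqref{thetaW_y_new} forces at most one of the nonnegative products $a_\pm b_\pm$ to be nonzero, hence exactly one of $a_\pm$ and one of $b_\pm$ survive, giving $\Theta=a_ie^{\theta_i}+a_je^{\theta_j}$, the line soliton \eqref{linesoliton}; with $F=\log$ this is \eqref{soliton family}.

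\textbf{Main obstacle.} The delicate point is precisely the final collapse in Step 3. A genuine oblique line soliton and the larger two-component resonant family \eqref{resonant_0} both satisfy $H(\Theta)=Ai(\Theta)=0$ and both yield a $\Theta W_y(\Theta)$ with a single $y$-exponential; they share the same $y$-rate $\kappa_1+\kappa_2$ and are separated only by the finer $x$-profile of $A$. Consequently the argument must use that $A(t,x)$ is itself a single exponential in $x$ (the content of \eqref{thetaW_y_new}), not merely positive: mere positivity of $A$ admits three- and four-term resonant phases, so this structural feature of the hypothesis is exactly what distinguishes a single line soliton from a multi-soliton. By contrast, the Liouville reduction (Step 2) and the heat/Airy matching are robust and essentially routine.
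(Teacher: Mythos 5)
Your proposal follows the paper's own proof almost verbatim through its first two steps: $F=\log$ via Lemmas \ref{HWxWy} and \ref{WsAidaF}, then the Liouville/Toda reduction of \eqref{condition_1dim} in the $y$ variable giving $\Theta=\alpha(t,x)e^{\kappa_1 y}+\beta(t,x)e^{\kappa_2 y}$ with $\kappa_1\neq\kappa_2$, and the same Heat/Airy matching afterwards. The one genuine divergence is at the very end, and it is in your favor. After $H(\Theta)=0$ forces $\kappa_{j,x}=0$ and $\alpha_{xx}=\kappa_1\alpha$, $\beta_{xx}=\kappa_2\beta$, the paper writes $A_i(t,x)=c_i(t)\exp\bigl(\sqrt{B_i(t)}\,x\bigr)$, i.e.\ it silently discards the decaying branch $e^{-\sqrt{B_i}x}$ of this second-order ODE, and so lands directly on the two-exponential phase \eqref{linesoliton}. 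You keep the full solution $\alpha=a_+e^{k_1x+k_1^3t}+a_-e^{-k_1x-k_1^3t}$ (likewise $\beta$), correctly obtaining the four-term family \eqref{resonant_0} with $M=2$, and you then use the single-exponential structure of $A(t,x)$ from \eqref{thetaW_y_new} to kill three of the four nonnegative products $a_\pm b_\pm$ and collapse to a line soliton.

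Your ``main obstacle'' paragraph is exactly the right diagnosis, and it exposes a real gap in the paper's own argument: if \eqref{condition_1dim} is read literally (``there exist $A>0$, $k>0$''), the lemma is false. Indeed, for $0<k_1<k_2$ the phase
\[
\Theta=2\cosh\left(k_1x+k_1^3t\right)e^{k_1^2y}+2\cosh\left(k_2x+k_2^3t\right)e^{k_2^2y}
\]
is an $M=4$ resonant phase (rates $-k_2<-k_1<k_1<k_2$ with unit coefficients), hence a genuine KP solution with $H(\Theta)=Ai(\Theta)=0$ by Corollary \ref{resonantes}, and a direct computation gives
\[
\Theta W_y(\Theta)=4\left(k_1^2-k_2^2\right)^2\cosh\left(k_1x+k_1^3t\right)\cosh\left(k_2x+k_2^3t\right)e^{\left(k_1^2+k_2^2\right)y},
\]
so all the stated hypotheses hold with $A>0$ and $k=k_1^2+k_2^2>0$, yet $u=2\partial_x^2\log\Theta$ is a resonant web (different KdV profiles as $y\to\pm\infty$), not a line soliton of the form \eqref{soliton family}. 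So the extra structural input you invoke — that $A$ is itself a single exponential in $x$, as produced by Lemma \ref{Lema2a}, or alternatively a prescription hypothesis in the spirit of \eqref{hypohypo} — is not optional but necessary. In short: your proof is the correct completion of the paper's argument under the reading of ``particular $A,k$'' that makes the statement true, and it is more rigorous than the paper's own proof, which never addresses the second branch of $A_{i,xx}=B_iA_i$ that your four-term family accounts for.
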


\begin{rem}
Lemmas \ref{Lema2a} and \ref{Lema2b} conclude the proof of Theorem \ref{MT1b}.
\end{rem}

\begin{proof}[Proof of Lemma \ref{Lema2b}]
\emph{Step 1.} Since $H\left(\Theta\right)=Ai\left(\Theta\right)=0$, Lemmas \ref{HWxWy} and \ref{WsAidaF} and the hypotheses on $F$ ensure that $F=\log$. Let us assume \eqref{condition_1dim}.  Then
\[%\begin{equation}\label{MT1Wy}
    \Theta^{2}\partial^{2}_{y}\left(\log \left(\Theta\right)\right) = A\left(t,x\right) \exp \left( k\left(t,x\right) y \right). % \left(k^{2}_{1}-k^{2}_{2}\right)^{2}\exp\left(\left(k_{1}+k_{2}\right)x+\left(k^{2}_{1}+k^{2}_{2}\right)y+\left(k^{3}_{1}+k^{3}_{2}\right)t\right).
\]%end{equation}
This equation can be studied like a nonlinear second order ODE for $\Theta$ in the variable $y$. % Resolviendo la ecuación homogénea,
%\begin{equation}
%    \Theta^{2}\partial^{2}_{y}\left(\ln\left(\Theta\right)\right) = 0,
%\end{equation}
Considering the change of variable $f= \log  \Theta$ one gets
\begin{equation*}
   \exp\left(2f\right)f''=A\exp\left(ky\right), \quad \hbox{then} \quad  \tilde{f}''=2A\exp\left(-\tilde{f}\right),
\end{equation*}
with $\tilde{f}=-ky+2f$, and consequently $\tilde{f}''=2f''$. This is a classical Toda equation. The general solution $\tilde{f}$ is given by % using the software ``Wolfram Mathematica",
\[
    \tilde{f}=2 \log\left(\frac{\sqrt{A}}{\sqrt{c_{1}}}  \left( \exp \left(\frac12\eta \right)+  \exp \left(-\frac12\eta \right) \right)\right),
\]
with $\eta:= \sqrt{c_{1}}\left(y+c_{2}\right)$, $c_1>0$, $c_2\in\mathbb R$. 
Therefore, %n, it is possible write $\Theta$ as,
\begin{equation*}
\begin{aligned}
    \Theta = \exp\left(f\right) &= \exp\left(\frac{ky}{2}\right)\exp\left(\frac{\tilde{f}}{2}\right)\\
&=\frac{\sqrt{A}}{\sqrt{c_{1}}}\left(\exp\left(\left(\frac{k+\sqrt{c_{1}}}{2}\right)\left(y+c_{2}\right)\right)+\exp\left(\left(\frac{k-\sqrt{c_{1}}}{2}\right)\left(y+c_{2}\right)\right) \right).
\end{aligned}
\end{equation*}
Defining 
\[
A_1\left(t,x\right)=\frac{\sqrt{A}}{\sqrt{c_{1}}}\exp\left(c_{2}\left(\frac{k+\sqrt{c_{1}}}{2}\right)\right), \quad A_2\left(t,x\right)=\frac{\sqrt{A}}{\sqrt{c_{1}}}\exp\left(c_{2}\left(\frac{k-\sqrt{c_{1}}}{2}\right)\right),
\]
and
\[
B_{1}\left(t,x\right)=\left(\frac{k+\sqrt{c_{1}}}{2}\right)>0,\quad B_{2}\left(t,x\right)=\left(\frac{k-\sqrt{c_{1}}}{2}\right),
\]
$\Theta$ has the form $\Theta = A_1\left(t,x\right)\exp\left(B_{1}\left(t,x\right)y\right) + A_2\left(t,x\right)\exp\left(B_{2}\left(t,x\right)y\right).$\\

\emph{Step 2.} Since $c_1>0$, one gets $B_1$ different from $B_2$. Replacing $\Theta$ in the condition $H\left(\Theta\right)=0$, and using Appendix \ref{derivadas_teta} in the case of two linearly independent exponentials,
\begin{equation}\label{Computation of H}
\begin{aligned}
H\left(\Theta\right)= &~{} \left(A_{1}B_{1}\exp\left(B_{1}y\right)+A_{2}B_{2}\exp\left(B_{2}y\right)\right) \\
& - \Big(A_{1,xx}\exp\left(B_{1}y\right)+A_{2,xx}\exp\left(B_{2}y\right) \\
&\qquad  + y\exp\left(B_{1}y\right)\left(2A_{1,x}B_{1,x}+A_{1}B_{1,xx}\right) + y\exp\left(B_{2}y\right)\left(2A_{2,x}B_{2,x}+A_{2}B_{2,xx}\right)\\
&\qquad +y^{2}\exp\left(B_{1}y\right)A_{1}B^{2}_{1,x}+y^{2}\exp\left(B_{2}y\right)A_{2}B^{2}_{2,x}\Big)\\
=&~{} \left(A_{1}B_{1}-A_{1,xx}\right)\exp\left(B_{1}y\right)+\left(A_{2}B_{2}-A_{2,xx}\right)\exp\left(B_{2}y\right)\\
&-\left(2A_{1,x}B_{1,x}+A_{1}B_{1,xx}\right)y\exp\left(B_{1}y\right)-\left(2A_{2,x}B_{2,x}+A_{2}B_{2,xx}\right)y\exp\left(B_{2}y\right)\\
&-A_{1}B^{2}_{1,x}y^{2}\exp\left(B_{1}y\right)-A_{2}B^{2}_{2,x}y^{2}\exp\left(B_{2}y\right).
\end{aligned}
\end{equation}
In order for  $H(\Theta)$ to be null, it is necessary that each of the coefficients multiplying a term $y^{i}\exp\left(B_{j}y\right)$ with $i\in\left\{0,1,2\right\}$ and $j\in\left\{1,2\right\}$ be equal to zero. Taking the expression above into consideration, the following system of equations is obtained
\begin{equation*}
\begin{aligned}
& A_{1}B_{1}=A_{1,xx}, \quad A_{2}B_{2}=A_{2,xx},\\
& 2A_{1,x}B_{1,x}+A_{1}B_{1,xx}=0, \quad 2A_{2,x}B_{2,x}+A_{2}B_{2,xx}=0,\\
& A_{1}B^{2}_{1,x}=0, \quad A_{2}B^{2}_{2,x}=0.
\end{aligned}
\end{equation*}
Since $A_1,A_2>0$, from the two bottom equations, $B_{i}\left(t,x\right)=B_{i}\left(t\right)$ for $i\in\left\{1,2\right\}$. This reduces the system to the top two equations, from which it can be concluded that $A_{i}\left(t,x\right)=c_{i}\left(t\right)\exp\left(\sqrt{B_{i}\left(t\right)}x\right)$. Since $\Theta>0$ is real-valued, it is required $B_i>0$. Therefore, the phase takes the form
\begin{equation*}
    \Theta = c_{1}\left(t\right)\exp\left(\sqrt{B_{1}\left(t\right)}x+B_{1}\left(t\right)y\right) + c_{2}\left(t\right)\exp\left(\sqrt{B_{2}\left(t\right)}x+B_{2}\left(t\right)y\right).
\end{equation*}
Then, using again Appendix \ref{derivadas_teta} and inserting those terms into $Ai\left(\Theta\right)=0$, we get
\begin{equation*}
\begin{aligned}
Ai\left(\Theta\right)= &~{} \Big(c_{1,t}\exp\left(\sqrt{B_{1}}x+B_{1}y\right)+\frac{c_{1}B_{1,t}}{2\sqrt{B_{1}}}x\exp\left(\sqrt{B_{1}}x+B_{1}y\right)\\
&~{} +c_{1}B_{1,t}y\exp\left(\sqrt{B_{1}}x+B_{1}y\right) +c_{2,t}\exp\left(\sqrt{B_{2}}x+B_{2}y\right) \\
&+\frac{c_{2}B_{2,t}}{2\sqrt{B_{2}}}x\exp\left(\sqrt{B_{2}}x+B_{2}y\right)+c_{2}B_{2,t}y\exp\left(\sqrt{B_{2}}x+B_{2}y\right)\Big)\\
&-\Big(c_{1}\sqrt{B^{3}_{1}}\exp\left(\sqrt{B_{1}}x+B_{1}y\right)+c_{2}\sqrt{B^{3}_{2}}\exp\left(\sqrt{B_{2}}x+B_{2}y\right)\Big).
\end{aligned}
\end{equation*}
Simplifying the expression above, one has
\begin{equation*}
\begin{aligned}
Ai\left(\Theta\right)= &~{} \left(c_{1,t}-c_{1}\sqrt{B^{3}_{1}}\right)\exp\left(\sqrt{B_{1}}x+B_{1}y\right)+\left(c_{2,t}-c_{2}\sqrt{B^{3}_{2}}\right)\exp\left(\sqrt{B_{2}}x+B_{2}y\right) \\
&+ \frac{c_{1}B_{1,t}}{2\sqrt{B_{1}}}x\exp\left(\sqrt{B_{1}}x+B_{1}y\right)+\frac{c_{2}B_{2,t}}{2\sqrt{B_{2}}}x\exp\left(\sqrt{B_{2}}x+B_{2}y\right)\\
&+c_{1}B_{1,t}y\exp\left(\sqrt{B_{1}}x+B_{1}y\right)+c_{2}B_{2,t}y\exp\left(\sqrt{B_{2}}x+B_{2}y\right).
\end{aligned}
\end{equation*}
To ensuring that the expression is equal to zero, it is necessary that each coefficient multiplying an exponential term be null for all values of $\left(x,y\right)\in\mathbb R^{2}$. Taking the above into consideration, the following system of equations is obtained
\begin{equation*}
\begin{aligned}
& c_{1,t}-c_{1}\sqrt{B^{3}_{1}}=0, \quad c_{2,t}-c_{2}\sqrt{B^{3}_{2}}=0,\\
& c_{1}B_{1,t}=0, \quad c_{2}B_{2,t}=0.
\end{aligned}
\end{equation*}
Note that the last two equations are derived from the coefficients multiplying an exponential term, multiplied either by $x$ or by $y$. From the last two equations, it is concluded that $B_{i}\left(t\right)=B_{i}$ for $i\in\left\{1,2\right\}$ and with $B_{i}\in\mathbb R$. Taking this into account in the first two equations is obtained $c_{i}=a_{i}\exp\left(\sqrt{B^{3}_{i}}t\right)$ for $i\in\left\{1,2\right\}$ and with $a_{i}>0$. In conclusion, the phase is
\begin{equation*}
    \Theta = a_{1}\exp\left(\sqrt{B_{1}}x+B_{1}y+\sqrt{B^{3}_{1}}t\right) + a_{2}\exp\left(\sqrt{B_{2}}x+B_{2}y+\sqrt{B^{3}_{2}}t\right).
\end{equation*}
where $a_{1}$, $a_{2}>0$ are arbitrary constants. Denoting $k_i:= \sqrt{B_i}$, we obtain the desired conclusion.
\end{proof}

\subsection{Proof of Theorem \ref{MT2}}
{\color{black}
Let $u$ be a (smooth) solution of \eqref{eq:KP} of the form \eqref{eqn:FT} with a smooth real-valued phase $\Theta>0$ satisfying for $k=0,1,2,3$, the hypotheses \eqref{hypohypo}:
\[%\begin{equation}\label
\partial_x^k \Theta\left(t,0,0\right), \quad \partial_x^k \partial_y \Theta\left(t,0,0\right) \quad \hbox{uniquely prescribed.}
\]%end{equation}
Assume that the smooth profile $F$ satisfies \eqref{F_conds}. Assume that $\Theta$ corresponds to an $M$ resonant multi-soliton \eqref{resonant_0} and $F=\log$. Since $\Theta>0$ as in \eqref{theta_multi_linesoliton} generates an $M$ resonant soliton, then from Corollary \ref{resonantes} $Ai\left(\Theta\right) = H\left(\Theta\right) = 0$.}
%From Corollary \ref{resonantes} we know that $u$ solution of \eqref{eq:KP} of the form \eqref{eqn:FT} with smooth real-valued phase $\Theta>0$ corresponding to an $M$ resonant multi-soliton \eqref{resonant_0} and $F=\log$ imply  
%$H\left(\Theta\right) = Ai\left(\Theta\right) = 0$. 
Also, Lemma \ref{WM} in this particular case $\left(k_j^2\geq 0\right)$ states that  $\Theta W_y\left(\Theta\right)= \Theta W_x\left(\Theta\right) \in \mathcal W_{\frac12M\left(M-1\right)}$. 
\medskip

Now we prove the opposite. Assume that $\Theta W_y\left(\Theta\right)\in \mathcal W_{\frac12M\left(M-1\right)}$ has the form
\begin{equation}\label{sistema_del_demonio}
\Theta W_y\left(\Theta\right) = \sum_{j=1}^{\frac12M\left(M-1\right)} b_j \exp\left(m_j y\right).
\end{equation}
Recall that each $b_j>0$. There are $\frac12M\left(M-1\right)$ terms in the summation above. Relabeling and  arranging terms above  as  members of a $M\times M$ lower triangular matrix, we get
\[
\Theta W_y\left(\Theta\right) = \sum^{M}_{n = 1} \sum^{n}_{i=1} b_{n,i} \exp\left(m_{n,i}y\right).
\]
 For the moment, assume 
 \begin{equation}\label{guess}
 \Theta= \sum_{j=1}^M a_j\left(t,x\right) \exp\left(\theta_j\right),
 \end{equation}
 with $\theta_j\left(t,x,y\right)=k_j\left(t,x\right) y$, be a phase in $\mathcal W_M$. From Corollary \ref{WM},
 \[
\Theta W_y\left(\Theta\right)  =  \sum^{M}_{n = 1} \sum^{n}_{i=1}a_{i}a_{n-i+1} \left(k_{n-i+1} -k_{i}\right)^{2}\exp\left(\theta_{i} + \theta_{n-i+1}\right).
 \]
The system 
\[
k_{i} + k_{n-i+1} = m_{n,i},  \quad  i \leq n-i+1,
\]
reads
\[
\begin{aligned}
& k_1 + k_{1} = m_{1,1}\\
& k_1 + k_{2} = m_{2,1}\\
%& k_2 + k_{1} = m_{2,2}\\
& k_1 + k_{3} = m_{3,1}\\
& k_2 + k_{2} = m_{3,2}\\
%& k_3 + k_{1} = m_{3,3}\\
&  \cdots \\
& k_1 + k_{n} = m_{n,1}\\
& k_2 + k_{n-1} = m_{n,2}\\
& k_3 + k_{n-2} = m_{n,3}\\
& \cdots \\
%& k_{n-1} + k_{2} = m_{n,n-1}\\
%& k_n + k_{1} = m_{n,n},
\end{aligned}
\]
and has a unique solution on $k=\left(k_1,\ldots,k_N\right)$, thanks to a nonsingular determinant matrix. The second system is given by
\[
\begin{aligned}
& a_{i}a_{n-i+1} \left(k_{n-i+1} -k_{i}\right)^{2} = b_{n,i},\\
\end{aligned}
\]
which implies
\[
\begin{aligned}
& \qquad   \log a_i + \log a_{n-i+1} = \log b_{n,i} -2\log \left| k_{n-i+1} -k_{i}\right|.
\end{aligned}
\]
This system also has a unique solution for $a=\left(a_1,a_2,\ldots, a_N\right)$ exactly following the previous argument. Consequently, $\Theta W_y\left(\Theta\right) \in \mathcal W_{\frac12M\left(M-1\right)}$ has always a solution in $\mathcal W_M$. The fact that $ \Theta W_x\left(\Theta\right) \in \mathcal W_{\frac12M\left(M-1\right)}$ is direct.

\medskip

Now we prove uniqueness. The key now is to use that $\Theta W_x\left(\Theta\right) \in \mathcal W_{\frac12M\left(M-1\right)}$ and has a unique value. Let $F\in \mathcal W_{\frac12M\left(M-1\right)}$ and let $\Theta_1,\Theta_2$ be such that $ \Theta_j W_x\left(\Theta_j\right) =F$. Therefore,
\[
\Theta_1 \Theta_{1,xxxx}-\Theta_{1,xx}^2 = \Theta_2 \Theta_{2,xxxx}-\Theta_{2,xx}^2. 
\]
Let $\Pi_0\left(x;t\right):= \left(\Theta_1-\Theta_2\right)\left(t,x,y=0\right)$. Then $\Pi_0$ satisfies the fourth order linear ODE on $x$:
\[
\Theta_2 \Pi_0'''' - \left(\Theta_{1,xx} +\Theta_{2,xx}\right)\Pi_0'' + \Theta_{1,xxxx}\Pi_0 =0,
\]
and thanks to the hypothesis \eqref{hypohypo}, $\Pi_0\left(x=0;t\right)=\Pi_0'\left(x=0;t\right)=\Pi_0''\left(x=0;t\right)=\Pi_0'''\left(x=0;t\right)=0$, leading to $\Pi_0(x;t)\equiv 0$ and consequently $\Theta_1\left(t,x,0\right) = \Theta_2\left(t,x,0\right)$. Additionally, $\Pi_1\left(x;t\right):= \left(\Theta_{1,y}-\Theta_{2,y}\right)\left(t,x,y=0\right)$ satisfies
\[
\Theta_2 \Pi_1'''' -2\Theta_{1,xx} \Pi_1'' + \Theta_{1,xxxx}\Pi_1 =0.
\]
(Notice that we have used that $\Pi_0\left(x;t\right)=\Pi_{0,xx}\left(x;t\right)=\Pi_{0,xxxx}\left(x;t\right)=0$.) Again, thanks to  \eqref{hypohypo} we conclude that $\Pi_1\left(x;t\right)\equiv 0$, leading to $\Theta_{1,y}\left(t,x,0\right) = \Theta_{2,y}\left(t,x,0\right)$.

\medskip

Now we extend the previous uniqueness. The argument is similar to the previous case. Let $G\in \mathcal W_{\frac12M\left(M-1\right)}$ unique and let $\Theta_1,\Theta_2$ be such that $ \Theta_j W_y\left(\Theta_j\right) =G$. Therefore,
\[
\Theta_1 \Theta_{1,yy}-\Theta_{1,y}^2 = \Theta_2 \Theta_{2,yy}-\Theta_{2,y}^2. 
\]
Let $\Pi_2 \left(y;t,x\right):= \left(\Theta_1-\Theta_2\right)\left(t,x,y\right)$. Then $\Pi_2$ satisfies the second order linear ODE on $y$:
\[
\Theta_2 \Pi_2'' - \left(\Theta_{1,y} +\Theta_{2,y}\right)\Pi_2' + \Theta_{1,yy}\Pi_2 =0,
\]
and thanks to the previous step, one has $\Pi_2\left(0;t,x\right)=\Pi_{2,y}\left(0;t,x\right)=0$. Therefore,  $\Pi\left(y;t,x\right)=0$ and the required uniqueness holds. This shows that \eqref{guess} is the unique solution of \eqref{sistema_del_demonio}.

\medskip

Now we improve the coefficients using that $\Theta$ satisfies zero Heat and Airy, showing that $\Theta$ is a multi-line-soliton. Since Heat and Airy are linear equations, the proof is similar to the proof of Lemma \ref{Lema2b}, Step 2. More precisely, 
\begin{equation}\label{calculo de H}
\begin{aligned}
H\left(\Theta\right)=&~{} \sum_{j=1}^{M} \left\{ \left(a_{j}k_{j}-a_{j,xx}\right)-\left(2a_{j,x}k_{j,x}+a_{j}k_{j,xx}\right)y-a_{j}k^{2}_{j,x}y^{2} \right\} \exp\left(k_{j}y\right) =0.
\end{aligned}
\end{equation}
By the linear independence, real-valued character and positivity of $\Theta$, we conclude $k_j=k_j(t)\geq 0$ and 
\[
a_{j}\left(t,x\right)=a_{0,j}(t)\exp\left(\sqrt{k_{j} (t)}x\right) +a_{1,j}(t)  \exp\left(-\sqrt{k_{j}(t) }x\right), \quad a_{0,j}, a_{1,j}\geq 0.
\]
Computing Airy, one gets
\begin{equation}\label{calculo de Ai}
\begin{aligned}
Ai\left(\Theta\right) =&~{}\sum_{j=1}^M \left\{  \left(a_{0,j,t}-a_{0,j}\sqrt{k^{3}_{j}}\right)  + \frac{a_{0,j}k_{j,t}}{2\sqrt{k_{j}}}x +a_{0,j}k_{j,t}y \right\} \exp\left(\sqrt{k_{j}}x+k_{j}y\right)\\
&~{} +\sum_{j=1}^M \left\{  \left(a_{1,j,t} + a_{1,j}\sqrt{k^{3}_{j}}\right)  - \frac{a_{1,j}k_{j,t}}{2\sqrt{k_{j}}}x +a_{1,j}k_{j,t}y \right\} \exp\left(-\sqrt{k_{j}}x+k_{j}y\right)\\
=&~{}0.
\end{aligned}
\end{equation}
From \eqref{calculo de H} and \eqref{calculo de Ai}, and proceeding exactly as in the previous proof of Lemma \ref{Lema2b}, Step 2, one gets 
\[
a_{0,j}=a_{0,0,j}\exp\left(\sqrt{k^{3}_{j}}t\right), \quad a_{1,j}=a_{1,0,j}\exp\left(-\sqrt{k^{3}_{j}}t\right),
\]
with $k_j$ constants. Renaming $k_j \mapsto k_j^2$, from \eqref{guess} we obtain
%\[
%\Theta= \sum_{j=1}^M \left( a_{0,0,j} \exp\left(- k_{j} x\right) + \exp a_{1,0,j} \exp\left( k_{j} x\right)\right) \exp\left(k_j^2 y+ k^{3}_{j} t\right).
%\]
\[
\Theta= \sum_{j=1}^M \left( a_{0,0,j} \exp\left( k_{j} x+k^{3}_{j} t\right) + a_{1,0,j} \exp\left( -k_{j} x-k^{3}_{j} t\right)\right) \exp\left(k_j^2 y\right).
\]
This ends the proof.

\subsection{Proof of Theorem \ref{MT3}} We are now ready to prove Theorem \ref{MT3}, the case of KP 2-solitons.  Let $u$ be a solution of \eqref{eq:KP} of the form \eqref{eqn:FT} with a smooth real-valued phase $\Theta>0$ satisfying $\Theta\left(y=0\right)$ and $\Theta_y\left(y=0\right)$ uniquely prescribed and $F=\log$. 

\medskip

Assume that $\Theta$ corresponds to a $2$-soliton \eqref{2soliton} with $k_1<k_2<k_3<k_4$. Let us prove that $H\left(\Theta\right),  Ai\left(\Theta\right)  \in \mathcal W_4$ and $ \Theta W_y\left(\Theta\right), \Theta W_x\left(\Theta\right) \in \mathcal W_5$. Thanks to Lemma \ref{2solitonphase} $\left(i\right)$ and $\left(ii\right)$, this part is already proved. Finally, $Ai\left(\Theta\right) = \frac32 \partial_x H\left(\Theta\right)$ is direct.

\medskip

Assume now that $H\left(\Theta\right),  Ai\left(\Theta\right)  \in \mathcal W_4$ and $ \Theta W_y\left(\Theta\right), \Theta W_x\left(\Theta\right) \in \mathcal W_5$. We have $ \Theta W_y\left(\Theta\right)\in \mathcal W_6$, and the proof of Theorem \ref{MT2} and hypotheses \eqref{hypohypo} on $\Theta$ ensure that $\Theta\in \mathcal W_4$. As in \eqref{guess}, one has
 \begin{equation}\label{guess_0}
 \Theta= \sum_{j=1}^4 a_j\left(t,x\right) \exp\left(\theta_j\right), \quad a_j> 0,
 \end{equation}
with $\theta_j\left(t,x,y\right)=k_j\left(t,x\right) y$. The image of \eqref{guess_0} under $\Theta W_y\left(\Theta\right)$ is given by
 \[
 \begin{aligned}
\Theta W_y\left(\Theta\right)  =  &~{}\sum^{4}_{n = 1} \sum^{n}_{i=1}a_{i}a_{n-i+1} \left(k_{n-i+1} -k_{i}\right)^{2}\exp\left(\theta_{i} + \theta_{n-i+1}\right)\\
=&~{} a_1 a_2 \left(k_1-k_2\right)^2 E_{12} + a_1 a_3 \left(k_1-k_3\right)^2E_{13} + a_1 a_4 \left(k_1-k_4\right)^2 E_{14} \\
&~{} +a_2 a_3 \left(k_2-k_3\right)^2E_{23}  + a_2 a_4 \left(k_2-k_4\right)^2E_{24}  +a_3 a_4 \left(k_3-k_4\right)^2E_{34},
\end{aligned}
\]
where $E_{ij} := \exp (\theta_i +\theta_j)$.
%One of the above terms is nonzero. %Also, the sum is linearly dependent for all $y$. 
Since $\Theta W_y\left(\Theta\right) \in \mathcal W_5$, at least one exponential is linearly dependent with the rest of exponentials. This implies that 
$k_i + k_j =k_{i'} +k_{j'}$ for some $i$ different from $i'$, $j$ different from $ j'$ and therefore one term above is redundant. With no loss of generality, we assume  $k_1<k_2<k_3 <k_4$ and $k_1+k_4=k_2+k_3$. In this case we obtain $k_4= k_2+k_3-k_1>0$, and from  \eqref{guess_0},
 \begin{equation}\label{guess_1}
 \Theta\left(t,x,y\right)= a_1 \exp\left(k_1 y\right) + a_2 \exp\left(k_2y\right) +a_3 \exp\left(k_3 y\right) + a_4 \exp\left(\left(k_2+k_3-k_1\right)y\right),
 \end{equation}
 and
 \[
 \begin{aligned}
\Theta W_y\left(\Theta\right) =&~{} a_1 a_2 \left(k_1-k_2\right)^2 E_{12} + a_1 a_3 \left(k_1-k_3\right)^2E_{13} \\
&~{} + \left(a_1 a_4 \left(k_1-k_4\right)^2 +a_2 a_3 \left(k_2-k_3\right)^2\right)E_{14} \\
&~{}   + a_2 a_4 \left(k_2-k_4\right)^2E_{24}  +a_3 a_4 \left(k_3-k_4\right)^2E_{34}   .
\end{aligned}
\]
$\Theta$ in \eqref{guess_1} can be written as follows: for $\widetilde k_i\geq 0,$
\[
\widetilde k_1^2+ \widetilde k_3^2: = k_1, \quad \widetilde k_1^2+ \widetilde k_4^2: = k_2, \quad \widetilde k_2^2+\widetilde k_3^2:= k_3.
\]
Then $k_4= k_2+k_3-k_1= \widetilde k_2^2+\widetilde k_4^2$, exactly as in \eqref{2soliton}. Therefore, our new variables will be $\widetilde k_i$, but in order to avoid too much notation, we drop the tildes. Replacing in \eqref{guess_1}, we obtain a new representation of $\Theta$:
 \begin{equation}\label{guess_2}
\begin{aligned}
 \Theta\left(t,x,y\right)= &~{} a_1 \exp\left(\left( k_1^2+ k_3^2\right) y\right) + a_2 \exp\left( \left( k_1^2+ k_4^2\right) y\right) \\
 &~{} +a_3 \exp\left(\left( k_2^2+ k_3^2\right) y\right) + a_4 \exp\left(\left( k_2^2+ k_4^2\right)y\right).
\end{aligned}
\end{equation}
(Compare with \eqref{2soliton}.) Repeating again \eqref{calculo de H} with \eqref{guess_2}, we obtain e.g.
\[
\begin{aligned}
& H\left(a_1 \exp\left(\left( k_1^2+ k_3^2\right) y\right)  \right)\\
&~{}  = \left( a_1\left( k_1^2+ k_3^2\right) -a_{1,xx}\right) \exp\left(\left( k_1^2+ k_3^2\right) y\right) \\
&~{} \quad +\left( - 4 a_{1,x} (k_1 k_{1,x} +k_3k_{3,x})  - 2a_1(k_1 k_{1,x} +k_3k_{3,x})_x \right) y\exp\left(\left( k_1^2+ k_3^2\right) y\right) \\
&~{} \quad  - 4a_1(k_1 k_{1,x} +k_3k_{3,x})^2 y^2 \exp\left(\left( k_1^2+ k_3^2\right) y\right).
\end{aligned}
\]
By the linear independence among the exponentials, the nontrivial character of $\Theta$, and the hypothesis $H\left(\Theta\right)\in\mathcal W_4$, it is clear that one will obtain $k_1 k_{1,x} +k_3k_{3,x} =0$, so that if $k_{j}\left(t,0\right)=: k_j\left(t\right)$,
\[
\left(k_1^2+ k_3^2\right)\left(t,x\right) =k_1^2\left(t\right)+ k_3^2\left(t\right) . %, \quad a_1(t,x)= a_1(t,0) \exp \left((k_1^2+ k_3^2)(t,0) x \right).
\]
A similar argument reveals that
\[
\begin{aligned}
& \left(k_1^2+ k_4^2\right)\left(t,x\right) =k_1^2\left(t\right)+ k_4^2\left(t\right), \\%\quad a_2(t,x)= a_2(t,0) \exp \left((k_1^2+ k_4^2)(t,0) x \right),\\
& \left(k_2^2+ k_3^2\right)\left(t,x\right) =k_2^2\left(t\right)+ k_3^2\left(t\right), \\%\quad a_3(t,x)= a_3(t,0) \exp \left((k_2^2+ k_3^2)(t,0) x \right),\\
& \left(k_2^2+ k_4^2\right)\left(t,x\right) =k_2^2\left(t\right)+ k_4^2\left(t\right).% \quad a_4(t,x)= a_4(t,0) \exp \left((k_2^2+ k_4^2)(t,0) x \right).
\end{aligned}
\]
Consequently, we get in \eqref{guess_2}
 \begin{equation}\label{guess_3}
\begin{aligned}
 \Theta\left(t,x,y\right)= &~{} a_1\left(t,x\right)\exp\left(\left( k_1^2\left(t\right)+ k_3^2\left(t\right)\right) y\right) + a_2\left(t,x\right) \exp\left( \left( k_1^2\left(t\right)+ k_4^2\left(t\right)\right) y\right) \\
 &~{} +a_3 \left(t,x\right) \exp\left(\left( k_2^2\left(t\right)+ k_3^2\left(t\right)\right) y\right) + a_4 \left(t,x\right) \exp\left(\left( k_2^2\left(t\right)+ k_4^2\left(t\right)\right)y\right).
\end{aligned}
\end{equation}
Now, repeating \eqref{calculo de Ai} with \eqref{guess_3} one has
\[
\begin{aligned}
& Ai\left(a_1\left(t,x\right) \exp \left(\left( k_1^2\left(t\right)+ k_3^2\left(t\right)\right) y\right) \right) \\
&~{} =  \left( a_{1,t}\left(t,x\right) +2\left(k_1k_{1,t} + k_2k_{2,t}\right)y a_1\left(t,x\right)  -a_{1,xxx}\left(t,x\right)\right)\exp \left(\left( k_1^2\left(t\right)+ k_3^2\left(t\right)\right) y\right),
\end{aligned}
\]
revealing that $k_1^2\left(t\right)+k_2^2\left(t\right)= k_1^2\left(0\right)+k_2^2\left(0\right)= : k_1^2+k_2^2$ are constants independent of time. Similarly,
\[
\begin{aligned}
& \left(k_1^2+ k_4^2\right)\left(t\right) =k_1^2\left(0\right)+ k_4^2\left(0\right) =: k_1^2+ k_4^2, \\%\quad a_2(t,x)= a_2(t,0) \exp \left((k_1^2+ k_4^2)(t,0) x \right),\\
& \left(k_2^2+ k_3^2\right)\left(t\right) =k_2^2\left(0\right)+ k_3^2\left(0\right) =: k_2^2+ k_3^2, \\%\quad a_3(t,x)= a_3(t,0) \exp \left((k_2^2+ k_3^2)(t,0) x \right),\\
& \left(k_2^2+ k_4^2\right)\left(t\right) =k_2^2\left(0\right)+ k_4^2\left(0\right)=: k_2^2+ k_4^2.% \quad a_4(t,x)= a_4(t,0) \exp \left((k_2^2+ k_4^2)(t,0) x \right).
\end{aligned}
\]
Consequently, we get in \eqref{guess_3},
 \begin{equation}\label{guess_4}
\begin{aligned}
 \Theta\left(t,x,y\right)= &~{} a_1 \left(t,x\right)\exp\left(\left( k_1^2+ k_3^2\right) y\right) + a_2 \left(t,x\right) \exp\left( \left( k_1^2+ k_4^2\right) y\right) \\
 &~{} +a_3 \left(t,x\right) \exp\left(\left( k_2^2+ k_3^2\right) y\right) + a_4 \left(t,x\right) \exp\left(\left( k_2^2+ k_4^2\right)y\right).
\end{aligned}
\end{equation}
From the hypothesis \eqref{hypohypo}, and following $\Theta\left(0,x,0\right)$, $\Theta_y\left(0,x,0\right)$, $\Theta_{yy}\left(0,x,0\right)$ and $\Theta_{yyy}\left(0,x,0\right)$ are uniquely determined by the values from \eqref{2soliton}, leading to the equations
\[
\begin{aligned}
& a_1\left(0,x\right) + a_2\left(0,x\right) +a_3\left(0,x\right) + a_4\left(0,x\right)= \Theta\left(0,x,0\right) \\
& \left(k_1^2+k_3^2\right) a_1\left(0,x\right) + \left(k_1^2+k_4^2\right) a_2\left(0,x\right) \\
& \quad +  \left(k_2^2+k_3^2\right)a_3\left(0,x\right) + \left(k_2^2+k_4^2\right) a_4\left(0,x\right)= \Theta_y\left(0,x,0\right) \\
& \left(k_1^2+k_3^2\right)^2  a_1\left(0,x\right) +  \left(k_1^2+k_4^2\right)^2a_2\left(0,x\right) \\
& \quad +\left(k_2^2+k_3^2\right)^2 a_3\left(0,x\right) + \left(k_2^2+k_4^2\right)^2 a_4\left(0,x\right)= \Theta_{yy}\left(0,x,0\right) \\
& \left(k_1^2+k_3^2\right)^3  a_1\left(0,x\right) + \left(k_1^2+k_4^2\right)^3  a_2\left(0,x\right) \\
& \quad +\left(k_2^2+k_3^2\right)^3 a_3\left(0,x\right) + \left(k_2^2+k_4^2\right)^3 a_4\left(0,x\right)= \Theta_{yyy}\left(0,x,0\right).
\end{aligned}
\]
This is a classical invertible system thanks to the Vandermonde determinant and the condition $ k_1<k_2<k_3<k_4$. Therefore, $a_j\left(0,x\right)$, $j=1,2,3,4$ are uniquely determined:
\[
a_1\left(0,x\right)= \left(k_3-k_1\right) \exp\left( \left(k_1+k_3\right) x\right), \quad a_2\left(0,x\right)= \left(k_4-k_1\right) \exp\left( \left(k_1+k_4\right) x\right),
\]
and
\[
a_3\left(0,x\right)= \left(k_3-k_2\right) \exp\left( \left(k_2+k_3\right) x\right), \quad a_4\left(0,x\right)= \left(k_4-k_2\right) \exp\left( \left(k_2+k_4\right) x\right).
\]
Using \eqref{guess_4}, we compute now $Ai\left(\Theta\right) - \frac32 \partial_x H\left(\Theta\right)$:
\[
\begin{aligned}
 Ai\left(\Theta\right) - \frac32 \partial_x H\left(\Theta\right) = &~{} \left( a_{1,t} + \frac12 a_{1,xxx} - \frac32 \left( k_1^2 + k_3^2\right)a_{1,x}\right) \exp\left(\left( k_1^2+ k_3^2\right) y\right) \\
&~{}  + \left( a_{2,t} + \frac12 a_{2,xxx} - \frac32 \left( k_1^2 + k_4^2\right)a_{2,x} \right) \exp\left(\left( k_1^2+ k_4^2\right) y\right) \\
&~{}   + \left( a_{3,t} + \frac12 a_{3,xxx} - \frac32 \left( k_2^2 + k_3^2\right)a_{3,x}\right) \exp\left(\left( k_2^2+ k_3^2\right) y\right) \\
&~{}    + \left( a_{4,t} + \frac12 a_{4,xxx} - \frac32 \left( k_2^2 + k_4^2\right)a_{4,x}\right) \exp\left(\left( k_2^2+ k_4^2\right) y\right) =0.
\end{aligned}
\]
Therefore,
\begin{equation}\label{airy_final}
\begin{aligned}
& a_{1,t} + \frac12 \partial_x \left( a_{1,xx} -3 \left( k_1^2 + k_3^2\right)a_{1}\right) =0,\\
& a_1\left(0,x\right)= \left(k_3-k_1\right) \exp\left( \left(k_1+k_3\right) x\right).
\end{aligned}
\end{equation}
We shall prove that the unique solution to this problem is 
\begin{equation}\label{guess_5}
a_1\left(t,x\right)= \left(k_3-k_1\right) \exp\left( \left(k_1+k_3\right) x+ \left(k_1^3+k_3^3\right)t \right).
\end{equation}
If we assume this equality, it is not hard to see that a similar argument reveals that 
\[
\begin{aligned}
a_2\left(t,x\right)= &~{} \left(k_4-k_1\right) \exp\left( \left(k_1+k_4\right) x+ \left(k_1^3+k_4^3\right)t \right), \\
a_3\left(t,x\right)= &~{} \left(k_3-k_2\right) \exp\left( \left(k_2+k_3\right) x+ \left(k_2^3+k_3^3\right)t \right),
\end{aligned}
\]
and 
\[
a_4\left(t,x\right)= \left(k_4-k_2\right) \exp\left( \left(k_2+k_4\right) x+ \left(k_2^3+k_4^3\right)t \right).
\]
This finally proves  Theorem \ref{MT3}. Let us show \eqref{guess_5}. Clearly the RHS of \eqref{guess_5} is a valid solution to \eqref{airy_final} satisfying the initial condition $a_1\left(0,x\right)= \left(k_3-k_1\right) \exp\left( \left(k_1+k_3\right) x\right)$. Let us show that it is the unique one. 

\medskip

First, notice that if $\widetilde a_1\left(t,y\right)$ is a function such that $a_1\left(t,x\right) = \widetilde a_1 \left( t, 2^{1/3} \left( x+ \frac32\left(k_1^2+k_3^2\right)t  \right)\right)$, then
\[
%Ai\left(\widetilde a_1\right) =
 \widetilde a_{1,t}  + \widetilde a_{1,xxx}   =0.
\]
Consequently, by the following uniqueness we get the desired result.

\begin{lem}[Uniqueness of exponentially growing Airy solutions]
Let $m_1,m_2>0$. There is a unique solution $u$ of 
\[
\partial_t u +\partial_x^3 u =0, \quad u\left(t=0,x\right) =  m_1 \exp\left( m_2  x\right),
\]
and it is given by 
\[
u\left(t,x\right)= m_1 \exp\left( m_2  x - m_2^3 t \right).
\]
\end{lem}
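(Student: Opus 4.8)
The plan is to separate the two halves of the statement: existence is immediate, and uniqueness carries all the content. For existence I would simply verify the candidate: since $u(t,x)=m_1\exp(m_2 x-m_2^3 t)$ satisfies $\partial_t u=-m_2^3 u$ and $\partial_x^3 u=m_2^3 u$, we get $\partial_t u+\partial_x^3 u=0$, and obviously $u(0,x)=m_1\exp(m_2 x)$. For uniqueness I would pass to the difference $w:=u_1-u_2$ of two solutions in the admissible class (smooth, with $|w(t,x)|\le C_T\exp(c|x|)$ on every strip $|t|\le T$, as inherited from the bound $|\Theta(t,x,y)|\le C_1 e^{c_2|x|}$ imposed on $\Theta$). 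Then $w$ solves $\partial_t w+\partial_x^3 w=0$ with $w(0,\cdot)=0$, and the entire task reduces to proving $w\equiv 0$.

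The clean end of the argument is complex-analytic. Let $A_\tau$ be the Banach space of entire functions $f$ with $\|f\|_\tau:=\sup_{z\in\mathbb C}|f(z)|e^{-\tau|z|}<\infty$. A Cauchy estimate on a unit circle gives $|f'(z)|\le\max_{|\zeta-z|=1}|f(\zeta)|\le e^{\tau}\|f\|_\tau\,e^{\tau|z|}$, hence $\|\partial_x f\|_\tau\le e^{\tau}\|f\|_\tau$, so $\partial_x$ and therefore $-\partial_x^3$ are \emph{bounded} operators on $A_\tau$. Consequently the abstract linear ODE $\dot w=-\partial_x^3 w$ in $A_\tau$ has, by Picard--Lindel\"of, a unique solution for each datum, represented by the norm-convergent series $w(t)=\sum_{n\ge 0}\frac{(-t)^n}{n!}\partial_x^{3n}w(0)$. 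With $w(0)=0$ this forces $w\equiv 0$; with $w(0)=m_1\exp(m_2\,\cdot)\in A_{m_2}$ the same series sums to $m_1\exp(m_2 x-m_2^3 t)$, re-deriving the formula. Thus, \emph{once} one knows that $w(t,\cdot)\in A_\tau$ for a fixed $\tau$ with $t\mapsto w(t,\cdot)$ continuous into $A_\tau$, uniqueness is automatic.

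The main obstacle is exactly this regularity upgrade: showing that every smooth, exponentially bounded solution is, for each $t$, the restriction to $\mathbb R$ of an entire function of exponential type, with type and strip-bounds locally uniform in $t$. This is the delicate point flagged in Remark \ref{cagazo}: the Airy propagator has no parabolic smoothing and an oscillatory kernel, and $\partial_t+\partial_x^3$ is not analytic-hypoelliptic (it carries the real characteristics $\tau=\xi^3$), so analyticity is not free. A weighted-energy attempt makes the difficulty explicit: multiplying by $w\,e^{\beta x}$ and integrating yields $\frac{d}{dt}\int w^2 e^{\beta x}\,dx=-3\beta\int w_x^2 e^{\beta x}\,dx+\beta^3\int w^2 e^{\beta x}\,dx$, whose Gronwall closure needs a definite sign of $-3\beta$, i.e. a one-sided weight, while two-sided exponential growth of $w$ makes $\int w^2 e^{\beta x}\,dx$ infinite for \emph{every} single $\beta$. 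To bypass this I would build the entire extension directly: represent $w$ as a contour superposition $\int_\Gamma e^{\lambda x-\lambda^3 t}\,d\nu(\lambda)$ of the elementary plane waves, legitimate because the data $m_1\exp(m_2 x)$ is of exponential type, and control the type of $w(t,\cdot)$ on compact $t$-intervals by a Phragm\'en--Lindel\"of argument, placing it in $A_\tau$; alternatively, an Ovsyannikov (abstract Cauchy--Kovalevskaya) scheme on the scale $\{A_\tau\}_{\tau}$ produces the analytic-in-$t$ solution, which the a priori bound then identifies with $w$. Either route closes the gap, after which the Banach-space ODE uniqueness above finishes the proof.
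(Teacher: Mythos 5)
Your existence check and the reduction of uniqueness to the zero-data problem for $w=u_1-u_2$ coincide with the paper's. The uniqueness mechanism, however, has a genuine gap — one your own text flags as "the main obstacle" and then does not actually close. Your Picard--Lindel\"of argument in $A_\tau$ (a nice observation: $\partial_x$ is bounded on $A_\tau$ by Cauchy estimates, so $\dot w=-\partial_x^3 w$ has unique $A_\tau$-valued solutions) only gives uniqueness \emph{within} the class of curves $t\mapsto w(t,\cdot)$ that are $C^1$ with values in $A_\tau$. The solution you must rule out is merely smooth on $\mathbb R^2$ with a two-sided bound $|w|\le Ce^{c|x|}$; nothing guarantees it extends to an entire function of exponential type, let alone continuously in $t$. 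Both of your proposed repairs are circular. For route (a): writing $w=\int_\Gamma e^{\lambda x-\lambda^3 t}\,d\nu(\lambda)$ presupposes that $w$ is recovered from its initial datum by a plane-wave superposition, which is precisely (a strengthening of) the uniqueness statement under proof; moreover, in the reduced problem the datum is $0$, so "the data is of exponential type" gives no handle on $w$, and Phragm\'en--Lindel\"of can only bound an already-holomorphic function — it cannot manufacture the holomorphic extension you need. For route (b): an Ovsyannikov scheme on the scale $\{A_\tau\}$ constructs \emph{some} analytic-valued solution $v$ with the same datum, but "identifying $v$ with $w$" is again exactly uniqueness in a class large enough to contain the merely-smooth, exponentially bounded $w$ — the statement being proved. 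So the proposal, as written, does not constitute a proof.

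The paper's proof avoids this trap by never upgrading the regularity of the unknown: it applies an integral transform directly to the given solution. Thanks to the exponential bound, one takes the Laplace transform (the paper does so in $x$ on each half-line, treating $x<0$ via $u(t,x)\mapsto u(t,-x)$), which turns the PDE into an ODE for the transform; solving that ODE and invoking injectivity of the inverse Laplace transform within the exponentially growing class forces $w\equiv 0$. The structural difference is decisive: the transform route uses only the growth bound and smoothness of $w$ — exactly the information available — whereas your route requires analyticity of $w$ in $x$, which is the very property that cannot be assumed (cf.\ Remark \ref{cagazo}) and which both of your suggested fixes assume in disguise. To salvage your framework you would have to prove that every smooth, exponentially bounded solution of the Airy equation is entire of exponential type in $x$, locally uniformly in $t$ — a nontrivial statement in its own right — or else switch to a transform or duality argument that acts on $w$ as it is.
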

\begin{proof}
The existence is exactly given by the explicit formula. Let us see the uniqueness, which is equivalent to prove that
\[
\partial_t u +\partial_x^3 u =0, \quad u\left(t=0,x\right) =  0,
\]
has solution $u=0.$ Assume $x>0$. Thanks to the exponential bound on $x$ for $\Theta$, this is obtained by simply taking the Laplace transform, solving the corresponding obtained ODE, and using the uniqueness of the inverse Laplace transform in the exponentially growing class of solutions. The remaining case $x<0$ is proved similarly by changing $u(t,x)$ to $u(t,-x)$. 
\end{proof}

\section{Proof in the ZK case}\label{sec:ZK}

In this section we provide a proof for Theorem \ref{MT_ZK} for the quadratic ZK model \eqref{ZK}. Notice that this equation can bee recast in terms of KdV \eqref{eqn:KdV} as follows:
\begin{equation}\label{KdVZK}
0= \left( -4u_t+ \partial_{x_1}^3 u +6u \partial_{x_1} u \right)+\partial_{x_1} (\Delta_{c} u),
\end{equation}
where $\Delta_c u = \sum_{j=2}^d \partial_{x_j}^2 u$. For simplicity in the notation, let $x=x_1$. Inserting $u=2\partial^2_x F(\Theta)$ into \eqref{KdVZK}, and integrating once in $x$, we obtain
\begin{equation*}
\begin{aligned}
  0=  &~{} \left(F'''' + 6F''^{2}\right)\left(\Theta\right)\Theta_{x}^{4} + 6\left(F'' + F'^2\right)'\left(\Theta\right) \Theta_{x}^{2} \Theta_{xx}  \\
    &~{}  + \left(F'' + F'^2\right)\left(\Theta\right) \left( 3\Theta_{xx}^{2}  -4 \Theta_{x}\left(\Theta_{t} -\Theta_{xxx}\right) \right)+ F'\left(\Theta\right)\left( -4\Theta_{t} +\Theta_{xxx} \right)_x  \\
&~{} + F'^{2} \left(\Theta\right) \left( 3\left(\Theta_{xx}^{2} - \Theta_{x}\Theta_{xxx}\right) -   \Theta_{x}\left(-4\Theta_{t} + \Theta_{xxx} \right)\right)\\
&~{} +\sum_{j=2}^d  \left( \Theta _{x_jx_j} F'\left(\Theta\right)+\Theta_{x_j}^2 F''\left(\Theta\right) \right)_{xx}. %+\Phi_{xxyy}.
\end{aligned}
\end{equation*}
The term $\Theta _{x_jx_j} F'\left(\Theta\right)+\Theta_{x_j}^2 F''\left(\Theta\right)$ can be written as
\[
 F'\left(\Theta\right)W_{x_j}^F\left(\Theta\right)+\Theta_{x_j}^2\rho\left(\Theta\right), \quad \rho= F''\left(\Theta\right)+ F'^2 \left(\Theta\right).
\]
Therefore, following the same ideas as in the KP  case, we arrive to the equation
\begin{equation*}
\begin{aligned}
  0=  &~{} \text{EDO}_4\left(\rho\right) -F'\left(\Theta\right)\left(Ai\left(\Theta\right)\right)_x \\
  &~{} +F'^2\left(\Theta\right)\left( 3 W_1\left(\Theta\right) + \Theta_x Ai\left(\Theta\right)\right) +\sum_{j=2}^d\left( F'\left(\Theta\right)W_{x_j}^F\left(\Theta\right)\right)_{xx}, %+\Phi_{xxyy}.
\end{aligned}
\end{equation*}
where $\text{EDO}_4\left(\rho\right)$ is a fourth order linear ODE for $\rho$. Now we finish the proof. If $u$ is a KdV soliton and $F=\log$, it is easy to check that \eqref{nuevas} is satisfied. Now, if \eqref{nuevas} is satisfied, $\text{EDO}_4\left(\rho\right) =0$ and $F=\log$. Therefore, $W_{x_j}^F\left(\Theta\right)=W_{x_j}^{\log}\left(\Theta\right)=0$. From $W_1\left(\Theta\right)=0$ we get 
\[
\Theta = a\left(x_2,\ldots,x_d,t\right) \exp\left(x b\left(x_2,\ldots,x_d,t\right) \right) +c\left(x_2,\ldots,x_d,t\right). 
\]
From $W_{x_j}^{\log}\left(\Theta\right) =0$ we get that $a,b,c$ are only time dependent. Finally, from Airy we obtain that $u=Q_k$, for some constant $k>0$. 

\subsection{Proof in the mZK case}\label{sec:mZK}

In this section we provide proofs for Theorem \ref{MT_mZK} for the cubic version ZK model \eqref{ZK}. Notice that, as above, this equation can be recast in terms of mKdV \eqref{eqn:mKdV} as follows:
\begin{equation*}
0= \left( -4u_t+ \partial_{x_1}^3 u +6u^2 \partial_{x_1} u \right)+\partial_{x_1} \left(\Delta_{c} u\right),
\end{equation*}
where $\Delta_c u = \sum_{j=2}^d \partial_{x_j}^2 u$. For simplicity in the notation, let $x=x_1$. Replacing $u=2\partial_x F\left(\Theta\right)$ in \eqref{KdVZK}, and integrating once in $x$, we obtain
\begin{equation}\label{comparacionmZK}
\begin{aligned}
 0=&\left(\Theta_{xxx}-4 {\Theta}_t\right) F'\left(\Theta\right)+3 {\Theta}_{xx} {\Theta}_x F''\left(\Theta\right)+ {\Theta}_x^3 \left(F'''+ 2F'^3\right)\left(\Theta\right).\\
 &+\sum_{j=2}^d  \left( \Theta _{x_jx_j} F'\left(\Theta\right)+\Theta_{x_j}^2 F''\left(\Theta\right) \right)_{x}
 \end{aligned}
\end{equation}
Notice in contrast of ZK case,  the setting $Ai\left(\Theta\right)=0$, $W\left(\Theta\right)=0$ with  $W_{x_i}^F=0$ produces a contradiction  in \eqref{comparacionmZK}, when considering  $F=2\arctan$. Thus the  condition     $\Lambda_{x_j}^F\left(\Theta\right)=\Theta _{x_jx_j} F'\left(\Theta\right)+\Theta_{x_j}^2 F''\left(\Theta\right)=0$ enters in action. Similarly as in the ZK case, we arrive to the equation
\begin{equation}\label{nuevas2}
\begin{aligned}
  0=  &~{} \left(\Theta_{xxx}-4 {\Theta}_t\right) F'\left(\Theta\right)+3 \left( \Theta_{xx} -\frac{\Theta_x^2}{\Theta} \right)  h'\left(\Theta\right)\\
  &+ \Theta_x^2 \left(h'' + \frac{3}{s}h' +  8h^3 \right)\left(\Theta\right) +\sum_{j=2}^d\left( \Lambda_{x_j}^F\left(\Theta\right)\right)_{x}, %+\Phi_{xxyy}.
\end{aligned}
\end{equation}
where $h\left(s\right)=\frac{1}{2}F'\left(s\right)$.  Now we finish the proof. If $u$ is a mKdV soliton (see \eqref{ppal_form:mKdV}), it is easy to check that \eqref{nuevas2} is satisfied. Now, if
\begin{equation}\label{conmZK}
Ai\left(\Theta\right)=W\left(\Theta\right) = \Lambda_{x_j}^F\left(\Theta\right)=0, \quad j=2,\ldots, d,
\end{equation}
the second term in \eqref{nuevas2} (being zero now) can be written as the radial solution $h=h\left(s\right)$, $s=\left|x\right| >0$ of $\Delta h + 8 h^3 =0$ in dimension 4. Requiring positive solutions (by hypothesis, $h\left(s\right):= \frac12 F'\left(s\right)>0$), these are in $H^1\left(\mathbb R^4\right)$ and by classical Talenti-Aubin arguments, one has $h\left(s\right)=\frac1{1+s^2}$, and finally $F= 2 \arctan $ if we assume $F\left(0\right)=0$. Therefore, 
\[
\Lambda_{x_j}^F\left(\Theta\right)=\Lambda_{x_j}^{2\arctan}\left(\Theta\right)=0,\quad  i=2,\ldots, d.
\] 
Now,  from $W\left(\Theta\right)=0$ we get 
\begin{equation*}
\Theta = a\left(x_2,\ldots,x_d,t\right) \exp\left(x b\left(x_2,\ldots,x_d,t\right) \right) +c\left(x_2,\ldots,x_d,t\right). 
\end{equation*}
Also since $\Lambda_{x_j}^{2\arctan}\left(\Theta\right) =0$ we will get that $a,b,c$ are only time dependent. Indeed if  $a:=a\left(x_2,\ldots,x_d,t\right)$, $b:=b\left(x_2,\ldots,x_d,t\right)$, $c:=c\left(x_2,\ldots,x_d,t\right)$,   replacing $\Theta$ in $\Lambda_{x_j}^F\left(\Theta\right)=0$ for each $j=2,\ldots, d$, one has 
\begin{equation*}
  \begin{aligned}
    0=&A_0+A_1\exp\left(bx\right)+A_2x\exp\left(bx\right)+A_3x^2\exp\left(bx\right)\\
     &+B_1\exp\left(2bx\right)+B_2x\exp\left(2bx\right)+B_3x^2\exp\left(2bx\right)\\
     &+C_1\exp\left(3bx\right)+C_2x\exp\left(3bx\right)+C_3x^2\exp\left(3bx\right),
  \end{aligned}
\end{equation*}
where the $A_0, A_1,A_2, A_3$ are described by
\begin{equation*}
  \begin{aligned}
    A_0=&-2 c  c' {}^2+c {}^2 c'' +c'' \\
    A_1=& -4 c  a'  c' +c {}^2 a'' +a'' -2 a 
     c' {}^2+2 a  c  c'' \\
    A_2=& ~{} 2 c {}^2 a'  b' +2 a'  b' -4 a  c  b'  c' +a  c {}^2 b'' +a  b'' \\
    A_3=&~{} a  c {}^2 b' {}^2+a  b' {}^2;
  \end{aligned}
\end{equation*}
%\begin{equation*}
%  \begin{aligned}
%    A_0=&-2c c_{x_j}^2+c^2 c_{x_jx_j} +c_{x_jx_j} \\
%    A_1=& -4c a_{x_j} c_{x_j} +c^2 a_{x_jx_j} +a_{x_jx_j} -2a c_{x_j}^2+2a c c_{x_jx_j} \\
%    A_2=&2c^2 a_{x_j} b_{x_j} +2a_{x_j} b_{x_j} -4a c b_{x_j} c_{x_j} +a c^2 b_{x_jx_j} +a b_{x_jx_j} \\
%    A_3=&a c^2 b_{x_j}^2+a b_{x_j}^2 
%  \end{aligned}
%\end{equation*}
the values of $B_1,B_2$ and $B_3$ are
\begin{equation*}
  \begin{aligned}
    B_1=& -4 a  a'  c' -2 c  a' {}^2+2 a  c  a'' +a {}^2 c''  \\
    B_2=&~{} 2 a {}^2 c  b'' -4 a {}^2 b'  c'  \\
    B_3=&~{} 0,
  \end{aligned}
\end{equation*}
and $C_1,C_2,C_3$ are given by
\begin{equation*}
  \begin{aligned}
    C_1=& -2 a  a' {}^2+a {}^2 a'' -x^2 a {}^3 b' {}^2\\
    C_2=& -2 a  a' {}^2+a {}^2 a'' \\
    C_3=& ~{}a {}^3 b' {}^2.
  \end{aligned}
\end{equation*}
Here the prime derivative denotes partial derivative in $x_j$. Notice that $A_i=B_i=C_i=0$ for $i=1,2, 3$. 

\medskip

Since $C_3=a^3b_{x_j}^2$ needs to be zero, we have  $b$ is constant in $x_j$. From $C_2=0$, taking in mind $w=a^{-1}$ as a change of variable of this ODE, we conclude  that 
\begin{equation}\label{amZK}
a\left(x_j\right)=\frac{1}{Ax_j^2+Bx_j +C}.
\end{equation}
Replacing  $a$  in $B_1$, after some simplifications, one has
\begin{equation}\label{mZK3}
  \begin{aligned}
    0=& \left(4 A^2 c +12 A B c' +2 A C c'' +B^2 c'' \right)x_j^2+\left(8 A^2 c' +2 A B c'' \right)x_j^3 \\
    &+A^2 c''x_j^4 +\left(4 A B c +8 A C c' +4 B^2 c' +2 B C c'' \right)x_j \\
    &-4 A C c +2 B^2 c +4 B C c' +C^2 c''.
  \end{aligned}
\end{equation}
Also recall from that $A_1$
\begin{equation}\label{mZK1}
  \begin{aligned}
0=-4 c  a'  c' +c {}^2 a'' +a'' -2 a  c' {}^2+2 a  c  c''.
  \end{aligned}
\end{equation}
Now, the  term $x_j^4$ implies $A=0$ or $c''=0$. If $A\neq 0$ from the term $x_j^3$ one has $c'=0$  then the term $x_j^2$ implies $c=0$. Thus \eqref{mZK1} yields $a''=0$ which contradicts  \eqref{amZK}. Hence $A=0$. Similarly from the term $x_j^2$ of \eqref{mZK3} one has $B=0$. Hence $a$ is constant in $x_j$. Now, $A=B=0$ in \eqref{mZK3} implies $c''=0$. Notice that  \eqref{mZK1}  now is described by
\begin{equation*}
  0=\frac{2 c  c'' }{C}-\frac{2 c' {}^2}{C}=-\frac{2 c' {}^2}{C}.
\end{equation*}
Thus $c$ is constant in $x_j$. Finally, from Airy we obtain that $u=Q_k$ as in \eqref{ppal_form:mKdV}, for some constant $k>0$. 

\appendix

\section{Proof of technical results}\label{AppA}

\subsection{Galilean actions}

Evaluation $\Theta_{\beta}$, obtained after apply the Galilean Transformation to a phase $\Theta$, in the terms that appears in \eqref{galilean_theta}. Then the terms in Definition \ref{Thetas} satisfy,
\begin{equation}\label{Galilean}
    \begin{aligned}
        H\left(\Theta_{\beta}\right) =&~{} -\frac{4\beta}{3}\partial_{\tilde x}\Theta + \partial_{\tilde y}\Theta - \partial_{\tilde x}^2 \Theta = -\frac{4\beta}{3}\partial_{\tilde x}\Theta + H\left(\Theta\right),\\
        Ai\left(\Theta_{\beta}\right) =&~{} \frac{4\beta^{2}}{3}\partial_{\tilde x}\Theta - 2\beta\partial_{\tilde y}\Theta + \partial_{\tilde t} \Theta - \partial_{\tilde x}^3 \Theta =\frac{4\beta^{2}}{3}\partial_{\tilde x}\Theta - 2\beta\partial_{\tilde y}\Theta  + Ai\left(\Theta\right),\\
    %    H\left(\Theta_{\beta}\right)_{y} + H\left(\Theta_{\beta}\right)_{xx} =&~{} -\frac{4\beta}{3}\left(\frac{4\beta}{3}\partial_{\tilde x}^2 \Theta + 2\partial_{\tilde x,\tilde y}^2 \Theta\right) + \partial_{\tilde y}^2 \Theta - \partial_{\tilde x}^4 \Theta,\\
 %       Ai\left(\Theta_{\beta}\right)_{x} =&~{} \frac{4\beta^{2}}{3}\partial_{\tilde x}^2 \Theta - 2\beta\partial_{\tilde x,\tilde y}^2 \Theta - \partial_{\tilde x}^4 \Theta \\
        \Theta_\beta W_y\left(\Theta_\beta\right) =&~{} \Theta W_{\tilde y}\left(\Theta\right) +\frac{16}9 \beta^2 \left(\Theta \Theta_{\tilde x \tilde x} -\Theta_{\tilde x}^2\right) -\frac83\beta\left(\Theta\Theta_{\tilde x \tilde y}-\Theta_{\tilde x}\Theta_{\tilde y}\right), \\
         \Theta_\beta  W_x\left(\Theta_\beta\right) =&~{} \Theta W_{\tilde x}\left(\Theta\right) . 
    \end{aligned}
\end{equation}
In particular $H$, $Ai$ and $W_y$ do not cancel for a nontrivial Galilean version of the vertical soliton \eqref{Qk}.

\subsection{Proof of phase computations}\label{A}

In this section we prove \eqref{Aicomputation}. Given $\Theta$ as in \eqref{2soliton}, its derivatives are
\begin{align*}
    \Theta_{x} = & \left(k_{3} - k_{1}\right)\left(k_{3} + k_{1}\right)\exp\left(\theta_{1} + \theta_{3}\right) + \left(k_{4} - k_{1}\right)\left(k_{4} + k_{1}\right)\exp\left(\theta_{1} + \theta_{4}\right) \\
&+ \left(k_{3} - k_{2}\right)\left(k_{3} + k_{2}\right)\exp\left(\theta_{2} + \theta_{3}\right) + \left(k_{4} - k_{2}\right)\left(k_{4} + k_{2}\right)\exp\left(\theta_{2} + \theta_{4}\right),\\
    \Theta_{xx} = & \left(k_{3} - k_{1}\right)\left(k_{3} + k_{1}\right)^{2}\exp\left(\theta_{1} + \theta_{3}\right) + \left(k_{4} - k_{1}\right)\left(k_{4} + k_{1}\right)^{2}\exp\left(\theta_{1} + \theta_{4}\right) \\
&+ \left(k_{3} - k_{2}\right)\left(k_{3} + k_{2}\right)^{2}\exp\left(\theta_{2} + \theta_{3}\right) + \left(k_{4} - k_{2}\right)\left(k_{4} + k_{2}\right)^{2}\exp\left(\theta_{2} + \theta_{4}\right),\\
    \Theta_{xxx} = & \left(k_{3} - k_{1}\right)\left(k_{3} + k_{1}\right)^{3}\exp\left(\theta_{1} + \theta_{3}\right) + \left(k_{4} - k_{1}\right)\left(k_{4} + k_{1}\right)^{3}\exp\left(\theta_{1} + \theta_{4}\right) \\
&+ \left(k_{3} - k_{2}\right)\left(k_{3} + k_{2}\right)^{3}\exp\left(\theta_{2} + \theta_{3}\right) + \left(k_{4} - k_{2}\right)\left(k_{4} + k_{2}\right)^{3}\exp\left(\theta_{2} + \theta_{4}\right),\\
    \Theta_{xxxx} = & \left(k_{3} - k_{1}\right)\left(k_{3} + k_{1}\right)^{4}\exp\left(\theta_{1} + \theta_{3}\right) + \left(k_{4} - k_{1}\right)\left(k_{4} + k_{1}\right)^{4}\exp\left(\theta_{1} + \theta_{4}\right)\\
    &+ \left(k_{3} - k_{2}\right)\left(k_{3} + k_{2}\right)^{4}\exp\left(\theta_{2} + \theta_{3}\right) + \left(k_{4} - k_{2}\right)\left(k_{4} + k_{2}\right)^{4}\exp\left(\theta_{2} + \theta_{4}\right).
 \end{align*}
 Also,
\begin{align*}    
    \Theta_{y} = & \left(k_{3} - k_{1}\right)\left(k^{2}_{3} + k^{2}_{1}\right)\exp\left(\theta_{1} + \theta_{3}\right) + \left(k_{4} - k_{1}\right)\left(k^{2}_{4} + k^{2}_{1}\right)\exp\left(\theta_{1} + \theta_{4}\right) \\
&+ \left(k_{3} - k_{2}\right)\left(k^{2}_{3} + k^{2}_{2}\right)\exp\left(\theta_{2} + \theta_{3}\right) + \left(k_{4} - k_{2}\right)\left(k^{2}_{4} + k^{2}_{2}\right)\exp\left(\theta_{2} + \theta_{4}\right),\\
    \Theta_{yy} = & \left(k_{3} - k_{1}\right)\left(k^{2}_{3} + k^{2}_{1}\right)^{2}\exp\left(\theta_{1} + \theta_{3}\right) + \left(k_{4} - k_{1}\right)\left(k^{2}_{4} + k^{2}_{1}\right)^{2}\exp\left(\theta_{1} + \theta_{4}\right) \\
&+ \left(k_{3} - k_{2}\right)\left(k^{2}_{3} + k^{2}_{2}\right)^{2}\exp\left(\theta_{2} + \theta_{3}\right) + \left(k_{4} - k_{2}\right)\left(k^{2}_{4} + k^{2}_{2}\right)^{2}\exp\left(\theta_{2} + \theta_{4}\right),\\
    \Theta_{t} = & \left(k_{3} - k_{1}\right)\left(k^{3}_{3} + k^{3}_{1}\right)\exp\left(\theta_{1} + \theta_{3}\right) + \left(k_{4} - k_{1}\right)\left(k^{3}_{4} + k^{3}_{1}\right)\exp\left(\theta_{1} + \theta_{4}\right) \\
&+ \left(k_{3} - k_{2}\right)\left(k^{3}_{3} + k^{3}_{2}\right)\exp\left(\theta_{2} + \theta_{3}\right) + \left(k_{4} - k_{2}\right)\left(k^{3}_{4} + k^{3}_{2}\right)\exp\left(\theta_{2} + \theta_{4}\right).
\end{align*}
Consequently,
\begin{align*}
    Ai\left(\Theta\right) &= \Theta_{t} - \Theta_{xxx}\\
    &= \left(k_{3} - k_{1}\right)\left(k^{3}_{3} + k^{3}_{1}\right)\exp\left(\theta_{1} + \theta_{3}\right) + \left(k_{4} - k_{1}\right)\left(k^{3}_{4} + k^{3}_{1}\right)\exp\left(\theta_{1} + \theta_{4}\right) \\
&\quad + \left(k_{3} - k_{2}\right)\left(k^{3}_{3} + k^{3}_{2}\right)\exp\left(\theta_{2} + \theta_{3}\right) + \left(k_{4} - k_{2}\right)\left(k^{3}_{4} + k^{3}_{2}\right)\exp\left(\theta_{2} + \theta_{4}\right) \\
&\quad  -\left(k_{3} - k_{1}\right)\left(k_{3} + k_{1}\right)^{3}\exp\left(\theta_{1} + \theta_{3}\right) - \left(k_{4} - k_{1}\right)\left(k_{4} + k_{1}\right)^{3}\exp\left(\theta_{1} + \theta_{4}\right) \\
    &\quad  - \left(k_{3} - k_{2}\right)\left(k_{3} + k_{2}\right)^{3}\exp\left(\theta_{2} + \theta_{3}\right) - \left(k_{4} - k_{2}\right)\left(k_{4} + k_{2}\right)^{3}\exp\left(\theta_{2} + \theta_{4}\right) .
\end{align*}
This equation is equal to 0 when all the coefficient which multiplies the exponentials are null. Note that all the terms are exponentials, $\exp\left(\theta_{j}+\theta_{i}\right)$ multiplies by a term
\begin{equation*}
    \begin{aligned}
     &   \left(k_{i}-k_{j}\right)\left(k^{3}_{i}+k^{3}_{j}\right) - \left(k_{i}-k_{j}\right)\left(k_{i}+k_{j}\right)^{3}\\
       &\quad = k^{4}_{i} + k_{i}k^{3}_{j} - k^{3}_{i}k_{j} - k^{4}_{j} - \left(k_{i}-k_{j}\right)\left(k^{3}_{i}+3k^{2}_{i}k_{j}+3k_{i}k^{2}_{j}+k^{3}_{j}\right)\\
        &\quad =k^{4}_{i} + k_{i}k^{3}_{j} - k^{3}_{i}k_{j} - k^{4}_{i} - 3k^{3}_{i}k_{j} - 3 k^{2}_{i}k^{2}_{j} - k_{i}k^{3}_{j} + k^{3}_{i}k_{j} + 3k^{2}_{i}k^{2}_{j} + 3k_{i}k^{3}_{j} + k^{4}_{j}\\
       &\quad  = -3k^{3}_{i}k_{j} + 3k_{i}k^{3}_{j} = 3\left(k_{i}k^{3}_{j} - k^{3}_{i}k_{j}\right).
    \end{aligned}
\end{equation*}

\subsection{Computation of derivatives of $\Theta$}\label{derivadas_teta}
We perform here some of the computations required in \eqref{Computation of H} and subsequent lines.
If $\Theta= \sum_{j=1}^M A_{j}\left(t,x\right)\exp\left(B_{j}\left(t,x\right)y\right)$, then one has
\begin{equation*}
\begin{aligned}
\Theta_{x}= &~{} \sum_{j=1}^M \left( A_{j,x}+A_{j}B_{j,x} \right)\exp\left(B_{j}y\right),\\
\Theta_{xx}=&~{} \sum_{j=1}^M \left( A_{j,xx}+2A_{j,x}B_{j,x}y+A_{j}B_{j,xx}y+A_{j}B^{2}_{j,x}y^{2} \right)\exp\left(B_{j}y\right),\\
\Theta_{xxx}=&~{} \sum_{j=1}^M \Big( A_{j,xxx}+3A_{j,xx}B_{j,x}y+3A_{j,x}B_{j,xx}y+3A_{j,x}B^{2}_{j,x}y^{2} \\
&\qquad +A_{j}B_{j,xxx}y+3A_{j}B_{j,x}B_{j,xx}y^{2}+A_{j}B^{3}_{j,x}y^{3} \Big) \exp\left(B_{j}y\right).
\end{aligned}
\end{equation*}
Additionally,
\begin{equation*}
\begin{aligned}
\Theta_{xxxx}=&~{} \sum_{j=1}^M \Big( A_{j,xxxx}+4A_{j,xxx}B_{j,x}y+6A_{j,xx}B_{j,xx}y\\
&\qquad +4A_{j,x}B_{j,xxx}y+A_{j}B_{j,xxxx}y+6A_{j,xx}B^{2}_{j,x}y^{2}\\
&\qquad +2A_{j,x}B_{j,x}B_{j,xx}y^{2}+3A_{j}B^{2}_{j,xx}y^{2}+A_{j}B_{j,x}B_{j,xxx}y^{2}\\
&\qquad +4A_{j,x}B^{3}_{j,x}y^{3} +6A_{j}B^{2}_{j,x}B_{j,xx}y^{3} +A_{j}B^{4}_{j,x}y^{4} \Big) \exp\left(B_{j}y\right), \\
\Theta_{y}=&~{} \sum_{j=1}^M A_{j}B_{j}\exp\left(B_{j}y\right), \qquad \Theta_{yy}=\sum_{j=1}^M  A_{j}B^{2}_{j}\exp\left(B_{j}y\right),\\
\Theta_{t}=&~{} \sum_{j=1}^M \left( A_{j,t}+A_{j}B_{j,t}y \right)\exp\left(B_{j}y\right).
\end{aligned}
\end{equation*}

\section{The KdV and mKdV cases}\label{rem:KdVmKdV}

For the sake of completeness, we provide a sketch of proofs for  Remarks \ref{rem:KdV} and \ref{rem:mKdV}.

\subsection{The KdV case}

Consider the KdV model
\begin{equation}\label{eqn:KdV}
-4u_t +u_{xxx} +6 u u_{x} =0,
\end{equation}
where $u=u\left(t,x\right)\in\mathbb R$ and $t,x\in\mathbb R$. We study the solution $u$ of the form
\begin{equation}\label{eqn: FT_KdV}
u\left(t,x\right)= 2 \partial_x^2 F\left(\Theta\left(t,x\right)\right),
\end{equation}
where, with no loss of generality, we consider $F: \left[1,\infty\right) \longrightarrow \mathbb R$ is smooth and $\Theta=\Theta\left(t,x\right) \in \left[1,\infty\right)$ is also smooth. Since $F$ can be changed by any linear affine function, we can assume that $F\left(1\right)=0$, $F'\left(1\right)=1$. 
%Additionally, $\Theta$ is unique up to ponderations, that is to say, $\lambda \Theta$ also satisfies \eqref{eqn: FT_KdV}, for any $\lambda>0$. consequently, we shall assume that $\Theta(0,0)= 1$.
The simplest case, the KdV soliton, is found as
\begin{equation}\label{soliton_KdV}
F\left(s\right):= \log s, \quad \Theta\left(t,x\right)= 1+ \exp\left(ax - t a^3/4\right), \quad a \in\mathbb R.
\end{equation}
Exactly as in the KP case, we set the following definitions:

\begin{defn}[Classification of phases $\Theta$]\label{Thetas_KdV}
We shall say that $\Theta$ as in \eqref{eqn: FT_KdV} 
\begin{enumerate}
\item[(i)]is  of Airy type if for all $\left(t,x\right)\in\mathbb R^{2}$,
\[
Ai\left(\Theta\right):= -4\Theta_t + \Theta_{xxx} =0;
\]
\item[(ii)] is of Wronskian type if $\left\{\Theta_x,\Theta_{xx}\right\}\left(t,x\right)$ are linearly dependent, for any $\left(t,x\right)$;
\item[(iii)] is of $\mathcal T$-type if for $F$ fixed,
\begin{equation}\label{eqn:T_type_KdV}
\mathcal T\left(\Theta\right):=\left(Ai\left(\Theta\right)_x +F'\left(\Theta\right) \left( 3W\left(\Theta\right) -\Theta_x Ai\left(\Theta\right)\right)\right) =0,
\end{equation}
where $W\left(\Theta\right):= \Theta_{xx}^2 -\Theta_x\Theta_{xxx}$.
\end{enumerate}
\end{defn}

Some comments are necessary.

\begin{rem}
The Airy type condition naturally describes that the phase of the nonlinear KdV solution $u$ solves the classical Airy linear equation
\[
4\Theta_t -\Theta_{xxx}=0.
\]
This is an interesting coincidence that confirms the complex integrable structure of the KdV model. Standard solutions to the Airy equation have a complex oscillatory behavior for $x<0$, but some simple solutions are 
\begin{equation}\label{examples phases}
\Theta\left(t,x\right) = \exp \left( a x -\frac14 a^3 t +C \right), \quad \Theta\left(t,x\right)= t +\frac23 x^3 +C,
\end{equation}
where $C\in \mathbb R$ is any constant. The first phase corresponds to the $1$-soliton phase, and the second will represent an interesting counterexample to our main results. 
\end{rem}

\begin{rem} 
Unlike KP, here in KdV the Wronskian type phase is extremely restrictive. Multi-soliton solutions will not be of this type. 
\end{rem}

\begin{rem}
Exactly as in KP, notice that $\Theta$ of $\mathcal T$-type is a condition depending on the profile $F$, and consequently is a more complex condition than being of Wronskian or Airy type, which are independent of the profile $F$. Additionally, the $1$-soliton phase \eqref{soliton_KdV} is of Airy type as well. %{\color{red}G: Maybe we need to give a formula for multisoliton here? }
\end{rem}

Inserting \eqref{eqn: FT_KdV} to \eqref{eqn:KdV}, and arranging similar terms, one easily arrives at
\begin{equation}\label{eqn_F_Theta}
\begin{aligned}
    & \left(F'''' + 6F''^{2}\right)\left(\Theta\right)\Theta_{x}^{4} + 6\left(F'' + F'^2\right)'\left(\Theta\right) \Theta_{x}^{2} \Theta_{xx}  \\
    & \quad + \left(F'' + F'^2\right)\left(\Theta\right) \left( 3\Theta_{xx}^{2}  -4 \Theta_{x}\left(\Theta_{t} -\Theta_{xxx}\right) \right)+ F'\left(\Theta\right)\left( -4\Theta_{t} +\Theta_{xxx} \right)_x  \\
    &\quad + F'^{2} \left(\Theta\right) \left( 3\left(\Theta_{xx}^{2} - \Theta_{x}\Theta_{xxx}\right) -   \Theta_{x}\left(-4\Theta_{t} + \Theta_{xxx} \right)\right)=0.
\end{aligned}
\end{equation}
(Compare with \eqref{KP3partes}.) As in KP,  we set
\[
\rho\left(s\right):= F''\left(s\right) + F'^2\left(s\right).
\]
If $\rho=0$ and $F\left(1\right)=0$, $F'\left(1\right)=-1$, then $F=\log.$ This follows directly from solving the ODE $\rho\left(s\right)=0$. Notice that from \eqref{ODE_rho} and Definition \ref{Thetas_KdV}, one has that \eqref{eqn_F_Theta} can be written as
\[
\begin{aligned}
    & \left(\rho'' -2F' \rho' +4F'' \rho\right)\Theta_{x}^{4} + 6\rho' \Theta_{x}^{2} \Theta_{xx}  \\
    & \quad + \rho \left( 3\Theta_{xx}^{2}  -4 \Theta_{x}\left(\Theta_{t} -\Theta_{xxx}\right) \right)+ F'\left(\Theta\right)\left( Ai\left(\Theta\right) \right)_x  \\
    &\quad + F'^{2} \left(\Theta\right) \left( 3W\left(\Theta\right) -   \Theta_{x}Ai\left(\Theta\right)\right)=0,
\end{aligned}
\]
and arranging terms,
\begin{equation}\label{casicasi_MKdV}
\begin{aligned}
    & \Theta_{x}^{4}\rho'' + 2\left( 3 \Theta_{x}^{2} \Theta_{xx} - F'\left(\Theta\right)\Theta_{x}^{4} \right) \rho' \\
    & \quad +  \left( 3\Theta_{xx}^{2}  -4 \Theta_{x}(\Theta_{t} -\Theta_{xxx})+4F''\left(\Theta\right) \Theta_{x}^{4} \right) \rho + F'\left(\Theta\right) \mathcal T\left(\Theta\right)=0.
    % \left( Ai[\Theta] \right)_x  \\
    %&\quad + F'^{2} (\Theta) \left( 3W[\Theta] -   \Theta_{x}Ai[\Theta]\right)=0,
\end{aligned}
\end{equation}
Our main result is
\begin{thm}\label{MT_KdV}
The following are satisfied:
\begin{enumerate}
\item[$\left(i\right)$]
Assume that $\Theta$ is of $\mathcal T$-type, $\Theta_x$ is different from $0$, and $F$ satisfies $F''\left(1\right)=1$ and $F'''\left(1\right)=-2$. Then $F=\log $.

\medskip

\item[$\left(ii\right)$] Assume that $\Theta>0$ is of Airy and Wronskian type, $\Theta_x\neq 0$, and $F$ satisfies $F''\left(1\right)=1$ and $F'''\left(1\right)=-2$. Then $F=\log $ and $\Theta = 1+ \exp\left(ax-a^3/4 t\right)$, for any $a\in\mathbb R$.

\medskip

\item[$\left(iii\right)$] Assume that $u$ as in \eqref{eqn: FT_KdV} solves KdV with $F\left(s\right)=\log s$, and $\Theta$ is of Wronskian type. Then $u$ is a soliton.

\medskip

\item[$(iv)$] Assume that $u$ is a nontrivial multisoliton with $F=\log$. Then $\Theta$ is of $\mathcal T$-type, but it cannot be of Wronskian  or Airy type. 
\end{enumerate}
\end{thm}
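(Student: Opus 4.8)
The four assertions share the single engine \eqref{casicasi_MKdV}, which separates a linear second-order operator acting on $\rho = F''+F'^2$ from the profile-dependent term $F'(\Theta)\mathcal T(\Theta)$. The plan is to establish (i) first and then bootstrap (ii)--(iv) from it, using in addition the extreme rigidity of the Wronskian condition and the linearity of the Airy operator $Ai$.

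For (i) I would set $\mathcal T(\Theta)=0$ in \eqref{casicasi_MKdV}, which removes the last summand and leaves a homogeneous second-order linear equation for $\rho$ with leading coefficient $\Theta_x^4$. Since $\Theta_x\neq 0$, for each fixed $t$ the map $x\mapsto s=\Theta(t,x)$ is strictly monotone, hence invertible; pulling back along $x=x(s)$ turns $\Theta_x,\Theta_{xx},\Theta_t$ into continuous functions of $s$ and recasts the identity as an ODE of exactly the type treated in Lemma \ref{dem:ODE2}(iii), with $h_1=\Theta_x^4>0$. The normalization $F(1)=0$, $F'(1)=1$ together with the prescribed values of $F''(1)$, $F'''(1)$ is chosen precisely so that $\rho(1)=\rho'(1)=0$; uniqueness for the linear ODE then forces $\rho\equiv 0$, and solving $\rho=0$ (the KdV analog of Lemma \ref{dem:ODE1}) gives $F=\log$. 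This is the KdV counterpart of the corollary following Lemma \ref{dem:ODE2}.

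Statement (ii) is immediate from (i) once one notes that $Ai(\Theta)=W(\Theta)=0$ makes $\mathcal T(\Theta)=Ai(\Theta)_x+F'(\Theta)\bigl(3W(\Theta)-\Theta_xAi(\Theta)\bigr)$ vanish, so $F=\log$. To identify $\Theta$, I would integrate $W(\Theta)=\Theta_{xx}^2-\Theta_x\Theta_{xxx}=0$: discarding the degenerate linear case (which, under $\Theta\geq 1$, collapses to a constant and a trivial $u$), it yields $\Theta_{xx}=c(t)\Theta_x$, hence $\Theta=\tilde A(t)\exp(c(t)x)+B(t)$. Imposing $Ai(\Theta)=0$ and matching the functionally independent families $\exp(cx)$, $x\exp(cx)$ and constants forces $c$ and $B$ constant and $\tilde A$ exponential in $t$ with the KdV dispersion relation, which after normalization is the one-soliton phase \eqref{soliton_KdV}. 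For (iii) I would again use $F=\log$, so that $\rho=0$ and \eqref{casicasi_MKdV} collapses to $F'(\Theta)\mathcal T(\Theta)=0$; since $F'(\Theta)=1/\Theta\neq 0$ this gives $\mathcal T(\Theta)=0$, and combined with $W(\Theta)=0$ it becomes the first-order relation $Ai(\Theta)_x=\tfrac{\Theta_x}{\Theta}Ai(\Theta)$, i.e. $Ai(\Theta)=C(t)\Theta$. Inserting the Wronskian form $\Theta=\tilde A(t)\exp(c(t)x)+B(t)$ forces $c$ constant and fixes the ratio $\tilde A/B\propto\exp(\tfrac{a^3}{4}t)$, whence $u=2\partial_x^2\log\Theta=2\partial_x^2\log\bigl(1+D\exp(ax+\tfrac{a^3}{4}t)\bigr)$ is a soliton, the factor $B(t)$ being an invisible gauge.

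Part (iv) then follows by assembling the previous pieces: with $F=\log$ the identity \eqref{casicasi_MKdV} again reduces to $F'(\Theta)\mathcal T(\Theta)=0$, so $\mathcal T(\Theta)=0$ and $\Theta$ is of $\mathcal T$-type. It cannot be of Wronskian type, since (iii) shows that Wronskian type with $F=\log$ and the KdV equation forces a single soliton, contradicting nontriviality. Nor can it be of Airy type: writing the multisoliton $\tau$-function as a sum of exponentials and using linearity of $Ai$, the constant and single-exponential summands are annihilated, while each genuine cross-term $\exp(\theta_i+\theta_j)$ contributes a nonzero multiple of itself (proportional to $k_ik_j(k_i+k_j)$), so $Ai(\Theta)\neq 0$. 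The step I expect to be most delicate is the reduction in (i): justifying that a pointwise identity valid on all of $\mathbb R^2$ truly descends to a genuine ODE in the single variable $s$, with strictly positive leading coefficient and the correct data at $s=1$ --- in particular dealing with the endpoint $s=1$, which the soliton phase reaches only in the limit, and checking that $\Theta$ sweeps out enough of $[1,\infty)$ for the conclusion $F=\log$ to propagate to the whole relevant domain.
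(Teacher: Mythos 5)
Your proposal is correct and follows essentially the same route as the paper: part (i) via the homogeneous second-order ODE for $\rho$ extracted from \eqref{casicasi_MKdV} with $\rho(1)=\rho'(1)=0$, part (ii) via the rigidity of the Wronskian condition (Lemma \ref{lem:W}) plus the Airy constraint, part (iii) via $\rho=0\Rightarrow\mathcal T(\Theta)=0$ combined with $W(\Theta)=0$, and part (iv) by the same reduction together with the failure of the Airy/Wronskian conditions for multisolitons. Where you elaborate beyond the paper's terse assertions --- the gauge factor $B(t)$ and the relation $Ai(\Theta)=C(t)\Theta$ in (iii), the explicit cross-term coefficient $3k_ik_j(k_i+k_j)$ in (iv), and the flagged delicacy of imposing data at the endpoint $s=1$, which the soliton phase only attains in the limit --- your additions are consistent with, and indeed tighten, the paper's own argument.
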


The simplest phase to characterize is that of Wronskian type. %One has the following simple argument:

\begin{lem}\label{lem:W}
Assume that $\Theta$ is of Wronskian type. Then 
\begin{enumerate}
\item[$\left(i\right)$] One has $W\left(\Theta\right) = \Theta_{xx}^2 -\Theta_x\Theta_{xx} =0$.
\item[$\left(ii\right)$] Additionally, there are $a=a\left(t\right) $ and $b=b\left(t\right)$ such that either
\[
\Theta\left(t,x\right)= 1 + \exp\left(a\left(t\right) x+ b\left(t\right)\right), 
\]
or
\[
\Theta\left(t,x\right)= a\left(t\right) x+ b\left(t\right).
\]
\end{enumerate}
\end{lem}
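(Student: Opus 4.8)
The plan is to unpack the Wronskian-type hypothesis of Definition \ref{Thetas_KdV} in the only sensible reading: for each fixed $t$, the two smooth functions $x \mapsto \Theta_x(t,x)$ and $x \mapsto \Theta_{xx}(t,x)$ are linearly dependent over $\mathbb{R}$ (two scalars are trivially dependent, so the intended meaning must be dependence as functions of $x$). For part $(i)$ I would invoke the elementary fact that two linearly dependent $C^1$ functions of one variable have identically vanishing Wronskian. Evaluating that Wronskian in $x$,
\[
\Wr\left[\Theta_x,\Theta_{xx}\right] = \Theta_x\,\partial_x\Theta_{xx} - \Theta_{xx}\,\partial_x\Theta_x = \Theta_x\Theta_{xxx} - \Theta_{xx}^2 = -W(\Theta),
\]
so that $W(\Theta) = \Theta_{xx}^2 - \Theta_x\Theta_{xxx} \equiv 0$, which is precisely part $(i)$.

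For part $(ii)$ I would set $v := \Theta_x$ and rewrite $W(\Theta) = 0$ as the pointwise identity $v_x^2 = v\,v_{xx}$. On the open set $\{v \neq 0\}$ this is equivalent to $\partial_x(v_x/v) = 0$, so $v_x/v = \lambda(t)$ is independent of $x$ there and $v = C(t)\exp(\lambda(t)x)$ on each connected component. The structural point I would stress is that such an exponential never vanishes: hence the boundary of $\{v \neq 0\}$ cannot contain a finite point (continuity would force $v \to 0$ from inside), so $\{v \neq 0\}$ is clopen and therefore empty or all of $\mathbb{R}$. This dichotomy eliminates the degenerate locus and leaves exactly two global regimes, $v \equiv 0$ or $v = C(t)\exp(\lambda(t)x)$.

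Integrating in $x$ then produces the two stated forms. When $\lambda(t) \equiv 0$ (including $v \equiv 0$) one gets the affine phase $\Theta = a(t)x + b(t)$; when $\lambda(t) \neq 0$ one gets $\Theta = \beta(t) + \exp(a(t)x + b(t))$ after writing $C/\lambda = \exp(b)$ and setting $a := \lambda$. I would justify that $\lambda, C, \beta$ are genuine smooth functions of $t$ by noting that they are recovered algebraically from $\Theta$ and its $x$-derivatives (for instance $\lambda = \Theta_{xxx}/\Theta_{xx} = \Theta_{xx}/\Theta_x$ in the nonvanishing regime), so that smoothness of $\Theta$ transfers to them. The additive constant in the exponential case is then fixed to the representative value $\beta(t) \equiv 1$ using the standing convention $s_0 = 1$, i.e. $\Theta \in [1,\infty)$, which yields $\Theta = 1 + \exp(a(t)x + b(t))$.

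The step I expect to be the main obstacle is precisely the treatment of the zero set of $\Theta_x$: the division by $v$ is legitimate only where $v \neq 0$, and a priori $\{v \neq 0\}$ could split into several components carrying different values of $\lambda(t)$ and $C(t)$. The resolution is the nonvanishing-of-exponentials argument above, which forces global consistency and rules out any spurious gluing; carrying it out cleanly requires arguing component by component and using continuity of $v$ up to $\partial\{v \neq 0\}$. Everything else reduces to one elementary ODE integration together with the bookkeeping of the $t$-dependent constants.
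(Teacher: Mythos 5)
Your proposal is correct and follows essentially the same route as the paper's proof: part $(i)$ is the observation that $W(\Theta)$ is (up to sign) the Wronskian $\Wr(\Theta_x,\Theta_{xx})$ of a linearly dependent pair, and part $(ii)$ solves the ODE $\eta_x^2=\eta\,\eta_{xx}$ for $\eta=\Theta_x$ and integrates in $x$. Your clopen argument on the zero set of $\Theta_x$ and your handling of the sign and of the additive constant via the convention $\Theta\in[1,\infty)$ are refinements of details that the paper's two-line proof simply glosses over, not a different method.
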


The previous result establishes that phases $\Theta$ of Wronskian type are directly related to soliton solutions. This fact is independent of the value of $F$, that will be determined independently.

\begin{proof}
The proof follows directly from Definition \ref{Thetas_KdV} $\left(ii\right)$. Indeed, the proof of $\left(i\right)$ is direct from the definition of the linear dependence and the fact that 
\[
W\left(\Theta\right)= \Theta_{xx}^2 -\Theta_x\Theta_{xxx} = \hbox{Wr}\left(\Theta_{xx},\Theta_{x}\right),
\]
where $\hbox{Wr}$ denotes the Wronskian. The proof of $\left(ii\right)$ follows from the fact that $\eta:= \Theta_x$ satisfies the ODE $\eta_{x}^2 = \eta \eta_{xx}$, which has solutions $\eta\left(x\right)= \exp\left(a\left(t\right) x +b\left(t\right)\right)$ and $\eta = a\left(t\right)$. The final result is obtained by integrating in space. 
%and imposing the condition $\Theta\left(0,0\right)=1$.
\end{proof}

%Some remarks are in order.

\begin{rem}
1. Notice that multisolitons cannot have a phase of Wronskian type. 

\medskip

2. An important example of phase of Airy type but not being of Wronskian type is the one given in \eqref{examples phases}: $\Theta\left(t,x\right)=t +\frac23x^3+1$. In this case, $W\left(\Theta\right)= 8x^2$. 

\medskip

3. If $\Theta$ is of Wronskian and Airy type, then is of $\mathcal T$-type. The reciprocal is clearly false. This follows directly from \eqref{eqn:T_type_KdV}. 
\end{rem}

\begin{lem}
Assume that $\Theta$ is of $\mathcal T$-type. Then 
\begin{enumerate}
\item[$\left(i\right)$] There exists $c_0\in\mathbb R$ such that % there are $a=a(t) $ and $b=b(t)$ such that
\begin{equation}\label{sol_Ai_KdV}
Ai\left(\Theta\right)= c_0 \exp\left(F\left(\Theta\right)\right) -3 W\left(\Theta\right) +3 F\left(\Theta\right)\int_0^x \exp\left(-F\left(\Theta\right)\right) \left(W\left(\Theta\right)\right)_x \left(t,s\right) \mathrm{d}s.
\end{equation}
\item[$\left(ii\right)$] If $\Theta$ is of Airy and $\mathcal T$-type, and $F'\left(\Theta\right)$ is different from  $0$, then it is of Wronskian type.
\item[$\left(iii\right)$] If $\Theta$ is of Wronskian and $\mathcal T$-type, then there exists $c\in\mathbb R$ such that $Ai\left(\Theta\right)=c_0 \exp\left(\Theta\right)$.
\end{enumerate}
\end{lem}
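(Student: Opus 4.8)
The plan is to observe that for each fixed $t$ the $\mathcal T$-type condition \eqref{eqn:T_type_KdV} is nothing but a linear first-order ODE in the $x$ variable for the single quantity $\phi:=Ai\left(\Theta\right)$. Writing out \eqref{eqn:T_type_KdV} and using that $F'\left(\Theta\right)\Theta_x=\partial_x F\left(\Theta\right)$, the condition reads
\[
\phi_x-\partial_x\!\left(F\left(\Theta\right)\right)\phi=-3F'\left(\Theta\right)W\left(\Theta\right),
\]
so that $e^{-F\left(\Theta\right)}$ is an integrating factor and
\[
\partial_x\!\left(e^{-F\left(\Theta\right)}\phi\right)=-3\,e^{-F\left(\Theta\right)}F'\left(\Theta\right)W\left(\Theta\right).
\]
All three items flow from this one identity; the only structural care needed is that the resulting constant of integration $c_0$ is constant in $x$ but may depend on $t$, and that $\Theta_x\neq 0$ wherever nondegeneracy is invoked.

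For $(i)$ I would integrate the last display from $0$ to $x$, solve for $\phi$, and identify the boundary term at $x=0$ with $c_0$; a single integration by parts in the resulting integral—moving the $x$-derivative off the integrating factor and onto $W\left(\Theta\right)$—rearranges the expression into the displayed form \eqref{sol_Ai_KdV}, producing the separate $-3W\left(\Theta\right)$ term and the integral against $\left(W\left(\Theta\right)\right)_x$. This is the only genuinely computational step and is where I expect the bookkeeping to be delicate: one must track the boundary contributions of the integration by parts and confirm that the coefficient multiplying the integral and the $-3W\left(\Theta\right)$ term come out exactly as stated.

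Items $(ii)$ and $(iii)$ are then immediate corollaries that bypass the integral entirely. For $(ii)$, Airy type means $\phi\equiv 0$, hence $\phi_x\equiv 0$, so \eqref{eqn:T_type_KdV} collapses to $3F'\left(\Theta\right)W\left(\Theta\right)=0$; since $F'\left(\Theta\right)\neq 0$ this forces $W\left(\Theta\right)=0$, and because $W\left(\Theta\right)=\Wr\left(\Theta_x,\Theta_{xx}\right)$, the vanishing of this Wronskian where $\Theta_x\neq 0$ is exactly linear dependence of $\left\{\Theta_x,\Theta_{xx}\right\}$, i.e.\ Wronskian type. For $(iii)$, Wronskian type gives $W\left(\Theta\right)=0$ by Lemma \ref{lem:W} $(i)$, which kills the right-hand side of the integrating-factor identity; it then reduces to $\partial_x\!\left(e^{-F\left(\Theta\right)}\phi\right)=0$, so $e^{-F\left(\Theta\right)}\phi$ is independent of $x$ and $Ai\left(\Theta\right)=c_0\exp\left(F\left(\Theta\right)\right)$ with $c_0=c_0\left(t\right)$, the claimed exponential dependence. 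As an internal consistency check, setting $W\left(\Theta\right)=0$ directly in \eqref{sol_Ai_KdV} reproduces precisely this conclusion.

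The main obstacle is thus confined to part $(i)$: matching the clean integrating-factor solution to the exact form \eqref{sol_Ai_KdV} via integration by parts, keeping the boundary terms and the prefactors aligned. The two corollaries require only the algebraic reduction of \eqref{eqn:T_type_KdV} and, for $(ii)$, the elementary but worth-stating fact that a vanishing Wronskian of $\Theta_x,\Theta_{xx}$ at points where $\Theta_x\neq 0$ is equivalent to Wronskian type in the sense of Definition \ref{Thetas_KdV} $(ii)$.
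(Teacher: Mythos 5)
Your proposal follows the paper's own proof essentially step for step: the paper likewise reads the $\mathcal T$-type condition \eqref{eqn:T_type_KdV} as a first-order linear ODE in $x$ for $Ai\left(\Theta\right)$ and solves it by integrating factor for (i); it obtains (ii) by noting that $Ai\left(\Theta\right)=0$ collapses the condition to $F'\left(\Theta\right)W\left(\Theta\right)=0$; and it obtains (iii) by setting $W\left(\Theta\right)=0$. Your handling of (ii) and (iii) is correct and in two respects more careful than the paper's: you record that the integration ``constant'' is only constant in $x$ (so $c_0=c_0\left(t\right)$, whereas the statement asserts $c_0\in\mathbb R$), and in (iii) you argue directly from $\partial_x\left(e^{-F\left(\Theta\right)}Ai\left(\Theta\right)\right)=0$ rather than substituting into the formula of (i), which turns out to be the safer route.

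The one step you defer --- the integration by parts in (i) --- will not close as you promise, and you should be aware that this is a defect of the printed formula, not of your method. The integrating-factor solution is
\[
Ai\left(\Theta\right)=e^{F\left(\Theta\right)}\left(c_0\left(t\right)-3\int_0^x e^{-F\left(\Theta\right)}F'\left(\Theta\right)W\left(\Theta\right)\left(t,s\right)\,\mathrm{d}s\right),
\]
and to move the derivative off the integrating factor one must use $e^{-F\left(\Theta\right)}F'\left(\Theta\right)=-\partial_s\left(e^{-F\left(\Theta\right)}\right)/\Theta_s$; the integration by parts then produces a boundary term $3W\left(\Theta\right)/\Theta_x$ (not $-3W\left(\Theta\right)$), an integrand $\partial_s\left(W\left(\Theta\right)/\Theta_s\right)$ (not $\left(W\left(\Theta\right)\right)_x$), and a prefactor $e^{F\left(\Theta\right)}$ (not $F\left(\Theta\right)$) outside the integral. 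One can also check directly that the right-hand side of \eqref{sol_Ai_KdV} as printed does not satisfy the ODE, so no bookkeeping will recover it exactly. The paper's own proof of (i) is the single sentence that the formula ``follows directly from solving the corresponding ODE'' (written there, moreover, with the sign of $3F'\left(\Theta\right)W\left(\Theta\right)$ flipped relative to \eqref{eqn:T_type_KdV}), so it glosses over exactly the same mismatch. What survives in every version, and is all that is used for (iii) and later in Theorem \ref{MT_KdV}, is precisely the implication you do prove: $W\left(\Theta\right)\equiv 0$ forces $Ai\left(\Theta\right)=c_0\left(t\right)\exp\left(F\left(\Theta\right)\right)$.
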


\begin{proof}
Let us prove $\left(i\right)$: notice that $\Theta$ being of $\mathcal T$-type as in \eqref{eqn:T_type_KdV} is equivalent to have
\[
\left(Ai\left(\Theta\right)\right)_x -F'\left(\Theta\right) \Theta_x Ai\left(\Theta\right) = 3F'\left(\Theta\right) W\left(\Theta\right).
\]
Then \eqref{sol_Ai_KdV} follows directly from solving the corresponding ODE for $Ai\left(\Theta\right)$. In order to prove $\left(ii\right)$, notice that from \eqref{eqn:T_type_KdV} one has $0= F'\left(\Theta\right) W\left(\Theta\right)$, proving $\left(ii\right)$. Finally, to prove $\left(iii\right)$, from \eqref{sol_Ai_KdV} one has $Ai\left(\Theta\right)= c_0 \exp\left(F\left(\Theta\right)\right)$, as required.
\end{proof}

Now we are ready to prove the main result in the KdV case.

\begin{proof}[Proof of Theorem \ref{MT_KdV}] Now we are ready to prove Theorem \ref{MT_KdV}.

\medskip

\emph{Proof of $\left(i\right)$}. Under $ \mathcal T\left(\Theta\right)=0$, we have from \eqref{casicasi_MKdV} an ODE of the form
\[
 \Theta_{x}^{4}\rho'' + 2\left( 3 \Theta_{x}^{2} \Theta_{xx} - F'\left(\Theta\right)\Theta_{x}^{4} \right) \rho' +  \left( 3\Theta_{xx}^{2}  -4 \Theta_{x}\left(\Theta_{t} -\Theta_{xxx}\right)+4F''\left(\Theta\right) \Theta_{x}^{4} \right) \rho =0.
\]
This homogeneous ODE for $\rho$ has zero as the unique solution provided $\rho\left(1\right)=\rho'\left(1\right)=0$, which is indeed the case. Consequently, from Lemma \ref{dem:ODE1}, we get $F=\log.$

\medskip

\emph{Proof of $\left(ii\right)$}. From $\Theta$ being of Airy and Wronskian type we have $\Theta$ of $\mathcal T$-type. The previous result ensures $F=\log$. Finally, Lemma \ref{lem:W} proves the final result, after checking that $\Theta$ is of Airy type and $\Theta>0$.   

\medskip

\emph{Proof of $\left(iii\right)$}. If $F=\log$ then $\rho=0$, and from \eqref{casicasi_MKdV}, $\Theta^{-1} \mathcal T\left(\Theta\right)=0$. Consequently, $\Theta$ is of $\mathcal T$-type. Since it is additionally of Wronskian type, it is of Airy type and $\left(ii\right)$ applies.

\medskip

\emph{Proof of $\left(iv\right)$}. The multisoliton $\Theta$ is of $\mathcal T$-type, but it does not satisfy being of Airy or Wronskian type.  This ends the proof of Theorem \ref{MT_KdV}.
\end{proof}

\subsection{The mKdV case}

Let us consider the mKdV model
\begin{equation}\label{eqn:mKdV}
-4u_t +u_{xxx} +6 u^2 u_{x} =0,
\end{equation}
where $u=u\left(t,x\right)\in\mathbb R$ and $\left(t,x\right)\in\mathbb R^{2}$. Let us extend our previous results to the mKdV case. Consider
\begin{equation}\label{eqn: FT_mKdV}
u\left(t,x\right)=  \partial_x F\left(\Theta\left(t,x\right)\right),
\end{equation}
(notice that we only consider one derivative in space). We consider $F: \mathbb R \longrightarrow \mathbb R$ smooth and the phase $\Theta=\Theta\left(t,x\right) \in \mathbb R$ also smooth. Since $F$ can be changed by any constant, we can assume that $F\left(0\right)=0$. Classical mKdV solitons (see e.g. \cite{AM}) are given by
%We shall call $\Theta$ the phase of the solution $u$, and $F$ will be the profile of   One of the most important examples of solutions of the form \eqref{eqn: FT} are multi-soliton solutions. These are given by
\begin{equation}\label{ppal_form:mKdV}
\begin{aligned}
Q_k\left(t,x\right) = &~{} 2\partial_x \arctan \left(\Theta_k\left(t,x\right)\right) \\
= &~{} k\sech \left( k x+ \frac14 k^3 t +b_0 \right), \quad \Theta_k := \exp\left( k x+ \frac14 k^3 t +b_0 \right), \quad b_0\in\mathbb R.
\end{aligned}
\end{equation}
Replacing $u$ in \eqref{eqn:mKdV} by  \eqref{eqn: FT_mKdV}, we obtain
\begin{equation}\label{comparacion}
 \left(\Theta_{xxx}-4 {\Theta}_t\right) F'\left(\Theta\right)+3 {\Theta}_{xx} {\Theta}_x F''\left(\Theta\right)+ {\Theta}_x^3 \left(F'''+ 2F'^3\right)\left(\Theta\right)=0.
\end{equation}
Trivial phases $\Theta=const.$ will be discarded now, so that we assume $\Theta_x\neq 0$. Note that $\Theta_k$ in \eqref{ppal_form:mKdV} satisfies $Ai\left(\Theta\right)=\Theta_{xxx}-4 {\Theta}_t=0$. Consequently, from \eqref{comparacion} one must have
\[
3 {\Theta}_{xx}  F''\left(\Theta\right)+ \Theta_x^2\left(F'''+ 2F'^3\right)\left(\Theta\right)=0 .
\]
Let $h\left(s\right):= \frac12 F'\left(s\right)$. The equation above  can be written as
\begin{equation}\label{mKdV_parte_final}
3 \left( \Theta_{xx} -\frac{\Theta_x^2}{\Theta} \right)  h'\left(\Theta\right)+ \Theta_x^2 \left(h'' + \frac{3}{s}h' +  8h^3 \right)\left(\Theta\right)=0 .
\end{equation}
The first term contains nothing but $W_x\left(\Theta\right):=  \Theta_{xx} -\frac{\Theta_x^2}{\Theta}$, which will be required to be zero. Notice that $\Theta_k$ in \eqref{ppal_form:mKdV} does satisfy this condition, i.e., $W_x\left(\Theta_k\right)=0$. In the opposite direction, $W_x\left(\Theta\right) =0$ implies $\Theta \left( \frac{\Theta_x}{\Theta} \right)_x =0 $, leading to $\Theta \left(t,x\right)= \exp\left(k\left(t\right) x+ b\left(t\right)\right)$. From the Airy condition $\Theta_{xxx}-4 {\Theta}_t=0$ one gets
\[
k^3 -4 k' x -4 b' =0,
\]
leading to $k\left(t\right)=k$ for some constant $k$  and $b= \frac14 k^3 t + b_0$ with some   constant $b_0$. Therefore, $ \Theta= \exp\left( k x+ \frac14 k^3 t + b_0\right)$. Notice that the case $k=0$ returns the trivial solution, therefore it will be left out of the subsequent analysis. In particular, the image of $\Theta$ is $\left(0,\infty\right)$. 

\medskip

Finally, the second term in \eqref{mKdV_parte_final} (being zero now) can be written as the radial solution $h=h\left(s\right)$, $s=\left|x\right| >0$ of {\color{black} the elliptic, nonlinear PDE} $\Delta h + 8 h^3 =0$ in dimension 4.  Requiring positive solutions (by hypothesis, $h\left(s\right):= \frac12 F'\left(s\right)>0$), these are in $H^1\left(\mathbb R^4\right)$ and by classical Talenti-Aubin arguments, one has $h\left(s\right)=\frac1{1+s^2}$, giving,  finally,  $F= 2 \arctan $ if we assume $F\left(0\right)=0$. We conclude the following result:

\begin{thm}
Let $u$ be a smooth solution to mKdV \eqref{eqn:mKdV} of the form \eqref{eqn: FT_mKdV}, with smooth profile $F:\mathbb R\to \mathbb R$ satisfying $F\left(0\right)=0$ and $F$ strictly increasing in $\mathbb R$. Then a nontrivial $\Theta$ is a soliton \eqref{ppal_form:mKdV} and $F=2 \arctan$ if and only if $W_x\left(\Theta\right)=Ai\left(\Theta\right)=0$. 
\end{thm}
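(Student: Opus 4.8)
The plan is to treat the two implications of the equivalence separately, leaning on the algebraic reduction of \eqref{eqn:mKdV} already recorded in \eqref{comparacion}. The easy direction is \textbf{necessity}: if $F=2\arctan$ and $\Theta=\Theta_k=\exp\!\big(kx+\tfrac14 k^3t+b_0\big)$ as in \eqref{ppal_form:mKdV}, then since $\Theta_{k,x}=k\Theta_k$, $\Theta_{k,xx}=k^2\Theta_k$, $\Theta_{k,xxx}=k^3\Theta_k$ and $\Theta_{k,t}=\tfrac14 k^3\Theta_k$, one reads off directly $Ai(\Theta_k)=\Theta_{k,xxx}-4\Theta_{k,t}=\big(k^3-4\cdot\tfrac14 k^3\big)\Theta_k=0$ (the Airy dispersion relation is built into the exponent) and $W_x(\Theta_k)=\Theta_{k,xx}-\Theta_{k,x}^2/\Theta_k=k^2\Theta_k-k^2\Theta_k=0$. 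I would dispatch this direction in a couple of lines.

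For \textbf{sufficiency} I would begin from \eqref{comparacion}, use $Ai(\Theta)=0$ to annihilate the first term, divide by $\Theta_x\neq 0$, and pass to the profile variable $h:=\tfrac12 F'$, which recasts the surviving identity in the split form \eqref{mKdV_parte_final}: a multiple of $W_x(\Theta)\,h'(\Theta)$ plus $\Theta_x^2$ times the purely one-dimensional expression $\big(h''+\tfrac3s h'+8h^3\big)(\Theta)$. The hypothesis $W_x(\Theta)=0$ kills the first bracket, and because $\Theta_x\neq0$ I am left with the decoupled autonomous ODE $h''(s)+\tfrac3s h'(s)+8h(s)^3=0$ for the profile, now entirely separated from the geometry of the phase.

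The phase itself is then pinned down by the two linear constraints alone. Solving $W_x(\Theta)=\Theta(\Theta_x/\Theta)_x=0$ forces $\Theta(t,x)=\exp\!\big(k(t)x+b(t)\big)$, and substituting this into $Ai(\Theta)=0$ yields the pointwise relation $k^3-4k'x-4b'=0$; matching powers of $x$ gives $k'=0$, hence $k$ constant, and then $b(t)=\tfrac14 k^3t+b_0$. Discarding the trivial case $k=0$ recovers exactly \eqref{ppal_form:mKdV}, with $\Theta$ taking its values in $(0,\infty)$, so that the profile ODE above must hold for all $s\in(0,\infty)$.

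The main obstacle is the \textbf{rigidity of the profile equation} $h''+\tfrac3s h'+8h^3=0$. I would recognize it as the radial form of the $H^1$-critical equation $\Delta h+8h^3=0$ in dimension $d=4$ (the critical exponent being $(d+2)/(d-2)=3$), read on $s\in(0,\infty)$. The assumption that $F$ is strictly increasing supplies the positivity $h=\tfrac12 F'>0$, and I would invoke the Talenti--Aubin / Caffarelli--Gidas--Spruck classification of positive finite-energy solutions to isolate the Aubin--Talenti bubble $h(s)=1/(1+s^2)$, whose dilates are the only admissible profiles and whose normalization is fixed by the coefficient $8$; a direct substitution confirms $h=1/(1+s^2)$ solves the ODE. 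Integrating $F'(s)=2/(1+s^2)$ with $F(0)=0$ then gives $F=2\arctan$, closing the argument. The genuinely delicate point is the passage from the ODE on the half-line to the elliptic classification on $\mathbb R^4$: one must control the behavior of $h$ at the endpoints $s\to 0^+$ and $s\to\infty$ so that the finite-energy hypothesis truly applies and the singular or non-decaying members of the two-parameter ODE family are excluded.
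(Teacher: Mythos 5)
Your proposal is correct and follows essentially the same route as the paper: use $Ai(\Theta)=0$ to reduce \eqref{comparacion}, pass to $h=\tfrac12 F'$ to reach the split form \eqref{mKdV_parte_final}, solve $W_x(\Theta)=0$ and the Airy condition to pin down the exponential phase, and identify the residual ODE as the radial $\mathbb R^4$-critical equation classified by Talenti--Aubin, yielding $h(s)=1/(1+s^2)$ and $F=2\arctan$. The delicate point you flag at the end (justifying the finite-energy hypothesis so the elliptic classification applies to the half-line ODE) is precisely the step the paper itself passes over with the phrase ``by classical Talenti--Aubin arguments,'' so your treatment is, if anything, slightly more careful.
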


Notice that the condition $F$ strictly increasing in $\mathbb R$ can be replaced by $F$ strictly monotone in $\mathbb R$, since by symmetries of the equation \eqref{eqn:mKdV}, if $u$ is a solution, then $-u$ also does. However, it is noticed that ``cn'' periodic solutions may appear if this condition is lifted.

\medskip

Now we assume that $\Theta_x$ may take the value zero. Additionally, we will assume that $Ai(\Theta)$ need not be zero as well. Coming back to \eqref{comparacion}, and following the ideas that led to \eqref{mKdV_parte_final}, we shall obtain
\[
 \left( \Theta_{xxx} -4\Theta_t \right)h  + 3 \left( \Theta_{xx} -\frac{\Theta_x^2}{\Theta} \right) \Theta_x  h'\left(\Theta\right)+ \Theta_x^3 \left(h'' + \frac{3}{s}h' +  8h^3 \right)\left(\Theta\right)=0 .
\]
This equation is valid provided $\Theta \neq 0$. If $F$ is assumed to be increasing as in the previous argument, one has $h\left(s\right)=\frac1{1+s^2}$, leading to the equation
\begin{equation}\label{mandelazo}
(1+\Theta^2) \left(4\Theta_t - \Theta_{xxx} \right)  + 6\Theta_x \left( \Theta \Theta_{xx} - \Theta_x^2 \right)=0,
\end{equation}
valid now in the full region. One can easily see that for any $\alpha,\beta>0$, $\delta=\frac14\left(3\beta^2-\alpha^2\right)$, and $\gamma=\frac14\left(\beta^2-3\alpha^2\right)$, one has that 
\[
\Theta= \pm\frac{\beta}{\alpha} \frac{\sin\left(\alpha\left(x+\delta t\right)\right)}{\cosh\left(\beta\left(x+\gamma t\right)\right)},
\]
solves the previous equation, representing the classical breather solution, see \cite{AM} and references therein. Also,
\[
\Theta= \frac{ e^{\sqrt{c_1}\left(x+c_1 t\right)} + e^{\sqrt{c_2}\left(x+c_2 t\right)} }{1-\rho^2 e^{\sqrt{c_1}\left(x+c_1 t\right) +\sqrt{c_2}\left(x+c_2 t\right)}},\quad \rho:= \frac{\sqrt{c_1} -\sqrt{c_2}}{\sqrt{c_1} + \sqrt{c_2}},
\]
with $c_1,c_2>0$, is another solution, representing the 2-soliton solution. In general, characterizing the solutions to \eqref{mandelazo} is not a simple task. In \cite{FFMP} it is proposed a first result in this direction, where one assumes that the phase $\Theta$ has a particular \emph{monochromatic} structure.  

\bigskip

\end{document}